\newcommand{\adicff}{\mathcal{X}}
\newcommand{\Spa}{\mathrm{Spa}}
\newcommand{\Spd}{\mathrm{Spd}}
\newcommand{\HN}{\mathrm{HN}}
\newcommand{\HNvec}{\overrightarrow{\mathrm{HN}}}
\newcommand{\Surj}{\mathcal{S}\mathrm{urj}}
\newcommand{\Inj}{\mathcal{I}\mathrm{nj}}
\newcommand{\Hom}{\mathcal{H}\mathrm{om}}
\numberwithin{equation}{section}
\newcommand{\heart}{\ensuremath\heartsuit}
\newcommand{\RR}{\mathbf{R}}
\newcommand{\Q}{\mathbf{Q}}
\newcommand{\Z}{\mathbf{Z}}
\newcommand{\ol}[1]{\overline{#1}}
\newcommand{\Cal}[1]{\mathcal{#1}}
\newcommand{\mrm}[1]{\mathrm{#1}}
\DeclareMathOperator{\Ima}{Im\,}
\DeclareMathOperator{\rank}{rank}
\DeclareMathOperator{\Bun}{Bun}
\DeclareMathOperator{\Id}{Id}
\DeclareMathOperator{\Proj}{Proj\,}
\DeclareFontFamily{OT1}{rsfs}{}
\DeclareFontShape{OT1}{rsfs}{n}{it}{<-> rsfs10}{}
\DeclareMathAlphabet{\mathscr}{OT1}{rsfs}{n}{it}
\newcommand{\acal}{\mathcal{A}}
\newcommand{\ecal}{\mathcal{E}}
\newcommand{\fcal}{\mathcal{F}}
\newcommand{\hcal}{\mathcal{H}}
\newcommand{\kcal}{\mathcal{K}}
\newcommand{\lcal}{\mathcal{L}}
\newcommand{\ocal}{\mathcal{O}}
\newcommand{\qcal}{\mathcal{Q}}
\newcommand{\vcal}{\mathcal{V}}
\newcommand{\xcal}{\mathcal{X}}
\newcommand{\ten}{\otimes}
\newcommand{\dsm}{\oplus}
\newcommand{\surj}{\twoheadrightarrow}
\newtheorem{thm}{Theorem}[subsection]
\newtheorem{lemma}[thm]{Lemma}
\newtheorem{prop}[thm]{Proposition}
\newtheorem{cor}[thm]{Corollary}
\theoremstyle{remark}
\newtheorem{remark}[thm]{Remark}
\newtheorem{defn}[thm]{Definition}
\newtheorem{example}[thm]{Example}
\newtheorem{claim}[thm]{Claim}
\newtheorem*{thm*}{Theorem}
\def\th@remark{%
  \thm@headfont{\bfseries}%
  \normalfont 
}
\def\imod#1{\allowbreak\mkern5mu({\operator@font mod}\,\,#1)}
\theoremstyle{theorem}
\numberwithin{equation}{section}
\begin{document}
	
	\tikzset{
		node style sp/.style={draw,circle,minimum size=\myunit},
		node style ge/.style={circle,minimum size=\myunit},
		arrow style mul/.style={draw,sloped,midway,fill=white},
		arrow style plus/.style={midway,sloped,fill=white},
	}
    
	\title{Extensions of Vector Bundles on the Fargues-Fontaine Curve}
	
	\author[C. Birkbeck]{Christopher Birkbeck}
    \address{Mathematics Institute, University of Warwick, Zeeman Building, Coventry CV4 7AL}
    \email{c.d.birkbeck@warwick.ac.uk}
	
    \author[T. Feng]{Tony Feng}
    \address{Department of Mathematics, Stanford University, 450 Serra Mall, Stanford CA 94305}
    \email{tonyfeng@stanford.edu}
    
    \author[D. Hansen]{David Hansen}
    \address{Department of Mathematics, Columbia University, 2990 Broadway, New York NY 10027}
   \email{hansen@math.columbia.edu}
   
    \author[S. Hong]{Serin Hong}
    \address{Department of Mathematics, California Institute of Technology, 1200 E. California Blvd, Pasadena CA 91125}
    \email{shong2@caltech.edu}
    \author[Q. Li]{Qirui Li} 
    \address{Department of Mathematics, Columbia University, 2990 Broadway, New York NY 10027}
   \email{qiruili@math.columbia.edu}
   
    \author[A. Wang]{Anthony Wang}
    \address{Department of Mathematics, University of Chicago, 5734 S. University Avenue, Chicago, IL, 60637}
    \email{anthonyw@math.uchicago.edu}
    
    \author[L. Ye]{Lynnelle Ye}
    \address{Department of Mathematics, Harvard University, 1 Oxford Street, Cambridge, MA 02138}
    \email{lynnelle@math.harvard.edu}
    
    \begin{abstract}We completely classify the possible extensions between semistable vector bundles on the Fargues-Fontaine curve (over an algebraically closed perfectoid field), in terms of a simple condition on Harder-Narasimhan polygons. Our arguments rely on a careful study of various moduli spaces of bundle maps, which we define and analyze using Scholze's language of diamonds. This analysis reduces our main results to a somewhat involved combinatorial problem, which we then solve via a reinterpretation in terms of the Euclidean geometry of Harder-Narasimhan polygons.
    \end{abstract}
	
	\maketitle

	\tableofcontents
	
	\rhead{}

	\chead{}
\section{Introduction}
    
Let $E$ be a $p$-adic local field with residue field $\mathbf{F}_q$, and let $F/\mathbf{F}_q$ be an algebraically closed complete nonarchimedean field of characteristic $p$. Given any such pair, Fargues and Fontaine \cite{FF08} defined a remarkable scheme $X=X_{E,F}$, the so-called \emph{Fargues-Fontaine curve}. Many constructions in classical $p$-adic Hodge theory can be reinterpreted ``geometrically'' in terms of vector bundles on $X$.

One of the main results of \cite{FF08} is a complete classification of vector bundles on $X$. In particular, Fargues and Fontaine proved that any bundle is determined up to isomorphism by its Harder-Narasimhan (HN) polygon, that the Harder-Narasimhan filtration of any bundle splits, and that there is a unique isomorphism class of stable bundles of any specified slope $\lambda \in \mathbf{Q}$.

In this paper we study the question of which bundles can occur as \emph{extensions} between two given vector bundles on $X$. This question turns out to have a rather combinatorial flavor, since vector bundles on the curve are determined by their Harder-Narasimhan polygons. Moreover, abstract slope theory imposes certain conditions on the HN polygon of any bundle $\ecal$ appearing as an extension of two specified bundles $\fcal_1,\fcal_2$.  Our main result is that when both bundles $\fcal_i$ are semistable, these necessary conditions are also sufficient; for a precise statement, see Theorem \ref{main_thm_3}. We also prove a natural generalization characterizing bundles admitting multi-step filtrations with specified semistable graded pieces, cf. Theorem \ref{main_thm_2}.

The results in this paper have applications towards understanding the geometry of the stack $\Bun_G$ of $G$-bundles on the Fargues-Fontaine curve, for $G$  a connected reductive group over $E$. More precisely, this stack has a natural stratification into locally closed strata $\Bun_G^{b}$ indexed by the Kottwitz set $B(G)$, and our results have implications for the problem of computing the closures of the individual strata. In the case $G=\mathrm{GL}_n$, the aforementioned strata are simply indexed by Harder-Narasimhan polygons, and the exact statement is given in Theorem \ref{strataclosure} below. These applications are the subject of a companion paper by one of us \cite{Hanvb}.

\subsection{Statement of results}

Before stating our results, we briefly recall the classification of vector bundles on $X$. A more extensive discussion of this and related background will be given in \S \ref{background}. 

\begin{thm}[Fargues-Fontaine, Kedlaya] \label{classification}\cite{FF08}\cite{Ked08} Vector bundles on $X$ enjoy the following properties: 

1) Every vector bundle $\Cal{E}$ admits a canonical Harder-Narasimhan filtration. 

2) For every rational number $\lambda$, there is a unique stable bundle of slope $\lambda$ on $X$, which is denoted $\Cal{O}(\lambda)$. Writing $\lambda = p/q$ in lowest terms, the bundle $\Cal{O}(\lambda)$ has rank $q$ and degree $p$. 

3) Any semistable bundle of slope $\lambda$ is a finite direct sum $\ocal(\lambda)^{d}$, and tensor products of semistable bundles are semistable.

4) For any $\lambda \in \mathbf{Q}$, we have 
\[
H^0(\Cal{O}(\lambda)) = 0 \text{ if and only if } \lambda<0
\]
and 
\[
H^1(\Cal{O}(\lambda)) = 0 \text{ if and only if } \lambda\geq 0.
\]
In particular, any vector bundle $\ecal$ admits a splitting
\[
\Cal{E} \simeq \bigoplus_i \Cal{O}(\lambda_i)
\]
of its Harder-Narasimhan filtration.
\end{thm}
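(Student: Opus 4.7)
The plan is to follow the strategy of Fargues-Fontaine \cite{FF08}, supplemented by the additional input from Kedlaya \cite{Ked08} needed to upgrade the classification from the isoclinic to the semistable case. The starting point is that $X$ is a Dedekind scheme, on which vector bundles carry well-defined rank and degree and every coherent torsion-free sheaf is locally free. Since the slope function $\mu = \deg/\mathrm{rank}$ is additive on short exact sequences and takes values in $\mathbf{Q}$, the abstract formalism of Harder-Narasimhan filtrations applies verbatim, yielding (1). This reduces the entire classification to (a) constructing enough stable bundles and (b) controlling extensions between them.

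For (2), I would first construct the line bundles $\ocal(d)$ for $d\in\mathbf{Z}$: using the graded ring $P = \bigoplus_{d\geq 0} B^{\varphi=\pi^d}$ underlying the schematic definition $X = \Proj P$ (with $\pi$ a uniformizer of $E$), these are the Serre twists. For general $\lambda = p/q$ in lowest terms, exploit the Galois-equivariant finite \'{e}tale cover $f_q : X_{E_q,F} \to X_{E,F}$ of degree $q$, where $E_q$ is the unramified degree $q$ extension of $E$, and define $\ocal(p/q) := f_{q,*}\ocal(p)$. Finite \'{e}taleness immediately gives rank $q$ and degree $p$. The cohomology vanishing in (4) then reduces to identifying $H^0(X,\ocal(\lambda)) = (B^{\varphi^q=\pi^p})^{\Gal(E_q/E)}$ and applying the fundamental exact sequence of $p$-adic Hodge theory; the $H^1$ statement follows by Serre duality (or by a direct computation with the same graded ring).

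Stability of $\ocal(\lambda)$ is the main technical obstacle. The standard argument identifies the endomorphism algebra $\End(\ocal(\lambda))$ with a central division algebra over $E$ of invariant determined by $\lambda$, via a period-ring computation; any proper nonzero subbundle of the same slope would then split off by an idempotent and contradict simplicity. Uniqueness of $\ocal(\lambda)$ up to isomorphism follows from an analogous calculation of $\mathrm{Hom}$'s between candidate stable bundles of equal slope, together with the fact that any nonzero slope-preserving map between stable bundles of equal rank is an isomorphism.

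Finally, for (3), the key input --- going beyond \cite{FF08} --- is the nontrivial theorem of \cite{Ked08} that every semistable bundle of slope $\lambda$ is semisimple, i.e. isomorphic to $\ocal(\lambda)^{\oplus d}$. This is proved by descending to the isoclinic case and invoking the slope filtration theorem for $\varphi$-modules over the Robba ring. Tensor-preservation of semistability then follows from additivity of slopes under tensor product together with a Dieudonn\'{e}-Manin-type argument applied to the resulting graded pieces. The splitting of the HN filtration in (4) reduces by d\'{e}vissage to the vanishing $\Ext^1(\ocal(\lambda),\ocal(\mu)) = H^1(X,\Hom(\ocal(\lambda),\ocal(\mu))) = 0$ whenever $\lambda > \mu$; by (3), $\Hom(\ocal(\lambda),\ocal(\mu))$ is a direct sum of copies of $\ocal(\mu-\lambda)$, and the vanishing is then immediate from the cohomology calculation. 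The semisimplicity of semistable bundles is the deepest ingredient, and is the crux of the proof.
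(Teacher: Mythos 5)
This theorem is cited in the paper as a known background result, with no proof supplied: it appears under the heading ``Theorem (Fargues-Fontaine, Kedlaya)'' with references to \cite{FF08} and \cite{Ked08}, and the paper invokes it as a black box. There is therefore no internal proof to compare against; I can only assess your outline as a reconstruction of the literature argument.

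Your sketch captures the main architecture (abstract HN formalism, construction of $\ocal(\lambda)$ via pushforward from unramified covers, stability from the division-algebra structure of $\End(\ocal(\lambda))$, semisimplicity of semistable bundles via Kedlaya's slope filtration as the deep step, and splitting of the HN filtration from cohomology vanishing). Two points, however, are not quite right. First, once you define $\ocal(p/q) := f_{q,*}\ocal(p)$, the projection formula for a finite map gives $H^0(X_{E,F},\ocal(p/q)) \cong H^0(X_{E_q,F},\ocal(p))$ directly --- the \emph{full} space, not its $\Gal(E_q/E)$-invariants. Taking invariants would undercount sections. (This does not change the vanishing criterion in degree $0$, but the identity as written is wrong.) Second, invoking Serre duality to get the $H^1$ vanishing is suspect: the Fargues-Fontaine curve is not of finite type over any base field, and the duality statements one proves for it are typically \emph{consequences} of the very cohomology computations you are trying to establish, so this route risks circularity. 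The standard arguments compute $H^1$ directly from the fundamental exact sequence of $p$-adic Hodge theory (or, on the adic side, from explicit \v{C}ech computations), and I would recommend replacing the Serre-duality step with one of those. With those two adjustments your outline matches the strategy of the cited references.
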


This project began at the 2017 Arizona Winter School, as an investigation of the following question: 

\begin{quotation}
Given two vector bundles $\Cal{F}_1$ and $\Cal{F}_2$ on $X$, which polygons can occur as $\HN(\Cal{E})$ for an extension
\[
0 \rightarrow \Cal{F}_1 \rightarrow \Cal{E} \rightarrow \Cal{F}_2 \rightarrow 0?
\]
\end{quotation}

Given such a short exact sequence, the total rank and degree of $\Cal{E}$ are clearly determined by $\Cal{F}_1$ and $\Cal{F}_2$, and they determine the endpoints of $\HN(\Cal{E})$. By concavity, $\HN(\Cal{E})$ must lie above the straight line between its endpoints, which corresponds to the HN polygon of the unique semistable bundle of the correct degree and rank. Slope theory also imposes a less trivial constraint on $\HN(\ecal)$.  To explain this, let us define a partial order on the set of HN polygons by writing $P \leq P'$ if $P$ lies (nonstrictly) below $P'$ and has the same right endpoint.  It is then not difficult to show that $\HN(\ecal) \leq \HN(\Cal{F}_1 \oplus \Cal{F}_2)$, cf. Corollary \ref{polygon_bound}. 

\begin{center}
\begin{figure}[h]
\begin{tikzpicture}	
        \hspace{-0.5cm}
        \draw[black] (0,0) --node[above]{$\mathcal{F}_1$}  (1.5, 3) --node[above]{$\mathcal{F}_2$} (4,2) -- (0,0)  ;
        \draw[dashed] (0,0)--(1.5,2.2)--node[below]{$\mathcal{E}$} (2,2.4)--(4,2);
		
		\draw[step=1cm,thick] (0,0) -- node[left]{} (1.5,3);
		\draw[step=1cm,thick] (1.5,3) -- node[above] {} (4,2);
		
		
		
		\draw [fill] (0,0) circle [radius=0.05];
		
		\draw [fill] (1.5,3) circle [radius=0.05];
		\draw [fill] (1.5,2.2) circle [radius=0.03];
        \draw [fill] (2,2.4) circle [radius=0.03];
		\draw [fill] (4,2) circle [radius=0.05];
		
		\node at (0-0.8,0-0.2) {};
		\node at (4-0.2,2+0.3) {};

\end{tikzpicture}
\caption{An illustration of the scenario $\HN(\ecal) \leq \HN(\fcal_1 \oplus \fcal_2)$ for semistable $\fcal_i$'s. Here $\HN(\ecal)$ (resp. $\HN(\fcal_1 \oplus \fcal_2)$) is depicted as the dashed (resp. solid) polygonal segment.}
\end{figure}
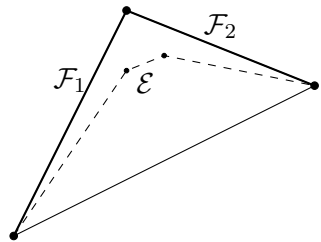
\end{center}

Our first main result is that when $\Cal{F}_1$ and $\Cal{F}_2$ are semistable, every bundle $\ecal$ satisfying these conditions is actually realized. Given a bundle $\ecal$, let $\mu(\ecal)$ be the slope of $\ecal$. 

\begin{thm}\label{main_thm_3} Let $\Cal{F}_1$ and $\Cal{F}_2$ be semistable vector bundles on $X$ such that $\mu(\fcal_1) < \mu(\fcal_2)$. Then any vector bundle $\ecal$ such that $\HN(\ecal) \leq \HN(\Cal{F}_1 \oplus \Cal{F}_2)$ is realized as an extension
\[
0 \rightarrow \Cal{F}_1 \rightarrow \Cal{E} \rightarrow \Cal{F}_2 \rightarrow 0.
\]
\end{thm}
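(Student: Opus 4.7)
The plan is to recast existence of an extension as existence of a point in the moduli diamond of surjections $\Surj(\ecal,\fcal_2)$, and then to combine Kedlaya--Liu-type semicontinuity of HN polygons with a combinatorial analysis to produce such a point.

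The first step is a reformulation. Giving a short exact sequence $0 \to \fcal_1 \to \ecal \to \fcal_2 \to 0$ is equivalent to giving a surjection $q\colon \ecal \twoheadrightarrow \fcal_2$ whose kernel is abstractly isomorphic to $\fcal_1$. Since rank and degree are additive in short exact sequences, any $\ker q$ automatically has the correct rank and slope; by Theorem~\ref{classification}(3), $\fcal_1 \cong \ocal(\mu(\fcal_1))^{d_1}$ is the unique semistable bundle with those invariants. So the task is to exhibit a surjection $\ecal \twoheadrightarrow \fcal_2$ whose kernel is semistable.

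The geometric strategy is to work over the moduli diamond $\Surj(\ecal,\fcal_2)$ developed elsewhere in the paper. Over it sits a universal kernel $\kcal$: a family of rank-$r_1$, degree-$d_1$ bundles on $X$. Semicontinuity of HN polygons in families, as formulated in the diamond setting following Kedlaya--Liu and Scholze, says that for any polygon $P$ the locus where the fibre has HN polygon on or above $P$ is closed; in particular, the locus in $\Surj(\ecal,\fcal_2)$ where $\HN(\kcal_x) = \HN(\fcal_1)$---the straight-line polygon, which is the minimal one allowed by the fixed rank and degree together with concavity---is open. The theorem therefore reduces to two assertions: that $\Surj(\ecal,\fcal_2)$ is non-empty, and that this open ``semistable kernel'' locus inside it is non-empty.

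Non-emptiness of $\Surj(\ecal,\fcal_2)$ should follow from a dimension count, using Theorem~\ref{classification}(4) to control $H^0$ groups between stable summands and the hypothesis $\HN(\ecal) \leq \HN(\fcal_1 \oplus \fcal_2)$ to guarantee enough maps are available. The hard part is the second assertion---exhibiting a single surjection with semistable kernel---which the abstract signals as the ``somewhat involved combinatorial problem''. I would attack it by induction on the number of distinct HN slopes of $\ecal$, peeling off at each step either the top or the bottom HN piece (depending on how $\HN(\ecal)$ sits relative to $\HN(\fcal_1 \oplus \fcal_2)$) and reducing to a smaller instance of the theorem. The main obstacle I anticipate is verifying that the polygon inequality $\HN(\ecal) \leq \HN(\fcal_1 \oplus \fcal_2)$ propagates through the induction to the reduced data: this is a genuinely Euclidean-geometric question about concave polygons, and choosing the correct peel-off order to make it go through is the combinatorial crux of the argument. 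Once the two-summand case is settled, the multi-step generalization (Theorem~\ref{main_thm_2}) should follow by iterating.
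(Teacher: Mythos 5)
Your framework matches the paper's skeleton: reformulating the extension problem as finding a surjection $\ecal\twoheadrightarrow\fcal_2$ with semistable kernel, working on the moduli diamond $\Surj(\ecal,\fcal_2)$, observing that the semistable-kernel locus is open by semicontinuity of HN polygons, and splitting the task into nonemptiness of $\Surj(\ecal,\fcal_2)$ and nonemptiness of the open sublocus. That much is exactly right.

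The gap is in how you propose to finish. For the second assertion you suggest an induction on the number of distinct HN slopes of $\ecal$, peeling off top or bottom pieces. This is not what the paper does, and I don't think it can be made to work in the form you've sketched. The base case, where $\ecal$ is semistable, is already the hardest and least obvious instance of the theorem---for example, producing
\[
0\to\ocal(-1)\to\ocal^{\oplus 2}\to\ocal(1)\to 0
\]
is a genuinely nonformal fact about the curve, not something one reduces away. More fundamentally, the extension problem fixes the two ends $\fcal_1,\fcal_2$ once and for all, so peeling HN pieces off the \emph{middle} term $\ecal$ changes its rank and degree and breaks the hypotheses; there is no natural ``smaller instance'' of Theorem~\ref{main_thm_3} to reduce to. (The one reduction of this flavour that the paper does make is mild: split off a common $\ocal(\mu(\fcal_2))$-summand so that $\mu(\fcal_2)$ strictly exceeds the top slope of $\ecal$.) The paper's actual resolution of the second assertion is not an induction but a second dimension count on the same footing as the first: for each non-semistable candidate kernel $\kcal$ one computes $\dim\Surj(\ecal,\fcal_2)^{\kcal}$ (Lemma~\ref{maindimlemma2}) and proves the strict inequality of Theorem~\ref{degree inequality for kernel}, namely
\[
\deg(\kcal^\vee\otimes\ecal)^{\geq 0}<\deg(\kcal^\vee\otimes\kcal)^{\geq 0}+\deg(\ecal^\vee\otimes\fcal_2)^{\geq 0},
\]
which forces these strata to miss the top dimension and hence the open semistable stratum to be nonempty. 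The ``Euclidean-geometric question about concave polygons'' that you correctly anticipate lives in proving this degree inequality (and the analogous one for Step~(1), Theorem~\ref{step-1}), not in propagating a polygon bound through a peel-off induction. You will also want to note that your Step~(1) (``should follow from a dimension count'') hides the same kind of work: it requires stratifying $\Hom(\ecal,\fcal_2)$ by image bundle $\qcal$ and proving the corresponding strict inequality of Theorem~\ref{mainsurjthm1}.
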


For a more quantitative version of this result, see Theorem \ref{quantitativemainthm}.

In \S \ref{introstrategy} below, we will give a detailed summary of the proof.  For now let us simply remark that even though Theorem \ref{main_thm_3} is purely a statement about vector bundles on a Noetherian scheme, our proof uses \emph{diamonds} in a crucial way.  Furthermore, we believe that any natural proof of this result will make heavy use of diamonds; in our arguments, they arise as the correct framework for constructing moduli spaces of bundle maps with specified properties. We challenge the skeptical reader to produce a short exact sequence $$ 0 \to \ocal\left(-\tfrac{1}{2}\right)^2 \to  \ocal\left(\tfrac{1}{3}\right) \oplus \ocal\left(\tfrac{6}{5}\right) \to \ocal\left(\tfrac{9}{4}\right) \to 0$$by an argument which does not involve diamonds.

This theorem has a natural generalization to multi-step filtrations, which appears as Theorem \ref{main_thm_2} below. Before giving the precise statement, we explain the motivation. As discussed above, the stack $\Bun_n$ of rank $n$ vector bundles on the curve admits a stratification 
\[
\Bun_{n} = \bigsqcup_{P}  \Bun_{n}^{P}
\]
where the stratum $ \Bun_{n}^{P}$ parametrizes vector bundles with fixed Harder-Narasimhan polygon $P$. Immediately before the winter school, DH realized that Theorem \ref{main_thm_2}, if true, could be used to determine the precise closure relations among these strata:

\begin{thm}[Hansen]\label{strataclosure}
Let $P$ be any HN polygon of width $n$. Then we have 
\[
\ol{\Bun_{n}^{P}} = \Bun_{n}^{\geq P}
\]
as substacks of $\Bun_{n}$, where $\Bun_{n}^{\geq P}$ denotes the substack parametrizing vector bundles whose Harder-Narasimhan polygon is $\geq P$.
\end{thm}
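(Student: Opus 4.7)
The plan is to prove the two inclusions separately. For the easy direction $\overline{\Bun_n^P} \subseteq \Bun_n^{\geq P}$, I would invoke upper semicontinuity of the Harder-Narasimhan polygon in families of vector bundles on the Fargues-Fontaine curve (essentially a theorem of Kedlaya-Liu, extended to the diamond setting), which immediately shows that $\Bun_n^{\geq P}$ is a closed substack of $\Bun_n$ and therefore contains $\overline{\Bun_n^P}$.

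For the reverse inclusion $\Bun_n^{\geq P} \subseteq \overline{\Bun_n^P}$, I would check pointwise: given any geometric point $\mathcal{E}$ with $\HN(\mathcal{E}) = Q \geq P$, I must exhibit $\mathcal{E}$ as the specialization of a family of bundles with HN polygon equal to $P$. Using Theorem \ref{classification}(4), I decompose $\mathcal{E} \cong \bigoplus_{i=1}^k \mathcal{F}_i$, where $\mathcal{F}_i = \mathcal{O}(\nu_i)^{e_i}$ are the semistable components of $Q$, ordered so that $\nu_1 < \nu_2 < \cdots < \nu_k$; note that $\HN(\bigoplus_i \mathcal{F}_i) = Q$ dominates $P$. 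Applying the multi-step extension result (Theorem \ref{main_thm_2}) then furnishes an iterated extension $\mathcal{E}'$ fitting into a filtration $0 = \mathcal{E}'_0 \subset \mathcal{E}'_1 \subset \cdots \subset \mathcal{E}'_k = \mathcal{E}'$ with $\mathcal{E}'_i / \mathcal{E}'_{i-1} \cong \mathcal{F}_i$ and $\HN(\mathcal{E}') = P$.

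To realize $\mathcal{E}$ as a specialization of $\mathcal{E}'$, I would deploy a $\G_m$-scaling (Rees-type) argument. The iterated extension $\mathcal{E}'$ corresponds to a point $m$ of a moduli diamond $M$ of iterated extensions with graded pieces $(\mathcal{F}_1, \ldots, \mathcal{F}_k)$; concretely $M$ is built as an iterated torsor under the Banach-Colmez spaces attached to the relevant $\Ext^1$-groups. This $M$ carries a natural $\G_m$-action simultaneously rescaling every extension class by a scalar $t$, and since scaling an extension class by a unit does not alter the underlying bundle up to isomorphism, the fiber of the tautological morphism $M \to \Bun_n$ over $t \cdot m$ is isomorphic to $\mathcal{E}'$ for every $t \in \G_m$, whereas at $t = 0$ the whole filtration splits and the fiber becomes $\bigoplus_i \mathcal{F}_i \cong \mathcal{E}$. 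The induced morphism $\A^{1,\diamond} \to \Bun_n$, $t \mapsto t \cdot m$, is then a family whose generic fiber lies in $\Bun_n^P$ and whose special fiber is $\mathcal{E}$, so $\mathcal{E} \in \overline{\Bun_n^P}$. The main obstacle is Theorem \ref{main_thm_2} itself, which is the heart of the paper; once it is in hand, the degeneration argument is essentially formal, modulo standard technicalities in setting up moduli of extensions as diamonds and verifying upper semicontinuity of the HN polygon in that setting.
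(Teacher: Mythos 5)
This theorem is stated in the paper but \emph{not proved here}: the introduction explicitly defers the proof to the companion paper \cite{Hanvb}, noting only that Theorem \ref{main_thm_2} ``could be used to determine the precise closure relations among these strata.'' So there is no in-paper argument to compare against, and your proposal is best judged on its own terms as a reconstruction of the intended deduction.

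Your plan is the natural one and, at the level of ideas, almost certainly matches what \cite{Hanvb} does. The closed direction via upper-semicontinuity of HN polygons (\cite[Theorem~7.4.5]{KL15}) is exactly what the paper alludes to immediately after the theorem statement. For the open direction, decomposing a point $\Cal{E}$ of $\Bun_n^{\geq P}$ into its semistable summands $\Cal{F}_i$, then applying Theorem \ref{main_thm_2} to produce the bundle $\Cal{E}'$ with $\HN(\Cal{E}')=P$ equipped with a filtration whose gradeds are the $\Cal{F}_i$, and finally degenerating the filtration to its associated graded to produce a specialization $\Cal{E}' \rightsquigarrow \bigoplus_i \Cal{F}_i = \Cal{E}$, is precisely the deduction the paper advertises. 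You also implicitly use Corollary \ref{HNsplit} (uniqueness of the bundle with a given HN polygon) when you say ``$\Cal{E}'$ with $\HN(\Cal{E}')=P$''; worth making explicit.

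Two cautions. First, the description ``a natural $\G_m$-action simultaneously rescaling every extension class'' is imprecise for a $k$-step filtration with $k\geq 3$: the group $\Ext^1(\Cal{F}_3,\Cal{E}'_2)$ depends on the choice of $\Cal{E}'_2$, so there is no canonical way to ``rescale $e_2$ and $e_3$ simultaneously'' as if they lived in a fixed product. The clean statement is the Rees construction you invoke by name: from the filtered bundle $(\Cal{E}',\Cal{E}'_\bullet)$ build the sheaf $\bigoplus_i t^{-i}\Cal{E}'_i$ on $\Cal{X}_{(\A^1_F)^\lozenge}$, which is trivialized as $\Cal{E}'$ over the $\G_m$-locus and equal to $\mathrm{gr}(\Cal{E}')\simeq\Cal{E}$ at $t=0$; this gives the desired $(\A^1)^\lozenge$-point of $\Bun_n$ landing generically in $\Bun_n^P$ and specializing to $\Cal{E}$. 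Second, you should be a bit more careful than ``standard technicalities'' suggests about carrying the Rees degeneration out at the level of v-stacks: one has to check that the Rees sheaf over $\Cal{X}_{(\A^1_F)^\lozenge}$ is indeed a vector bundle (flatness across $t=0$), and that $|(\A^1_F)^\lozenge|\to|\Bun_n|$ sends $0$ into the closure of the image of the generic point. These are routine but not content-free in the diamond setting; given the infrastructure already built in \S\ref{diamond_dim}, they should go through.
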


This is in some sense a converse to the well-known fact that HN polygons jump up (in our convention) on closed subsets (cf. \cite[Theorem 7.4.5]{KL15}).

We now state our second main result, which is a generalization of Theorem \ref{main_thm_3}:

\begin{thm}\label{main_thm_2}  Let $k\geq 2$ be arbitrary, and let $\Cal{F}_1,\dots,\Cal{F}_k$ be semistable vector bundles on $X$ such that $\mu(\Cal{F}_i)<\mu( \Cal{F}_{i+1})$ for all $1 \leq i \leq k-1$. Let $\Cal{E}$ be any vector bundle on $X$ such that $\HN(\Cal{E}) \leq \HN(\Cal{F}_1 \oplus ... \oplus \Cal{F}_k)$. Then $\Cal{E}$ admits a filtration 
\[
0=\Cal{E}_0 \subset \Cal{E}_1 \subset \cdots \subset \Cal{E}_k=\Cal{E}
\]
such that $\Cal{E}_i/\Cal{E}_{i-1} \simeq \Cal{F}_i$ for all $1\leq i \leq k$.
\end{thm}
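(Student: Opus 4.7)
The plan is induction on $k$, with base case $k=2$ being Theorem \ref{main_thm_3}. For the inductive step, set $\Cal{E}' := \Cal{F}_2 \oplus \cdots \oplus \Cal{F}_k$. The key observation is that since the HN polygon of a direct sum of semistable bundles is the concatenation of their slope segments sorted in decreasing order, one has $\HN(\Cal{F}_1 \oplus \Cal{E}') = \HN(\Cal{F}_1 \oplus \cdots \oplus \Cal{F}_k)$, so the hypothesis $\HN(\Cal{E}) \leq \HN(\bigoplus_{i=1}^{k} \Cal{F}_i)$ translates exactly into the polygon condition needed to hope for an extension $0 \to \Cal{F}_1 \to \Cal{E} \to \Cal{E}' \to 0$. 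Granted such an extension, the inductive hypothesis applied to $\Cal{E}'$ and the semistable pieces $\Cal{F}_2, \ldots, \Cal{F}_k$ (the polygon condition $\HN(\Cal{E}') \leq \HN(\Cal{F}_2 \oplus \cdots \oplus \Cal{F}_k)$ holds trivially as an equality) furnishes a filtration of $\Cal{E}'$, which lifts via the quotient map to the required filtration of $\Cal{E}$ with $\Cal{E}_1$ equal to the kernel copy of $\Cal{F}_1$.

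The substantive task is thus the construction of the short exact sequence $0 \to \Cal{F}_1 \to \Cal{E} \to \Cal{E}' \to 0$. Since $\Cal{E}'$ is in general not semistable---its HN polygon has distinct segments of slopes $\mu(\Cal{F}_2), \ldots, \mu(\Cal{F}_k)$---Theorem \ref{main_thm_3} does not apply directly. What is required is a generalization: if $\Cal{F}_1$ is semistable of slope $\mu_1$ and $\Cal{E}'$ is any vector bundle with $\mu_{\min}(\Cal{E}') > \mu_1$, then any $\Cal{E}$ with $\HN(\Cal{E}) \leq \HN(\Cal{F}_1 \oplus \Cal{E}')$ can be realized as an extension of $\Cal{E}'$ by $\Cal{F}_1$. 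Such a generalization should be provable by the same diamond-theoretic analysis of moduli of bundle maps that underpins Theorem \ref{main_thm_3}, since the essential hypothesis that keeps that analysis tractable is the semistability of the sub-bundle $\Cal{F}_1$ rather than of the quotient; one might hope to extract it directly from the quantitative refinement Theorem \ref{quantitativemainthm}.

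The main obstacle I anticipate is establishing this generalization of Theorem \ref{main_thm_3}. Concretely, one must show that inside the moduli space of injections $\Cal{F}_1 \hookrightarrow \Cal{E}$, the stratum whose quotient has HN polygon equal to $\HN(\Cal{E}')$ is non-empty. The naive approach---decomposing $\Cal{E}'$ into its semistable summands $\Cal{E}' = \bigoplus_j \Cal{H}_j$ and iteratively applying Theorem \ref{main_thm_3}---becomes circular with Theorem \ref{main_thm_2} itself, since iteratively peeling off semistable quotients is essentially the same problem. One must instead argue directly via the moduli-theoretic framework, showing that the strata of the moduli of sub-bundles (indexed by the HN polygon of the quotient) form a reasonable stratification whose non-emptiness is governed by the expected polygon inequality; this is precisely the combinatorial/geometric content that the introduction advertises as the ``euclidean geometry of Harder-Narasimhan polygons.''
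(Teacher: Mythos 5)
Your inductive strategy is the right shape, but it has a genuine gap precisely where you concede it might: the construction of the extension
\[
0 \to \Cal{F}_1 \to \Cal{E} \to \Cal{E}' \to 0, \qquad \Cal{E}' = \Cal{F}_2 \oplus \cdots \oplus \Cal{F}_k,
\]
is not a consequence of Theorem~\ref{main_thm_3}, because $\Cal{E}'$ is not semistable once $k\geq 3$. The ``generalization'' you invoke --- that a semistable $\Cal{F}_1$ of slope below $\mu_{\min}(\Cal{E}')$ can be a subbundle of any $\Cal{E}$ with $\HN(\Cal{E}) \leq \HN(\Cal{F}_1 \oplus \Cal{E}')$ so that the quotient is \emph{exactly} $\Cal{E}'$ --- is a strictly stronger statement than Theorem~\ref{main_thm_3}. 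It is not even an immediate consequence of Theorem~\ref{main_thm_2} itself: applying Theorem~\ref{main_thm_2} to $\Cal{E}$ filters it with gradeds $\Cal{F}_1, \ldots, \Cal{F}_k$ and hence yields $\Cal{E}/\Cal{E}_1$ with $\HN(\Cal{E}/\Cal{E}_1) \leq \HN(\Cal{E}')$, but not with equality. So the approach is stuck: what you need is not available, and you cannot reconstruct it by the inductive hypothesis without circularity (which you notice). The moduli-theoretic machinery in Sections~\ref{diamond_dim} and \ref{step_2} is developed only for the case of a semistable quotient, and there is no quantitative statement like Theorem~\ref{quantitativemainthm} in the paper for an arbitrary quotient.

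The paper avoids this entirely by peeling from the \emph{top} rather than the bottom, and --- crucially --- by \emph{not} trying to realize the two-step device globally on $\Cal{E}$. One aligns left endpoints of $\HN(\Cal{E})$ and $\HN(\Cal{F}_1 \oplus \cdots \oplus \Cal{F}_k)$, places the $\Cal{F}_k$-segment atop $\HN(\Cal{E})$, and takes the upper convex hull: this creates one new semistable segment $\Cal{G}_{k-1}$ (from the right endpoint of $\Cal{F}_k$ down to a tangency vertex $B$ on $\HN(\Cal{E})$) followed by the tail $\Cal{H}_{k-1}$ of $\HN(\Cal{E})$ beyond $B$. Writing the initial part of $\HN(\Cal{E})$ up to $B$ as $\Cal{L}_{k-1}$ and using splittings $\Cal{E}\simeq \Cal{L}_{k-1}\oplus\Cal{H}_{k-1}$, $\Cal{E}_{k-1}\simeq \Cal{G}_{k-1}\oplus\Cal{H}_{k-1}$, one applies Theorem~\ref{main_thm_3} to the small problem $\HN(\Cal{L}_{k-1}) \leq \HN(\Cal{G}_{k-1}\oplus\Cal{F}_k)$ --- both $\Cal{G}_{k-1}$ and $\Cal{F}_k$ \emph{are} semistable --- to obtain a short exact sequence with quotient $\Cal{F}_k$, and then pushes out along $\Cal{G}_{k-1}\hookrightarrow\Cal{G}_{k-1}\oplus\Cal{H}_{k-1}$ to obtain $0\to\Cal{E}_{k-1}\to\Cal{E}\to\Cal{F}_k\to 0$. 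The hypothesis $\HN(\Cal{E}_{k-1})\leq\HN(\Cal{F}_1\oplus\cdots\oplus\Cal{F}_{k-1})$ then holds by convexity, and induction proceeds. It is this localization of the $k=2$ input to the pair $(\Cal{G}_{k-1},\Cal{F}_k)$, combined with the splitting of HN filtrations and the pushout, that you are missing; without it there is no way to quote only Theorem~\ref{main_thm_3} in the inductive step.
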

Corollary \ref{polygon_bound} again implies that the condition $\HN(\Cal{E}) \leq \HN(\Cal{F}_1 \oplus ... \oplus \Cal{F}_k)$ is necessary for $\ecal$ to admit a filtration with successive gradeds $\fcal_1,\dots,\fcal_k$, so this theorem is optimal.

Although Theorem \ref{main_thm_2} generalizes Theorem \ref{main_thm_3}, in fact the former follows quickly from the latter by induction on $k$, which we explain in 	 \S \ref{ind_step}. The rough idea is to triangulate the desired HN polygon from Theorem \ref{main_thm_2} into pieces which are then handled by Theorem \ref{main_thm_3}.

The bulk of the paper is thus concerned with establishing Theorem \ref{main_thm_3}. We now outline the strategy of our proof.
\subsection{The basic strategy}\label{introstrategy}

Theorem \ref{main_thm_3} asserts that under certain conditions, there exists an extension $\ecal$ of $\Cal{F}_2$ by $\Cal{F}_1$ with a specified HN polygon. 
To prove this, we proceed in two steps (both under the hypotheses of Theorem \ref{main_thm_3}):
\begin{description}
\item[\textbf{Step (1)}] We show that $\Cal{E}$ admits some surjection $\Cal{E} \twoheadrightarrow \Cal{F}_2$. 
\item[\textbf{Step (2)}] We show that if $\Cal{E}$ admits a surjection $\ecal \twoheadrightarrow \fcal_2$, then it admits such a surjection with kernel isomorphic to $\Cal{F}_1$. 
\end{description}

Let us sketch the arguments for these steps.\footnote{Strictly speaking, the following arguments work only after an easy reduction of Theorem \ref{main_thm_3} to the case where $\mu(\fcal_2)$ is strictly larger than the maximal slope of $\ecal$; see \S3.4 for details.}

\noindent \textbf{Step (1).} To carry out Step (1), we construct a moduli space $\Hom(\Cal{E}, \Cal{F}_2)$ parametrizing bundle maps $\ecal \to \fcal_2$, as well as an open subspace $\Surj(\Cal{E}, \Cal{F}_2)$ parametrizing surjective maps. These objects are easily defined as functors on the category of perfectoid spaces over $F$, but they typically are not representable by adic spaces.  We prove that they are \emph{diamonds} in the sense of Scholze \cite{Sch,SW15}; in fact, we show that they are ``locally spatial, equidimensional diamonds''.  We emphasize that our use of diamonds here is genuinely necessary for the arguments.  In particular, we crucially use the fact that a locally spatial diamond $X$ has a naturally associated locally spectral topological space $|X|$, and that the Krull dimension of this space gives rise to a reasonable dimension theory for such $X$'s, cf. \S \ref{diamondbasics}-\ref{dim_formulas}.

In any case, once we have proved that $\Surj(\ecal,\fcal_2)$ is a reasonable diamond, our task is to show that it is non-empty, which we deduce from the following claim.

\begin{claim}\label{claim1}
The dimension of the space of maps $\Cal{E} \rightarrow \Cal{F}_2$ which factor through a proper sub-bundle of $\Cal{F}_2$ is \emph{strictly smaller} than the dimension of $ \Hom(\Cal{E}, \Cal{F}_2)$.
\end{claim}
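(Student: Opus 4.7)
The plan is to stratify the bad locus---the locus in $\Hom(\mathcal{E},\mathcal{F}_2)$ of maps that factor through a proper sub-bundle---by the saturated rank of the image, and to bound each stratum by an elementary dimension count that exploits the semistability of $\mathcal{F}_2$. Write $\lambda := \mu(\mathcal{F}_2)$ and $r := \mathrm{rk}\,\mathcal{O}(\lambda)$, so that Theorem~\ref{classification} gives $\mathcal{F}_2 \cong \mathcal{O}(\lambda)^d$. Any sub-bundle $\mathcal{F}_2' \subset \mathcal{F}_2$ is automatically semistable of the same slope, hence isomorphic to $\mathcal{O}(\lambda)^{d'}$ for some $0 \leq d' \leq d$; a map factors through a proper sub-bundle precisely when its saturated image corresponds to some $d' < d$. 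I let $V_{d'} \subset \Hom(\mathcal{E}, \mathcal{F}_2)$ denote the locally closed locus of maps with image of rank $rd'$, so the bad locus is $\bigsqcup_{d' < d} V_{d'}$.

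The decisive input is the observation that the moduli $\Gr_{d'}$ of sub-bundles of $\mathcal{F}_2$ isomorphic to $\mathcal{O}(\lambda)^{d'}$ is $0$-dimensional as a diamond. Indeed, such sub-bundles are parametrized (modulo $\Aut \mathcal{O}(\lambda)^{d'}$) by injective global sections of $\mathcal{H}om(\mathcal{O}(\lambda)^{d'}, \mathcal{O}(\lambda)^d)$, which is a semistable bundle of slope $0$ and hence isomorphic to $\mathcal{O}_X^{r^2 d d'}$. The $H^0$-functor of any slope-$0$ semistable bundle on $X$ is, on perfectoid test objects over $F$, a constant sheaf of finite-rank $E$-modules (concretely $\underline{D^{d d'}}$, where $D := \End \mathcal{O}(\lambda)$), and such constant functors have diamond dimension $0$. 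The same argument shows $\Aut \mathcal{O}(\lambda)^{d'} \cong \underline{GL_{d'}(D)}$ is $0$-dimensional, and hence so is the quotient $\Gr_{d'}$.

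For each $d' < d$ the natural composition map
\[
\Gr_{d'} \times \Hom(\mathcal{E}, \mathcal{O}(\lambda)^{d'}) \;\longrightarrow\; \Hom(\mathcal{E}, \mathcal{F}_2), \qquad (\iota, \varphi) \mapsto \iota \circ \varphi,
\]
has image containing $V_{d'}$, so combining the dimension formulas for $\Hom$-diamonds from \S\ref{dim_formulas} with the additivity $\dim \Hom(\mathcal{E}, \mathcal{O}(\lambda)^{d'}) = d' h$, where $h := \dim \Hom(\mathcal{E}, \mathcal{O}(\lambda))$, yields
\[
\dim V_{d'} \;\leq\; \dim \Gr_{d'} + \dim \Hom(\mathcal{E}, \mathcal{O}(\lambda)^{d'}) \;=\; d' h \;<\; d h \;=\; \dim \Hom(\mathcal{E}, \mathcal{F}_2)
\]
for every $d' < d$, provided $h > 0$. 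Positivity of $h$ is guaranteed by the reduction to the regime $\mu_{\max}(\mathcal{E}) < \mu(\mathcal{F}_2)$ made in \S3.4: every slope $\lambda_i$ of $\mathcal{E}$ is then strictly less than $\lambda$, so each summand $\Hom(\mathcal{O}(\lambda_i), \mathcal{O}(\lambda))$ has strictly positive diamond dimension by Theorem~\ref{classification}(4).

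I expect the main technical hurdle to be the foundational bookkeeping: rigorously realising $\Gr_{d'}$ and the strata $V_{d'}$ as locally closed, locally spatial sub-diamonds of the correct dimension, and checking that the tautological composition map genuinely respects the expected dimensions. Conceptually, however, the heart of the argument is the clean observation that moduli of sub-bundles between semistable bundles of equal slope are $0$-dimensional in the diamond world---an avatar of the fact that $\End \mathcal{O}(\lambda)$ is a constant functor on perfectoid spaces over $F$---and once this is in hand the required strict inequality is automatic from the reduction $\mu_{\max}(\mathcal{E}) < \mu(\mathcal{F}_2)$.
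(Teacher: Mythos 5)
Your argument rests on the claim that ``any sub-bundle $\mathcal{F}_2' \subset \mathcal{F}_2$ is automatically semistable of the same slope,'' and this is false. Semistability of $\mathcal{F}_2$ only forces sub-bundles to have slope \emph{at most} $\lambda = \mu(\mathcal{F}_2)$; they need not have slope equal to $\lambda$, nor be isomorphic to $\mathcal{O}(\lambda)^{d'}$. For example, two sections of $\mathcal{O}(1)$ with no common zero define a saturated embedding $\mathcal{O} \hookrightarrow \mathcal{O}(1)^{2}$ with quotient $\mathcal{O}(2)$, so $\mathcal{O}$ is a rank-one saturated sub-bundle of the semistable bundle $\mathcal{O}(1)^2$ of strictly smaller slope. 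Consequently, your stratum $V_{d'}$ (maps with image of rank $rd'$) is \emph{not} contained in the image of $\Gr_{d'} \times \Hom(\mathcal{E}, \mathcal{O}(\lambda)^{d'}) \to \Hom(\mathcal{E}, \mathcal{F}_2)$: that composition map only produces maps whose saturated image is isomorphic to $\mathcal{O}(\lambda)^{d'}$, while typical points of $V_{d'}$ have saturated images of lower slope. (Also, the image rank need not be a multiple of $r$, so your index set is too coarse to begin with.) Your $0$-dimensionality computation for $\Gr_{d'}$ is correct, but it is special to sub-bundles of the same slope; the diamond of all saturated sub-bundles of $\mathcal{F}_2$ of a fixed rank is generally of positive dimension (e.g., rank-one $\mathcal{O}$-sub-bundles of $\mathcal{O}(1)^2$ form a $2$-dimensional diamond).

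The strata you have omitted are exactly where Claim \ref{claim1} is hard, and they are the whole content of the paper's Step~1. The paper stratifies the non-surjective locus by the \emph{isomorphism class} of the image $\mathcal{Q}$, establishes the dimension formula
$\dim |\Hom(\mathcal{E},\mathcal{F}_2)_{\mathcal{Q}}| = \deg(\mathcal{E}^\vee \otimes \mathcal{Q})^{\geq 0} + \deg(\mathcal{Q}^\vee \otimes \mathcal{F}_2)^{\geq 0} - \deg(\mathcal{Q}^\vee \otimes \mathcal{Q})^{\geq 0}$
(Lemma \ref{maindimlemma1}), and then devotes \S\ref{step_1} to proving the strict inequality against $\deg(\mathcal{E}^\vee\otimes\mathcal{F}_2)^{\geq 0}$ for every admissible $\mathcal{Q} \not\cong \mathcal{F}_2$, via the shear-and-area geometric argument. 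Your estimate reproduces the paper's bound for the easy strata with $\mathcal{Q}$ semistable of slope $\lambda$ (there the two terms $\deg(\mathcal{Q}^\vee\otimes\mathcal{F}_2)^{\geq 0}$ and $\deg(\mathcal{Q}^\vee\otimes\mathcal{Q})^{\geq 0}$ both vanish), but it does not address the general $\mathcal{Q}$, which is where the genuine combinatorial problem lies.
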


Roughly speaking, we prove this by stratifying $\Hom(\ecal,\fcal_2)$ according to the isomorphy type of the image of the map $\ecal\to \fcal_2$ and then computing the dimensions of these individual strata. The computation of these dimensions is non-trivial, and relies on the theory developed in \S \ref{dim_formulas}-\ref{bundlemaps}; the final formula is stated in Lemma \ref{maindimlemma1}. Once we have this formula, Claim \ref{claim1} is reduced to a certain finite collection of inequalities, whose verification is a mechanical computation in any given example, but which presents a difficult combinatorial problem in general. We eventually realized that it was natural to interpret the terms appearing in the inequality as sums of \emph{areas} of certain polygons (related to the slopes in the Harder-Narasimhan polygons), and that the inequality could then be seen by a complicated transformation of the polygons after which one area would clearly dominate the other. The precise statement and its proof is given in \S \ref{step_1}. \\

\noindent \textbf{Step (2).} The argument here is similar to that for Step (1). For any given bundle $\kcal$, we define a moduli space $\Surj(\Cal{E}, \Cal{F}_2)^{\Cal{K}}$ parametrizing surjections $\Cal{E} \rightarrow \Cal{F}_2$ with kernel isomorphic to $\Cal{K}$. Again, this is a locally spatial diamond. For $\Cal{K} = \Cal{F}_1$, this object is open in $\Surj(\Cal{E}, \Cal{F}_2)$ for essentially formal reasons (the openness of semistable loci). Again the task is to show $\Surj(\Cal{E}, \Cal{F}_2)^{\Cal{F}_1}$ is non-empty, which we achieve by showing that $\Surj(\Cal{E}, \Cal{F}_2)^{\Cal{K}}$ has strictly smaller dimension for any other isomorphism class of $\kcal$'s. This again amounts to a certain collection of inequalities, which we can again interpret via comparison of areas, and which after some manipulation become geometrically evident. The precise arguments are given in \S \ref{step_2}. 

\subsection*{Acknowledgments}The authors came together around these questions at the 2017 Arizona Winter School, under the umbrella of Kiran Kedlaya's project group.  We would like to heartily thank Kiran for giving us this opportunity.  We would also like to thank the organizers of the Winter School for creating such a wonderful experience, and we gratefully acknowledge NSF support of the Winter School via grant DMS-1504537.

DH is grateful to Christian Johansson for some useful conversations about the material in \S \ref{dim_formulas}, and Peter Scholze for providing early access to the manuscript \cite{Sch} and for some helpful conversations about the results therein. The project group students (CB, TF, SH, QL, AW, and LY) thank DH and Kiran Kedlaya for suggesting the problem. TF gratefully acknowledges the support of an NSF Graduate Fellowship. LY gratefully acknowledges the support of the National Defense Science and Engineering Graduate Fellowship. We would also like to thank David Linus Hamann and the referee for their valuable feedback on the first version of this paper.

\section{Background on the Fargues-Fontaine curve}\label{background}

\subsection{The Fargues-Fontaine curve, and vector bundles}
We begin by recalling the definition of the Fargues-Fontaine curve $X := X_{E,F}$. This is already a slightly subtle issue, as there are several different incarnations of the curve: as a scheme (which was the original definition of Fargues and Fontaine), as an \emph{adic space}, and as a \emph{diamond}. In this paper we will only need to use the classification of vector bundles on $X$ as black box, so we won't actually need any technical details of the construction of $X$ itself. Therefore, we content ourselves with giving just a cursory introduction to the construction. 

\begin{defn}\label{adicFFC} Let $E$ be a finite extension of $\Q_p$, with uniformizer $\pi$, ring of integers $E^\circ$, and residue field $\mathbf{F}_q$ where $q=p^f$, and let $F/ \mathbf{F}_q$ be an algebraically closed perfectoid field, with ring of integers $F^\circ$ and pseudouniformizer $\varpi$. 

Let $W_{E^\circ}(F^\circ)=W(F^\circ) \otimes_{W(\mathbf{F}_q)} E^\circ$ be the ramified Witt vectors of $F^\circ$ with coefficients in $E^\circ$. Define
\[
\Cal{Y}_{E,F}=\Spa(W_{E^\circ}(F^\circ))\setminus\{|p[\varpi]|=0\},
\]
and let $\phi:\Cal{Y}_{E,F}\to \Cal{Y}_{E,F}$ be the Frobenius automorphism of $\Cal{Y}_{E,F}$ induced by the natural $q$-Frobenius $\varphi_q = \varphi^f \otimes 1$ on $W_{E^{\circ}}(F^\circ)$. The (mixed-characteristic) \emph{adic Fargues-Fontaine curve} $\Cal{X}_{E,F}$ is
\[
\Cal{X}_{E,F}=\Cal{Y}_{E,F}/\phi^\Z.
\]
\end{defn}

\begin{prop}[Kedlaya] For any pair $(E,F)$ as above, $\xcal_{E,F}$ is a  Noetherian adic space over $\Spa\,E$.
\end{prop}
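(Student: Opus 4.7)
The plan is to construct $\xcal_{E,F}$ by verifying that $\ycal_{E,F}$ is a well-behaved adic space on which $\phi$ acts totally discontinuously with a nice fundamental domain, and then checking the Noetherian property on an explicit affinoid cover.

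First, I would check that $\ycal_{E,F}$ is itself an adic space over $\Spa\,E$. The ring $A := W_{E^\circ}(F^\circ)$, equipped with the $(\pi,[\varpi])$-adic topology, is a complete Huber ring (even a complete regular local ring of dimension $2$), so $\Spa(A)$ is an adic space. The condition $|p[\varpi]|=0$ cuts out a closed subset (the image of $\Spa$ of the two residue rings), and its complement $\ycal_{E,F}$ is an open adic subspace. The map to $\Spa\,E$ comes from the canonical inclusion $E \hookrightarrow A[1/\pi]$ after inverting $\pi$, which is achieved on the locus where $|\pi|\neq 0$ — and this locus is all of $\ycal_{E,F}$ since $|\pi[\varpi]|=0$ forces either $|\pi|=0$ or $|[\varpi]|=0$.

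Next, I would exhibit $\ycal_{E,F}$ as a union of affinoid annuli stable under $\phi^\Z$. The radius function
\[
\rho : |\ycal_{E,F}| \longrightarrow (0,\infty), \qquad \rho(x) = \frac{\log |\pi(\widetilde x)|}{\log |[\varpi](\widetilde x)|}
\]
(where $\widetilde x$ is the maximal generalization of $x$) is continuous, surjective, and satisfies $\rho(\phi(x)) = q \cdot \rho(x)$. For any $0 < r \le s < \infty$ with $r,s \in \log|F^\times|/\log|\pi|$, the rational subset $\ycal_{[r,s]} := \{r \le \rho \le s\}$ is an affinoid adic space $\Spa(B_{[r,s]},B_{[r,s]}^+)$, where $B_{[r,s]}$ is the completion of $A[1/\pi,1/[\varpi]]$ with respect to the natural Gauss-type norm. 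Kedlaya's theorem (see \cite{Ked08} and also \cite{KL15}) asserts that each $B_{[r,s]}$ is a principal ideal domain, and in particular a (strongly) Noetherian Banach $E$-algebra. These annuli cover $\ycal_{E,F}$, which therefore is a (sous-perfectoid, honest) Noetherian adic space.

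To form the quotient $\xcal_{E,F} = \ycal_{E,F}/\phi^\Z$, I would verify that the $\phi$-action is totally discontinuous: since $\phi$ multiplies $\rho$ by $q>1$, the translates $\phi^n(\ycal_{[r,s]})$ for distinct $n$ are pairwise disjoint as soon as $s/r < q$. Consequently, any point of $\ycal_{E,F}$ admits a $\phi^\Z$-invariant open whose orbit quotient is represented by a single affinoid annulus, so the quotient exists as an adic space and inherits the structure map to $\Spa\,E$. Choosing $r,s$ with $qr = s$ produces a single affinoid annulus $\ycal_{[r,qr]}$ that surjects onto $\xcal_{E,F}$ with finite overlap, giving $\xcal_{E,F}$ the structure of a Noetherian adic space covered by finitely many affinoid Noetherian opens.

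The main technical obstacle is the Noetherianness of the Banach $E$-algebras $B_{[r,s]}$; this rests on Kedlaya's classification of these rings and the fact that $F$ is algebraically closed. Once that input is accepted, the totally-discontinuous quotient construction and the gluing are essentially formal manipulations with rational subsets of $\Spa(A)$ and the radius function $\rho$. I would appeal to \cite{Ked08} (or the more polished treatment in \cite{KL15}) both for the PID structure of the $B_{[r,s]}$ and for the verification that forming quotients by such totally discontinuous actions preserves the adic-space structure.
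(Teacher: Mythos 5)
The paper offers no argument here at all: it simply cites Kedlaya's paper \cite{KedNoeth}, where this Noetherianity is one of the main results. Your sketch, by contrast, expands the cited result into the argument that Kedlaya actually runs (cover $\ycal_{E,F}$ by affinoid annuli $\ycal_{[r,s]}$, invoke the fact that the Robba-style rings $B_{[r,s]}$ are PIDs hence strongly Noetherian when $F$ is algebraically closed, observe that $\phi$ scales the radius function by $q^{\pm 1}$ so that the $\Z$-action is totally discontinuous, and glue), so the strategy is the standard one and the conclusion is correct.

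There are, however, two substantive inaccuracies in your first paragraph. First, $A = W_{E^\circ}(F^\circ)$ is emphatically \emph{not} a Noetherian regular local ring of dimension $2$: its residue ring $F^\circ$ is a rank-one valuation ring with non-finitely-generated maximal ideal, so $A$ is non-Noetherian, and its Krull dimension behaves pathologically. Second, and more seriously from a logical standpoint, the inference ``$A$ is a complete Huber ring, hence $\Spa(A)$ is an adic space, and $\ycal_{E,F}$ is an open subspace thereof'' does not go through: for a non-Noetherian, non-perfectoid Huber ring there is no guarantee that the structure presheaf on $\Spa(A,A^+)$ is a sheaf, and to my knowledge $\Spa(A_{\mathrm{inf}})$-type spaces are not known to be adic spaces. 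The order of reasoning should be reversed — one first establishes that each affinoid annulus $\Spa(B_{[r,s]},B_{[r,s]}^+)$ is an honest (strongly Noetherian, hence sheafy) adic space via Kedlaya's PID theorem, and only then deduces that $\ycal_{E,F}$, being glued from such pieces, is an adic space; the ambient $\Spa(A)$ plays no role except as the topological space inside which the rational subsets are cut out. A minor point: with your normalization of $\rho$ (with $\pi$ in the numerator) one has $\rho(\phi(x)) = \rho(x)/q$ rather than $q\,\rho(x)$, since $\phi^\ast[\varpi] = [\varpi]^q$ while $\phi^\ast\pi = \pi$; this does not affect the total-discontinuity argument but should be corrected.
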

\begin{proof}This is one of the main results of \cite{KedNoeth}.
\end{proof}

\begin{remark}
There is also an equal-characteristic Fargues-Fontaine curve associated with a pair $(E,F)$ as above, but where $E$ is now taken to be a finite extension of $\mathbf{F}_p ((t))$.  In this paper, we will largely focus on the mixed-characteristic curve: our main results hold verbatim in the equicharacteristic setting, but the proofs are strictly easier.
\end{remark}

By descent, a vector bundle on $\Cal{X}_{E,F}$ is the same as a $\phi$-equivariant vector bundle on $\Cal{Y}_{E,F}$, that is, a vector bundle $\tilde{\ecal}$ on $\Cal{Y}_{E,F}$ together with an isomorphism $\phi^*\tilde{\ecal}\overset{\sim}{\to}\tilde{\ecal}$. 

\begin{defn}
\label{o-r-over-s}
If $\lambda = r/s$ is a rational number written in lowest terms with $s$ positive, we define a vector bundle $\ocal(\lambda)$ on $\Cal{X}_{E,F}$ as the descent of the trivial rank $s$ vector bundle on $\Cal{Y}_{E,F}$ equipped with the following $\phi$-equivariant structure. Let $v_1,\dotsc,v_s$ be a trivializing basis of $\tilde{\ecal}:=\Cal{O}_{\Cal{Y}_{E,F}}^{\oplus s}$, and by abuse of notation view it as a trivializing basis for $\phi^* \tilde{\ecal}$ as well. Define $\phi^* \tilde{\ecal} \xrightarrow{\sim} \tilde{\ecal}$ by 
\begin{align*}
 v_1 & \mapsto v_2 \\
v_2 & \mapsto v_3 \\
 & \dotsb \\
 v_{s-1} & \mapsto v_s \\
v_s & \mapsto \pi^{-r}v_1. \\
\end{align*}
\end{defn}

As previously discussed, there is also a \emph{scheme-theoretic} Fargues-Fontaine curve $X_{E,F}$. 

\begin{defn}
Let $E$ and $F$ be as in Definition \ref{adicFFC}. We define the \emph{scheme-theoretic Fargues-Fontaine curve $X_{E,F}$} to be 
\[
X_{E,F} := \Proj \left( \bigoplus_{n \geq 0 } H^0(\Cal{X}_{E,F}, \Cal{O}(n) ) \right).
\]
\end{defn}

\begin{remark}
The original definition (\cite{FF08}, \cite{FF14}) of the scheme-theoretic Fargues-Fontaine curve was given in terms of certain period rings of $p$-adic Hodge theory:
\[
X_{E,F} = ``\Proj \left( \bigoplus_{n \geq 0 } B^{\varphi_q = \pi^n}\right)"
\]
(\cite{FF14}, p.22); however, the vector spaces $B^{\varphi_q = \pi^n}$ coincide with $H^0(\Cal{X}_{E,F}, \Cal{O}(n) )$, and so the definitions agree. 
\end{remark}

\begin{prop}The scheme $X_{E,F}$ is Noetherian, connected, and regular of (absolute) dimension one.
\end{prop}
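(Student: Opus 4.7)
The plan is to produce an affine open cover of $X_{E,F}$ by two spectra of principal ideal domains; all four claims then follow in a single stroke. Set $P = \bigoplus_{n \geq 0} H^0(\mathcal{X}_{E,F}, \mathcal{O}(n))$, so that $X_{E,F} = \Proj P$ by definition.

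First I would identify $P_0 = H^0(\mathcal{X}_{E,F}, \mathcal{O}) = E$. This amounts to observing that global functions on $\mathcal{Y}_{E,F}$ form the Fargues--Fontaine period ring $B$, and that $B^{\varphi_q = 1} = E$. Next, for any nonzero $t \in P_1$, the principal open $D_+(t) = \Spec (P[1/t])_0$ is the spectrum of the ring $(B[1/t])^{\varphi_q = 1}$, which a key structural theorem of \cite{FF08} asserts is a principal ideal domain. Geometrically, the divisor of any nonzero $t \in P_1$ is a single closed point of $X_{E,F}$ (sections of $\mathcal{O}(1)$ have divisors of degree one); since $P_1$ is infinite-dimensional over $E$, I can choose $t_1, t_2 \in P_1$ whose divisors are distinct closed points, producing a two-step affine cover
\[
X_{E,F} = D_+(t_1) \cup D_+(t_2).
\]

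Given this cover, Noetherianness is automatic. Both affine opens are spectra of PIDs, hence regular of Krull dimension at most one; the dimension is exactly one since $V_+(t_2) \cap D_+(t_1)$ contains the closed point cut out by $t_2$, so neither coordinate ring is a field. Connectedness follows from $H^0(X_{E,F}, \mathcal{O}) = E$ being a field, or equivalently from $D_+(t_1) \cap D_+(t_2) \neq \emptyset$.

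The principal obstacle is the assertion that $(B[1/t])^{\varphi_q = 1}$ is a principal ideal domain: this is one of the deeper structural inputs of the Fargues--Fontaine theory, resting on a delicate factorization theory for elements of $B$ with prescribed Newton polygons (``weak $B_{\mathrm{dR}}^+$-admissibility'' of primitive elements). In practice, I would simply cite \cite{FF08} for this input, together with \cite{KedNoeth} to identify the adic and scheme-theoretic incarnations of the curve used elsewhere in the paper.
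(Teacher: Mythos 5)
The paper states this proposition without proof, simply recording it as a known result of Fargues--Fontaine (the citation for \cite{FF08} governs this passage), so there is no internal argument to compare against. Your sketch reconstructs the standard proof from \cite{FF08}: cover $\Proj P$ by two affine opens $D_+(t_1), D_+(t_2)$ with $t_i \in P_1$ having distinct zero loci, identify each $(P[1/t_i])_0$ with $B_e = B[1/t_i]^{\varphi_q=1}$ and invoke the Fargues--Fontaine structure theorem that $B_e$ is a principal ideal domain; Noetherianness, regularity, dimension one, and connectedness then all follow as you say. This is correct and is the argument one would find in the source. The only point I would ask you to make slightly more explicit is why the map sending a nonzero $t \in P_1$ to its (degree-one) vanishing point is non-constant: infinite-dimensionality of $P_1$ over $E$ alone does not a priori preclude a common zero, and one really needs the Fargues--Fontaine dictionary identifying lines in $P_1$ with closed points of degree one (equivalently, that the locus in $P_1$ of sections vanishing at a fixed closed point is a proper hyperplane). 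Since you are already citing the PID theorem from \cite{FF08}, this extra input comes from the same place and the gap is cosmetic rather than substantive.
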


It is known (``GAGA for the Fargues-Fontaine curve'', \cite{KL15}, Theorem 6.3.12) that there is a natural map 
\[
\Cal{X}_{E,F} \rightarrow X_{E,F}
\]
which induces by pullback an equivalence of categories of vector bundles. Therefore, we can and do speak interchangeably about vector bundles on $\Cal{X}_{E,F}$ and $X_{E,F}$. For ease of notation, we henceforth set $X = X_{E,F}$.

\subsection{Harder-Narasimhan filtrations and polygons}

The following fundamental result of Fargues and Fontaine is the key to developing a slope theory for bundles on $X$.

\begin{prop}[\cite{FF08}]The curve $X$ is \emph{complete} in the sense that if $f \in k(X)$ is any nonzero rational function on $X$, then the divisor of $f$ has degree zero.
\end{prop}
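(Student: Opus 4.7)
My plan is to use the $\Proj$ description of the curve to reduce completeness to a statement about factorization of homogeneous elements. Since $X = \Proj(P)$ with $P = \bigoplus_{n\geq 0} H^0(X,\mathcal{O}(n))$ and $X$ is an integral Noetherian scheme, the function field $k(X)$ may be identified with the degree-zero part of the homogeneous localization of $P$ at its nonzero homogeneous elements. Consequently any nonzero $f \in k(X)$ can be written as $f = a/b$ with $a,b \in P_n$ both nonzero and homogeneous of the same degree $n \geq 0$, and the principal divisor $\mathrm{div}(f)$ equals $\mathrm{div}_0(a) - \mathrm{div}_0(b)$, where $\mathrm{div}_0(s)$ denotes the divisor of zeros of $s$ viewed as a global section of the ample line bundle $\mathcal{O}(n)$. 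The problem is thereby reduced to showing that $\deg(\mathrm{div}_0(s)) = n$ for any nonzero $s \in P_n$.

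The crucial input is the fundamental factorization theorem of Fargues and Fontaine: any nonzero element of $P_n$ factors, uniquely up to a scalar in $E^\times$, as a product of $n$ elements of $P_1$. Moreover, the nonzero elements of $P_1$ modulo $E^\times$ correspond bijectively to closed points of $X$ via $t \mapsto V(t)$, with each such $V(t)$ a single reduced closed point (adopting the convention, natural on $X$, that every closed point has degree one). Granting this, for any nonzero $s \in P_n$ the divisor $\mathrm{div}_0(s)$ is a sum of $n$ closed points counted with multiplicity, hence has degree $n$. Applying this to $a$ and $b$ yields $\deg(\mathrm{div}_0(a)) = \deg(\mathrm{div}_0(b)) = n$, so $\deg(\mathrm{div}(f)) = 0$.

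The main obstacle is of course the factorization theorem itself, which is among the deeper results in the Fargues--Fontaine theory and is not formal from the definitions. However, since the present paper already treats the full classification of vector bundles (Theorem \ref{classification}) as a black box from \cite{FF08, Ked08}, invoking the factorization theorem at a similar level of generality is consistent with the paper's style. A more hands-on alternative would be to transport the usual proof of degree-zero principal divisors for proper smooth curves over a field, computing the degree via an Euler-characteristic difference; but this runs into the immediate obstacle that $X$ is not proper over $E$, and the residue fields at closed points are algebraically closed perfectoid fields rather than finite extensions of $E$, so no such substitute is available without substantial additional input.
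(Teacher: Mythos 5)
The paper does not prove this proposition but simply cites \cite{FF08}, so the comparison must be with the argument in \cite{FF08}: your sketch reproduces that argument in outline. Specifically, you correctly reduce via the $\Proj$ presentation to the assertion that every nonzero $s \in P_n$ has divisor of zeros of degree $n$, and you correctly identify the two inputs that make this work: graded factoriality of $P = \bigoplus_n B^{\varphi = \pi^n}$ (each nonzero homogeneous element of degree $n$ factors, uniquely up to $E^\times$, into $n$ degree-one elements — Th\'eor\`eme 6.2.1 of \cite{FF08}) and the identification of $(P_1\setminus\{0\})/E^\times$ with closed points of $X$, each of which is declared to have degree one since $F$ is algebraically closed. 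This is exactly the route Fargues and Fontaine take, so the proposal is correct and at the appropriate level of black-boxing for this paper.
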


This implies, in particular, that if $\lcal$ is a line bundle on $X$ and $s$ is any nonzero meromorphic section of $\lcal$, then $\deg \lcal \overset{\mrm{def}}{=} \deg \mathrm{div}(s)$ is well-defined independently of the choice of $s$.

\begin{defn}
If $\Cal{E}$ is a vector bundle on $X$, we define the \emph{degree} of $\Cal{E}$ by $\deg \ecal := \deg \wedge^{\mathrm{rank}\,\ecal} \ecal$, and the \emph{slope} of $\Cal{E}$ by $\mu(E) := \frac{\deg\Cal{E}}{\rank \Cal{E}}$. 
\end{defn}

\begin{example}\label{tensor_bundles}
As a vector bundle on $X$, $\ocal(r/s)$ has rank $s$ and degree $r$, hence slope $r/s$. One may check by hand from Definition~\ref{o-r-over-s} that
\[
\ocal\left(\frac rs\right)\ten\ocal\left(\frac{r'}{s'}\right)\simeq\ocal\left(\frac rs+\frac{r'}{s'}\right)^{\dsm\gcd(s s',rs'+r's)}.
\]
In particular, $\ocal(r/s)\ten\ocal(r'/s')$ has rank $ss'$, degree $rs'+r's$, and slope $r/s+r'/s'$.
\end{example}

We recall the usual notions of (semi)stability. 

\begin{defn}
We say that a vector bundle $\Cal{E}$ on $X$ is \emph{stable} if it has no proper, non-zero subbundles $\Cal{F} \subset \Cal{E}$ with $\mu(\Cal{F}) \geq \mu(\Cal{E})$. We say that $\Cal{E}$ is \emph{semistable} if it has no proper, non-zero subbundles $\Cal{F} \subset \Cal{E}$ with $\mu(\Cal{F}) > \mu(\Cal{E})$.
\end{defn}

Stable and semistable vector bundles on $X$ turn out to admit a complete classification:

\begin{prop}[\cite{FF08,Ked17}]
For any $\lambda \in \mathbf{Q}$, the vector bundle $\Cal{O}(\lambda)$ is stable, and any finite direct sum $\ocal(\lambda)^{\oplus n}$ is semistable.  Moreover, every semistable vector bundle of slope $\lambda$ is isomorphic to some finite direct sum $\ocal(\lambda)^{\oplus n}$.
\end{prop}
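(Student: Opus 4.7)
The plan is to derive the proposition directly from Theorem~\ref{classification} together with the tensor-product formula of Example~\ref{tensor_bundles}. The central intermediate fact to establish is the Hom-vanishing
\[
\operatorname{Hom}(\ocal(\mu),\ocal(\lambda)) \;=\; H^0(\ocal(\lambda)\otimes\ocal(-\mu)) \;=\; 0 \qquad \text{for } \mu > \lambda,
\]
which follows by combining Example~\ref{tensor_bundles}, providing a splitting $\ocal(\lambda)\otimes\ocal(-\mu)\simeq \ocal(\lambda-\mu)^{\oplus d}$ for some $d\geq 1$, with Theorem~\ref{classification}(4), whose $H^0$-vanishing statement applies since $\lambda-\mu<0$.

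Granted this vanishing, semistability of $\ocal(\lambda)^{\oplus n}$ will be essentially formal. For any nonzero subbundle $\fcal\subset \ocal(\lambda)^{\oplus n}$, Theorem~\ref{classification}(4) yields a splitting $\fcal\simeq \bigoplus_i \ocal(\mu_i)$. If some $\mu_i$ were strictly greater than $\lambda$, composing the inclusion $\ocal(\mu_i)\hookrightarrow \ocal(\lambda)^{\oplus n}$ with projection onto a suitable factor would produce a nonzero element of $\operatorname{Hom}(\ocal(\mu_i),\ocal(\lambda))$, contradicting the vanishing above. Hence $\mu_i\leq \lambda$ for every $i$, forcing $\mu(\fcal)\leq \lambda$.

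For stability of $\ocal(\lambda)$ itself, the plan is to argue by contradiction. Writing $\lambda=r/s$ in lowest terms so that $\rank \ocal(\lambda) = s$, any proper nonzero subbundle $\fcal\subsetneq \ocal(\lambda)$ with $\mu(\fcal)\geq \lambda$ will have, by the previous paragraph, all slopes $\mu_i\leq \lambda$ in its decomposition; combined with the weighted-average condition $\mu(\fcal)\geq \lambda$, this forces $\mu_i=\lambda$ identically, so $\fcal\simeq \ocal(\lambda)^{\oplus k}$ with $k\geq 1$. But then $\rank \fcal = ks \geq s = \rank \ocal(\lambda)$, which is incompatible with properness: on the curve $X$ a subbundle of the same rank as the ambient bundle has torsion-free cokernel of generic rank zero, hence must equal the ambient bundle.

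The moreover clause follows by the same template: a semistable $\ecal$ of slope $\lambda$ splits as $\bigoplus_i \ocal(\lambda_i)$ by Theorem~\ref{classification}(4), semistability forces each $\lambda_i\leq \lambda$, and the rank-weighted average $\mu(\ecal)=\lambda$ upgrades this to $\lambda_i=\lambda$ for every $i$, giving $\ecal\simeq \ocal(\lambda)^{\oplus n}$ with $n\cdot s=\rank\ecal$. No serious technical obstacle is anticipated; with Theorem~\ref{classification} used as a black box, the one substantive ingredient is the Hom-vanishing in the first paragraph, and everything else reduces to short bookkeeping with ranks and weighted averages.
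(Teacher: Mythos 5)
The paper does not include its own proof of this proposition: it is quoted as a black box from \cite{FF08} and \cite{Ked17}, and it is substantially a restatement of parts (2) and (3) of Theorem \ref{classification}, which carries the same citations. So there is no in-paper argument to compare against.

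Your sketch traces a reasonable route for deducing the (semi)stability statements from the ``splitting theorem''---the assertion that every bundle on $X$ is a finite direct sum of the $\ocal(\lambda)$'s---together with the tensor formula and the $H^0$-vanishing, and the individual steps check out. The one substantive worry is a latent circularity: as literally written, Theorem~\ref{classification}(4) says the bundle admits a splitting ``of its Harder-Narasimhan filtration,'' whose graded pieces are semistable, so reading that clause at face value already presupposes that each semistable bundle of slope $\lambda_i$ is of the form $\ocal(\lambda_i)^{\oplus m_i}$---precisely what you aim to prove. The non-circular reading, consistent with how \cite{FF08} and \cite{Ked08} actually organize the subject, is to take as a primitive input the weaker assertion that every bundle is \emph{some} finite direct sum $\bigoplus_i \ocal(\lambda_i)$ (a Dieudonn\'e--Manin-style decomposition, established there without a prior HN theory), and to derive the HN formalism and the classification of semistables from it, exactly along the lines of your argument; you should make explicit that this is the reading you are adopting. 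Two minor points: you quietly use $\ocal(\mu)^\vee \simeq \ocal(-\mu)$, which follows from Definition~\ref{o-r-over-s} but is not stated in the paper; and your stability step can be shortened, since with $\lambda = r/s$ in lowest terms a proper nonzero subbundle of $\ocal(\lambda)$ has rank strictly less than $s$ and integral degree, hence cannot have slope exactly $\lambda$, so stability follows from the semistability you already established without the detour through $\fcal \simeq \ocal(\lambda)^{\oplus k}$.
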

Combining this classification with Example \ref{tensor_bundles}, one immediately deduces that tensor products of semistable bundles are semistable.

\begin{defn}
A \emph{Harder-Narasimhan (HN) filtration} of a vector bundle $\Cal{E}$ is a filtration of $\ecal$ by subbundles
\[
0 = \Cal{E}_0 \subset \Cal{E}_1 \subset \dotsb \subset \Cal{E}_m = \Cal{E}
\]
such that each successive quotient $\Cal{E}_i/\Cal{E}_{i-1}$ is a semistable vector bundle which is of slope $\mu_i$, and such that 
\[
\mu_1 > \mu_2 > \dotsb > \mu_m.
\]Keeping this notation, the \emph{Harder-Narasimhan (HN) polygon} of $\Cal{E}$ is the upper convex hull of the points $(\rank \ecal_i, \deg \ecal_i)$. 
\end{defn}

\begin{remark}
Our convention for HN polygons is opposed to the usual convention for Newton polygons, which are usually lower convex hulls.
\end{remark}

\begin{thm}[Fargues-Fontaine] Every vector bundle on $X$ admits a canonical and functorial Harder-Narasimhan filtration.
\end{thm}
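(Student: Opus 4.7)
The plan is to reduce the theorem to two ingredients already in hand: (a) the classification of semistable bundles as direct sums $\ocal(\lambda)^{\oplus n}$ recorded just above, and (b) a Hom-vanishing statement between semistable bundles of strictly decreasing slopes. Given these, the filtration will be read off from the canonical splitting of Theorem~\ref{classification}(4), and functoriality will follow formally.

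First I would establish that $\mathrm{Hom}(\fcal, \gcal) = 0$ whenever $\fcal, \gcal$ are semistable with $\mu(\fcal) > \mu(\gcal)$. By the classification it suffices to show $\mathrm{Hom}(\ocal(\lambda), \ocal(\mu)) = 0$ for $\lambda > \mu$. This group equals $H^0(\ocal(\lambda)^\vee \otimes \ocal(\mu))$. Since $\ocal(\lambda)^\vee$ is stable of slope $-\lambda$ (duality preserves stability and negates slope), it is isomorphic to $\ocal(-\lambda)$, and Example~\ref{tensor_bundles} then decomposes the tensor product as a direct sum of copies of $\ocal(\mu - \lambda)$. The vanishing now follows from Theorem~\ref{classification}(4) because $\mu - \lambda < 0$.

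Next I would construct the filtration. Using Theorem~\ref{classification}(4), fix some splitting $\ecal \simeq \bigoplus_j \ocal(\lambda_j)$, let $\mu_1 > \cdots > \mu_m$ be the distinct slopes appearing, and set
\[
\ecal_i := \bigoplus_{\lambda_j \geq \mu_i} \ocal(\lambda_j) \subset \ecal.
\]
Each successive quotient $\ecal_i / \ecal_{i-1}$ is a direct sum of copies of $\ocal(\mu_i)$, hence semistable of slope $\mu_i$, and the $\mu_i$ strictly decrease by construction. The substantive point is to verify that $\ecal_i$ is independent of the chosen splitting; for this I would show intrinsically that $\ecal_i$ coincides with the subsheaf of $\ecal$ generated by all semistable subbundles of slope $\geq \mu_i$. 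Indeed, by the Hom-vanishing, any such subbundle has zero projection onto each summand $\ocal(\lambda_j)$ with $\lambda_j < \mu_i$ and therefore lies inside $\ecal_i$; conversely, each $\ocal(\lambda_j)$ with $\lambda_j \geq \mu_i$ is visibly a semistable subbundle of slope $\geq \mu_i$.

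Finally, functoriality falls out of the same Hom-vanishing. For a map $\phi : \ecal \to \ecal'$ with HN slopes $\mu_1 > \cdots > \mu_m$ and $\mu'_1 > \cdots > \mu'_{m'}$, and any index $j$, the composite $\ecal_i \hookrightarrow \ecal \xrightarrow{\phi} \ecal' \twoheadrightarrow \ecal' / \ecal'_j$ sits in a Hom group which, by the classification of source and target as sums of $\ocal$'s, decomposes into pieces $\mathrm{Hom}(\ocal(\lambda), \ocal(\mu'_k))$ with $\lambda \geq \mu_i$ and $k > j$; all of these vanish as soon as $\mu'_{j+1} < \mu_i$, forcing $\phi(\ecal_i) \subseteq \ecal'_{j_0(i)}$ for the appropriate index $j_0(i)$. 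I expect the canonicity step in the construction to be the only genuinely delicate piece, though even there the Hom-vanishing does essentially all of the work; the existence of the filtration itself is little more than a repackaging of the classification.
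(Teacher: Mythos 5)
Your argument is circular. You derive the Harder--Narasimhan filtration from Theorem~\ref{classification}(4), the statement that every vector bundle on $X$ splits as a direct sum $\bigoplus_i \ocal(\lambda_i)$. But that splitting result is the \emph{deep} classification theorem of Fargues--Fontaine and Kedlaya, and its proof requires the Harder--Narasimhan formalism as a prerequisite: one establishes the existence of HN filtrations first and then, using $p$-adic period rings, $p$-divisible groups, and a serious descent argument, proves that the filtration splits and that the graded pieces are sums of $\ocal(\lambda)$'s. In the paper's own ordering in \S 2.2, the splitting statement is even phrased in terms of ``the Harder-Narasimhan slopes of $\ecal$,'' so it literally presupposes the theorem you are trying to prove. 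A logical implication from (splitting) $\Rightarrow$ (HN exists) is true, but it is not a proof of either.

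The paper's actual proof is a citation to \cite[Th\'eor\`eme 6.5.2]{FF08} and \cite[\S 5.5]{FF08}, and the content there takes an entirely different and much more elementary route: the Harder--Narasimhan filtration is obtained from abstract slope theory, using only that $X$ is a complete curve (so degree is well-defined and additive on short exact sequences), that rank is additive, and that slopes of subobjects of a fixed bundle are bounded above. One then shows the set of subbundles of maximal slope has a unique maximal element (the first step of the filtration), and iterates. None of this uses the classification of semistable bundles, nor the cohomology computation for $\ocal(\lambda)$, nor the period-ring technology behind the splitting theorem. Your attempt inverts the correct order of dependence; the existence of the HN filtration is the easy formal half, and the splitting of that filtration is the hard theorem-specific half.
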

\begin{proof} This follows from \cite[Th\'eor\`eme 6.5.2]{FF08} together with the discussion in \cite[\S 5.5]{FF08}.
\end{proof}

\begin{example}
In Figure \ref{HNpolygon_example_fig} we depict an example of the HN polygon associated to a vector bundle whose Harder-Narasimhan filtration has the form 
\[
0 \subset \Cal{E}_1 \subset \Cal{E}_2 \subset \Cal{E}_3 \subset \Cal{E}_4 = \Cal{E}
\]
with $\mu_i := \mu(\Cal{E}_i/\Cal{E}_{i-1})$.
\begin{center}\begin{figure}[h]\begin{tikzpicture}	
        \hspace{-0.5cm}
        \draw[black, fill = blue!20] (0,0) -- (1.5, 3) -- (4,5.5) --  (7,7) -- (12,8) -- (0,0)  ;
		
		\draw[step=1cm,thick] (0,0) -- node[right] {$\mu_1$} node[sloped, above, pos=0.99] {$(\rank \Cal{E}_1, \deg \Cal{E}_1)$} (1.5,3);
		\draw[step=1cm,thick] (1.5,3) -- node[below=3pt] {$\mu_2$} node[sloped, above, pos=0.99] {$(\rank \Cal{E}_2, \deg \Cal{E}_2)$} (4,5.5);
		\draw[step=1cm,thick] (4,5.5) -- node[below] {$\mu_3$} node[sloped, above, pos=0.99]  {$(\rank \Cal{E}_3, \deg \Cal{E}_3)$} (7,7);
		\draw[step=1cm,thick] (7,7) -- node[below] {$\mu_4$} (12,8);
		
		
		
		\draw [fill] (0,0) circle [radius=0.05];
		
		\draw [fill] (1.5,3) circle [radius=0.05];
		
		\draw [fill] (4,5.5) circle [radius=0.05];
		
		\draw [fill] (7,7) circle [radius=0.05];

		\draw [fill] (12,8) circle [radius=0.05];
		\node at (0-0.8,0-0.2) {$(0,0)$};

		\node at (12-0.2,8+0.3) {$(\rank \Cal{E}_4, \deg \Cal{E}_4)$};
			 \useasboundingbox (-2,0);
\end{tikzpicture}
\caption{The HN polygon of a vector bundle with a 4-step HN filtration.}\label{HNpolygon_example_fig}
\end{figure}
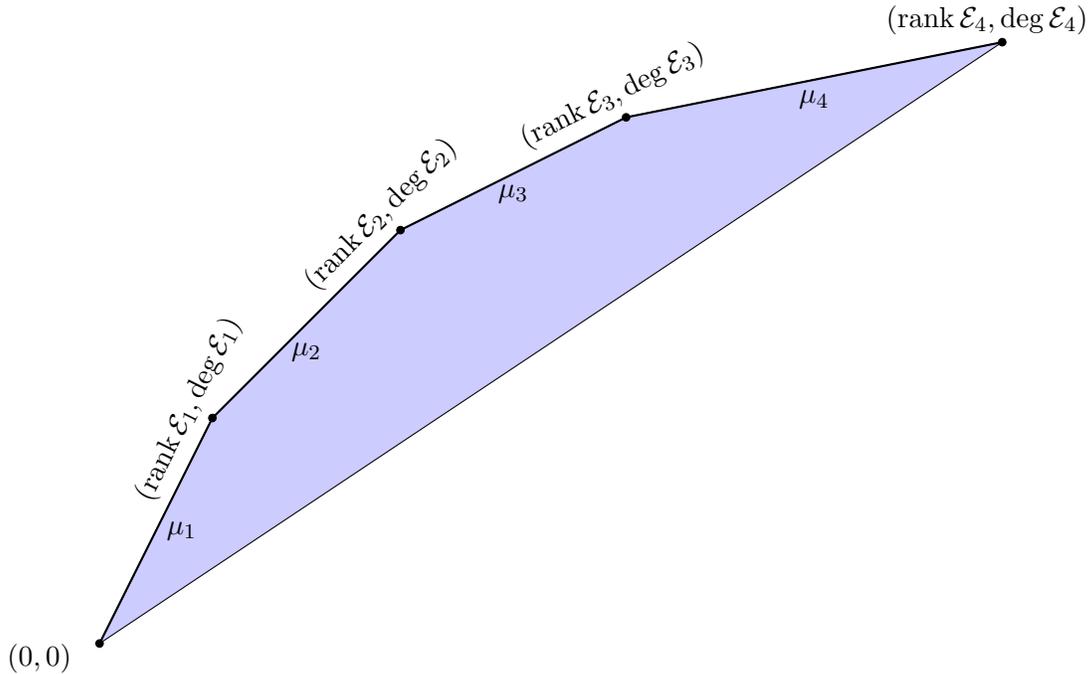
\end{center}
\end{example}

Combining the classification of semistable bundles with an explicit calculation of the cohomology groups $H^i(X,\ocal(\lambda))$, one deduces that the HN filtration of any vector bundle on $X$ is split.  The precise statement is as follows.

\begin{thm}[Kedlaya, Fargues-Fontaine]\cite{FF08},\cite{Ked08}\label{FFBClassification} \hfill

\noindent 1) For any given rational number $\lambda$, we have 
\[
H^0(\Cal{O}(\lambda)) = 0 \text{ if and only if } \lambda<0
\]
and 
\[
H^1(\Cal{O}(\lambda)) = 0 \text{ if and only if } \lambda\geq 0.
\]
2) Any vector bundle $\Cal{E}$ on $X$ admits a direct sum decomposition
\[
\Cal{E} \simeq \bigoplus_i \Cal{O}(\lambda_i)
\]
where the $\lambda_i$s run over the Harder-Narasimhan slopes of $\ecal$ counted with the appropriate multiplicities.
\end{thm}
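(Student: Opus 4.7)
The plan is to establish part (1) by direct cohomology computations with period rings, and then to deduce part (2) from part (1) together with the tensor product formula in Example \ref{tensor_bundles}.

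For part (1), I would identify $H^0(X, \mathcal{O}(\lambda))$ and $H^1(X, \mathcal{O}(\lambda))$ with suitable eigenspaces and cokernels of the Frobenius $\varphi_q$ on the period ring $B = H^0(\mathcal{Y}_{E,F}, \mathcal{O}_{\mathcal{Y}_{E,F}})$. From the $\Proj$ construction one has $H^0(X, \mathcal{O}(n)) = B^{\varphi_q = \pi^n}$ for integer $n$. For non-integer $\lambda = r/s$ in lowest terms I would pass to the degree-$s$ unramified cover $X_{E_s,F} \to X_{E,F}$, under which $\mathcal{O}(r/s)$ pulls back to a direct sum of copies of $\mathcal{O}(r)$; taking $\Gal(E_s/E)$-invariants then yields $H^0(X, \mathcal{O}(r/s)) = B^{\varphi_q^s = \pi^r}$. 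The vanishing claims for $H^0$ now reduce to properties of $\varphi_q$ on $B$: for $r < 0$ all $\varphi_q^s$-eigenvectors of negative weight vanish by a $p$-adic convergence argument on the integral subring, while for $r \geq 0$ nontrivial eigenvectors exist. Both of these can be repackaged via the fundamental exact sequence of $p$-adic Hodge theory, which identifies $B^{\varphi_q^s = \pi^r}$ (for $r \geq 0$) as an extension of $B_{\mathrm{dR}}^+ / t^r$ by $E_s$. The $H^1$ statements follow from the same exact sequence, whose connecting map yields surjectivity of $\varphi_q^s - \pi^r$ on $B$ exactly when $r \geq 0$.

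For part (2), I would proceed by induction on the length $m$ of the Harder-Narasimhan filtration $0 = \mathcal{E}_0 \subset \dotsb \subset \mathcal{E}_m = \mathcal{E}$. By the classification of semistable bundles each graded piece is of the form $\mathcal{O}(\mu_i)^{\oplus d_i}$, with $\mu_1 > \dotsb > \mu_m$. Assuming inductively that $\mathcal{E}_{m-1}$ has already been split as $\bigoplus_{i<m}\mathcal{O}(\mu_i)^{\oplus d_i}$, the residual extension
\[
0 \to \mathcal{E}_{m-1} \to \mathcal{E} \to \mathcal{O}(\mu_m)^{\oplus d_m} \to 0
\]
has class in $\Ext^1(\mathcal{O}(\mu_m)^{\oplus d_m}, \mathcal{E}_{m-1})$, which decomposes as a direct sum of groups $\Ext^1(\mathcal{O}(\mu_m), \mathcal{O}(\mu_j)) = H^1(X, \mathcal{O}(-\mu_m) \otimes \mathcal{O}(\mu_j))$ for $j < m$. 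By Example \ref{tensor_bundles} each tensor product $\mathcal{O}(-\mu_m) \otimes \mathcal{O}(\mu_j)$ is a direct sum of copies of $\mathcal{O}(\mu_j - \mu_m)$, and since $\mu_j > \mu_m$ the $H^1$-vanishing from part (1) applies. Hence the extension splits, completing the induction and yielding the desired decomposition $\mathcal{E} \simeq \bigoplus_i \mathcal{O}(\lambda_i)$.

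The main obstacle is the cohomology computation in part (1), particularly for fractional $\lambda$. The descent along $X_{E_s,F} \to X_{E,F}$ must be carried out carefully, keeping track of how the cyclic $\varphi$-module structure defining $\mathcal{O}(r/s)$ in Definition \ref{o-r-over-s} interacts with the $\Gal(E_s/E)$-action and the Frobenius $\varphi_q$ versus $\varphi_q^s$. Once one has translated both $H^0$ and $H^1$ into questions about $\varphi_q^s - \pi^r$ acting on $B$, the analysis reduces to structural properties of the Frobenius on the period ring---this is the substantive input from $p$-adic Hodge theory, and is where all the arithmetic content of the theorem sits.
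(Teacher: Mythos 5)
The paper states this theorem as a black-box input, citing \cite{FF08} and \cite{Ked08}, and gives no proof of its own; so there is no internal argument to compare against. Your outline does recover the standard proof from those references, and both halves are sound in broad strokes: part (1) is indeed computed via $\varphi$-cohomology on the period ring $B$ (i.e.\ $H^i(X,\Cal{O}(n))$ is the cohomology of the two-term complex $B \xrightarrow{\varphi_q - \pi^n} B$, and the vanishing claims reduce to the fundamental exact sequence and Newton-polygon/convergence estimates on $B$), and part (2) follows from part (1) exactly as you describe, by induction on the length of the HN filtration and vanishing of $\Ext^1(\Cal{O}(\mu_m),\Cal{O}(\mu_j)) = H^1(\Cal{O}(\mu_m)^\vee \otimes \Cal{O}(\mu_j))$ when $\mu_j > \mu_m$.

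One place your reduction to integer slopes is more awkward than it needs to be: you pull $\Cal{O}(r/s)$ back along the degree-$s$ unramified cover $\pi\colon X_{E_s,F}\to X_{E,F}$ (getting $\Cal{O}(r)^{\oplus s}$) and then attempt to take $\Gal(E_s/E)$-invariants of the resulting eigenspace. Tracking the Galois action on the $s$ permuted factors and on $B_s = B\otimes_E E_s$ simultaneously is fiddly and exactly the sort of place where sign/index errors creep in. It is cleaner to use the pushforward instead: $\pi_*\Cal{O}_{X_{E_s,F}}(r) \simeq \Cal{O}_{X_{E,F}}(r/s)$, so $H^i(X_{E,F},\Cal{O}(r/s)) = H^i(X_{E_s,F},\Cal{O}(r)) = H^i\bigl([B_s \xrightarrow{\varphi_q^s - \pi^r} B_s]\bigr)$ with no descent step at all. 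This is the form in which the paper itself implicitly uses the reduction (see the proof of Proposition \ref{h0nice}, where $\ocal(d/h)$ is realized as $r_\ast\ocal(d)$ from the degree-$h$ unramified extension). With that simplification your argument matches the source proofs cleanly.
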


\begin{cor}\label{HNsplit}
A vector bundle $\ecal$ on $X$ is determined up to isomorphism by its HN polygon $\HN(\ecal)$.
\end{cor}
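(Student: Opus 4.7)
The plan is straightforward given the preceding splitting theorem. By part (2) of the theorem, any vector bundle $\ecal$ on $X$ splits as $\ecal \simeq \bigoplus_i \Cal{O}(\lambda_i)$, where the $\lambda_i$'s are the Harder-Narasimhan slopes counted with multiplicity. Thus it suffices to show that the unordered multiset $\{\lambda_i\}$ can be recovered from $\HN(\ecal)$ alone, since the summands $\Cal{O}(\lambda)$ are themselves determined (as the unique stable bundles of slope $\lambda$) by Theorem \ref{classification}.

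First I would group summands by slope: write $\ecal \simeq \bigoplus_{j=1}^k \Cal{O}(\mu_j)^{\oplus n_j}$ with $\mu_1 > \mu_2 > \cdots > \mu_k$. Each summand $\Cal{O}(\mu_j)^{\oplus n_j}$ is semistable of slope $\mu_j$ by Theorem \ref{classification}(3), so the chain of partial direct sums $0 \subset \Cal{O}(\mu_1)^{\oplus n_1} \subset \Cal{O}(\mu_1)^{\oplus n_1}\oplus \Cal{O}(\mu_2)^{\oplus n_2}\subset \cdots$ is an HN filtration of $\ecal$. By uniqueness of the HN filtration, $\HN(\ecal)$ has successive slopes $\mu_1 > \cdots > \mu_k$, and the segment of slope $\mu_j$ has horizontal width equal to $\rank \Cal{O}(\mu_j)^{\oplus n_j} = n_j s_j$, where $s_j$ is the denominator of $\mu_j$ in lowest terms (using the rank formula in Theorem \ref{classification}(2)).

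Finally, from $\HN(\ecal)$ one reads off each pair $(\mu_j, n_j)$ by taking the slopes of the segments and dividing their horizontal widths by the appropriate denominators. This recovers the multiset $\{\lambda_i\}$, and hence determines $\ecal$ up to isomorphism. There is no substantive obstacle here; the corollary is essentially a direct unwinding of the splitting statement combined with the uniqueness of stable bundles of each slope.
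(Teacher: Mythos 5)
Your proof is correct and takes the approach the paper implicitly intends: the corollary is stated without proof precisely because it follows at once from the direct-sum decomposition in the preceding theorem together with the uniqueness of the stable bundle $\Cal{O}(\lambda)$ of each slope. Your careful unwinding — grouping summands by slope, identifying the resulting chain of partial sums as the HN filtration, and reading off slopes and multiplicities from segment slopes and widths — is exactly the elementary verification that the paper leaves to the reader.
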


\begin{center}
\begin{figure}[h]\begin{tikzpicture}	[scale=0.6]
        \hspace{-0.5cm}
        \draw[black, fill = green!20] (0,0) -- (1.5, 3) -- (4,5.5) --  (7,7) -- (12,8) -- (0,0)  ;
 	\draw [green, fill=blue!20]  (0,0) -- (1.5,3) -- (12,8) --(0,0);
		
		\draw[step=1cm,thick] (0,0) -- node[left] {} (1.5,3);
		\draw[step=1cm,thick] (1.5,3) -- node[above] {} (4,5.5);
		\draw[step=1cm,thick] (4,5.5) -- node[above] {} (7,7);
		\draw[step=1cm,thick] (7,7) -- node[above] {}(12,8);
		
		
		
		\draw [fill] (0,0) circle [radius=0.05];
		
		\draw [fill] (1.5,3) circle [radius=0.05];
		
		\draw [fill] (4,5.5) circle [radius=0.05];
		
		\draw [fill] (7,7) circle [radius=0.05];

		\draw [fill] (12,8) circle [radius=0.05];
		\node at (0-0.8,0-0.2) {};
		\node at (4-0.2,5.5+0.3) {};
		\node at (7-0.2,7+0.3){} ;
		\node at (12-0.2,8+0.3) {};
		\node at (1.5-0.2,3+0.3) {};
		
\end{tikzpicture}
\caption{This depicts the order relation between HN polygons. If $\HN_1$ is the polygon shaded in blue and $\HN_2$ is the polygon shaded in green, then $\HN_1 \leq \HN_2$. }
\end{figure}
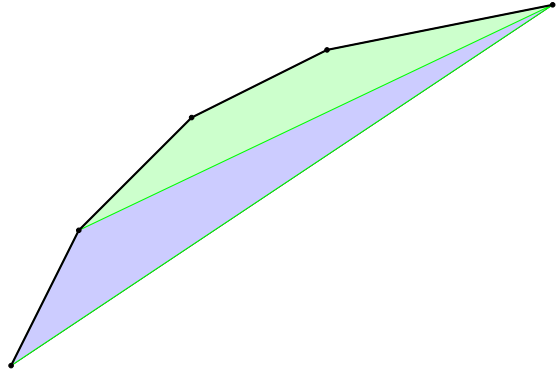
\end{center}

\begin{defn}
We define a partial order on HN polygons as follows: if $\HN_1$ and $\HN_2$ are two polygons, then we say that $\HN_1 \leq \HN_2$ if $\HN_1$ lies on or below $\HN_2$ and the polygons have the same endpoints. 
\end{defn}

Under this partial ordering, the Harder-Narasimhan filtration of a bundle $\ecal$ satisfies the following extremality property among all filtrations with semistable graded pieces:
\begin{cor}\label{polygon_bound}
Let $\ecal$ be a vector bundle on $X$, and let
\[
0 = \Cal{E}_0 \subset \Cal{E}_1 \subset \ldots \subset \Cal{E}_m = \Cal{E}
\]
be any filtration of $\ecal$ such that each graded piece $\Cal{E}_i/\Cal{E}_{i-1} $ is a semistable vector bundle. Then 
\[
\HN(\Cal{E}) \leq \HN(\Cal{E}_1/\Cal{E}_0 \oplus \ldots \oplus \Cal{E}_m/\Cal{E}_{m-1}).
\]
\end{cor}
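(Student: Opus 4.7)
My plan is to use the standard characterization that $\HN(\ecal)$ is the upper concave envelope of the points $(\rank \gcal, \deg \gcal)$ as $\gcal$ ranges over subbundles of $\ecal$; this reduces $\HN(\ecal) \leq P$, where $P := \HN(\fcal_1 \oplus \cdots \oplus \fcal_m)$ and $\fcal_i := \ecal_i/\ecal_{i-1}$, to showing (i) the two polygons share the right endpoint, and (ii) every such subbundle point $(\rank \gcal, \deg \gcal)$ lies on or below $P$. Point (i) is immediate from additivity of rank and degree in the short exact sequences $0 \to \ecal_{i-1} \to \ecal_i \to \fcal_i \to 0$.

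For (ii), I would fix a subbundle $\gcal \subseteq \ecal$ and form the induced filtration $\gcal_i := \gcal \cap \ecal_i$, replacing each $\gcal_i$ by its saturation inside $\ecal_i$ if needed so that $\gcal_i$ is an honest subbundle (saturation only increases degree, so all inequalities to follow are preserved). The identity $\gcal_i \cap \ecal_{i-1} = \gcal_{i-1}$ shows that the natural map $\gcal_i/\gcal_{i-1} \to \fcal_i$ is injective, so semistability of $\fcal_i$ yields the slope bound $\deg(\gcal_i/\gcal_{i-1}) \leq \mu_i r_i$, where $\mu_i := \mu(\fcal_i)$ and $r_i := \rank(\gcal_i/\gcal_{i-1})$. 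Summing gives $\rank \gcal = \sum_i r_i$ and $\deg \gcal \leq \sum_i \mu_i r_i$, subject to the constraints $0 \leq r_i \leq \rank \fcal_i$.

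The argument then reduces to the purely combinatorial claim that $\sum_i \mu_i r_i \leq P(\sum_i r_i)$ for any such tuple $(r_i)$. But $P$ is by construction obtained by stacking the widths $\rank \fcal_i$ in decreasing order of their slopes $\mu_i$, and a one-line exchange argument shows this is precisely the greedy (hence optimal) solution to the problem of maximizing $\sum_i \mu_i s_i$ over $0 \leq s_i \leq \rank \fcal_i$ with $\sum_i s_i$ prescribed. Combined with concavity of $P$ (which interpolates the bound between the vertices of $\HN(\ecal)$), this finishes (ii).

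I do not foresee any serious obstacle: the result is essentially a formal consequence of the HN formalism together with semistability of the $\fcal_i$. The only mild technical nuisance is the bookkeeping around saturations when forming $\gcal \cap \ecal_i$, which is handled routinely on the regular Noetherian curve $X$.
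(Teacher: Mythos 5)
Your proof is correct; the paper does not give an argument at all but simply cites \cite[Corollary 3.4.18]{Ked17}, so you are supplying a from-scratch proof. The route you take---the envelope characterization of $\HN(\ecal)$ by subbundles, intersecting an arbitrary subbundle $\gcal\subseteq\ecal$ with the given filtration, bounding $\deg(\gcal_i/\gcal_{i-1})$ via semistability of $\fcal_i:=\ecal_i/\ecal_{i-1}$, and a greedy/exchange argument identifying $P$ with the pointwise maximizer of $\sum_i\mu_i s_i$ subject to $0\le s_i\le\rank\fcal_i$---is the standard one and goes through; concavity of $P$ does, as you say, promote the pointwise bound on subbundle points to the full inequality of polygons. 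One small correction in the bookkeeping: saturating $\gcal_i=\gcal\cap\ecal_i$ inside $\ecal_i$ is not needed and is not quite the right move. Each $\gcal_i$ is automatically a subbundle of $\gcal$ and of $\ecal$, since $\gcal/\gcal_i$ injects into the torsion-free sheaf $\ecal/\ecal_i$; so there is nothing to saturate at that stage, and doing so also does not address the one place where a saturation issue genuinely appears, namely that $\gcal_i/\gcal_{i-1}$ embeds into $\fcal_i$ only as a subsheaf and need not be a subbundle. That is harmless: the bound $\mu(\hcal)\le\mu(\fcal_i)$ for semistable $\fcal_i$ holds for all nonzero subsheaves $\hcal$, since saturating $\hcal$ inside $\fcal_i$ only raises its degree. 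With that adjustment the argument is clean.
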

\begin{proof}This is \cite[Corollary 3.4.18]{Ked17}.
\end{proof}

\begin{defn}\label{slope_leq_part}
Let $\Cal{E}$ be a vector bundle on $X$ with Harder-Narasimhan filtration $$0 = \Cal{E}_0 \subset \Cal{E}_1 \subset \ldots \subset \Cal{E}_m = \Cal{E}.$$ Then for any $\lambda \in \mathbf{Q}$, we define $\ecal^{\geq \lambda}$ (resp. $\ecal^{> \lambda}$) to be the subbundle of $\ecal$ given by $\ecal_i$ for the largest value of $i$ such that $\mu(\ecal_i/\ecal_{i-1}) \geq \lambda$ (resp. such that $\mu(\ecal_i/\ecal_{i-1}) > \lambda$). We also define $\ecal^{<\lambda} = \ecal / \ecal^{\geq \lambda}$ and $\ecal^{\leq \lambda} = \ecal / \ecal^{> \lambda}$.
\end{defn}

\subsection{Geometric interpretation of degrees}

In a number of our later arguments, we will need to understand quantities of the form $\deg\,(\ecal_1 \otimes \ecal_2^\vee)^{\geq 0}$ for some fairly arbitrary bundles $\ecal_1$ and $\ecal_2$.  Here we develop some preliminary language for doing this.  The results in this section will not be necessary until \S4.
    
\begin{defn}\label{order_relation}	For two vectors $v, w \in \mathbb{R}^2$ with nonzero $x$-coordinates, we write $v \prec w$ (resp. $v \preceq w$) if the slope of $v$ is less than (resp. not greater than) the slope of $w$. 

	\end{defn}
	
We can determine this relation between $v,w$ using their (two-dimensional) cross product as follows:

\begin{lemma}\label{relation_area} We have the following characterization of the order $\prec$: 
	\begin{itemize}
		\item if the $x$-coordinates of $v$ and $w$ have the same sign, $v \prec w$ (resp. $v \preceq w$) if and only if $v \times w >0$ (resp. $v \times w \geq 0$); 
		\item if the $x$-coordinates of $v$ and $w$ have different signs, $v \prec w$ (resp. $v \preceq w$) if and only if $v \times w <0$ (resp. $v \times w \leq 0$). 
	\end{itemize}
	\end{lemma}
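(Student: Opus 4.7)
The plan is to reduce the lemma to a direct sign analysis of the identity
\[
\frac{w_2}{w_1} - \frac{v_2}{v_1} \;=\; \frac{v_1 w_2 - v_2 w_1}{v_1 w_1} \;=\; \frac{v \times w}{v_1 w_1},
\]
where $v = (v_1,v_2)$ and $w = (w_1,w_2)$. First I would simply record that by the definitions in Definition \ref{order_relation}, the relation $v \prec w$ (resp.\ $v \preceq w$) is equivalent to the assertion that $w_2/w_1 - v_2/v_1$ is strictly positive (resp.\ nonnegative).

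Next I would observe that the hypotheses of the lemma guarantee $v_1, w_1 \neq 0$, so the denominator $v_1 w_1$ in the identity above is nonzero and its sign is determined by whether the two $x$-coordinates have the same or opposite sign. Specifically, if $v_1$ and $w_1$ share a sign, then $v_1 w_1 > 0$, so $w_2/w_1 - v_2/v_1$ has the same sign as $v \times w$; if they have opposite signs, then $v_1 w_1 < 0$, so $w_2/w_1 - v_2/v_1$ has the opposite sign to $v \times w$. Combining this with the slope reformulation of $\prec$ and $\preceq$ from the previous paragraph immediately yields both bullet points.

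There is no real obstacle here: the lemma is a one-line computation dressed up in geometric language, and the only thing to be careful about is tracking the sign of the denominator $v_1 w_1$ when clearing it from the inequality $v_2/v_1 < w_2/w_1$. I would keep the write-up to a short paragraph, treating the $\prec$ and $\preceq$ cases uniformly by noting that the same sign identity handles both strict and non-strict inequalities.
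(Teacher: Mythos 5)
Your proof is correct, and since the paper itself only records ``This is straightforward'' without spelling out an argument, you are simply supplying the routine computation that the authors intended. The identity $w_2/w_1 - v_2/v_1 = (v\times w)/(v_1 w_1)$ together with the sign of the denominator is precisely the clean way to see it, and it handles $\prec$ and $\preceq$ uniformly as you note.
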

	
	\begin{proof}
	This is straightforward.
	\end{proof}

		Let $\mathcal{V}$ be a vector bundle on $X$. By Theorem \ref{FFBClassification}, $\mathcal{V}$ decomposes into a direct sum
	\[ \mathcal{V} = \bigoplus_{i=1}^s \mathcal{O}(d_i/h_i)^{m_i}\]
	where $d_1/h_1 > d_2/h_2 > \cdots > d_s/h_s$ and $d_i, h_i$ are relatively prime for each $i= 1, 2, \cdots, s$. Then the Harder-Narasimhan polygon of $\mathcal{V}$ is built out of the sequence of vectors
	\[ v_1 \succ v_2 \succ \cdots \succ v_s\]
	where $v_i = (m_i h_i, m_i d_i)$ is the $i$-th edge in $\HN(\mathcal{V})$. 
	
\begin{defn} \label{HN vector notation}
Keeping the notation of the preceding paragraph, the \emph{HN vectors of $\Cal{V}$} will be denoted by $\HNvec(\mathcal{V}) := (v_i)_{1\leq i \leq s}$. Note that the isomorphism class of $\mathcal{V}$ is uniquely determined by $\HNvec(\mathcal{V})$. 
\end{defn}

	\begin{lemma}\label{diamond lemma for nonnegative degree}
		Let $\mathcal{V}$ and $\mathcal{W}$ be any vector bundles on $X$ with $\HNvec(\mathcal{V}) = (v_i)$ and $\HNvec(\mathcal{W}) = (w_j)$. Then we have 
		\begin{equation}\label{deg_formula}
		\deg(\mathcal{V}^\vee \otimes \mathcal{W}) = \sum_{i, j} v_i \times w_j
		\end{equation}
		and 
		\begin{equation}\label{deg0_formula}
		\deg(\mathcal{V}^\vee \otimes \mathcal{W})^{\geq 0} = \sum_{v_i \preceq w_j} v_i \times w_j.
		\end{equation}
	\end{lemma}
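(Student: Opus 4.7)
The proof will be a direct calculation leveraging the explicit classification of vector bundles on $X$ together with the tensor product formula in Example \ref{tensor_bundles}. I will first prove \eqref{deg_formula}, and then observe that \eqref{deg0_formula} follows by restricting the same computation to the appropriate summands.

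The plan is to write $\mathcal{V} = \bigoplus_i \mathcal{O}(d_i/h_i)^{m_i}$ and $\mathcal{W} = \bigoplus_j \mathcal{O}(d'_j/h'_j)^{m'_j}$ in lowest terms. Then bilinearity of $\otimes$ together with $\mathcal{V}^\vee = \bigoplus_i \mathcal{O}(-d_i/h_i)^{m_i}$ yields
\[
\mathcal{V}^\vee \otimes \mathcal{W} = \bigoplus_{i,j} \left(\mathcal{O}(-d_i/h_i) \otimes \mathcal{O}(d'_j/h'_j)\right)^{m_i m'_j}.
\]
By Example \ref{tensor_bundles}, each factor $\mathcal{O}(-d_i/h_i)\otimes\mathcal{O}(d'_j/h'_j)$ is a semistable bundle of rank $h_ih'_j$ and degree $-d_ih'_j + d'_jh_i$. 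Hence the $(i,j)$ summand contributes total degree $m_im'_j(h_id'_j - d_ih'_j)$ to $\deg(\mathcal{V}^\vee\otimes\mathcal{W})$.

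Next I would match this against the cross product. Since $v_i = (m_ih_i, m_id_i)$ and $w_j = (m'_jh'_j, m'_jd'_j)$, a direct calculation gives
\[
v_i \times w_j = m_ih_i\cdot m'_jd'_j - m_id_i\cdot m'_jh'_j = m_im'_j(h_id'_j - d_ih'_j),
\]
which is exactly the degree contribution just computed. Summing over $(i,j)$ yields \eqref{deg_formula}.

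For \eqref{deg0_formula}, the key observation is that the $(i,j)$ summand is a semistable bundle of slope
\[
\frac{d'_j}{h'_j} - \frac{d_i}{h_i},
\]
which is $\geq 0$ precisely when the slope of $w_j$ is at least the slope of $v_i$, i.e.\ when $v_i \preceq w_j$ in the notation of Definition \ref{order_relation}. Since the Harder-Narasimhan filtration of a direct sum of semistable bundles splits as the grouping of summands by slope, the subbundle $(\mathcal{V}^\vee\otimes\mathcal{W})^{\geq 0}$ is exactly the direct sum of those $(i,j)$ summands with $v_i \preceq w_j$. Applying \eqref{deg_formula} to this sub-bundle then gives \eqref{deg0_formula}. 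No step here presents a real obstacle; the content of the lemma is the bookkeeping identification between degrees of tensor products of stable bundles and cross products of HN vectors.
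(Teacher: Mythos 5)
Your proof is correct and follows essentially the same route as the paper: reduce to the semistable (single-HN-vector) case via Example \ref{tensor_bundles}, then sum over the direct-sum decomposition — what the paper expresses tersely as ``both sides are linear in $\mathcal{V}$ and $\mathcal{W}$,'' you simply carry out explicitly, including the direct verification that the degree of each $(i,j)$ summand equals $v_i\times w_j$ and that its slope has the sign governed by $v_i\preceq w_j$.
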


	\begin{proof}
		Let us first consider the case when $\mathcal{V}$ and $\mathcal{W}$ are both semistable. In this case, both $\HNvec(\mathcal{V})$ and $\HNvec(\mathcal{W})$ consist of a single element, namely $(\rank(\mathcal{V}), \deg(\mathcal{V}))$ and $(\rank(\mathcal{W}), \deg(\mathcal{W}))$, respectively. Then \eqref{deg_formula} follows directly from Example~\ref{tensor_bundles}, and \eqref{deg0_formula} follows from \eqref{deg_formula} by Part 1 of Lemma~\ref{relation_area}. 

The general case now follows by observing that both sides of \eqref{deg_formula} and \eqref{deg0_formula} are linear in both $\Cal{V}$ and $\Cal{W}$, and using Theorem \ref{classification} to see that every vector bundle decomposes as a direct sum of semistable bundles. \end{proof}

	\begin{prop}\label{geometric diamond lemma for one vector bundle} Let $\mathcal{V}$ be a vector bundle on $X$. Then $\deg(\mathcal{V}^\vee \otimes \mathcal{V})^{\geq 0}$ is equal to twice the area of the region enclosed between $\HN(\mathcal{V})$ and the line segment joining the two endpoints of $\HN(\mathcal{V})$. In particular, $\deg(\mathcal{V}^\vee \otimes \mathcal{V})^{\geq 0}=0$ if and only if $\mathcal{V}$ is semistable. 
	\end{prop}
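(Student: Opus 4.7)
The plan is to combine Lemma \ref{diamond lemma for nonnegative degree} (applied with $\mathcal{W} = \mathcal{V}$) with a direct inductive computation of the enclosed area. Writing $\HNvec(\mathcal{V}) = (v_1, \ldots, v_s)$ with $v_1 \succ \cdots \succ v_s$, the lemma yields
$$\deg(\mathcal{V}^\vee \otimes \mathcal{V})^{\geq 0} = \sum_{v_i \preceq v_j} v_i \times v_j = \sum_{i > j} v_i \times v_j,$$
since each $v_k$ has positive $x$-coordinate, so $v_i \preceq v_j$ is equivalent to $i \geq j$, while the diagonal terms $v_i \times v_i$ vanish. The task is thus reduced to showing that this combinatorial sum equals twice the area $A$ enclosed between $\HN(\mathcal{V})$ and the chord joining its endpoints.

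I would establish the geometric identity $\sum_{i > j} v_i \times v_j = 2A$ by induction on the number $s$ of distinct HN slopes. The base case $s = 1$ is immediate, as both sides vanish. For the inductive step, set $P_k := v_1 + \cdots + v_k$ and let $\mathcal{V}'$ denote the bundle with $\HNvec(\mathcal{V}') = (v_1, \ldots, v_{s-1})$, enclosing an area $A'$. The region enclosed for $\mathcal{V}$ decomposes along the chord $P_0 P_{s-1}$ into the analogous region for $\mathcal{V}'$ together with the triangle $P_0 P_{s-1} P_s$, so
$$2A \;=\; 2A' + 2\cdot\mathrm{Area}(P_0 P_{s-1} P_s) \;=\; 2A' + |P_{s-1} \times v_s| \;=\; 2A' + \sum_{i=1}^{s-1} v_s \times v_i,$$
where the last equality uses that $v_s \prec v_i$ for $i < s$ forces $v_s \times v_i > 0$ by Lemma \ref{relation_area}, and hence $P_{s-1} \times v_s = \sum_i v_i \times v_s = -\sum_i v_s \times v_i < 0$. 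Combining this with the inductive hypothesis $2A' = \sum_{s > i > j} v_i \times v_j$ yields the claim.

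Finally, the ``in particular'' assertion is automatic: if $\mathcal{V}$ is semistable then $s = 1$ and both quantities vanish, while if $s \geq 2$ each cross product $v_i \times v_j$ with $i > j$ is strictly positive by Lemma \ref{relation_area}, so the degree (and area) is strictly positive. The main bookkeeping obstacle is tracking signs in the triangle-area computation, but given the cross-product interpretation of $\prec$ from Lemma \ref{relation_area} this is genuinely routine.
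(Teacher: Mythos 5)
Your proof is correct and follows essentially the same route as the paper: both apply Lemma \ref{diamond lemma for nonnegative degree} with $\mathcal{W}=\mathcal{V}$ and then identify the resulting sum of cross products with twice the enclosed area via a fan triangulation of the region. Your induction unwinds to a fan with apex at the left endpoint $O=P_0$, whereas the paper fans directly from the right endpoint $P_s$ (writing the sum as $\sum_j \overrightarrow{P_{j-1}P_s}\times\overrightarrow{P_{j-1}P_j}$); this is a minor bookkeeping difference rather than a genuinely different argument.
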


	\begin{proof}
		Write $\HNvec(\mathcal{V}) = (v_i)_{1\leq i \leq s}$, and let $O=P_0, P_1, P_2, \dotsc, P_s$ be the breakpoints of $\HN(\mathcal{V})$, listed in order of increasing $x$-coordinates (see Figure \ref{diamond_Lemma_fig}). Note that $v_i = \overrightarrow{P_{i-1} P_i}$. 

\begin{center}
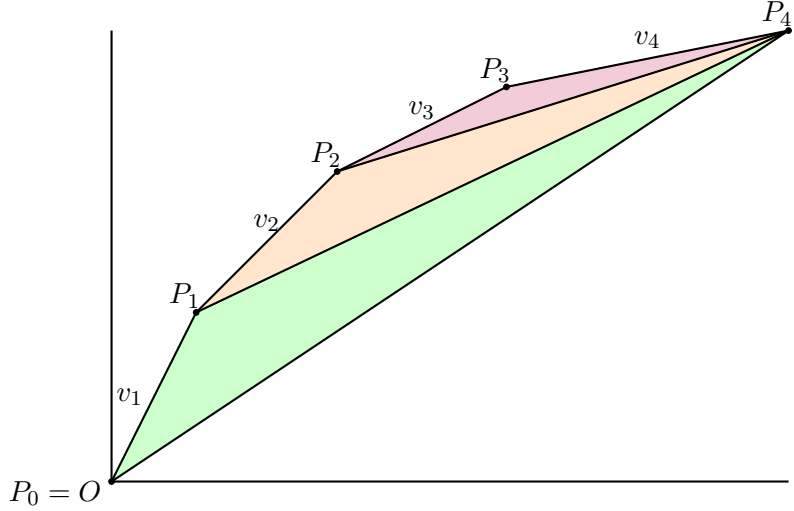
\begin{figure}[h]
\begin{tikzpicture}	[scale=0.75]
        \hspace{-0.5cm}
		\draw [green, fill=green!20]  (0,0) -- (1.5,3) -- (12,8) --(0,0);
		\draw [orange, fill=orange!20]  (1.5,3) -- (4,5.5) --(12,8) -- (1.5,3);
		\draw [purple, fill=purple!20]  (4,5.5) -- (7,7) --(12,8) -- (4,5.5);
		
		\draw[step=1cm,thick] (0,0) -- node[left] {$v_1$} (1.5,3);
		\draw[step=1cm,thick] (1.5,3) -- node[above] {$v_2$} (4,5.5);
		\draw[step=1cm,thick] (4,5.5) -- node[above] {$v_3$} (7,7);
		\draw[step=1cm,thick] (7,7) -- node[above] {$v_4$} (12,8);
		
		\draw[step=1cm,thick] (0,0) -- (0,8);
		
		\draw[step=1cm,thick] (0,0) -- (12,0);
		\draw[step=1cm,thick] (1.5,3) -- (12,8);
		\draw[step=1cm,thick] (4,5.5) -- (12,8);
		\draw[step=1cm,thick] (0,0) -- (12,8);
		
		\draw [fill] (0,0) circle [radius=0.05];
		
		\draw [fill] (1.5,3) circle [radius=0.05];
		
		\draw [fill] (4,5.5) circle [radius=0.05];
		
		\draw [fill] (7,7) circle [radius=0.05];

		\draw [fill] (12,8) circle [radius=0.05];
		\node at (0-1,0-0.2) {$P_0=O$};
		\node at (4-0.2,5.5+0.3) {$P_2$};
		\node at (7-0.2,7+0.3) {$P_3$};
		\node at (12-0.2,8+0.3) {$P_4$};
		\node at (1.5-0.2,3+0.3) {$P_1$};
		\useasboundingbox (-3,0);
		
\end{tikzpicture}
\caption{Visual depiction of Proposition \ref{geometric diamond lemma for one vector bundle}.}\label{diamond_Lemma_fig}
\end{figure}
\end{center}

		By definition, we have $v_1 \succ v_2 \succ \cdots \succ v_s$. Hence Lemma \ref{diamond lemma for nonnegative degree} yields
		\begin{align*} \deg(\mathcal{V}^\vee \otimes \mathcal{V})^{\geq 0} &= \sum_{i\geq j} v_i \times v_j  = \sum_{j=1}^s (v_j + v_{j+1} + \cdots + v_s) \times v_j \\
		&= \sum_{j=1}^s  \big(\overrightarrow{P_{j-1} P_s} \times \overrightarrow{P_{j-1} P_j}\big)\\
		&= \sum_{j=1}^s 2 \cdot \text{Area}(\bigtriangleup P_{j-1}P_j P_s),
		\end{align*}
		which is clearly equal to twice the area of the region enclosed between $\HN(\mathcal{V})$ and the line segment joining $O$ and $P_s$. \end{proof}
	
	\begin{cor}\label{diamond inequality}
		Let $\mathcal{V}$ and $\mathcal{W}$ be any vector bundles on $X$ such that $\HN(\mathcal{V}) \leq \HN(\mathcal{W})$. Then $\deg(\mathcal{V}^\vee \otimes \mathcal{V})^{\geq 0} \leq \deg(\mathcal{W}^\vee \otimes \mathcal{W})^{\geq 0}$. 
	\end{cor}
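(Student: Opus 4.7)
The plan is to deduce this as an immediate geometric consequence of Proposition \ref{geometric diamond lemma for one vector bundle}. By that proposition, both quantities $\deg(\mathcal{V}^\vee \otimes \mathcal{V})^{\geq 0}$ and $\deg(\mathcal{W}^\vee \otimes \mathcal{W})^{\geq 0}$ admit direct interpretations as twice the area of the plane region trapped between the corresponding HN polygon and the chord connecting its two endpoints. So the claim should reduce to a simple containment of planar regions.

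Here are the steps in order. First, by definition of the partial order on HN polygons, the hypothesis $\HN(\mathcal{V}) \leq \HN(\mathcal{W})$ guarantees that the two polygons share the same pair of endpoints (both starting at the origin and ending at the common point $(r,d)$ where $r$ and $d$ are the common rank and degree). In particular, the chord $L$ joining the endpoints of $\HN(\mathcal{V})$ is literally the same as the chord joining the endpoints of $\HN(\mathcal{W})$. Second, by concavity each HN polygon lies weakly above its chord, so we can define two planar regions $R_{\mathcal{V}}$ and $R_{\mathcal{W}}$, each bounded above by the respective HN polygon and below by the common chord $L$. The hypothesis $\HN(\mathcal{V}) \leq \HN(\mathcal{W})$ says precisely that $\HN(\mathcal{V})$ lies weakly below $\HN(\mathcal{W})$, so we obtain the inclusion $R_{\mathcal{V}} \subseteq R_{\mathcal{W}}$. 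Third, applying Proposition \ref{geometric diamond lemma for one vector bundle} twice yields
\[
\deg(\mathcal{V}^\vee \otimes \mathcal{V})^{\geq 0} = 2 \cdot \mathrm{Area}(R_{\mathcal{V}}) \leq 2 \cdot \mathrm{Area}(R_{\mathcal{W}}) = \deg(\mathcal{W}^\vee \otimes \mathcal{W})^{\geq 0},
\]
which is exactly the desired inequality.

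There is no serious obstacle here; the content of the corollary is absorbed into Proposition \ref{geometric diamond lemma for one vector bundle}, and once that ``area interpretation'' is in place the corollary is literally a statement about nested planar regions with the same lower boundary.
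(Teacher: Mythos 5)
Your proof is correct and follows exactly the same route as the paper, which simply states the corollary is ``an immediate consequence of Proposition \ref{geometric diamond lemma for one vector bundle}''; you have spelled out the containment-of-regions argument that the paper leaves implicit.
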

	\begin{proof}
		This is an immediate consequence of Proposition \ref{geometric diamond lemma for one vector bundle}. 
	\end{proof}

\section{Diamonds, vector bundles, and dimensions}\label{diamond_dim}

\subsection{Recollections on diamonds}\label{diamondbasics}

In this section we (very) briefly introduce the language of diamonds.  Everything here can be found in \cite[\S7-8, \S10-11, \S18, \S21]{Sch}.

\begin{defn}
\begin{enumerate}
\item A map $Y\to X$ of affinoid perfectoid spaces is \emph{affinoid pro-\'etale} if $Y$ is isomorphic to the limit of some cofiltered system of affinoid perfectoid spaces $Y_i$ \'etale over $X$. 

\item A map $f: Y\to X$ of perfectoid spaces is \emph{pro-\'etale} if $X$ and $Y$ can be covered by open affinoid perfectoid subsets $U_i \subset X, V_{ij} \subset Y$ such that $f|_{V_{ij}}: V_{ij} \to X$ factors through the inclusion of $U_i$ and such that the induced map $V_{ij} \to U_i$ is affinoid pro-\'etale.

\item A map $f: Y\to X$ of perfectoid spaces is a \emph{pro-\'etale cover} if $f$ is pro-\'etale and for any qc open subset $U \subset X$, there is some qc open subset $V \subset Y$ with $f(V)=U$.

\item The \emph{big pro-\'etale site} $\mathrm{Perf}$ is the site of perfectoid spaces in characteristic $p$, with covers given by pro-\'etale covers.

\end{enumerate}
\end{defn}

\begin{prop}The site $\mathrm{Perf}$ is subcanonical.
\end{prop}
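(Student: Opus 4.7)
The plan is to verify that for every perfectoid space $Z$ over $\mathbf{F}_p$, the presheaf $h_Z : X \mapsto \mathrm{Hom}(X, Z)$ satisfies the sheaf condition for pro-\'etale covers. I would first reduce to checking sheafness against a single affinoid pro-\'etale map $f : Y \to X$ between affinoid perfectoid spaces. This reduction uses the definition of pro-\'etale covers, which is given locally by open analytic covers (for which $h_Z$ is trivially a sheaf, since $Z$ is an adic space) together with affinoid pro-\'etale morphisms on affinoid pieces.

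Next, I would write $Y = \varprojlim_{i \in I} Y_i$ as a cofiltered limit of affinoid perfectoid spaces $Y_i$ \'etale over $X$, with affine transition maps. For the corresponding problem at each finite level, that is, for a single \'etale cover $Y_i \to X$ of affinoid perfectoid spaces, sheafness of $h_Z$ follows from Scholze's descent results for perfectoid spaces: via the tilting equivalence and almost purity, a descent datum reduces to a genuinely faithfully flat descent problem on the level of perfectoid Tate algebras, which is classical. Thus each map $Y_i \to X$ is one of effective descent for $h_Z$.

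The remaining task is to bridge the finite \'etale case to the limit $Y$. Given a morphism $g : Y \to Z$ with $\mathrm{pr}_1^* g = \mathrm{pr}_2^* g$ on $Y \times_X Y$, I would cover $X$ by quasicompact open affinoid pieces, and within each piece cover the relevant image in $Z$ by quasicompact opens as well. Using that $Y$ is affinoid perfectoid and $Y = \varprojlim Y_i$ in the category of adic spaces with qcqs transition maps, one has
\[
\mathrm{Hom}(Y, W) = \varinjlim_i \mathrm{Hom}(Y_i, W)
\]
for any qcqs adic space $W$, so $g$ restricted to each affinoid piece is pulled back from some map $Y_{i} \to Z$ at a sufficiently large stage $i$. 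After replacing $i$ by a larger index if necessary, the descent datum also holds at finite level, so the already established \'etale descent produces a unique map on that affinoid piece of $X$. These local descents glue to a global map $X \to Z$, and the uniqueness assertion is obtained by applying the same reasoning to the difference of two putative lifts.

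The main obstacle I anticipate is this last step, the compatibility of the cofiltered limit $Y = \varprojlim Y_i$ with maps into an arbitrary (not necessarily quasicompact) target $Z$. One must carefully organize compatible quasicompact covers on $X$, $Z$, and each $Y_i$ so that one can actually pass the descent datum through the colimit $\varinjlim_i \mathrm{Hom}(Y_i, -)$; this is precisely the point where Scholze's framework of affinoid perfectoid spaces, and in particular the fact that limits of affinoid perfectoid spaces along affine diagrams remain affinoid perfectoid, does the essential work.
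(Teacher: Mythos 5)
The paper does not give a proof of this proposition; it is quoted from Scholze's monograph on diamonds (the reference \cite{Sch}), where it is established as part of the foundational theory. So there is no internal argument of the paper to compare against. Evaluating your proposal on its own terms, there is a genuine gap.

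The decisive problem is the step where you invoke
\[
\mathrm{Hom}(Y,W)=\varinjlim_i \mathrm{Hom}(Y_i,W)
\]
for $Y=\varprojlim_i Y_i$ affinoid perfectoid and $W$ an arbitrary qcqs adic space. This identity is false in the perfectoid world. The simplest counterexample is $W=Y$ itself: the identity map $Y\to Y$ does not factor through any finite stage $Y_i$ unless $Y\cong Y_i$ already. More generally, the approximation ``maps out of a cofiltered limit factor through a finite stage'' requires the target to be locally of finite presentation over a base in an appropriate sense; perfectoid rings and spaces are emphatically not of finite type (they are never noetherian), and one can easily write down continuous maps from $\mathcal{O}(W)$ into the completed colimit $\widehat{\varinjlim}\,\mathcal{O}(Y_i)$ that hit elements not in any $\mathcal{O}(Y_i)$ (e.g.\ $T\mapsto \sum_n \pi^n b_n$ with $b_n\in \mathcal{O}(Y_n)\setminus\mathcal{O}(Y_{n-1})$). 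So the bridge from \'etale descent at each finite level to pro-\'etale descent for $Y$ collapses, and the last paragraph of your proposal (where you acknowledge this is the delicate point) is where the argument actually fails rather than where it merely requires care.

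The correct mechanism does not go through factoring descent data to a finite stage. One instead proves directly that the structure presheaves $X\mapsto \mathcal{O}_X(X)$ and $X\mapsto \mathcal{O}_X^+(X)$ are sheaves for the pro-\'etale (and even $v$-)topology: for an affinoid pro-\'etale cover $\mathrm{Spa}(B,B^+)\to\mathrm{Spa}(A,A^+)$ one shows exactness of $0\to A\to B\to B\widehat{\otimes}_A B$ (and its almost version for the $+$-rings). That acyclicity genuinely uses completeness of perfectoid rings together with Tate acyclicity for rational localizations and almost purity for finite \'etale maps, and it is proved at the level of rings for the cover $Y\to X$ itself, not by an approximation from the $Y_i$. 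Once you have sheafness of $\mathcal{O}$ and $\mathcal{O}^+$, subcanonicity follows formally: for affinoid $Z=\mathrm{Spa}(C,C^+)$, the presheaf $h_Z$ is $X\mapsto \mathrm{Hom}\bigl((C,C^+),(\mathcal{O}_X(X),\mathcal{O}_X^+(X))\bigr)$ and hence a sheaf, and the case of general $Z$ follows by gluing over an affinoid cover of $Z$. Your secondary claim that \'etale descent ``reduces to a genuinely faithfully flat descent problem on the level of perfectoid Tate algebras'' is also slightly misleading: only the finite \'etale piece is honestly faithfully flat descent, while rational localizations require Tate acyclicity, which is a separate ingredient and not a special case of flat descent.
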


We slightly abuse notation and write $X=h_X$ for any $X \in \mathrm{Perf}$, i.e. we freely identify a characteristic $p$ perfectoid space $X$ with its Yoneda embedding into sheaves on $\mathrm{Perf}$. By the previous proposition, there is little harm in this.

\begin{defn} A \emph{diamond} is a sheaf $Y$ on $\mathrm{Perf}$ such that $$ Y \simeq \mathrm{Coeq}\left( R\rightrightarrows X \right), $$ where $R \rightrightarrows X$ is a pro-\'etale equivalence relation in characteristic $p$ perfectoid spaces.
\end{defn}

If $Y$ is a diamond with a presentation $Y \simeq \mathrm{Coeq}\left( R\rightrightarrows X \right)$, then $|R| \rightrightarrows |X|$ defines an equivalence relation on $|X|$, and we define $|Y| = |X|/|R|$ as the associated quotient, topologized via the quotient topology.  The topological space $|Y|$ is well-defined independently of the chosen presentation, and is functorial in $Y$.

\begin{defn}A subfunctor $Y' \subset Y$ of a diamond $Y$ is an \emph{open subdiamond} if for any perfectoid space $T$ with a map $T \to Y$, $Y'\times_{Y}T$ is representable and $Y' \times_Y T \to T$ is an open immersion.
\end{defn}
One then checks that if $Y$ is any diamond, the association $Y' \rightsquigarrow |Y'|$ defines an inclusion-preserving bijection between open subdiamonds of $Y$ and open subsets of $|Y|$. 

We also recall the relation of diamonds with adic spaces:

\begin{prop}There is a natural functor $X \mapsto X^\lozenge$ from analytic adic spaces over $\Spa(\mathbf{Z}_p,\mathbf{Z}_p)$ to diamonds, extending the functor $X \mapsto h_{X^\flat}$ on perfectoid spaces and inducing a functorial homeomorphism $|X^\lozenge| \cong |X|$.
\end{prop}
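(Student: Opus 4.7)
The plan is to build the functor $X \mapsto X^\lozenge$ by descent along a pro-\'etale perfectoid cover. I would define $X^\lozenge$ as the functor on $\mathrm{Perf}$ sending $S$ to the set of isomorphism classes of pairs $(S^\sharp, f)$, where $S^\sharp$ is an untilt of $S$ (i.e.\ a perfectoid space with $(S^\sharp)^\flat \cong S$) and $f: S^\sharp \to X$ is a morphism of adic spaces over $\Spa(\mathbf{Z}_p,\mathbf{Z}_p)$. Functoriality in $X$ is immediate by post-composition with $f$. When $X$ is a perfectoid space, giving a morphism $S^\sharp \to X$ is the same as giving a morphism $S \cong (S^\sharp)^\flat \to X^\flat$ by the tilting correspondence, so this extends $X \mapsto h_{X^\flat}$ on perfectoid spaces.

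The nontrivial work is to show that $X^\lozenge$ is actually a sheaf on $\mathrm{Perf}$ and that it is a diamond. For this I would invoke the Scholze-Weinstein existence theorem that any analytic adic space $X$ over $\Spa(\mathbf{Z}_p,\mathbf{Z}_p)$ admits a pro-\'etale surjection $\widetilde{X} \to X$ by a perfectoid space. Setting $R := \widetilde{X} \times_X \widetilde{X}$ (again perfectoid, since pro-\'etale covers of perfectoid spaces are perfectoid), tilting yields a pro-\'etale equivalence relation $R^\flat \rightrightarrows \widetilde{X}^\flat$ in $\mathrm{Perf}$, and by v-descent the coequalizer of this relation is canonically isomorphic to $X^\lozenge$. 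Thus $X^\lozenge$ is a diamond. Independence of the cover follows by the usual refinement trick: any two pro-\'etale perfectoid covers are dominated by their fiber product over $X$, producing canonical isomorphisms between the resulting coequalizers. Functoriality then upgrades from the pointwise level to the presenting equivalence relations.

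For the topological identification $|X^\lozenge| \cong |X|$, the key input is that a pro-\'etale surjection $\widetilde{X} \to X$ of analytic adic spaces induces a quotient map on underlying topological spaces, and the tilting equivalence gives a functorial homeomorphism $|Y| \cong |Y^\flat|$ for any perfectoid space $Y$, compatibly with the two projections $R \rightrightarrows \widetilde{X}$. Both $|X|$ and $|X^\lozenge|$ are then identified with the quotient of $|\widetilde{X}^\flat| = |\widetilde{X}|$ by the equivalence relation induced by $R$, yielding a canonical homeomorphism. Functoriality in $X$ of this homeomorphism is inherited from functoriality of the construction, via compatible choices of covers.

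The principal obstacle is the existence and flexibility of pro-\'etale perfectoid covers of an arbitrary analytic adic space over $\Spa(\mathbf{Z}_p,\mathbf{Z}_p)$, along with the fact that they are quotient maps on underlying topological spaces; these are substantial results from \cite{Sch} (and rest on Scholze-Weinstein's perfectoid theory) which I would use as black boxes. Once they are available, the verification that the coequalizer presentation represents the untilt functor, and that the topology matches, is essentially formal.
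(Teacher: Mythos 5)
This proposition is stated in \S\ref{diamondbasics} purely as recalled background, with no proof given; the paper simply cites \cite{Sch} (specifically \S15 of that reference, where this appears as Lemmas 15.3 and 15.6). Your sketch matches the argument given there: define $X^\lozenge$ by the untilt functor, exhibit a pro-\'etale perfectoid cover $\widetilde X\to X$, and use the coequalizer presentation $R^\flat\rightrightarrows\widetilde X^\flat$ together with $|Y^\flat|\cong|Y|$ for perfectoid $Y$ and the quotient-map property of the cover to get the homeomorphism on underlying spaces. Two small points worth flagging in your write-up: first, the claim that $X^\lozenge$ is a pro-\'etale (or v-) sheaf is not a formality even granting the cover, since one needs that untilts and adic-space morphisms glue along pro-\'etale covers, which is itself one of the substantive inputs from \cite{Sch}; second, your step ``$R=\widetilde X\times_X\widetilde X$ is perfectoid'' implicitly uses that this fiber product exists as an analytic adic space, which holds here because one projection is pro-\'etale but is not automatic for general analytic adic spaces. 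Neither is a gap so much as a place where the ``essentially formal'' label should be replaced by a citation.
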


If $\Spa(A,A^+)$ is an affinoid adic space, we'll often write $\Spd(A,A^+):=\Spa(A,A^+)^\lozenge$ and $\Spd\,A := \Spd(A,A^\circ)$.

Next we recall a flexible variant of pro-\'etale morphisms in the setting of diamonds.

\begin{defn}
\begin{enumerate}
\item A perfectoid space $X$ is \emph{strictly totally disconnected} if it is qcqs and every connected component of $X$ is of the form $\Spa(K,K^+)$ for some algebraically closed perfectoid field $K$.
\item A map of diamonds $Y\to X$ is \emph{quasi-pro-\'etale} if for any strictly totally disconnected perfectoid space $T$ with a map $T\to X$, the sheaf $Y \times_X T$ is representable and the induced map of perfectoid spaces $Y \times_X T \to T$ is pro-\'etale. 
\end{enumerate}
\end{defn}

We now recall some useful classes of diamonds and morphisms of diamonds.

\begin{defn}
\begin{enumerate}
\item A diamond $Y$ is  \emph{quasicompact (qc)} if it admits a presentation $Y \simeq \mathrm{Coeq}\left( R\rightrightarrows X \right)$ with $X$ quasicompact. A morphism of diamonds $Y\to X$ is  \emph{quasicompact} if $Y \times_X U$ is quasicompact for all quasicompact diamonds $U$ with a map $U\to X$.

\item A diamond $Y$ is  \emph{quasiseparated (qs)} if for any qc diamonds $U \to Y, V \to Y$, the fiber product $U \times_Y V$ is qc. A morphism of diamonds $Y\to X$ is  \emph{quasiseparated} if the relative diagonal $Y \to Y \times_X Y$ is quasicompact.

\item A diamond $Y$ is \emph{separated} (resp.  \emph{partially proper}) if it is quasiseparated and for all choices of a characteristic $p$ perfectoid Tate ring $R$ with a ring of integral elements $R^+$, the restriction map $$Y(R,R^{+}) \to Y(R,R^\circ)$$ is injective (resp. bijective). A morphism $Y\to X$ of diamonds is  \emph{separated} (resp.  \emph{partially proper}) if it is quasiseparated and for all choices of a characteristic $p$ perfectoid Tate ring $R$ with a ring of integral elements $R^+$, the natural map $$Y(R,R^+) \to Y(R,R^\circ) \times_{X(R,R^\circ)} X(R,R^+)$$ is injective (resp. bijective).

\item A diamond $Y$ is  \emph{spatial} if it is quasicompact and quasiseparated, and if the subsets $|U| \subset |Y|$, for $U \subset Y$ running over arbitrary qc open subdiamonds of $Y$, give a neighborhood basis of $|Y|$.

\item A diamond is  \emph{locally spatial} if it admits a covering by spatial open subdiamonds.

\end{enumerate}
\end{defn}

We will mostly restrict our attention to locally spatial diamonds, which have several favorable properties:

\begin{prop}\label{locallyspatialbasics}
\begin{enumerate}
\item If\/ $Y$ is (locally) spatial, then $|Y|$ is a (locally) spectral topological space.  
\item A locally spatial diamond $Y$ is qc (resp. qs) if and only if $|Y|$ is qc (resp. qs).
\item If $U \to X \leftarrow V$ is a diagram of (locally) spatial diamonds, then $U \times_X V$ is (locally) spatial.
\item If\/ $Y \to X$ is any map of locally spatial diamonds, the associated map of locally spectral spaces $|Y| \to |X|$ is spectral and generalizing.\footnote{Recall that a morphism of topological spaces $f:Y\to X$ is generalizing if for any point $y \in Y$ and any generalization $x'$ of $f(y)$, there is some generalization $y'$ of $y$ such that $f(y')=x'$, cf. \cite[Def. 1.3.3]{Huberbook}.} Moreover, $|Y| \to |X|$ is a quotient map if $Y\to X$ is surjective.
\item If $X$ is an analytic adic space, then $X^\lozenge$ is locally spatial, and $X^\lozenge$ is spatial exactly when $X$ is qcqs.
\end{enumerate}
\end{prop}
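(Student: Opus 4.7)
This proposition is a compilation of foundational results about (locally) spatial diamonds, and my plan is to deduce each part from the definitions together with results in Scholze's manuscript \cite{Sch}. I would address them roughly in the order stated, since parts (2)--(5) implicitly rely on a workable topological theory of $|Y|$, which is established by (1).

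For (1), the strategy is to verify Hochster's characterization: a topological space is spectral if and only if it is sober, quasi-compact, and admits a basis of qc open sets closed under finite intersection. For spatial $Y$, the defining property supplies the qc open basis directly, and $|Y|$ is qc since $Y$ is qc. Sobriety is the delicate input: I would choose a pro-\'etale presentation $X \to Y$ by a qcqs perfectoid space (so $|X|$ is spectral by Huber's theory) and then verify that the quotient $|X|/|R|$ inherits sobriety from the compatibility of $|R| \rightrightarrows |X|$ with specialization. The locally spatial case follows by choosing a spatial cover.

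For (2), the ``only if'' direction is standard: a qc presentation gives a continuous surjection $|X| \twoheadrightarrow |Y|$, so $|Y|$ inherits quasi-compactness; the qs case reduces to applying the qc case to the diagonal $Y \to Y \times Y$, using (3). Conversely, if $|Y|$ is qc, local spatiality lets me cover $|Y|$ by spatial opens, extract a finite subcover, and assemble a qc presentation of $Y$. For (3), I would take spatial presentations $\tilde{U} \to U$, $\tilde{V} \to V$, $\tilde{X} \to X$ by qcqs perfectoid spaces and form $\tilde{U} \times_{\tilde{X}} \tilde{V}$ in perfectoid spaces; stability of qcqs perfectoid spaces under fiber product then gives a spatial presentation of $U \times_X V$, with the locally spatial case following by covering.

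For (4), both spectrality and the generalizing property lift from the analogous properties for pro-\'etale maps of qcqs perfectoid spaces (which are open and generalizing). The spectrality of $|Y|\to|X|$ amounts to preimages of qc opens being qc, which reduces to the compatibility of the qc open bases on both sides with the pro-\'etale presentations. Surjectivity giving the quotient topology follows from openness of pro-\'etale covers combined with descent of open subsets along surjective spectral morphisms. Finally, (5) is extracted by unwinding the diamondification functor: in the affinoid case $\Spa(A,A^+)$ admits a pro-\'etale cover by a qcqs perfectoid space constructed via a perfectoid field extension, and this pro-\'etale presentation manifestly exhibits $\Spd(A,A^+)$ as spatial; the global claim follows by gluing, and the ``spatial iff qcqs'' equivalence is then immediate from (2) together with the homeomorphism $|X^\lozenge|\cong|X|$.

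The main obstacle is the sobriety verification in (1); the remaining parts are essentially formal once one has a supply of spatial pro-\'etale presentations and the basic topological theory of qcqs perfectoid spaces, whereas sobriety requires controlling the interaction between the quotient topology on $|Y|$ and the pro-\'etale equivalence relation $|R|\rightrightarrows|X|$, which is where Scholze's framework invests the bulk of its technical work.
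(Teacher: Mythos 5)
The paper does not prove this proposition; it is stated as a catalogue of results cited directly from Scholze's manuscript \cite{Sch} (the section opens with ``Everything here can be found in \cite[\S7-8, \S10-11, \S18, \S21]{Sch}''). So there is no in-text argument to compare against, and your task is really to assess whether the sketch would reconstruct Scholze's proofs. With that framing, the outline is broadly on the right track, but a few things deserve comment.

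The crux is your part (1), and you correctly identify sobriety as the bottleneck, but the sketch does not resolve it so much as restate it. The actual tool one needs is a quotient theorem for spectral spaces: if $X$ is spectral and $R\subset X\times X$ is a pro-constructible equivalence relation whose two projections $s,t\colon R\to X$ are open (equivalently, generalizing and quasicompact), then $X/R$ is spectral and $X\to X/R$ is spectral, generalizing, and a quotient map. This is precisely \cite[Lemmas 2.9--2.10]{Sch}, and notably the paper \emph{does} invoke exactly this in the proof of Lemma \ref{groupquotientspectralspaces}. Your proposal would profit from naming this lemma rather than gesturing at ``compatibility with specialization''; it is not a minor finish, it is the entire content of part (1), and it already delivers most of part (4) as a byproduct.

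In part (3) you write $\tilde U\times_{\tilde X}\tilde V$ directly, but this fiber product is not defined: there is no reason the chosen presentations admit lifts $\tilde U\to\tilde X$ and $\tilde V\to\tilde X$. One must first replace $\tilde U$ by $\tilde U\times_X\tilde X$ (a perfectoid space, since $\tilde X\to X$ is quasi-pro-\'etale and $\tilde U$ is perfectoid), and similarly for $\tilde V$, before forming the fiber product over $\tilde X$. The repaired argument is standard, but as written the step fails. Finally, in part (5), deducing ``spatial $\Longleftrightarrow$ qcqs'' from part (2) alone is slightly too quick: part (2) tells you when a \emph{locally spatial} $Y$ is qcqs, whereas you need the separate fact that a qcqs locally spatial diamond is spatial (again in \cite{Sch}). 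Neither of these is a conceptual obstruction, but they are genuine gaps in the proposal as stated.
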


Finally, in some of the proofs in \S3.2 we will make use of Scholze's results on ``canonical compactifications'' of diamonds.  Let us briefly explain what we need from this theory; the reader may wish to skip this discussion.

Fix a diamond $S$.  In \cite[\S18]{Sch}, Scholze defines a compactification functor $Y \mapsto \overline{Y}^{/S}$ from separated $S$-diamonds to separated $S$-diamonds satisfying a number of useful compatibilities:

1) The map $\overline{Y}^{/S} \to S$ is partially proper, and the diamond $ \overline{Y}^{/S}$ comes with a functorial injection $Y \hookrightarrow \overline{Y}^{/S}$ which is an isomorphism if $Y$ is partially proper.

2) The functor $Y \mapsto \overline{Y}^{/S}$ commutes with all limits.

3) If $Y \to Z$ is injective (resp. surjective, resp. quasi-pro-\'etale), then $\overline{Y}^{/S} \to \overline{Z}^{/S}$ is injective (resp. surjective, resp. quasi-pro-\'etale).

4) If $S=\Spd(A,A^+)$ and $Y=\Spd(B,B^+)$ are affinoid perfectoid, then $\overline{Y}^{/S} = \Spd(B,B')$ where $B'$ denotes the completed integral closure of $A^+ + B^{\circ \circ}$ in $B$.

5) If $W$ is a profinite set, then $\overline{Y \times \underline{W}}^{/S} \cong \overline{Y}^{/S}\times \underline{W}$.

We also need the following compatibility of compactifications with torsors.

\begin{prop}\label{Gtorsorcompactification}
Let $G$ be a profinite group, and let $X \to Y$ be a $\underline{G}$-torsor in separated $S$-diamonds.  Then $\overline{X}^{/S} \to \overline{Y}^{/S}$ is a $\underline{G}$-torsor.
\end{prop}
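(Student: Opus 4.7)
The plan is to verify each axiom of a $\underline{G}$-torsor for $\overline{X}^{/S}\to \overline{Y}^{/S}$ directly from the formal properties (1)--(5) of the compactification functor, applied to the given torsor $X\to Y$ in separated $S$-diamonds. The strategy is entirely formal: transport the action, transport the shearing isomorphism, and then transport the covering property.

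First, I would transport the $\underline{G}$-action. The action morphism $a\colon \underline{G}\times X\to X$ lives over $Y$ and is a map of separated $S$-diamonds, and property (5) (with $W=G$ viewed as a profinite set) identifies $\overline{\underline{G}\times X}^{/S}\cong \underline{G}\times \overline{X}^{/S}$. Applying the compactification functor and using this identification produces an action $\underline{G}\times \overline{X}^{/S}\to \overline{X}^{/S}$ over $\overline{Y}^{/S}$. The associativity and identity axioms transfer automatically because they are encoded as commutative diagrams and any functor preserves such diagrams.

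Second, I would verify the shearing isomorphism. The torsor condition on $X$ gives an isomorphism $\underline{G}\times X \xrightarrow{\sim} X\times_Y X$, $(g,x)\mapsto (gx,x)$. Property (2), i.e.\ commutation with all limits, identifies $\overline{X\times_Y X}^{/S}\cong \overline{X}^{/S}\times_{\overline{Y}^{/S}}\overline{X}^{/S}$, and combined with property (5) yields the canonical isomorphism
\[
\underline{G}\times \overline{X}^{/S}\;\xrightarrow{\sim}\;\overline{X}^{/S}\times_{\overline{Y}^{/S}}\overline{X}^{/S}
\]
that encodes the torsor condition for $\overline{X}^{/S}\to \overline{Y}^{/S}$. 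Third, I would invoke property (3): a $\underline{G}$-torsor for $G$ profinite is by construction quasi-pro-\'etale and surjective, and property (3) transfers both to $\overline{X}^{/S}\to \overline{Y}^{/S}$.

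The main (and essentially the only) obstacle is bookkeeping: one must ensure that every fiber product and constant-diamond product involved actually sits inside the category of separated $S$-diamonds, so that the appeals to properties (2) and (5) are legitimate. This is harmless: $\underline{G}\times X$ is separated over $S$ since $\underline{G}$ is (being pro-finite-\'etale over a point), and $X\times_Y X$ is separated over $S$ as a subdiamond of the separated $S$-diamond $X\times_S X$ (the separatedness of $X\to Y$ is inherited from $Y\to S$ and the separated map $X\to S$). Once this is settled, the proof is entirely formal from the listed compatibilities.
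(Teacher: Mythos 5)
Your proposal is correct and follows essentially the same route as the paper's proof: transport the $\underline{G}$-action by functoriality, identify $\underline{G}\times \overline{X}^{/S}\xrightarrow{\sim}\overline{X}^{/S}\times_{\overline{Y}^{/S}}\overline{X}^{/S}$ via properties (2) and (5), and use property (3) for surjectivity and quasi-pro-\'etaleness. The only difference is that you spell out the bookkeeping check that the auxiliary products are separated $S$-diamonds, which the paper leaves implicit; that is a harmless elaboration, not a different argument.
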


\begin{proof}
By property 3) above, the map $\overline{X}^{/S} \to \overline{Y}^{/S}$ is surjective and quasi-pro-\'etale.  Functoriality of the compactification implies that the $\underline{G}$-action on $X$ extends canonically to a $\underline{G}$-action on $\overline{X}^{/S}$.  It now suffices to check that $$ \underline{G} \times \overline{X}^{/S}  \to \overline{X}^{/S} \times_{\overline{Y}^{/S}} \overline{X}^{/S}$$ is an isomorphism. By property 2), the target of this map is canonically isomorphic to $$ \overline{X \times_{Y} X}^{/S} \cong \overline{\underline{G} \times X}^{/S},$$ which in turn is canonically isomorphic to $\underline{G} \times  \overline{X}^{/S}$ by property 5).  This implies the desired result.
\end{proof}

\subsection{Dimension theory} \label{dim_formulas}

In this section we work out some basic dimension theory for diamonds.  Fix a complete nonarchimedean field $K$ of residue characteristic $p$; until further notice, all diamonds are assumed to live over $\mathrm{Spd}(K,K^\circ)$.

\begin{defn} For $X$ a locally spatial diamond, we say $x \in |X|$ is a  \emph{rank one point} if any one of the following equivalent conditions are satisfied:
\begin{enumerate}
\item $x$ has no proper generalizations inside $|X|$;
\item there exists a perfectoid field $L$ and a quasi-pro-\'etale map $\Spd(L,L^\circ) \to X$ with topological image $x$;
\item we have $$x= \bigcap_{\substack{x \in U \subset |X| \\ \,U\,\mathrm{qc\,open}}} U$$as subsets of $|X|$.
\end{enumerate}
(The equivalence of these conditions follows easily from Proposition \ref{locallyspatialbasics}.)
Note that any rank one point $x$ has the structure of a diamond in its own right: more precisely, there is a quasicompact injection from a spatial diamond $x^\lozenge \to X$ with $|x^\lozenge| = x$, which is unique up to unique isomorphism. We will sometimes confuse $x$ and $x^\lozenge$ in what follows. We also note that for any map $Y \to X$ of locally spatial diamonds, the induced map $|Y| \to |X|$ carries rank one points onto rank one points.

\end{defn}
\begin{lemma}\label{dimlemma1}
Let $X$ be a partially proper and locally spatial diamond over $S = \mathrm{Spd}(K,K^\circ)$. Choose a rank one point $x \in |X|$, and choose a perfectoid field $L$ together with a quasi-pro-\'etale map $\mathrm{Spd}(L,L^\circ) \to X$ with topological image $x$.  Then $\dim \overline{\{x \}}=\mathrm{tr.deg}(l/k)$, where $l$ and $k$ denote the residue fields of $L^\circ$ and $K^\circ$, respectively.
\end{lemma}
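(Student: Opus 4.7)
The plan is to use the partial properness of $X$ to extend $g$ to a map from a compactification of $\mathrm{Spd}(L,L^\circ)$, and then reduce the dimension computation to classical valuation theory on $L$.

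Since $X$ is partially proper over $S$, property (1) of the compactification functor gives $\overline{X}^{/S} = X$, so functoriality extends $g$ to a map $\bar g \colon \mathrm{Spd}(L,L^+) \to X$, where by property (4) the ring $L^+$ is identified with the completed integral closure of $K^\circ + L^{\circ\circ}$ inside $L$. By property (3), since $g$ is quasi-pro-\'etale, so is $\bar g$; in particular, Proposition~\ref{locallyspatialbasics}(4) implies that $|\bar g|$ is spectral and generalizing. The unique rank-one point of $|\mathrm{Spd}(L,L^+)|$ maps to $x$, and every other point is a specialization of it, so the image of $|\bar g|$ is contained in $\overline{\{x\}}$.

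The crux of the argument is to promote this containment to a homeomorphism $|\mathrm{Spd}(L,L^+)| \xrightarrow{\sim} \overline{\{x\}}$. Surjectivity amounts to lifting specializations of $x$ in $|X|$ to the compactification, which I expect to obtain by applying a valuative criterion to the partially proper map $\bar g$; injectivity reflects the fact that the original map $g$ topologically contracts its entire domain to the single point $x$. I anticipate that this is the main obstacle of the proof.

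Granting the homeomorphism, the computation becomes purely valuation-theoretic. By Huber's theory, $|\mathrm{Spd}(L,L^+)|$ is in bijection with valuation subrings $R$ satisfying $L^+ \subseteq R \subseteq L^\circ$. Since $L^{\circ\circ} \subseteq L^+ \subseteq R$, quotienting by $L^{\circ\circ}$ identifies such $R$ with valuation subrings $\bar R$ of $l$ containing $\tilde k := L^+/L^{\circ\circ}$, and one checks directly that $\tilde k$ is the relative algebraic closure of $k$ in $l$. Classical valuation theory---specifically Abhyankar's inequality, with equality realized by composing the Gauss-type rank-one valuations attached to a transcendence basis of $l/\tilde k$---then shows that the maximal Krull dimension of such an $\bar R$ equals $\mathrm{tr.deg}(l/\tilde k) = \mathrm{tr.deg}(l/k)$, completing the proof.
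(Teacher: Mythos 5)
Your overall plan is close to the paper's, and the final valuation-theoretic step is correct: $|\Spd(L,L^{+})|$ (where $L^{+}$ is the completed integral closure of $K^\circ + L^{\circ\circ}$) is indeed the Zariski--Riemann space of $l$ over its relative algebraic closure of $k$, of dimension $\mathrm{tr.deg}(l/k)$, and your surjectivity argument via specialization-stability of images of partially proper maps is essentially the same as what the paper uses elsewhere.

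However, the claim you flag as ``the main obstacle''---that $|\Spd(L,L^{+})| \to \overline{\{x\}}$ is a \emph{homeomorphism}---is in general false, and your heuristic for injectivity is exactly backwards. The map $\Spd(L,L^\circ)\to X$ is quasi-pro-\'etale with topological image $x$, which means it factors as $\Spd(L,L^\circ)\twoheadrightarrow x^\lozenge \hookrightarrow X$ where the first arrow is a surjective quasi-pro-\'etale cover. After replacing $L$ by $\widehat{\overline{L}}$, this cover is a $\underline{G}$-torsor for a profinite group $G$ acting continuously and $K$-linearly on $L$ (\cite[Prop.~21.9]{Sch}); the fact that the whole of $|\Spd(L,L^\circ)|$ collapses onto the single point $x$ is a symptom of this nontrivial torsor structure, not evidence of injectivity. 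Passing to compactifications, one finds that $\Spd(L,L^{+}) \to \overline{x^\lozenge}^{/S}$ is again a $\underline{G}$-torsor (this needs Proposition~\ref{Gtorsorcompactification} in the paper), and the homeomorphism one actually obtains is
\[
\overline{\{x\}} \;\cong\; |\overline{x^\lozenge}^{/S}| \;\cong\; |\Spd(L,L^{+})|/G ,
\]
not $|\Spd(L,L^{+})|$ itself. So your proof is missing two things: (i) the reduction to algebraically closed $L$ and the $\underline{G}$-torsor description of $x^\lozenge$, and (ii) the additional lemma that the continuous action of a profinite group $G$ on a spectral space has quotient of the \emph{same} Krull dimension (Lemma~\ref{groupquotientspectralspaces}). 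The latter is what rescues the dimension count once you accept that the map is only a quotient, not a homeomorphism. Without these, the argument does not close.
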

\begin{proof}
Replacing $L$ with $\widehat{\overline{L}}$ doesn't change the transcendence degree of $l$ over $k$, so without of loss of generality we may assume that $L$ is algebraically closed.  By the proof of \cite[Prop. 21.9]{Sch}, this gives rise to a presentation   $$x^\lozenge \simeq \Spd(L,L^{\circ})/\underline{G},$$ where $G$ is a profinite group acting continuously and faithfully on $L$ by $K$-linear automorphisms.  Let $L^{\mathrm{min}}$ be the completed integral closure of $K^\circ + L^{\circ \circ}$ in $L$. Note that $\Spd(L,L^{\mathrm{min}})$ is simply the canonical compactification $\overline{\Spd(L,L^\circ)}^{/S}$.  Note also that $|\Spd(L,L^{\mathrm{min}})|$ is homeomorphic to the Zariski-Riemann space of $l/k'$, where $k'$ denotes the algebraic closure of $k$ in $l$; in particular, $|\Spd(L,L^{\mathrm{min}})|$ is spectral of dimension $\mathrm{tr.deg}(l/k') = \mathrm{tr.deg}(l/k)$.

Since $x^{\lozenge} \to X$ is an injection of separated $S$-diamonds, with $X$ partially proper and $x^\lozenge$ quasicompact, this extends to an injection $\overline{x^\lozenge}^{/S} \to X$, and the injective map $|\overline{x^\lozenge}^{/S}| \to |X|$ is a homeomorphism onto its (closed) image. We now claim that $\overline{\{ x \} }$ and $|\overline{x^\lozenge}^{/S}|$ are homeomorphic as subsets of $|X|$: this follows from observing that both subsets are closed and generalizing subsets of $|X|$ each containing $x$ as their unique rank one point. Since $\Spd(L,L^\circ) \to x^{\lozenge}$ is a $\underline{G}$-torsor, the map $\Spd(L,L^{\mathrm{min}}) \to \overline{x^\lozenge}^{/S}$ is a $\underline{G}$-torsor by Proposition \ref{Gtorsorcompactification}. Therefore we have natural homeomorphisms $$ \overline{\{ x \} } \cong |\overline{x^\lozenge}^{/S}| \cong |\Spd(L,L^{\mathrm{min}})|/G.$$ To complete the proof, it suffices to show the equality $$\dim |\Spd(L,L^{\mathrm{min}})| = \dim |\Spd(L,L^{\mathrm{min}})|/G.$$ This is a special case of the next lemma.
\end{proof}

\begin{lemma}\label{groupquotientspectralspaces} Let $X$ be a spectral space, and let $G$ be a profinite group acting continuously on $X$. Then $X/G$ is spectral, the map $q: X \to X/G$ is spectral and generalizing, and $\dim X = \dim X/G$.
\end{lemma}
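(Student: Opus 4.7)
The plan is to prove the three claims — spectrality of $X/G$, that $q$ is spectral and generalizing, and the dimension equality — in that order, with the compactness of $G$ combined with continuity of the action as the crucial input throughout. For spectrality of $X/G$, my starting point is the observation that for any quasi-compact open $U \subset X$, the saturation $GU = \bigcup_{g \in G} gU$ is again quasi-compact open: openness is clear as a union of opens, and quasi-compactness follows from presenting $GU$ as the continuous image of the quasi-compact space $G \times U$. Consequently $\{q(U) : U \subset X \text{ qc open}\}$ is a basis of qc opens on $X/G$ closed under finite intersection (via $q(U_1) \cap q(U_2) = q(GU_1 \cap GU_2)$), and quasi-compactness of $X/G$ follows from surjectivity of $q$. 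The $T_0$ and sobriety conditions I would verify using the natural bijection (from the quotient topology) between $G$-invariant closed subsets of $X$ and closed subsets of $X/G$, tracking generic points of $G$-stable irreducible closed subsets of $X$ through this correspondence.

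Spectrality of $q$ is then immediate, since $q^{-1}$ sends qc opens to qc opens by the first observation. For the generalizing property, given $x \in X$ with $y = q(x)$ and $y' \rightsquigarrow y$, I pick any $z \in q^{-1}(y')$; by the closed-set correspondence, $\overline{Gz} = q^{-1}(\overline{\{y'\}})$ contains $x$, so the task reduces to finding $gz \in Gz$ with $gz \rightsquigarrow x$. The orbit $Gz$ is the continuous image of the compact profinite space $G$, and the core technical step, which I expect to be the hardest part of this stage, is to show that $\overline{Gz}$ equals its specialization closure $\bigcup_{g \in G} \overline{\{gz\}} = G \cdot \overline{\{z\}}$. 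This would follow from showing the latter is closed in $X$, which one can try to establish by exploiting that $G \cdot \overline{\{z\}}$ is the continuous image of the qc spectral space $G \times \overline{\{z\}}$ and showing that this image is in fact pro-constructible, perhaps via approximation by open normal subgroups of $G$ reducing to the classical case of finite group actions.

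For the dimension equality, $\dim X/G \leq \dim X$ follows by chain lifting: a strict specialization chain in $X/G$ lifts iteratively via the generalizing property of $q$ to a chain of the same length in $X$, which is strict because distinct points of $X/G$ have disjoint fibers. The reverse inequality $\dim X \leq \dim X/G$ is the main obstacle and reduces to showing $G$-orbits are antichains under specialization: $x \rightsquigarrow gx$ implies $x = gx$. My plan is as follows. Iterating the $g$-action yields $g^n x \in \overline{\{x\}}$ for all $n \geq 0$. The key input is that in any compact Hausdorff topological group, the closed subsemigroup generated by an element is automatically a subgroup --- extracting via compactness a convergent subsequence $g^{n_k} \to t$, the differences $g^{n_{k+1} - n_k - 1}$ approximate $g^{-1}$ as a limit of positive powers. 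Applied in $\overline{\langle g \rangle} \subset G$, continuity of the action gives $g^{-1} x \in \overline{\{g^n x : n \geq 1\}} \subset \overline{\{x\}}$. Since applying $g^{-1}$ to $x \rightsquigarrow gx$ also yields $x \in \overline{\{g^{-1} x\}}$, we see that $x$ and $g^{-1} x$ are mutually specializing; the $T_0$ property of $X$ then forces $x = g^{-1} x$, i.e. $gx = x$, completing the antichain property and hence the dimension equality.
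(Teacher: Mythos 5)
Your proof is correct but takes a genuinely different route from the paper's on both main parts. For spectrality and the generalizing property of $q$, the paper is terse: it observes that the image $R$ of the map $G\times X\to X\times X$, $(g,x)\mapsto(gx,x)$, is a pro-constructible equivalence relation on $X$ whose two projections to $X$ are quasicompact, open and generalizing, and then simply cites two lemmas from Scholze's \emph{\'Etale cohomology of diamonds} which give spectrality of $X/R$ and the spectral/generalizing property of the quotient map in one stroke. Your argument rebuilds the same facts by hand via saturations of quasi-compact opens, the closed-set correspondence, and the observation that $G\cdot\overline{\{z\}}$ is a pro-constructible, specialization-stable (hence closed) image of the spectral space $G\times\overline{\{z\}}$ under a spectral map; this works and is more self-contained, but you would need to flesh out the $T_0$/sobriety checks you flagged and keep track of the logical order so that the closed-set correspondence is available before you invoke it. For the dimension equality, both you and the paper reduce the inequality $\dim X\le\dim X/G$ to the statement that $G$-orbits are antichains under specialization, but the mechanisms differ. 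The paper introduces the $G$-invariant height function $\lambda(z)=$ (maximal length of a chain of generalizations of $z$), notes $\lambda(x)\ge\lambda(y)+1$ for a proper specialization, and concludes from orbit-invariance of $\lambda$; this is slick, but the ``contradiction'' only bites when $\lambda$ is finite, so it tacitly leans on finiteness of heights and in particular never uses compactness of $G$. Your argument via the compact-semigroup lemma in $\overline{\langle g\rangle}$ --- extracting $g^{-1}$ as a limit of positive powers of $g$, then invoking $T_0$ to force $gx=x$ --- is a genuinely different proof that actually uses the profiniteness of $G$, and it is robust regardless of the Krull dimension of $X$.
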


\begin{proof} Without loss of generality, $G$ acts faithfully on $X$.  One easily checks that the image $R$ of the natural map $G \times X \overset{(xg,x)}{\to} X \times X$ is pro-constructible and defines an equivalence relation on $X$, such that the maps $s,t: R \to X$ are quasicompact, generalizing and open.  Spectrality of $X/R=X/G$ then follows from \cite[Lemmas 2.9-2.10]{Sch}; moreover, these same lemmas imply that $X \to X/G$ is spectral and generalizing. 

For the equality of dimensions, note that since $q$ is generalizing, we may lift any finite chain in $X/G$ to a chain of the same length in $X$, and thus $\dim X \geq \dim X/G$. It remains to prove the opposite inequality. Since $q$ sends chains in $X$ to chains in $X/G$, it's enough to show that the for any finite chain $C \subset X$, the length of $C$ coincides with the length of $q(C)$. Suppose otherwise; then we can find two distinct points $x \prec y$ in $X$ with $q(x)=q(y)$, or equivalently with $xG = yG$. Let\footnote{Thanks to Christian Johansson for suggesting the following argument.} $\lambda : |X| \to \mathbf{N} \cup \{ \infty \}$ be the function sending a point $z \in X$ to the maximal length of any chain of generalizations of $z$. Clearly $\lambda(x) \geq \lambda(y)+1$.  On the other hand, the $G$-action on $X$ preserves the relation of one point generalizing another, so $\lambda$ is constant on $G$-orbits. This is a contradiction.
\end{proof}

\begin{lemma}\label{additivefiberdim}
Let $f: X \to Y$ be a map of partially proper and locally spatial diamonds over $\mathrm{Spd}(K,K^\circ)$. Choose any rank one point $x \in |X|$ with image $y=f(x)$, and let $X_{y}=X\times_{Y} y^\lozenge$ denote the fiber of $X$ over $y$ (so in particular $x \in |X_y|$).  Then $$\dim \overline{\{x \}}^{X} = \dim \overline{\{y \}}^{Y} + \dim \overline{\{x \}}^{X_y},$$
where $\overline{\{s\}}^{S}$ denotes the closure of $\{s\}$ in the topological space of $S$.
\end{lemma}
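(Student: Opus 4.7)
The plan is to reduce the equality to the additivity of transcendence degrees, by using Lemma \ref{dimlemma1} to rewrite each of the three dimensions as the transcendence degree of a residue field. The key preparatory step is to arrange the three quasi-pro-\'etale covers in play into a coherent tower.

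Concretely, first pick an algebraically closed perfectoid field $L$ together with a quasi-pro-\'etale map $\Spd(L,L^\circ) \to X$ of image $\{x\}$, as in the proof of Lemma \ref{dimlemma1}; the composite to $Y$ then has image $\{y\}$. Using \cite[Prop. 21.9]{Sch}, one may also write $y^\lozenge \simeq \Spd(L',{L'}^\circ)/\underline{G'}$ for some algebraically closed perfectoid field $L'$ and profinite group $G'$. Since $\Spd(L,L^\circ)$ is strictly totally disconnected, the pulled-back $\underline{G'}$-torsor $\Spd(L',{L'}^\circ) \times_{y^\lozenge} \Spd(L,L^\circ) \to \Spd(L,L^\circ)$ admits a section, giving a continuous $K$-embedding $L' \hookrightarrow L$ and hence a tower $k \subset l' \subset l$ of residue fields of $K^\circ$, ${L'}^\circ$, $L^\circ$.

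Now, Lemma \ref{dimlemma1} applied to $x \in X$ and $y \in Y$ (via the above quasi-pro-\'etale covers) gives $\dim \overline{\{x\}}^X = \mathrm{tr.deg}(l/k)$ and $\dim \overline{\{y\}}^Y = \mathrm{tr.deg}(l'/k)$. For the fiber, form the base change $\tilde X := X \times_Y \Spd(L',{L'}^\circ)$, which is partially proper and locally spatial over $\Spd(L',{L'}^\circ)$. The projection $\tilde X \to X_y$ is a $\underline{G'}$-torsor (base change of the $\underline{G'}$-torsor $\Spd(L',{L'}^\circ) \to y^\lozenge$), so the profinite-quotient argument at the end of the proof of Lemma \ref{dimlemma1}---appealing to Lemma \ref{groupquotientspectralspaces}---gives $\dim \overline{\{x\}}^{X_y} = \dim \overline{\{\tilde x\}}^{\tilde X}$ for the lift $\tilde x \in |\tilde X|$ of $x$ induced by the section above. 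Applying Lemma \ref{dimlemma1} now over the base $\Spd(L',{L'}^\circ)$---whose proof works verbatim since $L'$ is a complete nonarchimedean field of residue characteristic $p$---yields $\dim \overline{\{\tilde x\}}^{\tilde X} = \mathrm{tr.deg}(l/l')$. Combining the three identifications with the standard additivity of transcendence degrees in the tower $k \subset l' \subset l$ concludes the proof.

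The principal technical point is building the coherent tower $k \subset l' \subset l$---in particular, constructing the embedding $L' \hookrightarrow L$ compatible with all the maps in sight---and verifying that Lemma \ref{dimlemma1} applies over the non-standard base $\Spd(L',{L'}^\circ)$; once these are in place, the conclusion is immediate.
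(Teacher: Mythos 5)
Your proposal is correct, and it takes a genuinely different route from the paper's proof in how it handles the fiber term $\dim \overline{\{x\}}^{X_y}$, even though both reduce Lemma~\ref{additivefiberdim} to the additivity of transcendence degrees via Lemma~\ref{dimlemma1}. The paper's proof works directly with the fiber $X_y$ over the original base $\Spd(K,K^\circ)$ and carries out an explicit ``pro-\'etale unscrewing'' of the space $\overline{\{x\}}^{X_y} \cong |\overline{x^\lozenge}^{/y^\lozenge}|$ through a cascade of canonical compactifications and torsor identifications, eventually landing on a connected component $V \simeq \Spd(L,L^\circ)$ whose compactified dimension over $\Spd(M,M^\circ)$ is read off as $\mathrm{tr.deg}(l/m)$. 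Your proof instead base changes $X$ along $\Spd(L',{L'}^\circ) \to Y$ to form $\tilde X$, matches $\dim\overline{\{\tilde x\}}^{\tilde X}$ to $\dim\overline{\{x\}}^{X_y}$ via the $\underline{G'}$-torsor $\tilde X \to X_y$, and then re-invokes Lemma~\ref{dimlemma1} \emph{over the new base} $\Spd(L',{L'}^\circ)$ rather than re-deriving its conclusion by hand. This is a clean modular move that outsources the compactification work to a single application of the already-proved lemma; the paper essentially re-runs that machinery inline. Each approach pays a price: yours requires checking that the constructed $\Spd(L,L^\circ)\to \tilde X$ is quasi-pro-\'etale with image $\tilde x$ (which does hold, since it factors as a clopen immersion into a quasi-pro-\'etale cover), while the paper's requires more bookkeeping with $\overline{(\,\cdot\,)}^{/S}$.

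One point worth tightening: Lemma~\ref{groupquotientspectralspaces} literally asserts $\dim |\tilde X| = \dim |X_y|$, not the pointwise statement $\dim \overline{\{\tilde x\}}^{\tilde X} = \dim \overline{\{x\}}^{X_y}$ that you need. To get the local version, restrict the $\underline{G'}$-torsor to $\overline{\{x\}}^{X_y}$ (which is legitimate since this closure is closed and generalizing with unique rank one point $x$): the preimage $q^{-1}(\overline{\{x\}}^{X_y})$ equals $G'\cdot\overline{\{\tilde x\}}^{\tilde X}$ and has the same dimension as $\overline{\{\tilde x\}}^{\tilde X}$ since $G'$ acts by homeomorphisms, while Lemma~\ref{groupquotientspectralspaces} applied to this restricted torsor gives the equality with $\dim\overline{\{x\}}^{X_y}$. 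This is implicit in your appeal to ``the profinite-quotient argument,'' but it deserves a sentence rather than a citation to the global statement.
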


When $X$ and $Y$ are analytic adic spaces, this is exactly Proposition 4.2.21 of \cite{CS}. The proof given below is essentially an adaptation of the argument in loc. cit., although we need to incorporate a rather elaborate pro-\'etale unscrewing.

\begin{proof}
Choose surjective quasi-pro-\'etale maps $\Spd(L,L^\circ) \to X$, $\Spd(M,M^\circ) \to Y$ with topological images $x$ and $y$, respectively; without loss of generality we may assume that $L$ and $M$ are algebraically closed, with residue fields $l$ and $m$, respectively.  Applying Lemma \ref{dimlemma1} twice, we get equalities $\dim \overline{\{x \}}^{X} = \mathrm{tr.deg}(l/k)$ and $\dim \overline{\{y \}}^{Y} = \mathrm{tr.deg}(m/k)$. By the additivity of transcendence degrees, it suffices to prove that there is \emph{some} continuous $K$-algebra map $M \to L$ such that the induced extension $m \to l$ of residue fields satisfies $\mathrm{tr.deg}(l/m) = \dim \overline{\{x \}}^{X_y}$. 

Let $G$ and $H$ be the profinite groups such that $\Spd(L,L^\circ) \to x^\lozenge$ (resp. $\Spd(M,M^\circ) \to y^\lozenge$) is a $G$-torsor (resp. an $H$-torsor) as before.  Arguing as in the proof of Lemma \ref{dimlemma1}, we have a canonical homeomorphism $$ \overline{\{x \}}^{X_y} \cong |\overline{x^\lozenge}^{/y^\lozenge}|. $$ Base change along the map $\Spd(M,M^\circ) \to y^\lozenge$ identifies $\overline{x^\lozenge \times_{y^\lozenge} \Spd(M,M^\circ)}^{/\Spd(M,M^\circ)}$ with $\overline{x^\lozenge}^{/y^\lozenge} \times_{y^\lozenge} \Spd(M,M^\circ)$; in particular, $$\overline{x^\lozenge \times_{y^\lozenge} \Spd(M,M^\circ)}^{/\Spd(M,M^\circ)}$$ is naturally an $\underline{H}$-torsor over $\overline{x^\lozenge}^{/y^\lozenge}$, so \begin{eqnarray*} \dim \overline{\{x \}}^{X_y}  & = & \dim |\overline{x^\lozenge}^{/y^\lozenge}| \\
& = & \dim |\overline{x^\lozenge \times_{y^\lozenge} \Spd(M,M^\circ)}^{/\Spd(M,M^\circ)}|,
\end{eqnarray*}
where the second equality follows from Lemma \ref{groupquotientspectralspaces}. Furthermore, the 
$\underline{G}$-torsor $\Spd(L,L^\circ) \to x^\lozenge$ induces a $\underline{G}$-torsor structure on the map $$ \overline{\Spd(L,L^\circ) \times_{y^\lozenge} \Spd(M,M^\circ)}^{/\Spd(M,M^\circ)} \to \overline{x^\lozenge \times_{y^\lozenge} \Spd(M,M^\circ)}^{/\Spd(M,M^\circ)},$$ so we are reduced to calculating the topological dimension of the source of this map.  Since $\Spd(L,L^\circ) \times_{y^\lozenge} \Spd(M,M^\circ) \to \Spd(L,L^\circ)$ is an $\underline{H}$-torsor (and in particular a pro-finite \'etale map) with target a geometric point, there is an isomorphism $$\Spd(L,L^\circ) \times_{y^\lozenge} \Spd(M,M^\circ) \simeq \Spd(L,L^\circ) \times \underline{W}$$for some profinite set $W$. In particular, any connected component $V$ of $$ \Spd(L,L^\circ) \times_{y^\lozenge} \Spd(M,M^\circ)$$ is isomorphic to $\Spd(L,L^\circ)$; choosing such a component induces a continuous map $M \to L$, and $\dim |\overline{V}^{/\Spd(M,M^\circ)}| = \mathrm{tr.deg}(l/m)$ by our previous discussions.  We now conclude by observing that the compactification functor identifies any given connected component of $|\overline{\Spd(L,L^\circ) \times_{y^\lozenge} \Spd(M,M^\circ)}^{/\Spd(M,M^\circ)}|$ homeomorphically with $ |\overline{V}^{/\Spd(M,M^\circ)}|$ for some uniquely determined connected component $V \subset \Spd(L,L^\circ) \times_{y^\lozenge} \Spd(M,M^\circ)$, so the result now follows by varying $V$.
\end{proof}

The next result conceptualizes the proof of Proposition 4.2.23 of \cite{CS}.

\begin{lemma}\label{imagedimlemma}
Let $f: X \to Y$ be a map of partially proper and locally spatial diamonds over $\mathrm{Spd}(K,K^\circ)$. Suppose there is some integer $d \geq 0$ such that for every rank one point $y \in \mathrm{im}(|X| \to |Y|)$, the fiber $X_y = X \times_{Y} y^{\lozenge}$ is of dimension $d$.  Then $$\dim \mathrm{im}(|X| \to |Y|) = \dim |X| - d.$$
\end{lemma}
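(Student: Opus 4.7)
The plan is to prove both inequalities $\dim|X|\leq\dim\mathrm{im}(|X|\to|Y|)+d$ and $\dim|X|\geq\dim\mathrm{im}(|X|\to|Y|)+d$ by combining Lemma~\ref{additivefiberdim} with a topological consequence of partial properness. Writing $I=\mathrm{im}(|X|\to|Y|)$, the key preliminary observation is that partial properness forces $I$ to be closed under specialization in $|Y|$. To see this, suppose $y=|f|(\tilde x)$ lies in $I$ and $y'\preceq y$ is a specialization in $|Y|$; the specialization is induced by a map $\Spd(L,L^+)\to Y$ from a perfectoid Huber pair whose generic and specialized points realize $y$ and $y'$ respectively. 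The factorization $\tilde x\colon\Spd(L,L^\circ)\to X$ of the generic point, together with the given $\Spd(L,L^+)\to Y$, then extends by the surjectivity half of the partial-properness condition to a map $\Spd(L,L^+)\to X$, whose specialized point is a preimage of $y'$ in $|X|$. Consequently, for any rank one $y\in I$ (equivalently, any rank one $y\in|Y|$ lying in $I$, since $|f|$ is generalizing by Proposition~\ref{locallyspatialbasics}), we have $\overline{\{y\}}^Y\subseteq I$, hence $\overline{\{y\}}^I=\overline{\{y\}}^Y$, and in particular $\dim\overline{\{y\}}^Y\leq\dim I$.

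For the upper bound, I would use the identity $\dim|X|=\sup_{\tilde x}\dim\overline{\{\tilde x\}}^X$ as $\tilde x$ ranges over rank one points of $|X|$, and apply Lemma~\ref{additivefiberdim} to each such $\tilde x$. Setting $y=f(\tilde x)$, which is automatically rank one in $|Y|$ (and thus in $I$) because $|f|$ is generalizing, the lemma yields
$$\dim\overline{\{\tilde x\}}^X=\dim\overline{\{y\}}^Y+\dim\overline{\{\tilde x\}}^{X_y}\leq\dim I+d,$$
where the first term is bounded by the preliminary observation and the second by the hypothesis $\dim X_y=d$. Taking the supremum over $\tilde x$ gives $\dim|X|\leq\dim I+d$.

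For the lower bound, I would select a rank one $y\in I$ realizing $\dim\overline{\{y\}}^I=\dim I$; since every chain in $I$ extends upward to a rank one point of $I$ (using that $I$ is stable under generalization, because $|f|$ is generalizing), this equals $\sup_y\dim\overline{\{y\}}^I$, and the supremum is attained whenever $\dim I$ is finite, with the infinite case handled analogously by choosing arbitrarily large finite values. The hypothesis $\dim X_y=d$ then produces a rank one point $\tilde x\in|X_y|$ with $\dim\overline{\{\tilde x\}}^{X_y}=d$; such $\tilde x$ is automatically rank one in $|X|$, because any proper generalization in $|X|$ would project to a proper generalization of $y$ in $|Y|$, contradicting that $y$ is rank one. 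Applying Lemma~\ref{additivefiberdim} now gives
$$\dim\overline{\{\tilde x\}}^X=\dim\overline{\{y\}}^Y+d\geq\dim\overline{\{y\}}^I+d=\dim I+d,$$
so $\dim|X|\geq\dim I+d$, completing the proof.

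The main obstacle is the preliminary step translating partial properness into specialization-closedness of $I$, which is indispensable for the upper bound and requires carefully unwinding the Huber-pair definition of partial properness into a topological statement about $|f|$. Beyond this, the argument is a routine combination of the additivity formula of Lemma~\ref{additivefiberdim} with basic dimension-theoretic manipulations; a minor technical wrinkle is ensuring that the relevant suprema are actually attained, which follows from the integrality of Krull dimensions together with the finiteness hypothesis on $d$.
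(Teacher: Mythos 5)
Your proof is correct and takes essentially the same approach as the paper: both inequalities are derived by applying Lemma~\ref{additivefiberdim} to rank one points of $|X|$ and their images under $f$, then taking the supremum. The only substantive difference is that you carefully justify the specialization-closedness of $\mathrm{im}(|X|\to|Y|)$ inside $|Y|$ via the valuative criterion; the paper records this fact without proof in the remark immediately preceding the lemma (and sketches the same valuative-criterion argument later, in the proof of Proposition~\ref{homqpp}), and under the paper's definition of $\dim \mathrm{im}(|X|\to|Y|)$ as a supremum over chains in $|Y|$ emanating from rank one points of the image, this step is not even logically required for the dimension count itself.
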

\begin{remark}Let $f:X \to Y$ be as in the lemma. If $f$ is quasicompact, then the subspace and quotient topologies on $\mathrm{im}(|X| \to |Y|)$ coincide, and $\mathrm{im}(|X| \to |Y|)$ is locally spectral and pro-constructible inside $|Y|$. For a general $f$, all of these outcomes can fail.  It is true, however, that $\mathrm{im}(|X| \to |Y|)$ is always closed under generalization and specialization inside $|Y|$. In any case, we define $\dim \mathrm{im}(|X| \to |Y|)$ as the maximal length of any chain of specializations of any rank one point of $|Y|$ contained in the image of $|X|$. 
\end{remark}
\begin{proof} Choose a rank one point $y \in \mathrm{im}(|X| \to |Y|)$, and choose some rank one point $x \in X_y$ lifting $y$ such that $\dim \overline{\{x \}}^{X_y} = d$. Then $$ \dim \overline{\{y \}}^{Y} = \dim \overline{\{x \}}^{X}-\dim \overline{\{x \}}^{X_y} = \dim \overline{\{x \}}^{X}-d \leq \dim X -d$$ by Lemma \ref{additivefiberdim}. Taking the supremum over all $y$, we deduce the inequality $$\dim \mathrm{im}(|X| \to |Y|) \leq \dim |X| - d.$$

For the opposite inequality, it clearly suffices to prove that $$ \dim X \leq \dim \mathrm{im}(|X| \to |Y|)+d.$$ Let $|X|^{\mathrm{rk}\,1}$ denote the set of rank one points in $|X|$. Then  \begin{eqnarray*}
\dim X & = & \sup_{x \in |X|^{\mathrm{rk}\,1}} \dim \overline{\{x\}}^{X}\\
 & = & \sup_{x \in |X|^{\mathrm{rk}\,1}} (\dim \overline{\{f(x)\}}^{Y}+ \dim \overline{\{x\}}^{X_{f(x)}}) \\
 & \leq & \sup_{x \in |X|^{\mathrm{rk}\,1}} \dim \overline{\{f(x)\}}^{Y}+ \sup_{x \in |X|^{\mathrm{rk}\,1}} \dim \overline{\{x\}}^{X_{f(x)}}\\
 & = & \dim \mathrm{im}(|X| \to |Y|) + d.\end{eqnarray*}Here the first line is simply the definition of $\dim X$, the second line follows from Lemma \ref{additivefiberdim}, and the final line follows from the relevant definitions together with our assumption on the fibers of $f$.
\end{proof}

\subsection{Diamonds and moduli of bundle maps}\label{bundlemaps}

In this section, we define and study diamonds parametrizing maps between vector bundles on $\xcal$ with various specified properties. We note in passing that every diamond considered here is a diamond in the sense of the original definition proposed by Scholze in 2014, i.e. they each admit a surjective representable pro-\'etale morphism from a perfectoid space. 

Throughout this section, we fix a finite extension $E/\mathbf{Q}_p$ with residue field $\mathbf{F}_q$ and an algebraically closed perfectoid field $F / \mathbf{F}_q$, and we let $\Cal{X}=\Cal{X}_{E,F}$ denote the adic Fargues-Fontaine curve over $\Spa\,F$.  For any perfectoid space $S / \Spa\,F$ we get a relative curve $\mathcal{X}_S=\mathcal{X}_{E,S}$ together with a natural map $\Cal{X}_S \to \Cal{X}$ (cf. \cite[Ch. 7-8]{KL15} for a thorough discussion of relative Fargues-Fontaine curves). If $\Cal{E}$ is any vector bundle on $\Cal{X}$, we denote the vector bundle obtained via pullback along $\Cal{X}_S \to \Cal{X}$ by $\Cal{E}_S$.

\begin{defn} Fix vector bundles $\ecal,\fcal$ on $\xcal$.  Consider the following sheaves of sets on $\mathrm{Perf}_{/\mathrm{Spa}\, F}$.
\begin{enumerate}
\item Let $\mathcal{H}^{0}(\mathcal{E}):\mathrm{Perf}_{/\mathrm{Spa}\, F}\to\mathrm{Sets}$
be the functor sending $f:S\to\mathrm{Spa}\, F$ to the set $H^{0}(\mathcal{X}_{S},\mathcal{E}_{S})$.
\item Let $\mathcal{H}\mathrm{om}(\mathcal{E},\mathcal{F})$ be the functor
sending $f:S\to\mathrm{Spa}\, F$ to the set of $\mathcal{O}_{\mathcal{X}_{S}}$-module
maps $m:\mathcal{E}_{S}\to\mathcal{F}_{S}$. Note that $\mathcal{H}\mathrm{om}(\mathcal{E},\mathcal{F})\cong\mathcal{H}^{0}(\mathcal{E}^{\vee}\otimes\mathcal{F})$. 

\item Let $\mathcal{S}\mathrm{urj}(\mathcal{E},\mathcal{F})\subset\mathcal{H}\mathrm{om}(\mathcal{E},\mathcal{F})$
be the subfunctor of $\Hom(\ecal,\fcal)$ whose $S$-points parametrize surjective $\mathcal{O}_{\mathcal{X}_{S}}$-module
maps. 

\item Let $\mathcal{I}\mathrm{nj}(\mathcal{E},\mathcal{F})\subset\mathcal{H}\mathrm{om}(\mathcal{E},\mathcal{F})$
be the subfunctor of $\Hom(\ecal,\fcal)$ whose $S$-points parametrize ``fiberwise-injective'' $\mathcal{O}_{\mathcal{X}_{S}}$-module
maps. Precisely, this is the functor parametrizing $\mathcal{O}_{\mathcal{X}_{S}}$-module
maps $m:\mathcal{E}_{S}\to\mathcal{F}_{S}$ such that for every geometric
point $\overline{x}=\mathrm{Spa}(C,C^{+})\to S$, the pullback of $m:\mathcal{E}_{S}\to\mathcal{F}_{S}$
along the induced map $\mathcal{X}_{\overline{x}} \to\mathcal{X}_{S}$ gives an injective
$\mathcal{O}_{\mathcal{X}_{\overline{x}}}$-module map. %
\footnote{The condition defining $\mathcal{I}\mathrm{nj}$ is much stronger than
the condition that $m:\mathcal{E}_{S}\to\mathcal{F}_{S}$ be
injective; note that the association sending $S$ to the set of injective
$m$s isn't even a presheaf. Note also that there is a natural transformation
$\mathcal{S}\mathrm{urj}(\mathcal{F}^{\vee},\mathcal{E}^{\vee})\to\mathcal{I}\mathrm{nj}(\mathcal{E},\mathcal{F})$;
this turns out to be an open immersion, although it typically isn't
an isomorphism.%
}

\item Let $\mathcal{H}^{0}(\mathcal{E})^{\times}:=\mathcal{I}\mathrm{nj}(\mathcal{O},\mathcal{E})\subset\mathcal{H}^{0}(\mathcal{E})$
be the functor parametrizing sections of $\mathcal{E}$ which are
not identically zero on any fiber of $\mathcal{X}_{S}\dashrightarrow S$.

\item Let $\acal\mathrm{ut}(\ecal)$ be the functor sending $f:S \to \Spa\,F$ to the group of $\ocal_{\xcal_S}$-module automorphisms of $\ecal_S$.
\end{enumerate}

Our first order of business is to show that these functors are all locally spatial and partially proper diamonds. We remark that the functors $\mathcal{H}^{0}$ and $\mathcal{H}\mathrm{om}$ are Banach-Colmez spaces \cite{Col,leBras}, and in particular can be given some meaningful geometric structure without appealing to the theory of diamonds. However, the other four functors defined above are not Banach-Colmez spaces (indeed, they are not even valued in $\mathbf{Q}_p$-vector spaces), and the theory of diamonds seems essential to our analysis of them.

Let $$\tilde{\mathbf{D}}^n = \Spa(F^\circ [[T_1^{1/p^{\infty}},\dots,T_n^{1/p^{\infty}}]],F^\circ [[T_1^{1/p^{\infty}},\dots,T_n^{1/p^{\infty}}]]) \times_{\Spa(F^\circ,F^\circ)} \Spa(F,F^\circ)$$ denote the $n$-dimensional open perfectoid unit disk over $F$.

\end{defn}

\begin{prop}\label{h0nice} 1) The functor $\Cal{H}^0(\Cal{E})$ is a diamond: if $\Cal{E}$ has only positive slopes, then we can find an isomorphism $$\Cal{H}^0(\Cal{E}) \simeq \tilde{\mathbf{D}}^d/\underline{A}$$ where $d=\deg \ecal$ and where $A \simeq \mathbf{Q}_{p}^m$ is an abelian locally profinite group acting freely on $\tilde{\mathbf{D}}^d$; in general, there is a natural isomorphism $\Cal{H}^0(\Cal{E}) \simeq \underline{\mathbf{Q}_{p}^n} \times \Cal{H}^0(\Cal{E}^{>0})$ for some $n\geq 0$.  

2) The diamond $\hcal^0(\ecal)$ is partially proper and locally spatial, and equidimensional of dimension $\deg \ecal^{\geq 0}$. Furthermore, any nonempty open subfunctor of $\Cal{H}^0(\Cal{E})$ has an $F$-point.
\end{prop}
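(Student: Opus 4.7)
The strategy is to reduce to the case of a single stable bundle $\ocal(\lambda)$ via the splitting of the HN filtration, and then handle the sign of $\lambda$ case by case. By Corollary~\ref{HNsplit} we have $\ecal \simeq \bigoplus_i \ocal(\lambda_i)$, and since $\hcal^0(-)$ sends direct sums of bundles to products of sheaves, while all the claimed properties (local spatiality, partial properness, equidimensionality with dimensions adding, and existence of $F$-points in every nonempty open subfunctor) are stable under finite products, it suffices to establish the proposition for each $\hcal^0(\ocal(\lambda))$ individually. Grouping the summands by $\sgn(\lambda)$ then yields the isomorphism $\hcal^0(\ecal)\simeq\underline{\mathbf{Q}_p^n}\times\hcal^0(\ecal^{>0})$ asserted in (1).

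The cases $\lambda\leq 0$ are handled by inspection. For $\lambda<0$, a flat base-change reduction to geometric points combined with Theorem~\ref{classification}(4) gives $H^0(\xcal_S,\ocal(\lambda)_S)=0$ for every perfectoid $S/\Spa F$, so $\hcal^0(\ocal(\lambda))\simeq\Spa F$ satisfies every claim trivially with $\deg\ocal(\lambda)^{\geq 0}=0$. For $\lambda=0$, the identification of $H^0(\xcal_S,\ocal_{\xcal_S})$ with continuous $E$-valued functions on $|S|$ yields $\hcal^0(\ocal)\simeq\underline{E}\simeq\underline{\mathbf{Q}_p^{[E:\mathbf{Q}_p]}}$, a locally profinite (hence locally spatial and partially proper) diamond of dimension $0=\deg\ocal^{\geq 0}$; since every nonempty open is a union of copies of $\Spa F$, the $F$-point assertion is immediate.

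The substantive case is $\lambda=d/h>0$ in lowest terms, where I would construct the required presentation $\hcal^0(\ocal(\lambda))\simeq\tilde{\mathbf{D}}^d/\underline{A}$ by reinterpreting the classical Banach--Colmez description (going back to Colmez and to \cite{FF08}) in the diamond-theoretic framework. Concretely, $\ocal(\lambda)$ is defined by $\varphi$-descent from $\ycal$ of the trivial bundle $\ocal_{\ycal}^h$ with Frobenius twisted by $\pi^{-d}$, so sections over $\xcal_S$ correspond to $h$-tuples in $H^0(\ycal_S,\ocal)^h$ satisfying an explicit Frobenius equation. Combining this with the ``universal cover'' form of the fundamental exact sequence of $p$-adic Hodge theory produces a short exact sequence of pro-\'etale sheaves
\[
0 \longrightarrow \underline{A} \longrightarrow \tilde{\mathbf{D}}^d \longrightarrow \hcal^0(\ocal(\lambda)) \longrightarrow 0
\]
with $A\cong\mathbf{Q}_p^m$ acting freely: the middle term parametrizes lifts of a section through the universal cover, and $A$ is the resulting period lattice. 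Given such a presentation, local spatiality and partial properness pass from $\tilde{\mathbf{D}}^d$ to the quotient, Lemma~\ref{groupquotientspectralspaces} applied on spatial charts gives $\dim\hcal^0(\ocal(\lambda))=d=\deg\ocal(\lambda)^{\geq 0}$, and density of $\tilde{\mathbf{D}}^d(F)=\mathfrak{m}_{F^\circ}^{\oplus d}$ in $\tilde{\mathbf{D}}^d$ yields the $F$-point statement. The principal obstacle is the construction of this exact sequence in the positive-slope case: the underlying analytic content is classical, but translating it cleanly into a free pro-\'etale quotient of a perfectoid disk by a constant locally profinite sheaf, and in particular verifying both the freeness of the $\underline{A}$-action and the correct value of $m$, requires careful unwinding that is not merely formal.
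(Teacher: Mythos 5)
Your overall strategy matches the paper's: split $\ecal$ via the HN decomposition, dispatch the non-positive slopes by inspection (giving $\underline{\mathbf{Q}_p^n}$ and trivial factors), and construct a presentation
\[
0 \longrightarrow \underline{A} \longrightarrow \tilde{\mathbf{D}}^d \longrightarrow \hcal^0(\ocal(d/h)) \longrightarrow 0
\]
for positive slope. You correctly identify the construction of this sequence as the crux. The paper obtains it not from the ``universal cover form of the fundamental exact sequence'' in the abstract sense you gesture at, but by a concrete trick: one passes to the unramified degree-$h$ extension $E'/E$ and pushes forward the standard sequence $0\to\ocal^{d-1}\to\ocal(1)^d\to\ocal(d)\to 0$ on $\xcal_{E'}$ along the finite \'etale map $r:\xcal_{E',S}\to\xcal_S$, yielding $0\to\ocal^{h(d-1)}\to\ocal(1/h)^d\to\ocal(d/h)\to 0$ and hence the quotient presentation with $A\cong E^{h(d-1)}$; the middle term is identified with $\tilde{\mathbf D}^d$ by appealing to the universal cover of an isoclinic $\pi$-divisible $\ocal_E$-module (which directly handles the low-degree case $0<d\leq h$). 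So your ``principal obstacle'' is real but resolvable by an explicit device rather than open-ended ``careful unwinding.''

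The genuine gap is your claim that ``local spatiality and partial properness pass from $\tilde{\mathbf{D}}^d$ to the quotient.'' Partial properness is fine (and the paper notes it follows for entirely different, formal reasons from independence of $R^+$). But local spatiality of $\tilde{\mathbf{D}}^d/\underline{A}$ is \emph{not} automatic: $A$ is only locally profinite, so the map $\tilde{\mathbf{D}}^d\to\tilde{\mathbf{D}}^d/\underline{A}$ is not quasicompact and one cannot invoke the general descent of spatiality along qcqs pro-\'etale surjections. The paper instead gives a separate inductive argument for local spatiality alone: using an untilt $F^\sharp$, it produces a short exact sequence $0\to\hcal^0(\ocal((d-h)/h))\to\hcal^0(\ocal(d/h))\to(\mathbf{A}^h_{F^\sharp})^\lozenge\to 0$ whose last map acquires a section after a pro-\'etale cover, and then applies \cite[Prop. 13.4]{Sch} to descend local spatiality down the induction to the base case $0<d\leq h$, where the explicit identification with $\tilde{\mathbf{D}}^d$ is available. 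Your dimension computation via Lemma~\ref{groupquotientspectralspaces} is fine \emph{granting} local spatiality (the paper likewise factors through an open profinite $A_0\subset A$), but as written you would be running it on a space not yet known to be locally spatial.
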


We point out that the dimension of $\hcal^0(\ecal)$ computed above coincides with the ``principal dimension'' of this object in the language of Banach-Colmez spaces \cite{Col}.

\begin{remark} The partial properness of $\Cal{H}^0(\Cal{E})$ is a formal consequence of the fact that the category of vector bundles on a relative curve $\Cal{X}_{\Spa(R,R^+)}$ is canonically independent of the choice of $R^{+}$, cf. \cite[Theorem 8.7.7]{KL15}.  In fact, every functor defined above is partially proper, for the same reason.
\end{remark}

\begin{proof}
Writing $\ecal \simeq \ecal^{>0} \oplus \ocal^n \oplus \ecal'$ where $\ecal'$ has only negative slopes, the isomorphism $\mathcal{H}^{0}(\mathcal{E}_{1}\oplus\mathcal{E}_{2})\cong\mathcal{H}^{0}(\mathcal{E}_{1})\times_{\mathrm{Spd}\, F}\mathcal{H}^{0}(\mathcal{E}_{2})$ together with the identification $\hcal^0(\ocal) \simeq \underline{E}$ reduce us from the general case to the case of positive slopes. Writing $\ecal \simeq \oplus_i \ocal(\lambda_i)$ and observing that $$ \tilde{\mathbf{D}}^{d_1}/\underline{A_1} \times_{\Spd\,F} \tilde{\mathbf{D}}^{d_2}/\underline{A_2} \cong \tilde{\mathbf{D}}^{d_1 +d_2}/\underline{A_1 \times A_2},$$ we reduce further to the case where $\mathcal{E}=\mathcal{O}(\lambda)$ for some $\lambda = d/h \in\mathbf{Q}_{>0}$. Let $E'/E$ be the unramified extension of degree $h$, so we have a natural finite \'etale map $r: \xcal_{E',S} \to \xcal_S$ such that $r_{\ast} \ocal(d) \simeq \ocal(\lambda)$ functorially in $S$. Choose a short exact sequence $$0 \to \ocal^{d-1} \to \ocal(1)^d \to \ocal(d) \to 0$$of vector bundles on $\xcal_{E',F}$. Pushing forward along $r_\ast$ and applying $\hcal^0(-)$ gives rise to a short exact sequence of abelian group sheaves on $\mathrm{Perf}_{/\Spa\,F}$, giving an identification $$\hcal^0(\ocal(d/h)) \simeq \hcal^0(\ocal(1/h)^d)/\hcal^0(\ocal^{h(d-1)}).$$ It's easy to see that $\hcal^0(\ocal^{h(d-1)}) \simeq \underline{E^{h(d-1)}}$ as abelian group sheaves on $\mathrm{Perf}_{/\Spa\,F}$. Moreover, there is an isomorphism $\hcal^0(\ocal(1/h)) \simeq \tilde{\mathbf{D}}^1$: quite generally, for any given integers $0<d \leq h$, one can exhibit an isomorphism $$\hcal^0(\ocal(d/h)) \simeq \tilde{\mathbf{D}}^{d} $$ by identifying $\hcal^0(\ocal(d/h))$ with the universal cover of an isoclinic $\pi$-divisible $\ocal_E$-module of height $h$ and dimension $d$ (cf. \cite{FF08, SW13}). Putting things together, 1) follows.

For 2), assume for the moment that $\hcal^0(\ecal)$ is locally spatial.  Fix a presentation as in 1).  After choosing an open profinite subgroup $A_0 \subset A$, the map $$ \tilde{\mathbf{D}}^d \to \tilde{\mathbf{D}}^d/\underline{A}$$ factors as $$\tilde{\mathbf{D}}^d \to \tilde{\mathbf{D}}^d/\underline{A_0} \to \tilde{\mathbf{D}}^d/\underline{A}.$$ By \cite[Prop. 4.3.2]{Wei} the first arrow here is pro-finite \'etale surjective, and the second arrow is separated, \'etale and surjective by \cite[Lemma 10.13]{Sch}.  The space $\tilde{\mathbf{D}}^d$ is equidimensional of dimension $d$, so $\tilde{\mathbf{D}}^d/\underline{A_0}$ is equidimensional of dimension $d$ by Lemma \ref{groupquotientspectralspaces}.  We are now reduced to checking that if $f: X \to Y$ is a surjective \'etale map of locally spatial diamonds, then $\dim X = \dim Y$.  Since $f$ is generalizing, the inequality $\dim X \geq \dim Y$ is clear.  For the opposite inequality, one easily reduces to the case where $X \to Y$ is a surjective finite \'etale map with $Y$ connected, by \cite[Lemma 11.31]{Sch}, which we then leave as an exercise for the interested reader.\footnote{Sketch: By a standard argument, one can dominate $X$ by a surjective finite \'etale map $X' \to X$ such that $X' \to Y$ is finite \'etale and Galois for some finite group $G$; but then $\dim X \leq \dim X' = \dim Y$, where the latter equality follows from Proposition \ref{groupquotientspectralspaces}.}

We still need to check that $\hcal^0(\ecal)$ is locally spatial. Fix a presentation as in 1), and choose a quasicompact open subgroup $U_0 \subset \tilde{\mathbf{D}}^d$. Set $\underline{A_0}= U_0 \cap \underline{A}$, so this is a profinite group sheaf acting freely on $U_0$. Then $V=U_0/\underline{A_0}$ is an open  subdiamond of $\hcal^0(\ecal)$, and it is  spatial by the subsequent lemma. Since  the $p^{-n}$-dilates of $V$ cover all of $\hcal^0(\ecal)$, the latter is locally spatial, as desired.

Finally, the statement on $F$-points follows from the explicit presentation in 1) together with the (easy) analogous statement for $\tilde{\mathbf{D}}^n$.
\end{proof}
In the previous proof, we made use of the following lemma.
\begin{lemma}Let $X$ be a spatial diamond with a free $\underline{G}$-action for some profinite group $G$. Then $X/\underline{G}$ is a spatial diamond.
\end{lemma}
\begin{proof}Immediate upon combining Lemma 10.13 and Proposition 11.24 from \cite{Sch}.
\end{proof}

\begin{prop}\label{homnice}The functor $\Hom(\mathcal{E}, \mathcal{F})$ is a locally spatial (and partially proper) diamond over $\Spd\,F$, equidimensional of dimension $\deg(\Cal{E}^\vee \otimes \Cal{F})^{\geq 0}$.
\end{prop}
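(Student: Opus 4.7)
The plan is to deduce this directly from Proposition \ref{h0nice}. The key observation, already recorded in the definition of $\mathcal{H}\mathrm{om}(\mathcal{E}, \mathcal{F})$, is the natural isomorphism of functors
\[
\mathcal{H}\mathrm{om}(\mathcal{E}, \mathcal{F}) \cong \mathcal{H}^{0}(\mathcal{E}^{\vee} \otimes \mathcal{F}).
\]
First, I would verify this isomorphism functorially in $S$, noting that for any perfectoid $S / \Spa\, F$ one has a canonical identification $\Hom_{\mathcal{O}_{\mathcal{X}_S}}(\mathcal{E}_S, \mathcal{F}_S) \cong H^0(\mathcal{X}_S, (\mathcal{E}^\vee \otimes \mathcal{F})_S)$, since the formation of internal Hom, dual, and tensor product of locally free sheaves commutes with pullback along $\mathcal{X}_S \to \mathcal{X}$.

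Once this identification is in hand, I would simply apply part (2) of Proposition \ref{h0nice} to the vector bundle $\mathcal{E}^\vee \otimes \mathcal{F}$ on $\mathcal{X}$. That proposition yields directly that $\mathcal{H}^0(\mathcal{E}^\vee \otimes \mathcal{F})$ is a partially proper, locally spatial diamond over $\Spd\, F$, equidimensional of dimension $\deg(\mathcal{E}^\vee \otimes \mathcal{F})^{\geq 0}$. Transporting these properties across the isomorphism above gives precisely the statement for $\mathcal{H}\mathrm{om}(\mathcal{E}, \mathcal{F})$.

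There is essentially no obstacle here, since all the work has been done in Proposition \ref{h0nice}. The only subtlety worth flagging is that we are applying that result to an arbitrary vector bundle (whose HN polygon may have positive, zero, and negative slopes), so it is important that Proposition \ref{h0nice} was proved in that generality and that the asserted dimension is $\deg(\mathcal{E}^\vee \otimes \mathcal{F})^{\geq 0}$ rather than the full degree --- the negative-slope part $(\mathcal{E}^\vee \otimes \mathcal{F})^{<0}$ contributes nothing to $\mathcal{H}^0$, while the slope-zero part only contributes a factor $\underline{\mathbf{Q}_p^n}$ of dimension zero, consistent with using the truncation $\geq 0$.
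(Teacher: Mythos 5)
Your proof is correct and is precisely the paper's argument: the paper's proof reads ``Immediate from the identification $\Hom(\mathcal{E}, \mathcal{F}) \cong \Cal{H}^0(\Cal{E}^\vee \otimes \Cal{F})$,'' and you have simply spelled out why that identification holds and why Proposition \ref{h0nice} applies in the needed generality. No discrepancies.
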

\begin{proof} Immediate from the identification $\Hom(\mathcal{E}, \mathcal{F}) \cong \Cal{H}^0(\Cal{E}^\vee \otimes \Cal{F})$.
\end{proof}

\begin{prop}\label{surjinjnice}The functors $\Surj(\mathcal{E}, \mathcal{F})$ and $\Inj(\mathcal{E}, \mathcal{F})$ are both open subfunctors of $\Hom(\mathcal{E}, \mathcal{F})$. In particular, $\Surj(\mathcal{E}, \mathcal{F})$ and $\Inj(\mathcal{E}, \mathcal{F})$ are locally spatial (and partially proper) diamonds over $\Spd\,F$, each of which is either empty or equidimensional of dimension $\deg(\Cal{E}^\vee \otimes \Cal{F})^{\geq 0}$.
\end{prop}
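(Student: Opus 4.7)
By Proposition \ref{homnice}, $\Hom(\Cal{E},\Cal{F})$ is already known to be a locally spatial, partially proper diamond over $\Spd\,F$, equidimensional of dimension $\deg(\Cal{E}^\vee \otimes \Cal{F})^{\geq 0}$. The new content of the proposition is thus the openness of $\Surj$ and $\Inj$ as subfunctors of $\Hom$. From openness, local spatiality of $\Surj$ and $\Inj$ is immediate, since open subdiamonds of locally spatial diamonds are locally spatial. Partial properness of $\Surj$ and $\Inj$ does not pass formally from $\Hom$ under openness, but follows by the same argument used in the Remark after Proposition \ref{h0nice}: the category of vector bundles on $\Cal{X}_{\Spa(R, R^+)}$ is canonically independent of the choice of $R^+$ by \cite[Theorem 8.7.7]{KL15}, and the conditions of surjectivity and fiberwise injectivity depend only on the geometric fibers, which agree for $\Spa(R, R^+)$ and $\Spa(R, R^\circ)$. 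The dimension claim then follows once openness and partial properness are both in hand: an open subdiamond of a locally spatial diamond is closed under generalization, and partial properness ensures that specialization chains starting at points of the subdiamond remain inside it, so the topological dimension of a nonempty open partially proper subdiamond equals that of the ambient equidimensional diamond.

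By the definition of open subdiamond, the openness assertion reduces to the following: for any perfectoid space $S / \Spa\,F$ and any bundle map $m: \Cal{E}_S \to \Cal{F}_S$ (that is, any $S$-point of $\Hom(\Cal{E},\Cal{F})$), the subsets
\[
U_{\Surj} = \{s \in |S| : m_{\bar{s}} \text{ is surjective}\} \quad \text{and} \quad U_{\Inj} = \{s \in |S| : m_{\bar{s}} \text{ is injective}\}
\]
are open in $|S|$. The strategy is to invoke the Kedlaya-Liu semicontinuity theorem for Harder-Narasimhan polygons on families of vector bundles on the relative Fargues-Fontaine curve (\cite[Theorem 7.4.5]{KL15}). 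Concretely, $m_{\bar{s}}$ is injective iff the HN polygon of $\ker(m_{\bar{s}})$ is empty, and surjective iff the HN polygon of $\coker(m_{\bar{s}})$ is empty; in each case, the condition asks for a certain HN polygon to take its minimal value, which is open by semicontinuity.

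The main technical obstacle is that $\ker(m)$ and $\coker(m)$ are a priori coherent sheaves in families, not necessarily vector bundles (ranks can jump across fibers), whereas \cite[Theorem 7.4.5]{KL15} is stated for families of vector bundles. The expected resolution is to pass to saturations: the kernel of $m$, being saturated in $\Cal{E}_S$ automatically, is already a subbundle on each geometric fiber; for the cokernel, one replaces the image of $m$ by its saturation in $\Cal{F}_S$, which is a subbundle on each fiber, and a local analysis on $S$ yields a well-behaved family of vector bundles on $\Cal{X}_S$ to which \cite[Theorem 7.4.5]{KL15} applies directly. With these reductions in place, openness of $U_{\Surj}$ and $U_{\Inj}$ follows, completing the proof.
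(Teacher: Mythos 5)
The openness arguments in your proposal have a genuine gap. For \(\Inj\), you want to apply the Kedlaya--Liu semicontinuity theorem to the fiberwise kernels \(\ker(m_{\bar s})\); but \emph{kernels do not commute with base change along \(\xcal_{\bar s}\to\xcal_S\)} (tensoring is only right exact), so there is no single sheaf on \(\xcal_S\) whose fibers are the \(\ker(m_{\bar s})\), hence no object to which \cite[Theorem 7.4.5]{KL15} can be applied. The parenthetical observation that \(\ker(m)\) is saturated in \(\ecal_S\) concerns the sheaf \(\ker(m)\) on \(\xcal_S\), not the collection of fiberwise kernels, and does not repair this: when \(\ker(m_{\bar s})\) jumps in rank (which is exactly the locus you are trying to show is closed), there simply is no vector bundle on \(\xcal_S\) interpolating them. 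The analogous problem afflicts your treatment of \(\Surj\): the ``saturation of the image'' fails to be a vector bundle on \(\xcal_S\) precisely where \(m_{\bar s}\) fails to be surjective, because the fiberwise saturations jump in rank there. The phrase ``a local analysis on \(S\) yields a well-behaved family of vector bundles'' is asking for exactly the thing that cannot exist across a rank jump; at best one obtains vector bundles over locally closed strata of \(S\), which is not enough to conclude openness.

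The paper handles the two cases by different and more hands-on arguments. For \(\Surj\) it uses that the \emph{cokernel} does commute with base change, so \(m_{\bar s}\) is surjective iff \(\xcal_{\bar s}\) misses \(\mathrm{supp}(\coker(m))\), a Zariski-closed set; openness then follows because \(|\xcal_T|\to|T|\) is a closed map (specializing, quasicompact, spectral), so the image of the support is closed. No HN semicontinuity is needed. For \(\Inj\), since the kernel genuinely does not base-change, one cannot argue this way: the paper instead reduces to rank one via the top exterior power (\(m\) is fiberwise injective iff \(\wedge^r m\) is), twists to reduce to \(\ecal=\ocal\), decomposes \(\fcal\) into \(\ocal(\lambda_i)\)'s and covers \(\hcal^0(\fcal)^\times\) by the opens where at least one coordinate section is fiberwise nonzero, and finally cites Fargues' result that \((\mathbf{B}^{\varphi=\pi^d}\smallsetminus\{0\})\) is open in \(\mathbf{B}^{\varphi=\pi^d}\). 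That explicit reduction is essential and is not captured by the semicontinuity strategy. Your observations about local spatiality and partial properness passing to open subfunctors, and the deduction of equidimensionality, are fine, but they sit on top of an openness claim that has not been established.
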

\begin{proof}(Proof for $\Surj$.) Choose some $T\in\mathrm{Perf}$ together with a surjective quasi-pro-\'etale
morphism $T\to\mathcal{H}\mathrm{om}(\mathcal{E},\mathcal{F})$.
Over $\mathcal{X}_{T}$, we get a ``universal'' $\mathcal{O}_{\mathcal{X}_{T}}$-module
map $m^{\mathrm{univ}}:\mathcal{E}_{T}\to\mathcal{F}_{T}$; let $\mathcal{Q}_{T}$
be the cokernel of $m^{\mathrm{univ}}$. By a standard argument, the
support of $\mathcal{Q}_{T}$ is Zariski-closed in $\mathcal{X}_{T}$,
and we write $Z\subset|\mathcal{X}_{T}|$ for the associated closed
subset.

Next, we observe that the map $|\mathcal{X}_{T}|\to|T|$ is closed.
Indeed, this is a specializing quasicompact spectral map of locally
spectral spaces, so the image of any closed subset is pro-constructible
(by quasicompactness and spectrality) and stable under specialization,
hence closed by {[}Stacks, Tag 0903{]}. In particular, the subset
$V=\mathrm{im}(|\mathcal{X}_{T}|\to|T|)(Z)\subset|T|$ is closed.
We also observe that a geometric point $x:\mathrm{Spd}(C,C^{+})\to\mathcal{H}\mathrm{om}(\mathcal{E},\mathcal{F})$
defines a point of $\mathcal{S}\mathrm{urj}$ (resp. $\mathcal{H}\mathrm{om}\smallsetminus\mathcal{S}\mathrm{urj}$)
if and only if the preimage of $|x|$ in $|T|$ is disjoint from $V$
(resp. contained in $V$). In particular, the open subset $U=|T|\smallsetminus V\subset|T|$
is the preimage of a subset $W\subset|\mathcal{H}\mathrm{om}(\mathcal{E},\mathcal{F})|$;
since $|T|\twoheadrightarrow|\mathcal{H}\mathrm{om}(\mathcal{E},\mathcal{F})|$
is a quotient map, $W$ is open. But now $\mathcal{S}\mathrm{urj}$
can be identified with the open subdiamond of $\mathcal{H}\mathrm{om}(\mathcal{E},\mathcal{F})$
corresponding to the open subset $W$, so we win.

(Proof for $\Inj$.) Set $r=\mathrm{rank}(\mathcal{E})$; by the formula\[
\mathcal{I}\mathrm{nj}(\mathcal{E},\mathcal{F})\cong\mathcal{H}\mathrm{om}(\mathcal{E},\mathcal{F})\times_{\mathcal{H}\mathrm{om}(\wedge^{r}\mathcal{E},\wedge^{r}\mathcal{F})}\mathcal{I}\mathrm{nj}(\wedge^{r}\mathcal{E},\wedge^{r}\mathcal{F}),\]
we reduce to the case where $\mathcal{E}$ is a line bundle. After
twisting, we reduce further to the case $\mathcal{E}=\mathcal{O}$;
in other words, we need to prove that $\mathcal{H}^{0}(\mathcal{F})^{\times}$
is an open subfunctor of $\mathcal{H}^{0}(\mathcal{F})$. Fix an identification
$\mathcal{F}=\oplus_{1\leq i\leq n}\mathcal{O}(\lambda_{i})$, and
(for brevity) set $\mathcal{H}_{i}=\mathcal{H}^{0}(\mathcal{O}(\lambda_{i}))$
and $\mathcal{H}_{i}^{\times}=\mathcal{H}^{0}(\mathcal{O}(\lambda_{i}))^{\times}$.
Under the identification\[
\mathcal{H}^{0}(\mathcal{F})=\mathcal{H}_{1}\times_{\mathrm{Spd}\, F}\cdots\times_{\mathrm{Spd}\, F}\mathcal{H}_{n},\]
it is easy to see that the subfunctor $\mathcal{H}^{0}(\mathcal{F})^{\times}$
on the left-hand side is covered by the union of the subfunctors\[
\mathcal{U}_{i}=\mathcal{H}_{1}\times\cdots\times\mathcal{H}_{i-1}\times\mathcal{H}_{i}^{\times}\times\mathcal{H}_{i+1}\times\cdots\times\mathcal{H}_{n},\;1\leq i\leq n\]
on the right-hand side (here we have omitted the subscripted $\mathrm{Spd}\, F$'s
for brevity). This, finally, reduces us to showing that $\mathcal{H}^{0}(\mathcal{O}(\lambda))^{\times}$
is an open subfunctor of $\mathcal{H}^{0}(\mathcal{O}(\lambda))$,
for any fixed $\lambda \geq 0$. The case $\lambda =0$ is easy and left to the reader.  For $\lambda = d/h > 0$, writing $\ocal(\lambda)$ as the pushforward of $\ocal(d) / \xcal_{E',F}$ as in the proof of Proposition \ref{h0nice} reduces us to the case $h=1$. In this case, the functors $\hcal(\ocal(d))$ and $\hcal(\ocal(d))^\times$ agree (by definition) with the functors denoted by $\mathbf{B}^{\varphi=\pi^d}_{\Spa\,F}$ and $(\mathbf{B}^{\varphi=\pi^d} \smallsetminus \{0\})_{\Spa\,F}$ in \cite[\S2.2]{Far17}, and our claim is exactly the content of \cite[Lemme 2.10]{Far17}.
\end{proof}

\begin{prop}The functor $\mathcal{A}\mathrm{ut}(\ecal)$ is an open and partially proper subdiamond of $\Hom(\ecal,\ecal)$, equidimensional of dimension $\deg(\ecal^\vee \otimes \ecal)^{\geq 0}$.
\end{prop}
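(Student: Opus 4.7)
My approach will be to identify $\mathcal{A}\mathrm{ut}(\ecal)$ with $\Inj(\ecal,\ecal)$ as subfunctors of $\Hom(\ecal,\ecal)$, after which all three claims (openness, partial properness, equidimensionality of the stated dimension) follow immediately from Proposition~\ref{surjinjnice} applied with $\fcal=\ecal$, together with the observation that $\mathrm{id}_\ecal$ provides an $F$-point.

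The containment $\mathcal{A}\mathrm{ut}(\ecal)\subseteq\Inj(\ecal,\ecal)$ is immediate, since any automorphism of $\ecal_S$ is fiberwise injective. For the reverse inclusion, I would take an $S$-point $f\colon\ecal_S\to\ecal_S$ of $\Inj(\ecal,\ecal)$ and aim to construct an inverse via the adjugate formula. Setting $r=\mathrm{rank}\,\ecal$, the $r$-th exterior power produces the determinant section $\det f\in\Gamma(\xcal_S,\ocal)$. Over each geometric point $\bar x=\Spa(C,C^+)\to S$, the pullback of $\det f$ is a nonzero element of $H^0(\xcal_{\bar x},\ocal)=E$ (by fiberwise injectivity), and since every principal divisor on $\xcal_{\bar x}$ has degree zero (the ``completeness'' of the Fargues-Fontaine curve), this element lies in $E^\times$, hence is nowhere-vanishing on $\xcal_{\bar x}$. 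Thus $\det f$ is stalk-wise a unit in $\ocal_{\xcal_S}$, so globally a unit, and Cramer's rule then produces an inverse of $f$, exhibiting $f$ as an element of $\mathcal{A}\mathrm{ut}(\ecal)(S)$.

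Granting this identification, Proposition~\ref{surjinjnice} immediately yields that $\mathcal{A}\mathrm{ut}(\ecal)$ is an open, partially proper, and locally spatial subdiamond of $\Hom(\ecal,\ecal)$, either empty or equidimensional of dimension $\deg(\ecal^\vee\otimes\ecal)^{\geq 0}$; and since $\mathrm{id}_\ecal$ gives an $F$-point, it is nonempty and the dimension formula applies. The main subtlety, should one wish to formalize the identification in full detail, is the point-set step upgrading fiberwise non-vanishing of $\det f$ to its being a unit in $\Gamma(\xcal_S,\ocal)$---but this is a standard fact about sections of the structure sheaf on the relative Fargues-Fontaine curve.
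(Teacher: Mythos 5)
Your proof is correct, but it takes a route dual to the paper's: you identify $\mathcal{A}\mathrm{ut}(\ecal)$ with $\Inj(\ecal,\ecal)$, whereas the paper identifies it with $\Surj(\ecal,\ecal)$. The paper's argument is shorter and more elementary: a surjection $f\colon\ecal_S\to\ecal_S$ has kernel a vector bundle (the sequence splits locally), which fiberwise has rank $0$ and degree $0$, hence is zero, so $f$ is an isomorphism. This avoids the point you flag as ``the main subtlety'' --- upgrading fiberwise non-vanishing of $\det f$ to global invertibility --- which requires knowing that every point of $|\xcal_S|$ is hit by some $|\xcal_{\bar x}|$ for a geometric point $\bar x\to S$ (surjectivity of $|X\times_S Y|\to|X|\times_{|S|}|Y|$). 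That fact is true and standard, so your argument goes through, but the paper's rank-and-degree observation sidesteps the topology altogether. Minor stylistic note: once you observe that the pullback of $\det f$ lands in $H^0(\xcal_{\bar x},\ocal)=E$ and is nonzero, it is automatically a unit because $E$ is a field; invoking completeness of the curve to rule out zeros of a nonzero constant is a slight detour.
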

\begin{proof}Immediate from the previous proposition together with the identification $\mathcal{A}\mathrm{ut}(\ecal) \cong \Surj(\ecal,\ecal)$, which holds by consideration of rank and degree.
\end{proof}

We now explain how to reduce Step One of the strategy outlined in the introduction to a combinatorial problem. The key result here is Theorem \ref{mainsurjthm1} below.

\begin{defn} For any vector bundles $\ecal, \fcal, \qcal$, composition of bundle maps induces a natural map of diamonds $$\Surj(\ecal,\qcal) \times_{\Spd\,F} \Inj(\qcal,\fcal) \to \Hom(\ecal,\fcal),$$ and we define $$|\Hom(\mathcal{E}, \mathcal{F})_{\mathcal{Q}}| \subset |\Hom(\mathcal{E}, \mathcal{F})|$$ as the image of the induced map on topological spaces.
\end{defn}

As the notation suggests, $|\Hom(\mathcal{E}, \mathcal{F})_{\mathcal{Q}}|$ is the underlying topological space of a subdiamond $\Hom(\mathcal{E}, \mathcal{F})_{\mathcal{Q}}$ of $\Hom(\mathcal{E}, \mathcal{F})$, which (more or less) parametrizes bundle maps $\ecal \to \fcal$ with image isomorphic to $\qcal$ at all geometric points. However, the diamond $\Hom(\mathcal{E}, \mathcal{F})_{\mathcal{Q}}$ is a little obscure (in particular, it's not entirely clear to us that it's locally spatial), so we avoid making any explicit use of this object.  Fortunately, this doesn't cause any real complication.

\begin{prop}\label{homqpp}For any $\ecal, \fcal, \qcal$, $|\Hom(\mathcal{E}, \mathcal{F})_{\mathcal{Q}}|$ is stable under generalization and specialization inside $|\Hom(\mathcal{E}, \mathcal{F})|$.
\end{prop}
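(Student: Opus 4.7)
\noindent The plan is to exploit the partial properness and local spatiality of $\Hom(\ecal,\fcal)$, $\Surj(\ecal,\qcal)$, and $\Inj(\qcal,\fcal)$ established in Propositions \ref{homnice} and \ref{surjinjnice}, handling generalization and specialization separately.

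Stability under generalization is nearly formal. The composition morphism
\[ c : \Surj(\ecal,\qcal) \times_{\Spd F} \Inj(\qcal,\fcal) \longrightarrow \Hom(\ecal,\fcal) \]
is a morphism of locally spatial diamonds, so Proposition \ref{locallyspatialbasics}(4) implies that $|c|$ is spectral and generalizing; hence its image $|\Hom(\ecal,\fcal)_\qcal|$ is closed under generalization.

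For stability under specialization, the key step is to reduce to rank-one points via the following characterization. For a rank-one point $\tilde x \in |\Hom(\ecal,\fcal)|$ represented by $\tilde m : \Spd(K, K^\circ) \to \Hom(\ecal,\fcal)$ with $K$ an algebraically closed perfectoid field, I claim that $\tilde x \in |\Hom(\ecal,\fcal)_\qcal|$ if and only if the image bundle $\mathrm{im}(\tilde m)$ on $\xcal_K$ is isomorphic to $\qcal_K$. The forward direction is immediate from the definitions: a factorization $\tilde m = \tilde i \circ \tilde s$ with $\tilde s \in \Surj$ and $\tilde i \in \Inj$ at $\Spd(K, K^\circ)$ gives $\mathrm{im}(\tilde m) = \mathrm{im}(\tilde i) \simeq \qcal_K$, since at a geometric point the ``fiberwise injective'' condition reduces to honest injectivity. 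The reverse direction uses the tautological factorization $\ecal_K \twoheadrightarrow \mathrm{im}(\tilde m) \hookrightarrow \fcal_K$, which produces the desired $(\tilde s, \tilde i)$ once we fix an isomorphism $\mathrm{im}(\tilde m) \simeq \qcal_K$.

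Given this characterization, specialization-stability should follow as follows. Generalization/specialization relations in $|\Hom(\ecal,\fcal)|$ correspond to varying the valuation ring $K^+ \subset K^\circ$ in a representative $\Spd(K, K^+) \to \Hom(\ecal,\fcal)$ while keeping the underlying algebraic data fixed: this is the content of partial properness combined with the Kedlaya--Liu result \cite[Theorem 8.7.7]{KL15} that vector bundles on $\xcal_{(K, K^+)}$ canonically correspond to vector bundles on $\xcal_{(K, K^\circ)}$. In particular, the isomorphism class of the image bundle is unchanged along any such chain, and the characterization above is therefore invariant. The most delicate point, and where I expect the main technical care to be needed, is justifying that every specialization relation in $|\Hom(\ecal,\fcal)|$ really is of this ``same field, different valuation'' form. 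This is the partially-proper, locally-spatial analogue of the standard fact that every point of a proper adic space has a unique rank-one generization obtained by coarsening valuations; it should follow by applying the valuative criterion of partial properness to suitable representatives of a given specialization pair, possibly after passing to a common algebraically closed perfectoid completion.
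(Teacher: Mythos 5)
Your argument for stability under generalization is correct and is exactly the paper's argument. For specialization, however, you have overcomplicated things and left a genuine gap where you acknowledge one. The paper's route is short: the composition morphism $c : \Surj(\ecal,\qcal) \times_{\Spd F} \Inj(\qcal,\fcal) \to \Hom(\ecal,\fcal)$ is itself \emph{partially proper}, because any morphism between diamonds that are each partially proper over a fixed base (here $\Spd F$) is automatically partially proper. The valuative criterion built into the definition of partial properness then immediately gives that $\mathrm{im}(|c|)$ is stable under specialization: one lifts a specialization in the target to a specialization in the source without passing through any intrinsic characterization of which rank-one points lie in the image.

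Your alternative strategy --- characterize rank-one points of $|\Hom(\ecal,\fcal)_\qcal|$ by the isomorphism class of the image bundle, then argue that this invariant is constant along specializations --- is not wrong in spirit, but the ``delicate point'' you flag is real and is not a routine consequence of the cited results. A specialization $x \rightsquigarrow y$ in a locally spectral space is not tautologically of the form ``same perfectoid field $K$, shrink $K^+$''; making that precise, for an arbitrary $y$ in the closure of an arbitrary $x$, is exactly what the valuative criterion of partial properness encodes for you. In other words, the machinery you would need to close your gap is precisely what makes the paper's one-line argument work once you notice $c$ is partially proper, so it is strictly more efficient to apply it directly to $c$ than to your invariant. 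As a secondary issue, your reduction implicitly assumes $x$ is rank one; that is harmless (replace $x$ by a rank-one generalization, which is again in the image by the generalization half), but should be said.
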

\begin{proof} The subset in question is defined as the image of $|Y| \to |X|$, for some map of partially proper and locally spatial diamonds $Y \to X$. Quite generally, if $Y \to X$ is any map of locally spatial diamonds, the image of $|Y| \to |X|$ is stable under generalization by Proposition \ref{locallyspatialbasics}.iii.  Orthogonally, one immediately checks from the valuative criterion that the image of $|Y| \to |X|$ is stable under specialization for $Y\to X$ any partially proper map of diamonds.  Since any map between diamonds partially proper over a fixed base is automatically partially proper, the claim follows.
\end{proof}

We now have the following crucial lemma.

\begin{lemma}\label{maindimlemma1}For any $\ecal, \fcal, \qcal$ as above, the subset $|\Hom(\mathcal{E}, \mathcal{F})_{\mathcal{Q}}| \subset |\Hom(\mathcal{E}, \mathcal{F})|$ is either empty, or of dimension $$\deg(\Cal{E}^{\vee} \otimes \Cal{Q})^{\geq 0} +  \deg(\Cal{Q}^{\vee} \otimes \Cal{F})^{\geq 0} - \deg (\Cal{Q}^{\vee} \otimes \Cal{Q})^{ \geq 0}.$$
\end{lemma}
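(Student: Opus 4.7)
The plan is to apply Lemma \ref{imagedimlemma} to the composition morphism
\[ \mu : Z := \Surj(\ecal,\qcal) \times_{\Spd F} \Inj(\qcal,\fcal) \longrightarrow W := \Hom(\ecal,\fcal), \]
whose image on topological spaces is by definition $|\Hom(\ecal,\fcal)_{\qcal}|$. If either factor of $Z$ is empty then $|\Hom(\ecal,\fcal)_\qcal|$ is empty and we are done; otherwise Propositions \ref{homnice} and \ref{surjinjnice} ensure that all diamonds in sight are partially proper and locally spatial over $\Spd F$, with $\Surj(\ecal,\qcal)$ equidimensional of dimension $\deg(\ecal^\vee \otimes \qcal)^{\geq 0}$ and $\Inj(\qcal,\fcal)$ equidimensional of dimension $\deg(\qcal^\vee \otimes \fcal)^{\geq 0}$. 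Applying Lemma \ref{additivefiberdim} to the projection $Z \to \Inj(\qcal,\fcal)$, whose fiber over a rank-one point $y$ is the base change $\Surj(\ecal,\qcal)\times_{\Spd F} y^\lozenge$ of dimension $\deg(\ecal^\vee\otimes\qcal)^{\geq 0}$ (since $\dim y^\lozenge = 0$), one concludes that $Z$ is equidimensional of dimension $\deg(\ecal^\vee \otimes \qcal)^{\geq 0} + \deg(\qcal^\vee \otimes \fcal)^{\geq 0}$.

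The crucial step is to identify the fibers of $\mu$ above rank-one points $\phi$ of the image. The automorphism diamond $\acal\mathrm{ut}(\qcal)$ is equidimensional of dimension $\deg(\qcal^\vee \otimes \qcal)^{\geq 0}$, and it acts on $Z$ via $\alpha \cdot (s,i) := (\alpha\circ s,\,i \circ \alpha^{-1})$; this action preserves $i \circ s$, so restricts to an action on each fiber $Z_\phi := Z \times_W \phi^\lozenge$. I claim that $Z_\phi$ is an $\acal\mathrm{ut}(\qcal)_{\phi^\lozenge}$-torsor. Testing functorially, given any perfectoid $S \to \phi^\lozenge$ and two $S$-points $(s_1,i_1),(s_2,i_2)$ of $Z_\phi$ both lifting the universal $\phi_S$, the required automorphism $\alpha \in \acal\mathrm{ut}(\qcal)(S)$ with $s_2 = \alpha \circ s_1$ is uniquely determined because the factorization of $\phi_S$ through its image (which on the relative noetherian curve $\Cal{X}_S$ is a genuine subsheaf of $\fcal_S$) is unique up to unique isomorphism of the intermediate bundle, and then $i_2 = i_1 \circ \alpha^{-1}$ follows automatically. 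Non-emptiness of $Z_\phi$ as a sheaf is provided by the assumption that $\phi$ is in the image of $\mu$, after pulling back along the quasi-pro-\'etale cover $\Spd(L,L^\circ) \to \phi^\lozenge$ from Lemma \ref{dimlemma1}. Since a torsor has the same dimension as its structure group, $\dim Z_\phi = \dim \acal\mathrm{ut}(\qcal)_{\phi^\lozenge} = \deg(\qcal^\vee \otimes \qcal)^{\geq 0}$.

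Finally, Lemma \ref{imagedimlemma} with $d = \deg(\qcal^\vee\otimes\qcal)^{\geq 0}$ yields
\[ \dim |\Hom(\ecal,\fcal)_{\qcal}| = \dim Z - d = \deg(\ecal^\vee \otimes \qcal)^{\geq 0} + \deg(\qcal^\vee \otimes \fcal)^{\geq 0} - \deg(\qcal^\vee \otimes \qcal)^{\geq 0}, \]
as desired. The main obstacle is rigorously verifying the torsor structure on fibers over arbitrary rank-one points (rather than only at honest geometric points); this is where descent along $\Spd(L,L^\circ) \to \phi^\lozenge$ plays an essential role, since the image vector bundle of a bundle map only behaves well on the noetherian relative curve $\Cal{X}_{\Spd(L,L^\circ)}$, and one needs to transport both the torsor trivialization and the uniqueness statement down through the $\underline{G}$-action.
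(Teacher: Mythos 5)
Your proposal is correct and takes essentially the same route as the paper: both identify $|\Hom(\ecal,\fcal)_{\qcal}|$ as the image of the composition map $\mu$ on $Z = \Surj(\ecal,\qcal)\times_{\Spd F}\Inj(\qcal,\fcal)$, compute $\dim Z$ from the dimension formulas for $\Surj$ and $\Inj$, show the fibers of $\mu$ over rank-one points are $\acal\mathrm{ut}(\qcal)$-torsors of dimension $\deg(\qcal^\vee\otimes\qcal)^{\geq 0}$, and invoke Lemma \ref{imagedimlemma}. The paper simply asserts the torsor property and the dimension additivity for the product, whereas you supply the supporting details (the application of Lemma \ref{additivefiberdim} to the projection, and the uniqueness-of-factorization argument for the torsor), which is a welcome elaboration rather than a different method.
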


\begin{proof} Recall that $| \Hom(\ecal,\fcal)_{\qcal}|$ is defined as the image of the map on topological spaces associated with the map of locally spatial diamonds $$f: \Surj(\ecal,\qcal) \times_{\Spd\,F} \Inj(\qcal,\fcal) \overset{(s,i)\mapsto i \circ s}{\longrightarrow} \Hom(\ecal,\fcal).$$If either of the functors on the left is empty, there is nothing to prove.  If both functors on the left are nonempty, then \begin{eqnarray*} \dim \Surj(\ecal,\qcal) \times_{\Spd\,F} \Inj(\qcal,\fcal) & = & \dim \Surj(\ecal,\qcal) + \dim \Inj(\qcal,\fcal)\\ & = & \deg(\ecal^\vee \otimes \qcal)^{\geq 0} + \deg(\qcal^\vee \otimes \fcal)^{\geq 0}\end{eqnarray*}by Propositions \ref{h0nice} and \ref{surjinjnice}. Moreover, the fiber of $f$ over any rank one point $y$ is an $\mathcal{A}\mathrm{ut}(\qcal)\times_{\Spd\,F}y$-torsor, and hence can be identified with $\mathcal{A}\mathrm{ut}(\qcal) \times_{\Spd\,F} \overline{y}$ after taking a pro-finite \'etale covering of $y$ by some geometric point $\overline{y} = \Spd\,C$. Since $$\dim \mathcal{A}\mathrm{ut}(\qcal) = \deg(\qcal^\vee \otimes \qcal)^{\geq 0},$$the result now follows from Lemma \ref{imagedimlemma}.
\end{proof}

\begin{thm}\label{mainsurjthm1} Suppose $\ecal$ and $\fcal$ are vector bundles admitting a nonzero map $\ecal\to\fcal$, such that for any $\qcal \subsetneq \fcal$ which also occurs as a quotient of $\ecal$ we have a strict inequality $$\deg(\Cal{E}^{\vee} \otimes \Cal{Q})^{\geq 0} +  \deg(\Cal{Q}^{\vee} \otimes \Cal{F})^{\geq 0} < \deg(\Cal{E}^{\vee} \otimes \Cal{F})^{\geq 0} +  \deg (\Cal{Q}^{\vee} \otimes \Cal{Q})^{ \geq 0}.$$ Then there exists a surjective bundle map $\ecal \to \fcal$.
\end{thm}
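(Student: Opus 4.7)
The plan is to argue by contradiction using the dimension estimates from Proposition \ref{homnice} and Lemma \ref{maindimlemma1}. Assume no surjective bundle map $\ecal \to \fcal$ exists, i.e.\ $\Surj(\ecal, \fcal) = \emptyset$. Then at every geometric point of $\Hom(\ecal, \fcal)$, the corresponding bundle map has image a proper subbundle of $\fcal$, and by Theorem \ref{classification} this image has a well-defined isomorphism class $\qcal$ (which necessarily occurs as a quotient of $\ecal$). Consequently $|\Hom(\ecal, \fcal)|$ is covered set-theoretically by the subsets $|\Hom(\ecal, \fcal)_\qcal|$, with $\qcal$ ranging over isomorphism classes of proper subbundles of $\fcal$ which are also quotients of $\ecal$.

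By Proposition \ref{homnice}, $\Hom(\ecal, \fcal)$ is a locally spatial, partially proper diamond over $\Spd\,F$, equidimensional of dimension $d := \deg(\ecal^\vee \otimes \fcal)^{\geq 0}$. Note that the hypothesis of the theorem, specialized to $\qcal = 0$, already forces $0 < d$, so $d > 0$ is automatic. Applying Lemma \ref{dimlemma1} to $\Hom(\ecal,\fcal)$, we may choose a rank one point $x \in |\Hom(\ecal, \fcal)|$ realizing a chain of specializations of length $d$, so that $\dim \overline{\{x\}} = d$. By the contradiction hypothesis, $x$ lies in $|\Hom(\ecal, \fcal)_\qcal|$ for some proper $\qcal \subsetneq \fcal$ occurring as a quotient of $\ecal$. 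Proposition \ref{homqpp} asserts that $|\Hom(\ecal, \fcal)_\qcal|$ is stable under specialization inside $|\Hom(\ecal, \fcal)|$, so the entire closure $\overline{\{x\}}$ is contained in this subset, and therefore
\[
\dim |\Hom(\ecal, \fcal)_\qcal| \geq \dim \overline{\{x\}} = d.
\]

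On the other hand, Lemma \ref{maindimlemma1} computes
\[
\dim |\Hom(\ecal, \fcal)_\qcal| = \deg(\ecal^\vee \otimes \qcal)^{\geq 0} + \deg(\qcal^\vee \otimes \fcal)^{\geq 0} - \deg(\qcal^\vee \otimes \qcal)^{\geq 0},
\]
and the hypothesis of the theorem applied to this very $\qcal$ bounds the right-hand side strictly below $d$. The two inequalities are incompatible, giving the desired contradiction.

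The only real subtlety is the step that $|\Hom(\ecal, \fcal)_\qcal|$ is closed under specialization, which is what allows a dimension obstruction at a single rank one point to propagate along its closure and collide with a maximal chain in the ambient diamond. Without the framework of partially proper, locally spatial diamonds developed in \S\ref{diamondbasics}--\ref{dim_formulas}, it is unclear how one would even formulate such an obstruction, much less verify it; this is where the diamond language is genuinely essential.
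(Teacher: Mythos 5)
Your argument is correct and takes essentially the same route as the paper's: decompose the non-surjective locus into the strata $|\Hom(\ecal,\fcal)_{\qcal}|$, invoke Lemma \ref{maindimlemma1} together with the hypothesis to bound each stratum's dimension strictly below $d = \deg(\ecal^\vee\otimes\fcal)^{\geq 0}$, and derive a contradiction with the equidimensionality of $\Hom(\ecal,\fcal)$ from Proposition \ref{homnice}. The only cosmetic differences are that you propagate a single rank one point forward along specializations (using only the specialization-stability half of Proposition \ref{homqpp}, which is enough), whereas the paper observes directly that every chain in $X$ lies in a single stratum; and that you explicitly note the $\qcal=0$ case of the hypothesis forces $d>0$, a point the paper leaves implicit. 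One small citation slip: the step where you pick a rank one point with $\dim\overline{\{x\}}=d$ should invoke the equidimensionality assertion of Proposition \ref{homnice} rather than Lemma \ref{dimlemma1}, which computes $\dim\overline{\{x\}}$ in terms of a transcendence degree but does not by itself identify that quantity with $d$.
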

\begin{proof}
We need to show that $\Surj(\ecal,\fcal)(F)$ is nonempty. By Proposition \ref{surjinjnice} above, $\Surj(\ecal,\fcal)$ is an open subfunctor of $\Hom(\ecal,\fcal)$, and hence admits $F$-points if it is nonempty (by Proposition \ref{h0nice}). Since $\Surj(\ecal,\fcal)$ is nonempty if and only if  $|\Surj(\ecal,\fcal)|$ is nonempty, we can argue on topological spaces. 

By the definitions above, we obtain a decomposition $$ X \overset{\mathrm{def}}{=} |\Hom(\ecal,\fcal)| \smallsetminus |\Surj(\ecal,\fcal)| =  \coprod_{\qcal \in S} |\Hom(\ecal,\fcal)_{\qcal}|,$$ where $S$ denotes the set of isomorphism classes of subbundles $\qcal \subsetneq \fcal$ which also occur as quotients of $\ecal$. By Proposition \ref{homqpp}, any chain in $X$ is entirely contained in $|\Hom(\ecal,\fcal)_{\qcal}|$ for some fixed $\qcal$, so then $$ \dim X = \sup_{\qcal \in S} \dim |\Hom(\ecal,\fcal)_{\qcal}|
\leq \sup_{\qcal \in S}\left(\deg(\Cal{E}^{\vee} \otimes \Cal{Q})^{\geq 0} +  \deg(\Cal{Q}^{\vee} \otimes \Cal{F})^{\geq 0} - \deg (\Cal{Q}^{\vee} \otimes \Cal{Q})^{ \geq 0}\right)$$by the previous lemma.  By the assumptions of the theorem, we then deduce a strict inequality $\dim X < \deg(\ecal^\vee \otimes \fcal)^{\geq 0}$.  If $|\Surj(\ecal,\fcal)|$ were empty, however, $X = |\Hom(\ecal,\fcal)|$ would have dimension $\deg(\ecal^\vee \otimes \fcal)^{\geq 0}$ by Proposition \ref{homnice}, and this is a contradiction.  Thus $|\Surj(\ecal,\fcal)|$ is nonempty as desired.
\end{proof}

We now turn to Step Two of the strategy outlined in the introduction, which we will also reduce to a combinatorial problem. Here, the relevant diamonds are defined as follows.

\begin{defn} For some fixed vector bundles $\Cal{E},\Cal{F},\Cal{K}$ such that $\rank{\kcal}+\rank{\fcal}=\rank{\ecal}$ and $\deg{\kcal}+\deg{\fcal}=\deg{\ecal}$, define $\Surj(\mathcal{E}, \mathcal{F})^{\mathcal{K}}$ as the subfunctor of $\Surj(\mathcal{E}, \mathcal{F})$ whose $S$-points (for a given $S \to \Spd\,F$) parametrize surjective bundle maps $q: \ecal_S \to \fcal_S$ with the property that $\ker q$ is isomorphic to $\kcal$ after pullback along any geometric point $\overline{x} \to S$.
\end{defn}

Equivalently, the map $S \to \Surj(\mathcal{E}, \mathcal{F})$ corresponding to a surjection $q: \ecal_S \to \fcal_S$ defines a point of $\Surj(\mathcal{E}, \mathcal{F})^{\mathcal{K}}$ exactly when the HN polygon of $\ker q$, regarded as a function on $|S|$, is constant and coincides with the HN polygon of $\kcal$.

\begin{prop}\label{surjfixedkernelnice} The functor $\Surj(\mathcal{E}, \mathcal{F})^{\mathcal{K}}$ is a locally spatial (and partially proper) diamond over $\Spd\,F$.
\end{prop}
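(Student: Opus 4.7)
The plan is to realize $\Surj(\ecal,\fcal)^{\kcal}$ as a locally closed subdiamond of the locally spatial diamond $\Surj(\ecal,\fcal)$ (established in Proposition \ref{surjinjnice}), using Kedlaya--Liu's semicontinuity of Harder-Narasimhan polygons in families.

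Concretely, given any perfectoid space $T$ together with a morphism $T \to \Surj(\ecal,\fcal)$, we obtain a surjection $q_T : \ecal_T \twoheadrightarrow \fcal_T$ of vector bundles on the relative Fargues-Fontaine curve $\xcal_T$, and its kernel $\ker q_T$ is itself a vector bundle on $\xcal_T$ (being the kernel of a surjection with locally free cokernel). By Kedlaya--Liu's semicontinuity theorem \cite[Theorem 7.4.5]{KL15}, for each HN polygon $P$ with endpoints $(\rank \kcal, \deg \kcal)$, the subset
\[
\{\, t \in |T| : \HN((\ker q_T)_{\bar{t}}) \geq P \,\}
\]
is closed in $|T|$. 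Moreover, the set of HN polygons $(\ker q_T)_{\bar{t}}$ can attain is finite: these polygons have fixed endpoints $(\rank \kcal, \deg \kcal)$, while their maximum slope is bounded above by the maximum slope of $\ecal$ (since $\ker q_T$ is a subbundle of $\ecal_T$, any subbundle of $\ker q_T$ is in turn a subbundle of $\ecal_T$), and concavity plus a max-slope bound leaves only finitely many lattice polygons. Consequently,
\[
T^{\kcal} := \{\, t \in |T| : \HN((\ker q_T)_{\bar{t}}) = \HN(\kcal) \,\} = \{\,\HN \geq \HN(\kcal)\,\} \,\setminus\, \bigcup_{P > \HN(\kcal)} \{\,\HN \geq P\,\}
\]
is the difference of a closed subset and a finite union of closed subsets of $|T|$, hence locally closed.

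This description is functorial in $T$, so $\Surj(\ecal,\fcal)^{\kcal}$ is a subfunctor of $\Surj(\ecal,\fcal)$ whose pullback to any perfectoid $T \to \Surj(\ecal,\fcal)$ is represented by the locally closed subspace with underlying topological space $T^{\kcal}$. By the standard formalism of locally closed subdiamonds (cf.\ \cite[\S11]{Sch}), this defines a locally spatial subdiamond of $\Surj(\ecal,\fcal)$. Partial properness over $\Spd\,F$ is then inherited from that of $\Surj(\ecal,\fcal)$: the defining condition involves only geometric points of the base, and the sets of geometric points of $\Spa(R, R^\circ)$ and $\Spa(R, R^+)$ coincide for any perfectoid Tate ring $R$ and any ring of integral elements $R^+ \subset R$.

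The main obstacle is the bookkeeping required to promote the pointwise description above to a genuine locally spatial subdiamond, i.e.\ passing from the locally closed subset $T^{\kcal} \subset |T|$ (for $T$ ranging over a presentation of $\Surj(\ecal,\fcal)$) to the desired subfunctor structure. Once Kedlaya-Liu's semicontinuity is in hand and the finiteness of possible HN polygons is established, this is essentially a formal manipulation within Scholze's framework, but it is the only nontrivial ingredient.
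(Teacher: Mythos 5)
Your proof is correct and follows essentially the same strategy as the paper: reduce to Kedlaya--Liu semicontinuity of HN polygons in families, realizing $\Surj(\ecal,\fcal)^{\kcal}$ as the subdiamond corresponding to a locally closed subset of $|\Surj(\ecal,\fcal)|$. The one genuine difference is how local closedness is obtained. The paper quotes \cite[Theorem 7.4.5]{KL15} for \emph{both} the closedness of the locus $\{\HN \geq P\}$ and the openness of $\{\HN \leq P\}$, and then takes the intersection (with $P=\HN(\kcal)$). You instead use only the closed statement, compensating with a finiteness argument: the achievable HN polygons of $\ker q$ have fixed endpoints and max slope bounded by $\mu_{\max}(\ecal)$, so there are only finitely many, and $T^{\kcal}$ is a closed set minus a finite union of closed sets. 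Both routes work; yours trades one citation for a (correct, and in fact slightly more self-contained) finiteness argument.

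There is one small but genuine omission. The mechanism the paper invokes (the proof of \cite[Prop.\ 11.20]{Sch}) produces a locally spatial subdiamond from a locally closed subset of $|\Surj(\ecal,\fcal)|$ only when that subset is also \emph{stable under generalization}; a bare ``locally closed subset $\Rightarrow$ subdiamond'' citation to \cite[\S 11]{Sch} does not suffice. The paper explicitly records this generalizing property; it holds because the relative curve $\xcal_{\Spa(R,R^+)}$ and its vector bundles are independent of the choice of $R^{+}$ (cf.\ \cite[Theorem 8.7.7]{KL15}), so pointwise HN polygons are invariant along generalizations, and your $T^{\kcal}$ inherits this as an intersection of sets of the form $\{\HN\geq P\}$ and their complements. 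You should state this. Relatedly, the passage from a locally closed subset of $|T|$ (for $T$ ranging over test objects) to a well-defined generalizing locally closed subset of $|\Surj(\ecal,\fcal)|$ is slightly glossed by the phrase ``functorial in $T$'': the paper handles it by fixing a single surjective pro-\'etale cover $T\twoheadrightarrow\Surj(\ecal,\fcal)$ and using that $|T|\to|\Surj(\ecal,\fcal)|$ is a quotient map.
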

\begin{proof} Choose a perfectoid space $T$ together with a surjective and quasi-pro-\'etale map $f: T \to \Surj(\mathcal{E}, \mathcal{F})$. Let $\vcal$ be the bundle on $\xcal_T$ defined by the kernel of the ``universal'' quotient map $q^{univ} : \ecal_T \to \fcal_T$. Set $n = \rank(\ecal)-\rank(\fcal)$; for a given HN polygon $P$ of width $n$, let $|T|^{\geq P}$ (resp. $|T|^{\leq P}$) be the set of points $x$ where $\HN(\vcal_x) \geq P$ (resp. where $\HN(\vcal_x) \leq P$). By [KL, Theorem 7.4.5], the locus $|T|^{\geq P}$ (resp. $|T|^{\leq P}$) is closed (resp. open) in $|T|$; moreover, these subsets are stable under generalization.  Since $|f| : |T| \to |\Surj(\mathcal{E}, \mathcal{F})|$ is a quotient map, one checks directly that $|T|^{\geq P}$ (resp. $|T|^{\leq P}$) is the preimage of a generalizing and closed (resp. open) subset $|\Surj(\mathcal{E}, \mathcal{F})|^{\geq P}$ (resp. $|\Surj(\mathcal{E}, \mathcal{F})|^{\leq P}$) of $|\Surj(\mathcal{E}, \mathcal{F})|$. In particular, $$ |\Surj(\mathcal{E}, \mathcal{F})|^{\geq P} \cap |\Surj(\mathcal{E}, \mathcal{F})|^{\leq P}$$is a locally closed generalizing subset of $|\Surj(\mathcal{E}, \mathcal{F})|$, and hence corresponds to a locally spatial subdiamond $$\Surj(\mathcal{E}, \mathcal{F})^{P} \subset \Surj(\mathcal{E}, \mathcal{F})$$ by the proof of \cite[Prop. 11.20]{Sch}. Taking $P = \HN(\kcal)$, the functor $\Surj(\mathcal{E}, \mathcal{F})^{P}$ identifies with $\Surj(\mathcal{E}, \mathcal{F})^{\mathcal{K}}$, and we conclude.
\end{proof}

Again, we have a clean dimension formula in certain cases.

\begin{lemma}\label{maindimlemma2}Let $\ecal,\fcal$, and $\kcal$ be as above, and suppose moreover that $\fcal$ is semistable.  Then $\Surj(\mathcal{E}, \mathcal{F})^{\mathcal{K}}$ is either empty, or equidimensional of dimension $$ \deg(\ecal \otimes \kcal^\vee)^{\geq 0} - \deg(\kcal \otimes \kcal^\vee)^{\geq 0}.$$
\end{lemma}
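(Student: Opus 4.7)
The strategy mimics Theorem \ref{mainsurjthm1}: we relate $\Surj(\ecal,\fcal)^\kcal$ to a diamond whose dimension we already know, by introducing an auxiliary moduli of short exact sequences and exploiting torsor relations. Concretely, I would define $\Zcal$ to be the subfunctor of $\Inj(\kcal,\ecal) \times_{\Spd F} \Surj(\ecal,\fcal)$ whose $S$-points are pairs $(i,q)$ with $q \circ i = 0$; one verifies as in Proposition \ref{surjinjnice} that this vanishing condition cuts out a closed (hence locally spatial) subdiamond. The $S$-points of $\Zcal$ are then exactly the data of short exact sequences $0 \to \kcal_S \xrightarrow{i} \ecal_S \xrightarrow{q} \fcal_S \to 0$, since $q \circ i = 0$ forces $i$ to factor through $\ker q$, and rank/degree matching upgrades this factorization to a fiberwise isomorphism.

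Next we set up the two forgetful maps
\[
\Inj(\kcal,\ecal)^\fcal \xleftarrow{\,p_i\,} \Zcal \xrightarrow{\,p_q\,} \Surj(\ecal,\fcal)^\kcal,
\]
where $\Inj(\kcal,\ecal)^\fcal \subset \Inj(\kcal,\ecal)$ denotes the subfunctor of fiberwise injections whose cokernel is pointwise isomorphic to $\fcal$. Because $\fcal$ is semistable and the cokernel of any $i \in \Inj(\kcal,\ecal)$ automatically has the correct rank and degree, the condition defining $\Inj(\kcal,\ecal)^\fcal$ is equivalent to pointwise semistability of the cokernel, an open condition by \cite[Theorem 7.4.5]{KL15} (as in the proof of Proposition \ref{surjfixedkernelnice}); hence $\Inj(\kcal,\ecal)^\fcal$ is open in $\Inj(\kcal,\ecal)$, and is therefore empty or equidimensional of dimension $\deg(\kcal^\vee \otimes \ecal)^{\geq 0}$ by Proposition \ref{surjinjnice}. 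The central claim to verify is then that $p_i$ is an $\acal\mathrm{ut}(\fcal)$-torsor and $p_q$ is an $\acal\mathrm{ut}(\kcal)$-torsor: a lift of a given $i$ to $\Zcal$ is precisely an isomorphism $\ecal_S/i(\kcal_S) \xrightarrow{\sim} \fcal_S$, with torsor structure by post-composition, and symmetrically a lift of $q$ is an isomorphism $\kcal_S \xrightarrow{\sim} \ker q$.

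Assume $\Surj(\ecal,\fcal)^\kcal$ is nonempty (else there is nothing to show); then $\Zcal$ and $\Inj(\kcal,\ecal)^\fcal$ are also nonempty via the torsor maps. Applying Lemma \ref{additivefiberdim} to the two projections yields
\[
\dim \Zcal \;=\; \dim \Inj(\kcal,\ecal)^\fcal + \dim \acal\mathrm{ut}(\fcal) \;=\; \dim \Surj(\ecal,\fcal)^\kcal + \dim \acal\mathrm{ut}(\kcal),
\]
with equidimensionality propagating along both torsor maps since their fibers are of constant dimension. Substituting $\dim \acal\mathrm{ut}(\kcal) = \deg(\kcal^\vee \otimes \kcal)^{\geq 0}$ and the crucial vanishing $\dim \acal\mathrm{ut}(\fcal) = \deg(\fcal^\vee \otimes \fcal)^{\geq 0} = 0$, which follows from semistability of $\fcal$ via Proposition \ref{geometric diamond lemma for one vector bundle}, yields the asserted dimension formula. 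The main point where care is needed is the rigorous verification of the torsor structures --- namely, checking that the $\acal\mathrm{ut}$-actions are free and transitive on fibers pro-\'etale locally, and identifying the geometric fibers of $p_i$ and $p_q$ with $\acal\mathrm{ut}(\fcal)$ and $\acal\mathrm{ut}(\kcal)$ respectively, so that Lemma \ref{additivefiberdim} applies as claimed; everything else reduces to dimension bookkeeping.
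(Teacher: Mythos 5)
Your proposal is correct and captures the same core mechanism as the paper's proof: introduce the moduli of short exact sequences $0 \to \kcal \to \ecal \to \fcal \to 0$, relate it by $\acal\mathrm{ut}(\kcal)$-torsor to $\Surj(\ecal,\fcal)^{\kcal}$ and by $\acal\mathrm{ut}(\fcal)$-torsor to a surjection/injection moduli, and then exploit that semistability of $\fcal$ forces $\dim \acal\mathrm{ut}(\fcal) = 0$ and makes the relevant locus open. The paper packages this slightly differently: it defines $\Surj(\ecal,\fcal)^{\kcal,\heart}$ as the $\acal\mathrm{ut}(\kcal)$-torsor over $\Surj(\ecal,\fcal)^{\kcal}$, identifies it with the moduli of short exact sequences, and then \emph{dualizes} to obtain $\Surj(\ecal^\vee,\kcal^\vee)^{\fcal^\vee,\heart}$, so that both torsor computations happen over spaces of \emph{surjections} (where the kernel is automatically a sub-bundle and openness follows directly from Proposition \ref{surjfixedkernelnice}). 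Your version keeps everything on one side with two forgetful maps, which is conceptually symmetric and a perfectly valid variant.

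The one point to tighten: your definition of $\Inj(\kcal,\ecal)^\fcal$ and its claimed openness gloss over a subtlety. For a general $i \in \Inj(\kcal,\ecal)$ the cokernel sheaf $\coker i$ need not be a vector bundle --- the image of $i$ may fail to be a saturated subbundle, in which case $\coker i$ has a torsion part (with the ranks and total degree still forced, but with the torsion-free quotient having strictly smaller degree). So the condition ``$\coker i$ pointwise isomorphic to $\fcal$'' is not literally ``pointwise semistability of $\coker i$''; it is the conjunction of (a) $\coker i$ is a vector bundle, equivalently $i(\kcal)$ is saturated, and (b) $\coker i$ is semistable. Condition (a) cuts out exactly the open subfunctor $\Surj(\ecal^\vee,\kcal^\vee) \subset \Inj(\kcal,\ecal)$ mentioned in the footnote to the definition of $\Inj$, and within that open locus, condition (b) is open by \cite[Theorem 7.4.5]{KL15} as in Proposition \ref{surjfixedkernelnice} (here one uses that $\HN(\fcal)$ is the minimal possible HN polygon given the rank and degree, so ``equal to $\HN(\fcal)$'' is the same as ``$\leq \HN(\fcal)$''). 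With that repaired, $\Inj(\kcal,\ecal)^\fcal = \Surj(\ecal^\vee,\kcal^\vee)^{\fcal^\vee}$ is open in $\Inj(\kcal,\ecal)$ and the rest of your bookkeeping goes through exactly as in the paper --- indeed at this point your argument and the paper's coincide up to dualization.
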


\begin{proof} Recall that $\Surj(\ecal,\fcal)^{\kcal}$ parametrizes surjective bundle maps $q: \ecal \to \fcal$ with kernel isomorphic to $\kcal$ at all geometric points. Let $$\Surj(\ecal,\fcal)^{\kcal,\heart} \to \Surj(\ecal,\fcal)^{\kcal}$$ be the $\mathcal{A}\mathrm{ut}(\kcal)$-torsor parametrizing isomorphisms $\ker q \simeq \kcal$. Arguing as in the proof of Lemma \ref{maindimlemma1}, one easily checks that \begin{eqnarray*} \dim \Surj(\ecal,\fcal)^{\kcal,\heart} & = & \dim \Surj(\ecal,\fcal)^{\kcal} + \dim \mathcal{A}\mathrm{ut}(\kcal)\\ & = & \dim \Surj(\ecal,\fcal)^{\kcal} + \deg(\kcal^\vee \otimes \kcal)^{\geq 0}\end{eqnarray*}

  Next, we observe that  $$\Surj(\ecal,\fcal)^{\kcal,\heart}$$ can also be described as the functor whose $T$-points parametrize isomorphism classes of short exact sequences $$ 0 \to \kcal_T \to \ecal_T \to \fcal_T \to 0$$ on $\xcal_T$. Dualizing the set of such short exact sequences induces a canonical isomorphism $$\Surj(\ecal,\fcal)^{\kcal,\heart} \cong \Surj(\ecal^\vee,\kcal^\vee)^{\fcal^\vee,\heart}.$$
  
Returning to the matter at hand, we now assume that $\fcal$ is semistable.  Then the proof of Proposition \ref{surjfixedkernelnice} shows that $\Surj(\ecal^\vee,\kcal^\vee)^{\fcal^\vee}=\Surj(\ecal^\vee,\kcal^\vee)^{\le\HN(\fcal^\vee)}$ is an \emph{open} subfunctor of $\Surj(\ecal^\vee,\kcal^\vee)$.  Moreover, the dimension formula above degenerates to the equality $$ \dim \Surj(\ecal^\vee,\kcal^\vee)^{\fcal^\vee,\heart} = \dim \Surj(\ecal^\vee,\kcal^\vee)^{\fcal^\vee}.$$ Putting this together with Proposition \ref{surjinjnice}, we deduce that $$\dim \Surj(\ecal^\vee,\kcal^\vee)^{\fcal^\vee,\heart} = \deg( \ecal \otimes \kcal^\vee)^{\geq 0}$$ for semistable $\fcal$. But then \begin{eqnarray*} \dim \Surj(\ecal,\fcal)^{\kcal} & = & \dim \Surj(\ecal,\fcal)^{\kcal,\heart} - \deg(\kcal^\vee \otimes \kcal)^{\geq 0}\\
& = & \dim \Surj(\ecal^\vee,\kcal^\vee)^{\fcal^\vee,\heart} - \deg(\kcal^\vee \otimes \kcal)^{\geq 0}\\ & = & \deg(\ecal \otimes \kcal^\vee)^{\geq 0}-\deg(\kcal^\vee \otimes \kcal)^{\geq 0},\end{eqnarray*}as desired.
\end{proof}

\subsection{Summary of the strategy}\label{strategy}

The goal of \S \ref{step_1}-\S \ref{step_2} is to establish Theorem \ref{main_thm_3}, which we reproduce below for the reader's convenience: 
\begin{thm*} Let $\Cal{F}_1$ and $\Cal{F}_2$ be semistable vector bundles on $X$ with $\mu(\fcal_1) < \mu(\fcal_2)$. Then  any vector bundle $\Cal{E}$ such that
\[ \HN(\Cal{E}) \leq \HN(\Cal{F}_1 \oplus \Cal{F}_2)
\]
can be realized as an extension
\[
0 \rightarrow \Cal{F}_1 \rightarrow \Cal{E} \rightarrow \Cal{F}_2 \rightarrow 0.
\]
\end{thm*}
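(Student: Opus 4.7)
The plan is to execute the two-step program outlined in Section 1.2, leveraging the diamond-theoretic dimension formulas of Section 3 to convert each step into a combinatorial inequality about Harder-Narasimhan polygons. First I would make the easy reduction to the case $\mu_{\max}(\mathcal{E}) < \mu(\mathcal{F}_2)$: the hypothesis $\mathrm{HN}(\mathcal{E}) \leq \mathrm{HN}(\mathcal{F}_1 \oplus \mathcal{F}_2)$ forces all slopes of $\mathcal{E}$ to lie in $[\mu(\mathcal{F}_1),\mu(\mathcal{F}_2)]$, and if $\mu_{\max}(\mathcal{E}) = \mu(\mathcal{F}_2)$ then Theorem \ref{classification} lets us split off a common $\mathcal{O}(\mu(\mathcal{F}_2))$ summand from both $\mathcal{E}$ and $\mathcal{F}_2$, inducting until the maximal slope is strictly less than $\mu(\mathcal{F}_2)$.

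For Step (1), I would apply Theorem \ref{mainsurjthm1} to produce some surjection $\mathcal{E} \twoheadrightarrow \mathcal{F}_2$. This requires verifying, for every proper subbundle $\mathcal{Q} \subsetneq \mathcal{F}_2$ which also appears as a quotient of $\mathcal{E}$, the strict inequality
$$\deg(\mathcal{E}^\vee \otimes \mathcal{Q})^{\geq 0} + \deg(\mathcal{Q}^\vee \otimes \mathcal{F}_2)^{\geq 0} < \deg(\mathcal{E}^\vee \otimes \mathcal{F}_2)^{\geq 0} + \deg(\mathcal{Q}^\vee \otimes \mathcal{Q})^{\geq 0}.$$
The plan is to expand each of the four degree terms via Lemma \ref{diamond lemma for nonnegative degree} into a sum of cross products $v_i \times w_j$ over pairs of HN vectors, then interpret each cross product as twice a signed triangular area in the spirit of Proposition \ref{geometric diamond lemma for one vector bundle}. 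The goal is to recognise the difference (RHS minus LHS) as a manifestly nonnegative sum of polygonal areas that strictly vanishes only for $\mathcal{Q} = \mathcal{F}_2$, exploiting the semistability of $\mathcal{F}_2$ (so $\mathrm{HN}(\mathcal{F}_2)$ is a single straight segment) and the polygon constraints on $\mathcal{E}$ and $\mathcal{Q}$.

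For Step (2), since $\mathcal{F}_2$ is semistable, Lemma \ref{maindimlemma2} applies and gives
$$\dim \mathrm{Surj}(\mathcal{E},\mathcal{F}_2)^{\mathcal{K}} = \deg(\mathcal{E} \otimes \mathcal{K}^\vee)^{\geq 0} - \deg(\mathcal{K} \otimes \mathcal{K}^\vee)^{\geq 0}$$
for every candidate kernel $\mathcal{K}$ of rank $\mathrm{rank}(\mathcal{F}_1)$ and degree $\deg(\mathcal{F}_1)$. Since $\mathcal{F}_1$ is semistable, Proposition \ref{geometric diamond lemma for one vector bundle} forces $\deg(\mathcal{F}_1 \otimes \mathcal{F}_1^\vee)^{\geq 0} = 0$, and a short computation with Lemma \ref{diamond lemma for nonnegative degree} (using $\mu(\mathcal{F}_1) \leq \mu_{\min}(\mathcal{E})$) shows that $\dim \mathrm{Surj}(\mathcal{E},\mathcal{F}_2)^{\mathcal{F}_1}$ equals the total dimension $\deg(\mathcal{E}^\vee \otimes \mathcal{F}_2)^{\geq 0}$ of $\mathrm{Surj}(\mathcal{E},\mathcal{F}_2)$ from Proposition \ref{surjinjnice}. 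Combined with openness of the semistable locus, this identifies $\mathrm{Surj}(\mathcal{E},\mathcal{F}_2)^{\mathcal{F}_1}$ as an open substratum. I would then show that for any other $\mathcal{K} \not\simeq \mathcal{F}_1$ the stratum has strictly smaller dimension: the subtracted term $\deg(\mathcal{K} \otimes \mathcal{K}^\vee)^{\geq 0}$ becomes strictly positive (twice the nonzero area between $\mathrm{HN}(\mathcal{K})$ and its chord, by Proposition \ref{geometric diamond lemma for one vector bundle}), and $\deg(\mathcal{E} \otimes \mathcal{K}^\vee)^{\geq 0}$ cannot grow enough to compensate — this should follow from a direct comparison of HN polygons via Corollary \ref{diamond inequality}, using the constraint $\mathrm{HN}(\mathcal{E}) \leq \mathrm{HN}(\mathcal{K} \oplus \mathcal{F}_2)$ imposed by Corollary \ref{polygon_bound} on any kernel arising from an extension realising $\mathcal{E}$. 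Because the ambient $\mathrm{Surj}(\mathcal{E},\mathcal{F}_2)$ is nonempty equidimensional of the correct dimension and every other stratum is strictly smaller, the $\mathcal{F}_1$-stratum must be nonempty.

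The principal obstacle is Step (1): the set of proper subbundles $\mathcal{Q} \subsetneq \mathcal{F}_2$ arising as quotients of $\mathcal{E}$ is combinatorially rich, and the four-term inequality does not reduce to any obvious pairwise comparison. The clever repackaging of the expression as a signed-area identity among polygonal regions — hinted at in the introduction and foreshadowed by the area-theoretic formalism of Section 2.3 — is presumably what makes positivity geometrically visible, and will likely require a nontrivial transformation simultaneously involving the HN polygons of $\mathcal{E}$, $\mathcal{F}_2$, and $\mathcal{Q}$. Step (2), by contrast, should be comparatively tractable once the dimension identity $\dim \mathrm{Surj}(\mathcal{E},\mathcal{F}_2)^{\mathcal{F}_1} = \dim \mathrm{Surj}(\mathcal{E},\mathcal{F}_2)$ is verified, since semistability of $\mathcal{F}_1$ and convexity of HN polygons give the required strict inequality via a short geometric argument.
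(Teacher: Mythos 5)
Your proposal follows essentially the same two-step strategy as the paper, and the mechanics are correct: the reduction to $\mu_{\max}(\mathcal{E}) < \mu(\mathcal{F}_2)$ is exactly the one in \S\ref{strategy}, Step~(1) invokes Theorem~\ref{mainsurjthm1} and reduces to the quadrilinear inequality, and Step~(2) uses Lemma~\ref{maindimlemma2} together with openness of the $\mathcal{F}_1$-stratum. Your identity $\dim\Surj(\mathcal{E},\mathcal{F}_2)^{\mathcal{F}_1} = \deg(\mathcal{E}^\vee \otimes \mathcal{F}_2)^{\geq 0}$ is correct: both sides expand to $\rank(\mathcal{F}_1)\deg(\mathcal{F}_2) - \rank(\mathcal{F}_2)\deg(\mathcal{F}_1)$ once one uses that all slopes of $\mathcal{E}\otimes\mathcal{F}_1^\vee$ are $\geq 0$ and all slopes of $\mathcal{E}^\vee\otimes\mathcal{F}_2$ are $>0$.

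One caveat: you underestimate the combinatorial content of Step~(2). You write that the strict decrease of dimension for $\mathcal{K}\not\simeq\mathcal{F}_1$ ``should follow from a direct comparison of HN polygons via Corollary~\ref{diamond inequality},'' but that corollary only compares $\deg(\mathcal{V}^\vee\otimes\mathcal{V})^{\geq 0}$ with $\deg(\mathcal{W}^\vee\otimes\mathcal{W})^{\geq 0}$ for a \emph{single} nested pair of polygons; it says nothing about the cross-term $\deg(\mathcal{E}\otimes\mathcal{K}^\vee)^{\geq 0}$ that you actually need to control. The paper devotes all of \S\ref{5 section 1}--\S\ref{completion} (Theorem~\ref{degree inequality for kernel}) to precisely this inequality, and the proof is not a one-liner: after an interleaving reduction that replaces $(\vec{\mu}_i)$ and $(\vec{\lambda}_j)$ by strictly alternating blocks $(\tilde{\vec{\mu}}_k)$, $(\tilde{\vec{\lambda}}_k)$, it reduces the claim to a family of cross-product positivity statements at each breakpoint, argued by cases according to the sign of the $x$-coordinate of the displacement $\overrightarrow{Q_{i+1}P_{i+1}}$. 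This is comparable in flavour and difficulty to Step~(1), not ``comparatively tractable.'' So the plan is sound, but you should budget for Step~(2) to require its own nontrivial geometric argument, not just an invocation of Corollary~\ref{diamond inequality}.
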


Again, the strategy of our proof is as follows. 
We note that it suffices to show the theorem when the slope of $\Cal{F}_2$ is 
	strictly greater than the maximal slope of $\Cal{E}$.
Otherwise we would have $\Cal{F}_2=\Cal O(s)^m$ and $\Cal{E}=\Cal{E'}\oplus \Cal O(s)^n$
	for some $\Cal{E}'$ with maximal slope less than $s$ and $n\leq m$.
Solving the extension problem for 
\[
	0\to \Cal F_1 \to \Cal{E'} \to \Cal O(s)^{m-n} \to 0
\]
	and then direct summing with $0\to 0 \to \Cal{O}(s)^n \to \Cal{O}(s)^n\to 0$
    gives the full theorem.
    
It then clearly suffices to show that the assumptions of Theorem \ref{main_thm_3} 
	plus the additional assumption of strict inequality of slopes 
    imply the following two assertions.
\begin{enumerate}
\item There exists a surjection $\Cal{E} \rightarrow \Cal{F}_2 \rightarrow 0$. 
\item If $\Cal{E}$ admits a surjection $\Cal{E} \rightarrow \Cal{F}_2 \rightarrow 0$, then it admits such a surjection with kernel isomorphic to $\Cal{F}_1$. 
\end{enumerate}

The bulk of \S \ref{step_1} is devoted to showing that the inequality
\[
\deg(\Cal{E}^{\vee} \otimes \Cal{Q})^{\geq 0} +  \deg(\Cal{Q}^{\vee} \otimes \Cal{F}_2)^{\geq 0} < \deg(\Cal{E}^{\vee} \otimes \Cal{F}_2)^{\geq 0} +  \deg (\Cal{Q}^{\vee} \otimes \Cal{Q})^{ \geq 0}.
\]
holds for any proper sub-bundle $\Cal{Q} \subsetneq \Cal{F}_2$ which also occurs as a quotient of $\Cal{E}$. This collection of inequalities implies (1) by  Theorem \ref{mainsurjthm1}.

Assuming $\ecal$ satisfies (1), Lemma \ref{maindimlemma2} and Proposition \ref{surjinjnice} reduce (2) to the verification of the inequality
\[
\deg(\ecal \otimes \kcal^\vee)^{\geq 0} - \deg(\kcal \otimes \kcal^\vee)^{\geq 0} < 	\deg (\Cal{E}^\vee \otimes \Cal{F}_2)^{\geq 0}
\]
for a certain finite collection of $\kcal$'s (this implication will be fleshed out in \S \ref{completion}). The proof of these inequalities forms the subject of \S \ref{step_2}; the precise statement is given in Theorem \ref{degree inequality for kernel}.

    \section{Step one: surjections of vector bundles}\label{step_1}
  
\subsection{The goal}
	
    This section is devoted to establishing Step (1), as outlined in \S \ref{strategy}, wherein we show that the obvious necessary numerical conditions required for a bundle $\Cal{E}$ to surject onto a given semistable bundle $\Cal{F}$ are also sufficient. The precise statement is as follows.
    
\begin{thm}\label{step-1}
Let $\Cal{F}$ be a semistable vector bundle on $X$. Let $\Cal{E}$ be a vector bundle on $X$ such that 
\begin{itemize}
\item $\rank \Cal{E} > \rank \Cal{F}$, and 
\item the maximal slope of $\Cal{E}$ is less than or equal to $\mu(\fcal)$.
\end{itemize}
Then $\Cal{E}$ admits a surjection onto $\Cal{F}$.
\end{thm}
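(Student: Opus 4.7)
The approach is to invoke Theorem \ref{mainsurjthm1}. I would first reduce to the case where the maximal slope of $\mathcal{E}$ is \emph{strictly} less than $\mu(\mathcal{F})$: writing $\mathcal{F} \simeq \mathcal{O}(\mu)^m$ and $\mathcal{E} \simeq \mathcal{E}' \oplus \mathcal{O}(\mu)^n$ with $\mu_{\max}(\mathcal{E}') < \mu$, splitting off a common $\mathcal{O}(\mu)^{\min(m,n)}$-summand either solves the problem outright or reduces to applying a strict-slope version of the theorem to $\mathcal{E}' \to \mathcal{O}(\mu)^{m-n}$, whose rank hypothesis is preserved. Once in the strict-slope regime, a nonzero map $\mathcal{E} \to \mathcal{F}$ exists by Theorem \ref{classification}(4), since $\mathcal{E}^\vee \otimes \mathcal{F}$ has only positive slopes. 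It then remains to verify, for every proper sub-bundle $\mathcal{Q} \subsetneq \mathcal{F}$ that appears as a quotient of $\mathcal{E}$, the strict inequality
\[
\deg(\mathcal{E}^\vee \otimes \mathcal{Q})^{\geq 0} + \deg(\mathcal{Q}^\vee \otimes \mathcal{F})^{\geq 0} < \deg(\mathcal{E}^\vee \otimes \mathcal{F})^{\geq 0} + \deg(\mathcal{Q}^\vee \otimes \mathcal{Q})^{\geq 0}.
\]

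I would then unpack each term via Lemma \ref{diamond lemma for nonnegative degree}. Because $\mathcal{F}$ is semistable, $\HNvec(\mathcal{F})$ reduces to the single vector $f = (\rank \mathcal{F}, \deg \mathcal{F})$; because $\mathcal{Q} \hookrightarrow \mathcal{F}$ is a sub-bundle of a semistable bundle and $e_i \prec f$ by the slope reduction, every $q_k \in \HNvec(\mathcal{Q})$ satisfies $q_k \preceq f$, and the two $\mathcal{F}$-involving terms collapse to $\bigl(\sum_k q_k\bigr) \times f$ and $\bigl(\sum_i e_i\bigr) \times f$ respectively. Proposition \ref{geometric diamond lemma for one vector bundle} further rewrites $\deg(\mathcal{Q}^\vee \otimes \mathcal{Q})^{\geq 0}$ as twice the area enclosed between $\HN(\mathcal{Q})$ and its chord. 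The remaining ``mixed'' term $\deg(\mathcal{E}^\vee \otimes \mathcal{Q})^{\geq 0} = \sum_{e_i \preceq q_k} e_i \times q_k$ is the only one that genuinely couples the two HN polygons.

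After this rewriting, the inequality becomes a purely planar assertion about cross-products of the vectors $\{e_i\}$, $\{q_k\}$, and $f$. The plan is to express $\mathrm{RHS}-\mathrm{LHS}$ as the signed area of a single concrete polygonal region and show that region has positive area. The two pieces of structural input I would exploit are: (a) $\mathcal{Q} \subsetneq \mathcal{F}$, so $(\rank \mathcal{Q}, \deg \mathcal{Q})$ is a proper sub-vector of $f$, producing a genuine gap vector $f - (\rank \mathcal{Q}, \deg \mathcal{Q})$ in the picture; and (b) $\mathcal{Q}$ occurs as a quotient of $\mathcal{E}$, which via slope theory constrains the relative position of $\HN(\mathcal{Q})$ and $\HN(\mathcal{E})$ and thereby controls the mixed sum.

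The main obstacle is precisely this final geometric step, which the introduction flags as the hard combinatorial core of the paper: the inequality ``presents a difficult combinatorial problem in general,'' resolvable only by ``a complicated transformation of the polygons after which one area clearly dominates the other.'' I anticipate organizing $\mathrm{RHS}-\mathrm{LHS}$ as a telescoping sum of triangles anchored on the breakpoints of $\HN(\mathcal{E})$, $\HN(\mathcal{Q})$ and the gap vector, and showing that after a suitable rearrangement the orientations all line up to give a strictly positive total area, with strictness forced by the properness $\mathcal{Q} \subsetneq \mathcal{F}$.
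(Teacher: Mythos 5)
Your setup is correct and matches the paper's: the reduction to the strict-slope case is exactly the one used, the existence of a nonzero map is easily checked, and invoking Theorem \ref{mainsurjthm1} together with Lemma \ref{diamond lemma for nonnegative degree} and Proposition \ref{geometric diamond lemma for one vector bundle} is the right framework. You also correctly identify the two structural inputs — properness of $\mathcal{Q}\subsetneq\mathcal{F}$ and the fact that $\mathcal{Q}$ is a quotient of $\mathcal{E}$ — that must drive the strict inequality.

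However, the core of the proof is missing. Everything up to and including the rewriting of the four degree terms is bookkeeping; the actual content is the geometric inequality, and you stop precisely there, offering only an anticipated strategy ("I anticipate organizing $\mathrm{RHS}-\mathrm{LHS}$ as a telescoping sum of triangles... and showing that after a suitable rearrangement the orientations all line up"). That is a plan, not a proof, and the plan is vague enough that it is not clear it would go through. In particular, you have not extracted the concrete slope constraint that the quotient hypothesis gives (the paper's Lemma \ref{quotient-condition-lemma} and Corollary \ref{slopes-condition-corollary}: after aligning right endpoints, the HN polygon of $\mathcal{Q}$ lies below that of $\mathcal{E}$ strip by strip), and that constraint is exactly what makes the area comparison possible. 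The paper's argument is not a telescoping rearrangement of orientations; it proceeds by applying the shear $T=\bigl(\begin{smallmatrix}1&0\\-\gamma&1\end{smallmatrix}\bigr)$ so that $\vec\gamma$ becomes horizontal, then computing the area of each triangle on the left as $\tfrac12 h_i x_i$ after a further shear $T_i$, and proving the pointwise height bound $h_i\leq|h|$ (Claim \ref{heightclaim}) using convexity of the augmented polygon and the strip-wise slope comparison. The bound $\sum x_i h_i \leq |h|\sum x_i = |h|\rank\mathcal{Q} \leq |h|\rank\mathcal{F}$ then closes the argument, with strictness traced back to $\mathcal{Q}\subsetneq\mathcal{F}$ and the strict-slope hypothesis. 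You would need to either reproduce something equivalent to this height comparison, or genuinely construct and verify the "single concrete polygonal region" you propose — as it stands, the key step is absent.
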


Note that, by an argument similar to the one in \ref{strategy}, it suffices to treat the case where the maximal slope of $\ecal$ is strictly less than $\mu(\fcal)$. 

By this reduction and Theorem \ref{mainsurjthm1}, we are now reduced to proving the inequality
\begin{equation}\label{step1eq}
\deg(\Cal{E}^{\vee} \otimes \Cal{Q})^{\geq 0} +  \deg(\Cal{Q}^{\vee} \otimes \Cal{F})^{\geq 0} < \deg(\Cal{E}^{\vee} \otimes \Cal{F})^{\geq 0} +  \deg (\Cal{Q}^{\vee} \otimes \Cal{Q})^{ \geq 0}
\end{equation}
for any $\qcal$ which is both a quotient of $\Cal{E}$ and a proper subbundle of $\Cal{F}$, under the assumption that the maximal slope of $\Cal{E}$ is \emph{strictly} less than $\mu(\fcal)$. (The strict inequality is false without this latter strictness assumption, which is why we made the initial reduction to this case.) 

\subsection{Some lemmas} 

We begin by translating the assumption that $\Cal{Q}$ is a quotient of $\Cal{E}$ into numerical constraints on the slopes of their HN polygons.

\begin{lemma}\label{quotient-condition-lemma}
Suppose $\Cal{Q}$ is a quotient of $\Cal{E}$.
Then for every slope $\mu$, the bundle $\Cal{Q}^{\leq \mu}$ is a quotient of $\Cal{E}^{\leq \mu}$. (Dually, if $\Cal{K}$ is a subbundle of $\Cal{E}$, for every slope $\mu$, the bundle $\Cal{K}^{\geq \mu}$ is a subbundle of $\Cal{E}^{\geq \mu}$.) 
\end{lemma}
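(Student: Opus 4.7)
The plan is to reduce the claim to the vanishing of Hom-spaces between semistable bundles of incomparable slopes. The key fact that I would invoke is that for a semistable bundle of slope $\lambda$ and a semistable bundle of slope $\lambda'$ with $\lambda > \lambda'$, every map between them is zero: by the classification theorem and $\Hom(\Cal{O}(\lambda),\Cal{O}(\lambda')) = H^0(\Cal{O}(\lambda'-\lambda))$, this follows from Theorem~\ref{classification}(4). Consequently, any map from a bundle $\Cal{A}$ all of whose HN slopes are $>\mu$ to a bundle $\Cal{B}$ all of whose HN slopes are $\leq \mu$ must vanish (decompose both sides as in Theorem~\ref{classification} and apply the pointwise vanishing).

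Given this, I would argue as follows. Fix a surjection $\pi : \Cal{E} \twoheadrightarrow \Cal{Q}$ and consider the composite
\[
\Cal{E}^{>\mu} \hookrightarrow \Cal{E} \xrightarrow{\pi} \Cal{Q} \twoheadrightarrow \Cal{Q}^{\leq \mu}.
\]
By construction the source has all HN slopes $>\mu$ while the target has all HN slopes $\leq \mu$, so by the vanishing above this composite is zero. Hence the composite $\Cal{E} \to \Cal{Q} \to \Cal{Q}^{\leq \mu}$ factors through the quotient $\Cal{E}/\Cal{E}^{>\mu} = \Cal{E}^{\leq \mu}$, yielding a map $\phi : \Cal{E}^{\leq \mu} \to \Cal{Q}^{\leq \mu}$.

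To conclude, I would observe that the composite $\Cal{E} \twoheadrightarrow \Cal{E}^{\leq \mu} \xrightarrow{\phi} \Cal{Q}^{\leq \mu}$ agrees with the (surjective) composite $\Cal{E} \xrightarrow{\pi} \Cal{Q} \twoheadrightarrow \Cal{Q}^{\leq \mu}$; since the quotient map $\Cal{E} \twoheadrightarrow \Cal{E}^{\leq \mu}$ is surjective, this forces $\phi$ to be surjective as well, proving the first assertion. The dual statement follows by the evidently symmetric argument: for a subbundle $\Cal{K} \subseteq \Cal{E}$, the composite $\Cal{K}^{\geq \mu} \hookrightarrow \Cal{K} \hookrightarrow \Cal{E} \twoheadrightarrow \Cal{E}^{< \mu}$ has source with slopes $\geq \mu$ and target with slopes $< \mu$, hence is zero, so $\Cal{K}^{\geq \mu} \hookrightarrow \Cal{E}$ factors through $\Cal{E}^{\geq \mu}$; this factorization is automatically a subbundle inclusion (being the restriction of the subbundle inclusion $\Cal{K}^{\geq \mu} \hookrightarrow \Cal{K}$).

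There is no real obstacle here — the whole argument is formal once one has Theorem~\ref{classification}, and the only point requiring any care is the bookkeeping that the factored maps remain surjections (resp.\ subbundle inclusions), which is immediate from the universal properties of kernels and cokernels. This lemma is really just a packaging of the slope-theoretic intuition that ``HN polygons interact well with subs and quotients.''
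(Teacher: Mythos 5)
Your proposal is correct and follows essentially the same approach as the paper: the paper's one-line proof is that the composite $\Cal{E} \twoheadrightarrow \Cal{Q} \twoheadrightarrow \Cal{Q}^{\leq \mu}$ factors through $\Cal{E}^{\leq\mu}$ because $\Cal{E}^{>\mu}$ is a sum of stable bundles of slope $>\mu$, which is exactly your vanishing argument spelled out. You have simply made explicit the Hom-vanishing, the surjectivity of the factored map, and the dual case, all of which the paper leaves to the reader.
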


\begin{proof}
 The composite quotient map $\Cal{E} \twoheadrightarrow \Cal{Q}\surj\qcal^{\le\mu}$ necessarily factors through $\Cal{E}^{\leq \mu}$, since $\Cal{E}^{>\mu}$ is a direct sum of stable bundles with slopes greater than $\mu$.
\end{proof}

Since we will prove \eqref{step1eq} by interpreting the terms on both sides as areas of polygons, it is convenient to have a ``geometric'' reformulation of the preceding lemma.

\begin{cor}\label{slopes-condition-corollary}
Let $\Cal{Q}$ be a quotient of $\Cal{E}$. Translate the HN polygons for $\Cal{Q}$ and $\Cal{E}$ in the plane so that both their right endpoints lie at the origin.
Then for every integer $i$ from $1$ to $\rank \Cal{Q}$, the slope of the part of the HN polygon for $\Cal{Q}$ lying in the strip $[-i,-i+1]\times \RR$ is greater than or equal to the slope of the part of the HN polygon for $\Cal{E}$ lying in this strip. (See Figure \ref{slopes_cond_fig}.)

In particular, the HN polygon for $\Cal{Q}$ always lies below the HN polygon for $\Cal{E}$ (when aligning right endpoints).
\end{cor}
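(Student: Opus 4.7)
The plan is to deduce both parts of the corollary directly from Lemma \ref{quotient-condition-lemma}. The lemma, applied for every slope $\mu \in \mathbf{Q}$, yields the numerical consequence $\rank \Cal{Q}^{\leq \mu} \leq \rank \Cal{E}^{\leq \mu}$ (since the quotient map $\Cal{E}^{\leq \mu} \twoheadrightarrow \Cal{Q}^{\leq \mu}$ cannot increase rank). After aligning right endpoints at the origin, concavity of the Harder--Narasimhan polygon says that the slopes in the strips $[-i,-i+1]\times \RR$ of $\HN(\Cal{E})$ and $\HN(\Cal{Q})$ form nondecreasing sequences $\mu_\Cal{E}(i)$ and $\mu_\Cal{Q}(i)$ as $i$ grows, and the rank of $\Cal{E}^{\leq \mu}$ (resp. $\Cal{Q}^{\leq \mu}$) is just the number of strip-indices $i$ for which $\mu_\Cal{E}(i) \leq \mu$ (resp. $\mu_\Cal{Q}(i) \leq \mu$).

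The main pointwise slope inequality will then be proved by contradiction. Suppose $\mu_\Cal{Q}(i) < \mu_\Cal{E}(i)$ for some $i \in \{1,\dotsc,\rank \Cal{Q}\}$, and set $\mu = \mu_\Cal{Q}(i)$. Monotonicity of $\mu_\Cal{Q}(\cdot)$ forces $\mu_\Cal{Q}(j) \leq \mu$ for every $j \leq i$, so $\rank \Cal{Q}^{\leq \mu} \geq i$. Conversely, monotonicity of $\mu_\Cal{E}(\cdot)$ forces $\mu_\Cal{E}(j) \geq \mu_\Cal{E}(i) > \mu$ for every $j \geq i$, so $\rank \Cal{E}^{\leq \mu} \leq i - 1$. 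Together these contradict $\rank \Cal{Q}^{\leq \mu} \leq \rank \Cal{E}^{\leq \mu}$, establishing the desired strip-by-strip slope comparison.

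For the ``in particular'' statement, I would integrate the slope comparison. Since right endpoints are at the origin, the $y$-coordinate of $\HN(\Cal{E})$ at $x = -i$ equals $-\sum_{j=1}^{i} \mu_\Cal{E}(j)$, and analogously for $\Cal{Q}$; for any $i \leq \rank \Cal{Q}$, the slope inequality just proven gives $-\sum_{j=1}^{i} \mu_\Cal{Q}(j) \leq -\sum_{j=1}^{i} \mu_\Cal{E}(j)$, so $\HN(\Cal{Q})$ lies weakly below $\HN(\Cal{E})$ on the full interval $[-\rank \Cal{Q}, 0]$ on which both are defined. I do not anticipate any serious obstacle here; the only subtlety is keeping track of monotonicities to ensure the threshold $\mu$ separates the $\Cal{Q}$-strips and $\Cal{E}$-strips as claimed, and this is purely formal.
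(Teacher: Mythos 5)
Your proof is correct and is essentially the paper's own argument spelled out in more detail: both derive from Lemma~\ref{quotient-condition-lemma} the rank comparison $\rank\Cal{Q}^{\leq\mu}\leq\rank\Cal{E}^{\leq\mu}$, and both obtain the strip-by-strip slope inequality by contradiction with $\mu$ taken to be the slope of $\HN(\Cal{Q})$ in the offending strip, yielding $\rank\Cal{E}^{\leq\mu}<i\leq\rank\Cal{Q}^{\leq\mu}$. Your concluding summation for the ``in particular'' clause is a routine consequence that the paper leaves implicit.
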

\begin{center}
\begin{figure}[h]
\begin{tikzpicture}	
		\coordinate (q0) at (0,0);
		\coordinate (q1) at (4,1);
		\coordinate (q2) at (8.5,-0.5);
		\coordinate (end) at (11,-3);
		
		\coordinate (p0) at (2, -2);
		\coordinate (p1) at (4.5, -1);
		\coordinate (p2) at (6.5, -1);
		\coordinate (p3) at (9.5, -2);
				
		\draw[step=1cm,thick] (q0) --  (q1) -- (q2) -- (end);
		\draw[step=1cm,thick] (p0) --  (p1) -- (p2) -- (p3) -- (end);
		
		\draw [fill] (q0) circle [radius=0.05];		
		\draw [fill] (q1) circle [radius=0.05];		
		\draw [fill] (q2) circle [radius=0.05];		
		\draw [fill] (end) circle [radius=0.05];
		
		\draw [fill] (p0) circle [radius=0.05];		
		\draw [fill] (p1) circle [radius=0.05];		
		\draw [fill] (p2) circle [radius=0.05];		
		\draw [fill] (p3) circle [radius=0.05];
		
		\draw[step=1cm,dotted] (6, -3.4) -- (6, 1.4);
       	\draw[step=1cm,dotted] (5.5, -3.4) -- (5.5, 1.4);

		\node at (5.3,-3.8) {\scriptsize $-i$};
		\node at (6.2,-3.8) {\scriptsize $-i+1$};
		
		\path (q0) ++(-0.8, -0.1) node {$\HN({\mathcal{E}})$};
		\path (p0) ++(-0.8, -0.1) node {$\HN({\mathcal{Q}})$};

\end{tikzpicture}
\caption{Illustration of Corollary \ref{slopes-condition-corollary}.}\label{slopes_cond_fig}
\end{figure}
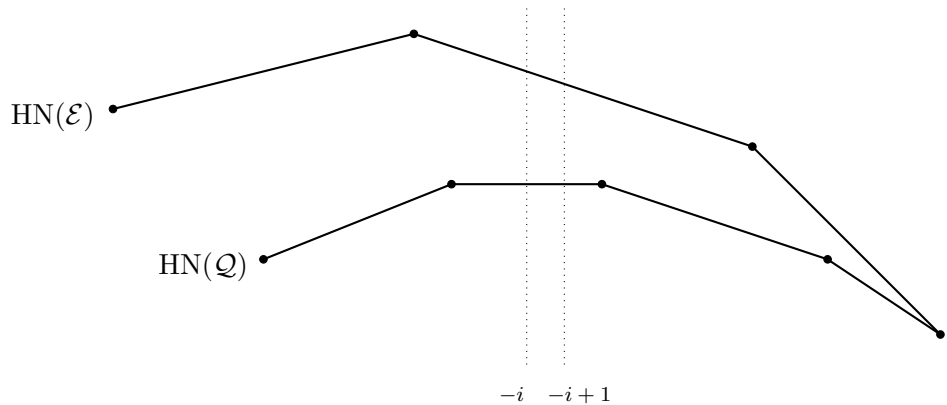
\end{center}
\begin{proof}
Suppose for the sake of contradiction that on some interval $[-i,-i+1]$, the slope of $\HN(\Cal{Q})$ is less than that of $\HN(\Cal{E})$. Let $\mu$ be the slope of the HN polygon for $\Cal{Q}$ on this strip. Then we would have $\rank \Cal{E}^{\leq \mu}< i\leq \rank \Cal{Q}^{\leq \mu}$, contradicting Lemma \ref{quotient-condition-lemma}.
\end{proof}

\begin{remark}
In this section, we will usually find it more convenient to align HN polygons with their \emph{right endpoint} at the origin. (In future sections, it will usually be more convenient to align HN polygons with their \emph{left endpoint} at the origin.) 
\end{remark}

\subsection{Proof of Theorem \ref{step-1}} We are now ready to begin the proof of Theorem \ref{step-1}, the main result of this section. The idea is to interpret each term of the inequality as the area of a certain polygon, and to transform the polygons by area-preserving operations (specifically translations and shears) until they can be easily compared. 

\begin{proof}[Proof of Theorem \ref{step-1}]
Recall that we are trying to show
\begin{equation}\label{ineq_for_step1}
	\deg(\Cal{E}^\vee\otimes \Cal Q)^{\geq 0}
    	-\deg(\Cal{Q}^\vee \otimes \Cal{Q})^{\geq 0}
    < \deg(\Cal{E}^\vee\otimes \Cal{F})^{\geq 0}
    	-\deg(\Cal{Q}^\vee \otimes \Cal{F})^{\geq 0}
\end{equation}
for any $\Cal{Q}$ which can occur as a quotient of $\Cal{E}$ and a proper subbundle of $\Cal{F}$.
As discussed before, we assume here that the maximum slope of $\Cal{E}$ is strictly less than the slope of $\Cal{F}$, which is permissible by the initial reduction discussed in \S4.1.

Write $\HNvec(\mathcal{E})=(\vec{\lambda}_j)_{1 \leq j \leq s}$ and $\HNvec(\mathcal{Q})=(\vec{\mu}_i)_{1 \leq i \leq s'}$
(under the notation introduced in Definition \ref{HN vector notation}). We will write $\lambda_j$ for the slope of the vector $\vec{\lambda}_j$, and so on. By Lemma \ref{diamond lemma for nonnegative degree}, we see that 
\begin{align*}
	\deg(\Cal{E}^\vee\otimes \Cal Q)^{\geq 0}
    	-\deg(\Cal{Q}^\vee \otimes \Cal{Q})^{\geq 0}
    &=\sum_{\vec{\lambda}_j \preceq \vec{\mu}_i} \vec{\lambda}_j \times \vec{\mu}_i
    	-\sum_{\vec{\mu}_k\preceq \vec{\mu}_i} \vec{\mu}_k\times \vec{\mu}_i\\
    &=\sum_i \left(\sum_{\vec{\lambda}_j\preceq \vec{\mu}_i}\vec{\lambda}_j-\sum_{\vec{\mu}_k\preceq\vec{\mu}_i} \vec{\mu}_k\right)\times \vec{\mu}_i \\
    &=\sum_i \vec{a}_i\times\vec{\mu}_i
\end{align*}
where
\[
\vec{a}_i=\sum_{\vec{\lambda}_j\preceq \vec{\mu}_i}\vec{\lambda}_j-\sum_{\vec{\mu}_k\preceq\vec{\mu}_i} \vec{\mu}_k.
\]
Then the left side of \eqref{ineq_for_step1} is equal to two times the shaded area in Figure \ref{ineq1_LHS} below.

\begin{center}
\begin{figure}[h]
\begin{tikzpicture}	
		\coordinate (q0) at (0,0);
		\coordinate (q1) at (4,1);
		\coordinate (q2) at (8.5,-0.5);
		\coordinate (end) at (11,-3);
		
		\coordinate (p0) at (2, -2);
		\coordinate (p1) at (4.5, -1);
		\coordinate (p2) at (6.5, -1);
		\coordinate (p3) at (9.5, -2);
		
        \draw[purple, fill=purple!20] (q2)--(end)--(p3)--(q2);
		\draw[purple, fill=purple!20] (q1)--(p1)--(p2)--(p3)--(q1);
		\draw[purple, fill=purple!20] (q0)--(p0)--(p1)--(q0);
		
		\draw[step=1cm,thick] (q2)--(p3);
		\draw[step=1cm,thick] (q1)--node[above] {$\vec{a}_3$} (p3);
		\draw[step=1cm,thick] (q1)--node[below] {$\vec{a}_2$} (p2);
		\draw[step=1cm,thick] (q1)--(p1);
		\draw[step=1cm,thick] (q0)--node[above] {$\vec{a}_1$} (p1);
		\draw[step=1cm,thick] (q0)--(p0);

		\draw[step=1cm,thick] (q0) --node[above] {$\vec{\lambda}_1$}  (q1);
		\draw[step=1cm,thick] (q1) --node[above] {$\vec{\lambda}_2$}  (q2);
		\draw[step=1cm,thick] (q2) --node[right] {$\vec{\lambda}_3=\vec{a}_4$}  (end);
		
		\draw[step=1cm,thick] (p0) --node[below] {$\vec{\mu}_1$}  (p1);
		\draw[step=1cm,thick] (p1) --node[below] {$\vec{\mu}_2$}  (p2);
		\draw[step=1cm,thick] (p2) --node[below] {$\vec{\mu}_3$}  (p3);
		\draw[step=1cm,thick] (p3) --node[below] {$\vec{\mu}_4$}  (end);
		
		\draw [fill] (q0) circle [radius=0.05];		
		\draw [fill] (q1) circle [radius=0.05];		
		\draw [fill] (q2) circle [radius=0.05];		
		\draw [fill] (end) circle [radius=0.05];
		
		\draw [fill] (p0) circle [radius=0.05];		
		\draw [fill] (p1) circle [radius=0.05];		
		\draw [fill] (p2) circle [radius=0.05];		
		\draw [fill] (p3) circle [radius=0.05];
		
		\path (q0) ++(-0.8, -0.1) node {$\HN({\mathcal{E}})$};
		\path (p0) ++(-0.8, -0.1) node {$\HN({\mathcal{Q}})$};
		
\end{tikzpicture}
\caption{Area interpretation of the left side of \eqref{ineq_for_step1} in the case $s=3, s'=4$, with slope vectors $\vec{\mu}_1 \succeq \vec{\lambda}_1 \succ \vec{\mu}_2 \succ \vec{\mu}_3 \succeq \vec{\lambda}_2 \succ \vec{\mu}_4 \succeq \vec{\lambda}_3$.}
\label{ineq1_LHS}
\end{figure}
\end{center}
Note that for a fixed $i$, the left endpoint of $\sum_{\vec{\lambda}_j\leq \vec{\mu}_i} \vec{\lambda}_j$ (whose $x$-coordinate is $-\rank \Cal{E}^{\leq \vec{\mu}_j}$)  lies to the left of the left end point of $\sum_{\vec{\mu}_k\leq \vec{\mu}_i} \vec{\mu}_k$ by Lemma \ref{quotient-condition-lemma}.\\

Now we analyze the right side of \eqref{ineq_for_step1}. Let $\HN(\Cal{F})=(\vec{\gamma})$. Note that the slope $\gamma$ of the semistable bundle $\Cal{F}$ must be at least $\mu_i$ for all $i$, and \emph{strictly} greater than $\lambda_j$ for all $j$. Therefore, by similar manipulations to those above, we have
\begin{equation}\label{5.2RHS_area}
\text{RHS of \eqref{ineq_for_step1}} = 	\left(\sum_j \vec{\lambda}_j -\sum_i \vec{\mu}_i\right)\times \vec{\gamma}.
\end{equation}
We note that the vector $\sum_j \vec{\lambda}_j -\sum_i \vec{\mu}_i$ has positive $x$-coordinate because $\rank \Cal{E} > \rank \Cal{Q}$, and has slope $\leq \lambda_1 \le \gamma$ because $\HN(\mathcal{Q})$ lies below $\HN(\mathcal{E})$ by Corollary \ref{slopes-condition-corollary}.

If we align the \emph{left} endpoints of the HN polygons for $\Cal{F}$ and $\Cal{E}$, we then see that the right side of \eqref{ineq_for_step1} is twice the area of the shaded triangle in Figure \ref{ineq1_RHS} below. 


\begin{center}
\begin{figure}[h]
\begin{tikzpicture}	
		\coordinate (q0) at (0,0);
		\coordinate (q1) at (4,1);
		\coordinate (q2) at (8.5,-0.5);
		\coordinate (end) at (11,-3);
		
		\coordinate (p0) at (2, -2);
		\coordinate (p1) at (4.5, -1);
		\coordinate (p2) at (6.5, -1);
		\coordinate (p3) at (9.5, -2);
		
		\coordinate (r) at (10,5);
        		
		\draw[purple, fill=purple!20] (q0)--(r)--(p0)--(q0);

		\draw[step=1cm,thick] (q0) --node[below] {$\vec{\lambda}_1$}  (q1);
		\draw[step=1cm,thick] (q1) --node[above] {$\vec{\lambda}_2$}  (q2);
		\draw[step=1cm,thick] (q2) --node[above] {$\vec{\lambda}_3$}  (end);
		
		\draw[step=1cm,thick] (p0) --node[below] {$\vec{\mu}_1$}  (p1);
		\draw[step=1cm,thick] (p1) --node[below] {$\vec{\mu}_2$}  (p2);
		\draw[step=1cm,thick] (p2) --node[below] {$\vec{\mu}_3$}  (p3);
		\draw[step=1cm,thick] (p3) --node[below] {$\vec{\mu}_4$}  (end);
		
		\draw[step=1cm,thick] (q0)--node[above]{$\vec{\gamma}$} (r);
        
		\draw[step=1cm,thick] (q0)--(p0);
		\draw[step=1cm,thick] (r)--(p0);
		
		\draw [fill] (q0) circle [radius=0.05];		
		\draw [fill] (q1) circle [radius=0.05];		
		\draw [fill] (q2) circle [radius=0.05];		
		\draw [fill] (end) circle [radius=0.05];
		
		\draw [fill] (p0) circle [radius=0.05];		
		\draw [fill] (p1) circle [radius=0.05];		
		\draw [fill] (p2) circle [radius=0.05];		
		\draw [fill] (p3) circle [radius=0.05];
		
		\draw [fill] (r) circle [radius=0.05];
		
		\path (q0) ++(-0.8, -0.1) node {$\HN({\mathcal{E}})$};
		\path (p0) ++(-0.8, -0.1) node {$\HN({\mathcal{Q}})$};
		\path (r) ++(-1, +.3) node {$\HN(\Cal{F})$};
		
\end{tikzpicture}
\caption{Area interpretation of the right side of \eqref{ineq_for_step1} in the case $s=3, s'=4$ with slope vectors $\vec{\mu}_1 \succeq \vec{\lambda}_1 \succ \vec{\mu}_2 \succ \vec{\mu}_3 \succeq \vec{\lambda}_2 \succ \vec{\mu}_4 \succeq \vec{\lambda}_3$.}
\label{ineq1_RHS}
\end{figure}
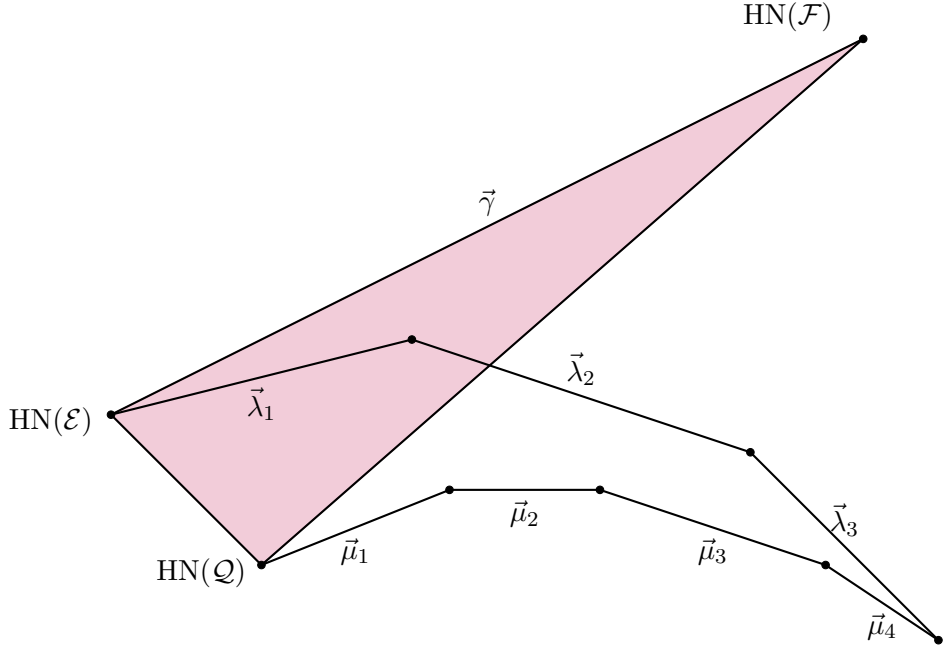
\end{center}

Therefore, we need to show that the shaded area in Figure \ref{ineq1_LHS} (corresponding to the left hand side of \eqref{ineq_for_step1}) is smaller than or equal to the shaded area in Figure \ref{ineq1_RHS} (corresponding to the right hand side of \eqref{ineq_for_step1}), with equality if and only if $\Cal{Q}=\Cal{F}$. The general idea of the proof is to apply shears to each of the triangles in turn so that their bases become horizontal, making it easier to compare the areas.

First, we apply a shear transformation
\[
T = \begin{bmatrix}
	1 & 0 \\
    -\gamma & 1
\end{bmatrix}
\]
to the whole figure, so that the vector $\vec{\gamma}$ becomes horizontal. See Figure \ref{shearT}.

\begin{remark} This shear is nearly equivalent to tensoring by $\mathcal{O}(-\mu(F))$. They are not quite the same because $\Cal{O}(-\mu(F))$ doesn't have rank one. But for the purposes of the inequality \eqref{ineq_for_step1}, tensoring by $\mathcal{O}(-\mu(F))$ would work just as well, as it multiplies both sides by the same factor.
\end{remark}

\begin{remark}\label{shear_observations}
We make some simple observations about the effect of such a shear:
\begin{enumerate}
	\item\label{shear preserves x-coordinates} The $x$-coordinates do not change, so the shear does not change the fact that the horizontal length of the polygon for $\Cal{Q}$ is at most the horizontal length of the polygon for $\Cal{F}$, which is strictly less than the horizontal length of the polygon for $\Cal{E}$. (This fact is simply a reformulation of the inequalities $\rank \Cal{E} > \rank \Cal{F} \geq \rank \Cal{Q}$.) 
    \item\label{shear preserves difference between slopes} The slopes of all line segments decrease by $\gamma$,
    so the \emph{difference} between slopes does not change.
    In particular, Corollary \ref{slopes-condition-corollary} still holds, and we may now assume that $\lambda_j<0$  and $\mu_i\leq 0$ for all $i$ and $j$.
    \item\label{shear preserves areas} The determinant of $T$ is $1$, so applying $T$ preserves all areas.
    	Thus, to prove the original inequality, we only need to prove the inequality for the transformed picture.
\end{enumerate}
\end{remark}

\begin{center}
\begin{figure}[h]
\begin{tikzpicture}[scale=0.6]	
		\coordinate (q0) at (0,0);
		\coordinate (q1) at (4,1);
		\coordinate (q2) at (8.5,-0.5);
		\coordinate (end) at (11,-3);
		
		\coordinate (p0) at (2, -2);
		\coordinate (p1) at (4.5, -1);
		\coordinate (p2) at (6.5, -1);
		\coordinate (p3) at (9.5, -2);
		
		\coordinate (r) at (10,5);
		
        \draw[purple, fill=purple!20] (q2)--(end)--(p3)--(q2);
		\draw[purple, fill=purple!20] (q1)--(p1)--(p2)--(p3)--(q1);
		\draw[purple, fill=purple!20] (q0)--(p0)--(p1)--(q0);
		
		\draw[step=1cm,thick] (q2)--(p3);
		\draw[step=1cm,thick] (q1)--(p3);
		\draw[step=1cm,thick] (q1)--(p2);
		\draw[step=1cm,thick] (q1)--(p1);
		\draw[step=1cm,thick] (q0)--(p1);
		\draw[step=1cm,thick] (q0)--(p0);
		
		\draw[thick] (q0) --node[pos=0.6, below] {\scriptsize $\vec{\lambda}_1$}  (q1);
		\draw[thick] (q1) --node[pos=0.6, above] {\scriptsize $\vec{\lambda}_2$}  (q2);
		\draw[thick] (q2) --node[above] {\scriptsize $\vec{\lambda}_3$}  (end);
		
		\draw[thick] (p0) --node[below] {\scriptsize $\vec{\mu}_1$}  (p1);
		\draw[thick] (p1) --node[below] {\scriptsize $\vec{\mu}_2$}  (p2);
		\draw[thick] (p2) --node[below] {\scriptsize $\vec{\mu}_3$}   (p3);
		\draw[thick] (p3) --node[below] {\scriptsize $\vec{\mu}_4$}   (end);

		\draw[thick] (q0)--node[above]{\scriptsize $\gamma$} (r);
				
		\draw [fill] (q0) circle [radius=0.05];		
		\draw [fill] (q1) circle [radius=0.05];		
		\draw [fill] (q2) circle [radius=0.05];		
		\draw [fill] (end) circle [radius=0.05];
		
		\draw [fill] (p0) circle [radius=0.05];		
		\draw [fill] (p1) circle [radius=0.05];		
		\draw [fill] (p2) circle [radius=0.05];		
		\draw [fill] (p3) circle [radius=0.05];
		
		\draw [fill] (r) circle [radius=0.05];
				
				
\end{tikzpicture} \begin{tikzpicture}[scale=0.4]
        \hspace{-0.2cm}
        \pgfmathsetmacro{\textycoordinate}{6.5}
		\draw[->, line width=1pt] (0, \textycoordinate) --node[above]{$T$} (3,\textycoordinate);
		\draw (0,0) circle [radius=0.00];	
\end{tikzpicture} \begin{tikzpicture}[scale=0.6]
		\coordinate (q0) at (0,0);
		\coordinate (q1) at (4,-1);
		\coordinate (q2) at (8.5,-4.75);
		\coordinate (end) at (11,-8.5);
        
		\coordinate (p0) at (2, -3);
		\coordinate (p1) at (4.5, -3.25);
		\coordinate (p2) at (6.5, -4.25);
		\coordinate (p3) at (9.5, -6.75);
		
		\coordinate (r) at (10,0);
        
        \draw[step=1cm,dashed] (-0.3,-3)--(p0){};
        
        \draw[purple, fill=purple!20] (q2)--(end)--(p3)--(q2);
		\draw[purple, fill=purple!20] (q1)--(p1)--(p2)--(p3)--(q1);
		\draw[purple, fill=purple!20] (q0)--(p0)--(p1)--(q0);
		
		\draw[step=1cm,thick] (q2)--(p3);
		\draw[step=1cm,thick] (q1)--(p3);
		\draw[step=1cm,thick] (q1)--(p2);
		\draw[step=1cm,thick] (q1)--(p1);
		\draw[step=1cm,thick] (q0)--(p1);
		\draw[step=1cm,thick] (q0)--(p0);
		
		\draw[thick] (q0) --node[pos=0.6, below] {\scriptsize $T(\vec{\lambda}_1)$}  (q1);
		\draw[thick] (q1) --node[right] {\scriptsize $T(\vec{\lambda}_2)$}  (q2);
		\draw[thick] (q2) --node[right] {\scriptsize $T(\vec{\lambda}_3)$}  (end);
		
		\draw[thick] (p0) --node[below] {\scriptsize $T(\vec{\mu}_1)$}  (p1);
		\draw[thick] (p1) --node[pos=0.6, below] {\scriptsize $T(\vec{\mu}_2)$}  (p2);
		\draw[thick] (p2) --node[left] {\scriptsize $T(\vec{\mu}_3)$}   (p3);
		\draw[thick] (p3) --node[left] {\scriptsize $T(\vec{\mu}_4)$}   (end);

		\draw[thick] (q0)--node[above]{\scriptsize $T(\gamma)$} (r);
				
		\draw [fill] (q0) circle [radius=0.05];		
		\draw [fill] (q1) circle [radius=0.05];		
		\draw [fill] (q2) circle [radius=0.05];		
		\draw [fill] (end) circle [radius=0.05];
		
		\draw [fill] (p0) circle [radius=0.05];		
		\draw [fill] (p1) circle [radius=0.05];		
		\draw [fill] (p2) circle [radius=0.05];		
		\draw [fill] (p3) circle [radius=0.05];
		
		\draw [fill] (r) circle [radius=0.05];
				

\end{tikzpicture}
\caption{Effect of the shear $T$.}\label{shearT}
\end{figure}
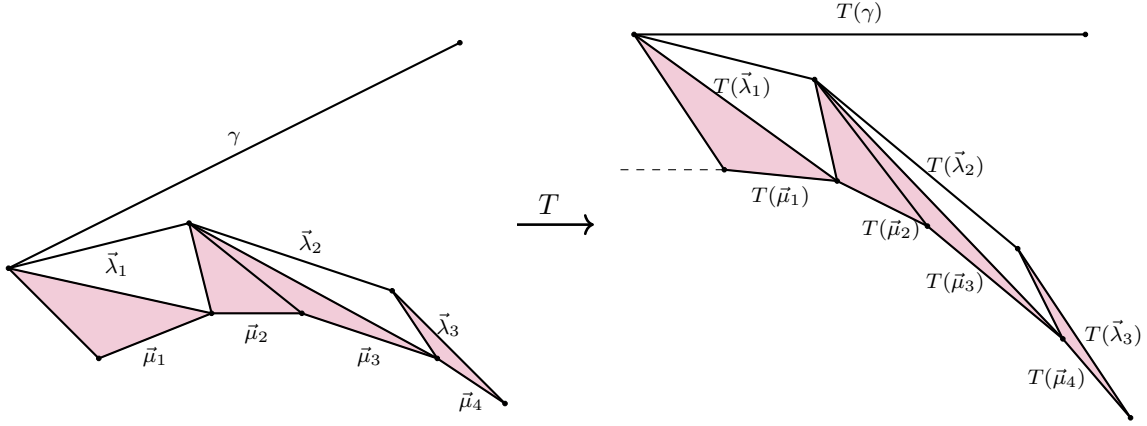
\end{center}

We now want to compute the area of each of the individual small triangles corresponding to terms in the sum for the left hand side.
Consider Figure \ref{fig_extend} below.

\begin{center}
\begin{figure}[h]
\begin{tikzpicture}[scale=0.7]
		\coordinate (bi+1) at (-1,8.3) {};
		\coordinate (bi) at (0,8) {};
        \coordinate (bix) at (0,0) {};
        \coordinate (bi-1) at (4,5) {};
        \coordinate (bi-2) at (6,3) {};
        \coordinate (ai+1) at (-1,3.3) {};
		\coordinate (ai) at (2,3) {};
		\coordinate (ai-1) at (6,1) {};
        \coordinate (aifoot) at (2, 0) {};
        \coordinate (ai-1foot) at (6,0) {};
		\coordinate (ci) at (0,4) {};
        \coordinate (di) at (2,6.5) {};
        \coordinate (ei) at (0,3.2) {};

        \draw[purple, fill= purple!20] (bi)--(ai)--(ai-1)--(bi);
        
        \draw[step=1cm,thick] (bi)--(bi+1);
        \draw[step=1cm,thick] (bi)--(ai);
		\draw[step=1cm,thick] (bi)--(ai-1);
        \draw[step=1cm,thick] (bi)--(bi-1);
        \draw[step=1cm,thick] (bi-1)--(bi-2);
        \draw[step=1cm,thick] (ai+1)--(ai);
        
        \path (ai) ++(-0.4, -0.5) node {$A_i$};
        \path (ai-1) ++(0.7, -0.2) node {$A_{i-1}$};
        \path (bi) ++(0.2, 0.5) node {$B_i$};
        \path (ci) ++(-0.5, 0.4) node {$C_i$};
        \path (di) ++(0.4, 0.4) node {$D_i$};
        \path (bi) ++(-0.5, -2) node {$h_i$};
        \path (ei) ++(-0.4, -0.4) node {$E_i$};
        \path (aifoot) ++ (2, -0.5) node {$x_i$};
        
        \draw[step=1cm,thick] (-1,0)--(7,0);
        
        \draw[thick] (ai) --node[below] {\small $T(\vec{\mu}_i)$}  (ai-1);
        
        \draw[dashed] (ai)--(ci);
        \draw[dashed] (ai)--(di);
        \draw[dashed] (bi)--(bix);
        \draw[dashed] (ai)--(aifoot);
        \draw[dashed] (ai-1)--(ai-1foot);

		\draw [fill] (bi) circle [radius=0.05];		
		\draw [fill] (bi-1) circle [radius=0.05];		
		\draw [fill] (ai) circle [radius=0.05];		
		\draw [fill] (ai-1) circle [radius=0.05];

\end{tikzpicture}\begin{tikzpicture}[scale=0.4]
        \pgfmathsetmacro{\textycoordinate}{8}
		\draw[->, line width=1pt] (0, \textycoordinate) --node[above]{$T_i$} (3,\textycoordinate);
		\draw (0,0) circle [radius=0.00];	
        \hspace{0.2cm}
\end{tikzpicture}\begin{tikzpicture}[scale=0.7]
		\hspace{0.5cm}
		\coordinate (bi+1) at (-1,7.8) {};
		\coordinate (bi) at (0,8) {};
        \coordinate (bix) at (0,0) {};
        \coordinate (bi-1) at (4,7) {};
        \coordinate (bi-2) at (6,6) {};
        \coordinate (ai+1) at (-1,2.8) {};
		\coordinate (ai) at (2,4) {};
		\coordinate (ai-1) at (6,4) {};
        \coordinate (aifoot) at (2, 0) {};
        \coordinate (ai-1foot) at (6,0) {};
		\coordinate (ci) at (0,4) {};
        \coordinate (di) at (2,7.5) {};
        \coordinate (ei) at (0,3.2) {};

        \draw[purple, fill= purple!20] (bi)--(ai)--(ai-1)--(bi);
        
        \draw[step=1cm,thick] (bi)--(bi+1);
        \draw[step=1cm,thick] (bi)--(ai);
		\draw[step=1cm,thick] (bi)--(ai-1);
        \draw[step=1cm,thick] (bi)--(bi-1);
        \draw[step=1cm,thick] (bi-1)--(bi-2);
        \draw[step=1cm,thick] (ai+1)--(ai);
        
        \path (ai) ++(-0.4, -0.6) node {$A_i'$};
        \path (ai-1) ++(0.8, -0.6) node {$A_{i-1}'$};
        \path (bi) ++(0.2, 0.5) node {$B_i'$};
        \path (ci) ++(-0.5, 0.4) node {$C_i'$};
        \path (bi) ++(-0.5, -2) node {$h_i$};
        \path (aifoot) ++ (2, -0.5) node {$x_i$};
        
        \draw[step=1cm,thick] (-1,0)--(7,0);
        
        \draw[thick] (ai) --node[below] {\small $T_i (T(\vec{\mu}_i))$}  (ai-1);
        
        \draw[dashed] (ai)--(ci);
        \draw[dashed] (bi)--(bix);
        \draw[dashed] (ai)--(aifoot);
        \draw[dashed] (ai-1)--(ai-1foot);

		\draw [fill] (bi) circle [radius=0.05];		
		\draw [fill] (bi-1) circle [radius=0.05];		
		\draw [fill] (ai) circle [radius=0.05];		
		\draw [fill] (ai-1) circle [radius=0.05];

\end{tikzpicture}
\caption{}\label{fig_extend}
\end{figure}
\end{center}
Here $A_i$ and $A_{i-1}$ are the two endpoints of $T(\vec{\mu}_i)$, and the vertex $B_i$ is the last vertex of the triangle whose area we are considering.
By Corollary \ref{slopes-condition-corollary}, $B_i$ lies to the left of $A_i$. Extend the line $A_iA_{i-1}$ out leftward so that it meets the vertical line though $B_i$ and call the point of intersection $C_i$.

We claim that $C_i$ must lie below $B_i$.
Indeed, the point $A_i$ lies below the point (call it $D_i$) of $\HN(\Cal{E})$ on the vertical line through $A_i$.
Moreover, by Corollary \ref{slopes-condition-corollary} the slope of the line from $B_i$ to $D_i$ has slope less than or equal to the slope of the line from $C_i$ to $A_i$.
These observations imply that $C_i$ lies below $B_i$ and the distance from $C_i$ to $B_i$ is at least as large as the distance from $A_i$ to $D_i$.

Let $h_i$ denote the distance from $B_i$ to $C_i$, and $x_i$ denote the $x$-coordinate of $\vec{\mu}_i$ (and $T(\vec{\mu}_i)$).	 

\begin{claim}\label{area_claim} We have 
\[
\text{Area}(\Delta A_{i-1} A_i B_i)=\dfrac{1}{2} h_i x_i.
\] 
\end{claim}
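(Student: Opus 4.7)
The claim is a routine piece of plane geometry: the triangle $\triangle A_{i-1}A_iB_i$ has the segment $A_{i-1}A_i$ of horizontal width $x_i$ as its base, and the point $B_i$ lies directly above a point $C_i$ on the line through $A_{i-1}A_i$ at vertical distance $h_i$. I expect the cleanest proof is the area-preserving shear suggested by the figure.

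The plan is to apply the vertical shear $T_i$ defined by
\[
T_i(x,y) = \bigl(x,\, y - s(x - x_{B_i})\bigr),
\]
where $s$ denotes the slope of the segment $A_{i-1}A_i$ (equivalently, of $T(\vec{\mu}_i)$) and $x_{B_i}$ is the $x$-coordinate of $B_i$. First I would observe that $T_i$ has Jacobian determinant $1$ and therefore preserves area, and that it fixes the vertical line $x=x_{B_i}$ pointwise; in particular it fixes both $B_i$ and $C_i$. Since $C_i$ was defined to lie on the line through $A_{i-1}$ and $A_i$ and on the vertical line through $B_i$, and since $T_i$ rotates that oblique line onto the horizontal line $y=y_{C_i}$ while preserving the vertical line through $B_i$, both $A_{i-1}' := T_i(A_{i-1})$ and $A_i' := T_i(A_i)$ lie at height $y_{C_i}$.

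Next I would read off the dimensions of the transformed triangle $\triangle A_{i-1}'A_i'B_i$. Because $T_i$ preserves $x$-coordinates, the horizontal base $A_{i-1}'A_i'$ still has length $x_i$, the $x$-coordinate of $T(\vec{\mu}_i)$. The vertex $B_i$ is unchanged, so its vertical distance to the new base, which lies at height $y_{C_i}$, equals the vertical distance from $B_i$ to $C_i$, namely $h_i$. Hence
\[
\operatorname{Area}(\triangle A_{i-1}'A_i'B_i) = \tfrac{1}{2}\, h_i\, x_i,
\]
and by area-preservation of $T_i$ this equals $\operatorname{Area}(\triangle A_{i-1}A_iB_i)$, as claimed.

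I do not foresee any real obstacle here; the only thing to watch is that $C_i$ indeed lies below $B_i$ (so $h_i$ is a well-defined nonnegative distance), which is precisely the claim verified just before the statement using Corollary \ref{slopes-condition-corollary}. A direct verification via the cross-product formula $\operatorname{Area} = \tfrac{1}{2}\lvert \overrightarrow{A_{i-1}A_i} \times \overrightarrow{A_{i-1}B_i}\rvert$ combined with the collinearity of $A_{i-1}, A_i, C_i$ and the verticality of $B_iC_i$ would give an alternative one-line proof, but the shear picture is more in keeping with the rest of the argument in this section.
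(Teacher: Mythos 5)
Your proposal is correct and is essentially the paper's own proof: both apply an area-preserving vertical shear with slope equal to that of $A_{i-1}A_i$, then read off the base $x_i$ and height $h_i$ of the resulting triangle. The only cosmetic difference is that you add a horizontal translation so the shear fixes the line $x=x_{B_i}$ (hence $B_i$ and $C_i$ themselves), whereas the paper's shear $T_i = \begin{bmatrix}1 & 0\\ -\mu_i & 1\end{bmatrix}$ fixes the $y$-axis and moves $B_i, C_i$ to $B_i', C_i'$ while preserving the verticality and length of that segment; both versions yield the same conclusion.
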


\begin{proof}[Proof of Claim \ref{area_claim}] To see this, we apply a shear transformation
\[
T_i = \begin{bmatrix}
	1 & 0 \\
    \gamma-\mu_i & 1
\end{bmatrix}.
\]
Recall the observations \eqref{shear preserves x-coordinates}, \eqref{shear preserves difference between slopes} and \eqref{shear preserves areas} in Remark \ref{shear_observations} that we made about the effect of such a shear.

Let $A_{i-1}', A_i', B_i', C_i'$ be respectively the image of $A_{i-1}, A_i, B_i, C_i$ under $T_i$ (see Figure \ref{fig_extend}). By observations \eqref{shear preserves x-coordinates} and \eqref{shear preserves difference between slopes} we have the following facts:
\begin{itemize}
\item the line segment $A_{i-1}' A_i'$ is horizontal of length $x_i$,
\item the line segment $B_i' C_i'$ is vertical of length $h_i$.
\end{itemize}
In particular, $\text{Area}(\Delta A_{i-1}' A_i' B_i') = \dfrac{1}{2} h_i x_i$. Now the claim follows since $\text{Area}(\Delta A_{i-1} A_i B_i)= \text{Area}(\Delta A_{i-1}' A_i' B_i')$  by observation \eqref{shear preserves areas}.
\end{proof}

We now let $h$ denote the $y$-coordinate of the vector $\sum_j T(\vec{\lambda}_j)-\sum_i T(\vec{\mu}_i)$. 
Note that $h>0$ since $\HN(\qcal)\leq \HN(\ecal)$, $\rank(\qcal)<\rank(\ecal)$, and the slopes of $T(\vec{\lambda}_i)$ are negative.

\begin{claim}\label{heightclaim}
We have $h_i \leq h$ for all $i$, with equality achieved for all $i$ only if $\mathcal{Q}$ has only one slope $\mu_1$
	and $\mu_1=\gamma$.
\end{claim}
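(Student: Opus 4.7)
I would prove the claim by finding explicit closed-form expressions for $h_i$ and $|h|$ that reveal them as values of the same non-negative integral evaluated at different upper limits.

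First, I would compute $h_i = y_{B_i} - y_{C_i}$ directly by reading off $y$-coordinates from the $T$-sheared polygons. Traversing $T(\HN(\Cal{E}))$ leftward from the origin, the vertex $B_i$ has $y_{B_i} = \sum_{\lambda_j \leq \mu_i}(\gamma - \lambda_j)\rank\vec{\lambda}_j$, and similarly $y_{A_i} = \sum_{\mu_k \leq \mu_i}(\gamma - \mu_k)\rank\vec{\mu}_k$. Since $C_i$ lies on the line through $A_{i-1}A_i$ of slope $\mu_i - \gamma$ and $x_{A_i} - x_{B_i} = \rank\Cal{E}^{\leq \mu_i} - \rank\Cal{Q}^{\leq \mu_i}$, one obtains $y_{C_i} = y_{A_i} + (\gamma - \mu_i)(\rank\Cal{E}^{\leq \mu_i} - \rank\Cal{Q}^{\leq \mu_i})$. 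Taking the difference and collecting terms, all the $\gamma$'s cancel and yield
\[
h_i = \sum_{\lambda_j \leq \mu_i}(\mu_i - \lambda_j)\rank\vec{\lambda}_j \;-\; \sum_{\mu_k \leq \mu_i}(\mu_i - \mu_k)\rank\vec{\mu}_k.
\]

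Next I would rewrite this as an integral. Using the identity $(\mu_i - x)_+ = \int_{-\infty}^{\mu_i}\mathbf{1}_{\{t \geq x\}}\,dt$ and interchanging sum and integral, one gets
\[
h_i = \int_{-\infty}^{\mu_i}\psi(t)\,dt, \qquad \psi(t) := \rank\Cal{E}^{\leq t} - \rank\Cal{Q}^{\leq t}.
\]
The key input is Lemma \ref{quotient-condition-lemma}, which guarantees $\psi(t) \geq 0$ for all $t$. The same maneuver, starting instead from $|h| = T(L_E)_y - T(L_Q)_y = \gamma(\rank\Cal{E} - \rank\Cal{Q}) - (\deg\Cal{E} - \deg\Cal{Q})$ (and noting that $\lambda_j < \gamma$ strictly while $\mu_k \leq \gamma$, so every positive-part operator appearing is vacuous), yields $|h| = \int_{-\infty}^{\gamma}\psi(t)\,dt$.

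The inequality $h_i \leq |h|$ is then immediate from $\mu_i \leq \gamma$ and $\psi \geq 0$. For the equality discussion, the difference $|h| - h_i = \int_{\mu_i}^{\gamma}\psi(t)\,dt$ vanishes exactly when the non-decreasing step function $\psi$ is identically zero on $[\mu_i,\gamma]$; since $\psi(\gamma) = \rank\Cal{E} - \rank\Cal{Q} > 0$, this forces $\mu_i = \gamma$. Demanding this for every $i$ then forces every distinct slope $\mu_i$ of $\Cal{Q}$ to equal $\gamma$, which is possible only in the degenerate case of a single slope $\mu_1 = \gamma$. The main obstacle is spotting the integral reformulation: once one expresses $h_i$ and $|h|$ as $\int_{-\infty}^{\cdot}\psi(t)\,dt$ with a common non-negative integrand, the inequality and equality characterization reduce to the trivial observation $\mu_i \leq \gamma$.
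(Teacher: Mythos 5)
Your approach is correct and genuinely different from the paper's. The paper argues purely geometrically: it introduces an auxiliary polygon $T(\HN(\Cal{Q}))'$ by appending a horizontal ray at the left end, uses convexity to show $h_i = |B_iC_i| \leq |B_iE_i|$ (where $E_i$ is on the auxiliary polygon), invokes the monotonicity of the vertical separation between $T(\HN(\Cal{E}))$ and $T(\HN(\Cal{Q}))'$ to get $|B_iE_i| \leq |h|$, and then tracks through when each inequality is tight. You instead write down closed-form expressions for both $h_i$ and $|h|$ and recognize them as the running integral $\Psi(\cdot) = \int_{-\infty}^{\cdot}\psi(t)\,dt$ of a single non-negative step function $\psi(t)=\rank\Cal{E}^{\leq t}-\rank\Cal{Q}^{\leq t}$ evaluated at $\mu_i$ versus at $\gamma$. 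This is a ``Legendre-dual'' way of packaging the same convexity input (Lemma \ref{quotient-condition-lemma}), and it replaces the slightly informal ``the vertical distance does not decrease as we move left'' with a line of arithmetic. Both are about equally long; yours has the advantage of being checkable term by term, the paper's of being visually transparent and not requiring the small index-gymnastics in the $(\mu_i - x)_+$ manipulation.

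One small patch is needed in your equality characterization. Your $\psi$ is \emph{not} non-decreasing in general: it jumps \emph{up} by $\rank\vec{\lambda}_j$ at each $\lambda_j$ and \emph{down} by $\rank\vec{\mu}_k$ at each $\mu_k$, and these slopes interleave. (E.g.\ $\Cal{E}$ with slopes $1,0,-1$ each of rank $2$ and $\Cal{Q}$ with slopes $1/2,-1/2$ each of rank $2$ gives $\psi$ taking values $0,2,0,2,0,2$.) Moreover, $\int_{\mu_i}^\gamma \psi = 0$ together with $\psi\geq 0$ only forces $\psi$ to vanish a.e.\ on the \emph{open} interval $(\mu_i,\gamma)$; it says nothing about the value $\psi(\gamma)$, so pointing to $\psi(\gamma)>0$ does not yet give the contradiction. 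The correct observation is that for every $t\in(\lambda_1,\gamma)$ (where $\lambda_1=\mu_{\max}(\Cal{E})<\gamma$) one has $\rank\Cal{E}^{\leq t}=\rank\Cal{E}$ and $\rank\Cal{Q}^{\leq t}\leq\rank\Cal{Q}$, so $\psi(t)\geq\rank\Cal{E}-\rank\Cal{Q}>0$ on the entire nonempty open interval $(\lambda_1,\gamma)$. Hence $\int_{\mu_i}^\gamma\psi>0$ whenever $\mu_i<\gamma$, which is all you need; the monotonicity claim can simply be dropped.
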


 First let's see that Claim \ref{heightclaim} finishes off the proof of \eqref{ineq_for_step1}. Since $x_i$ is the $x$-coordinate of the vector $\vec{\mu}_i$, the sum $\sum_i x_i$ is equal to $\rank(\mathcal{Q})$. On the other hand, the vector $T(\vec{\gamma})$ is a horizontal vector of length $\rank(\mathcal{F})$. Thus by Claim~\ref{area_claim}, we find that
 \begin{align*}
 \text{LHS of } \eqref{ineq_for_step1} &=   2\sum_i \text{Area}(\Delta A_{i-1} A_i B_i) = \sum_i x_i h_i \\ 
 & \leq h \sum_i x_i = h\cdot \rank(\mathcal{Q}) \\
 & \leq h\cdot\rank(\mathcal{F}) = h \cdot \| T(\vec{\gamma}) \|  =  \text{RHS of } \eqref{ineq_for_step1}.
 \end{align*}
Moreover, equality can only hold if $\rank(\mathcal{Q}) = \rank (\mathcal{F})$ and $\mathcal{Q}$ is semistable of slope $\mu_1=\gamma$, which implies that $\Cal{Q} = \Cal{F}$. 

Thus, we have reduced to proving Claim \ref{heightclaim}.

\begin{proof}[Proof of Claim \ref{heightclaim}] To aid with proving this, we add in a helper horizontal line starting from the left endpoint of $T($HN$(\Cal{Q}))$ extending out to negative infinity to obtain an augmented polygon $T($HN$(\Cal{Q}))'$ (see the dashed line in Figure \ref{shearT}). The resulting figure is convex and retains the property described in Corollary \ref{slopes-condition-corollary}.

Let $E_i$ denote the point directly under $B_i$ lying on this polygon (see Figure \ref{fig_extend}).
By convexity, the length $h_i$ is at most the length of the line from $B_i$ to $E_i$, with equality if and only if the line segment $A_iC_i$
	lies on the augmented polygon $T($HN$(\Cal{Q}))'$.
    
By the analogue of the property described in Corollary \ref{slopes-condition-corollary} for the augmented polygon $T($HN$(\Cal{Q}))'$, as we move left, the vertical distance from $T($HN$(\Cal{E}))$ to this augmented polygon is strictly increasing.
But the distance at the far left when we  reach the end of $T($HN$(\Cal{E}))$ is precisely $h$.
This shows that $|B_iE_i|\leq h$.
Equality occurs precisely when $B_i$ is the left endpoint of $T($HN$(\Cal{E}))$ and $C_i=E_i$ is the point directly below the left endpoint of $T($HN$(\Cal{E}))$, which happens if and only if
	the line segment $A_iC_i$ is horizontal. 
But this occurs precisely when $\mu_j=\gamma$ for all $j\leq i$.
Thus, if the equality is achieved for all $i$, $\mathcal{Q}$ must have only one slope $\mu_1=\gamma$.
\end{proof}

We have finished proving Theorem \ref{step-1}. \end{proof}

	\section{Step two: extensions of semistable bundles}\label{step_2}

\subsection{The key inequality}\label{5 section 1}


The bulk of this section is devoted to proving the following result.
	
	\begin{thm}\label{degree inequality for kernel} Let $\Cal{D}$ and $\Cal{F}$ be semistable vector bundles on $X$ such that $\HNvec(\mathcal{D}) = (\vec{\beta})$,
$\HNvec(\mathcal{F})=(\vec{\gamma})$, and $\vec\beta\preceq \vec\gamma$.
	Suppose $\mathcal{E}$ is a vector bundle on $X$ such that 
    	$\HNvec(\Cal E)\leq \HNvec(\Cal{D}\oplus \Cal{F})$ 
        with the same endpoints.
     Suppose also that the maximal slope of $\Cal{E}$ is strictly less than the slope of 	
     	$\Cal{F}$.
	
	 Let $\mathcal{K}$ be a vector bundle on $X$ with the same rank and degree as $\Cal{D}$ satisfying the following two conditions:
		\begin{itemize}
			\item[(i)] the maximal slope of $\mathcal{K}$ is at most the maximal slope of $\mathcal{E}$;
			\item[(ii)] $\mathcal{K}$ is not semistable.
		\end{itemize}
		Then we have a strict inequality
		\begin{equation} 
        \deg(\mathcal{K}^\vee \otimes \mathcal{E})^{\geq 0} < \deg(\mathcal{K}^\vee \otimes \mathcal{K})^{\geq 0} + \deg(\mathcal{E}^\vee \otimes \mathcal{F})^{\geq 0}.
        \label{section 5 inequality}
        \end{equation}
	\end{thm}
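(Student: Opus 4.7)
The plan is to follow the area-comparison strategy of \S \ref{step_1}: interpret both sides of \eqref{section 5 inequality} as signed areas of polygonal regions and then compare them after an area-preserving shear.

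First, I would simplify \eqref{section 5 inequality} to a cleaner equivalent form. Writing $\HNvec(\Cal E)=(\vec\epsilon_i)$ and $\HNvec(\Cal K)=(\vec\kappa_j)$, we have $\sum_i\vec\epsilon_i=\vec\beta+\vec\gamma$ (same endpoints as $\Cal D\oplus\Cal F$) and $\sum_j\vec\kappa_j=\vec\beta$. Since the maximal slope of $\Cal E$ is strictly less than $\gamma$, Lemma \ref{diamond lemma for nonnegative degree} gives
\[
\deg(\Cal E^\vee\otimes\Cal F)^{\geq 0}=(\vec\beta+\vec\gamma)\times\vec\gamma=\vec\beta\times\vec\gamma=\vec\beta\times(\vec\beta+\vec\gamma)=\deg(\Cal K^\vee\otimes\Cal E).
\]
Combined with the fact that $\deg(\Cal K^\vee\otimes\Cal E)^{\geq 0}-\deg(\Cal K^\vee\otimes\Cal E)=\deg(\Cal E^\vee\otimes\Cal K)^{\geq 0}$ (the slope-zero part of any bundle has zero degree), this reduces \eqref{section 5 inequality} to the equivalent and cleaner inequality
\[
\deg(\Cal E^\vee\otimes\Cal K)^{\geq 0}<\deg(\Cal K^\vee\otimes\Cal K)^{\geq 0}. \qquad (\star)
\]

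I would then reinterpret both sides of $(\star)$ as areas. By Proposition \ref{geometric diamond lemma for one vector bundle}, the right-hand side equals twice the area of the region $R_{\Cal K}$ enclosed between $\HN(\Cal K)$ and the chord joining its endpoints; condition (ii) guarantees that $R_{\Cal K}$ has strictly positive area. For the left-hand side, setting $\vec c_j=\sum_{\vec\epsilon_i\preceq\vec\kappa_j}\vec\epsilon_i$, Lemma \ref{diamond lemma for nonnegative degree} yields
\[
\deg(\Cal E^\vee\otimes\Cal K)^{\geq 0}=\sum_j\vec c_j\times\vec\kappa_j,
\]
which can be rewritten as a sum of signed triangle areas by the same algebraic manipulation as in the proof of Proposition \ref{geometric diamond lemma for one vector bundle}; each $\vec c_j$ is the vector from a break point on $\HN(\Cal E)$ (at the slope of $\vec\kappa_j$) to the right endpoint of $\HN(\Cal E)$.

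Following the pattern of the proof of Theorem \ref{step-1}, I would next apply an area-preserving shear sending $\vec\gamma$ to a horizontal vector. This preserves all cross products (and hence all degree quantities) while transforming the hypotheses into a convenient form: every slope of $\Cal E$ and $\Cal K$ becomes non-positive, and $\HN(\Cal E)\leq\HN(\Cal D\oplus\Cal F)$ becomes a clean geometric bound. Condition (i) is then used to position the break points of $\HN(\Cal E)$ relative to those of $\HN(\Cal K)$, and the hypothesis $\HN(\Cal E)\leq\HN(\Cal D\oplus\Cal F)$ controls how much the polygon of $\Cal E$ can rise.

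The remaining step is a direct triangle-by-triangle comparison, analogous to Claim \ref{heightclaim}, showing that each triangle contributing to $\sum_j\vec c_j\times\vec\kappa_j$ is dominated by a corresponding triangle in the decomposition of $R_{\Cal K}$, with the strict inequality coming from condition (ii). The main obstacle will be precisely this last comparison: unlike in Step 1, where the RHS was a single master triangle, here both sides are genuine sums of triangles whose break points need not align, so the matching requires careful bookkeeping driven by the structural constraints afforded by $\HN(\Cal E)\leq\HN(\Cal D\oplus\Cal F)$ together with condition (i).
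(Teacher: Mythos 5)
Your algebraic reduction at the start is correct and is a genuinely new idea that the paper's argument never uses. The two identities
\[
\deg(\Cal E^\vee\otimes\Cal F)^{\geq 0}=\deg(\Cal K^\vee\otimes\Cal E),\qquad
\deg(\Cal K^\vee\otimes\Cal E)^{\geq 0}-\deg(\Cal K^\vee\otimes\Cal E)=\deg(\Cal E^\vee\otimes\Cal K)^{\geq 0}
\]
(the first using that the maximal slope of $\Cal E$ is $<\mu(\Cal F)$ together with the fact that $\Cal K$ and $\Cal D$ have the same rank and degree) really do collapse \eqref{section 5 inequality} to the $\Cal F$-free and $\Cal D$-free inequality $\deg(\Cal E^\vee\otimes\Cal K)^{\geq 0}<\deg(\Cal K^\vee\otimes\Cal K)^{\geq 0}$. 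The paper instead keeps all three terms, coarsens the HN vectors of $\Cal K$ and $\Cal E$ so that they strictly interleave (the $\tilde{\vec{\mu}}_k,\tilde{\vec{\lambda}}_k$ in \S5.3.2), and then checks $t-1$ cross-product inequalities by a three-way case analysis on the sign of an $x$-coordinate. Your $(\star)$ sidesteps the coarsening and the case analysis entirely, which is a real simplification.

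The gap is in the final step, which you leave undone and visibly hesitate over (``the matching requires careful bookkeeping \ldots break points need not align''); what makes $(\star)$ go through is not a triangle-by-triangle matching as in Claim \ref{heightclaim}, but a supporting-line argument. Align $\HN(\Cal E)$ and $\HN(\Cal K)$ with right endpoint at the origin; writing $L_{j-1}L_j$ for the $j$-th edge of $\HN(\Cal K)$ and $Q_j$ for the vertex of $\HN(\Cal E)$ where the slope first drops to $\leq\mu(\vec\kappa_j)$, inequality $(\star)$ unwinds to $\sum_j\overrightarrow{Q_jL_{j-1}}\times\vec\kappa_j<0$, and each term is $\leq 0$ iff $Q_j$ sits on or below the supporting line $\ell_j$ of $\HN(\Cal K)$ at its $j$-th edge. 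One then shows every $\ell_j$ dominates all of $\HN(\Cal E)$: on $[-\rank\Cal K,0]$ this is the chain $\HN(\Cal E)\leq\HN(\Cal D)\leq\HN(\Cal K)\leq\ell_j$ coming from $\HN(\Cal E)\leq\HN(\Cal D\oplus\Cal F)$ and concavity of $\HN(\Cal K)$, while to the left of $-\rank\Cal K$ it follows because $\ell_j$ has slope $\mu(\vec\kappa_j)\leq\mu(\vec\kappa_1)<\mu(\Cal F)$ and starts at $\ell_j(-\rank\Cal K)\geq -\deg\Cal K=\HN(\Cal D\oplus\Cal F)(-\rank\Cal K)$, whereas $\HN(\Cal D\oplus\Cal F)$ descends at rate $\mu(\Cal F)$. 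Strictness already comes from the $j=1$ term, using condition (ii) to force $\mu(\vec\kappa_1)>\mu(\Cal D)$. Two further corrections: you are not quite right that strictness is furnished solely by (ii); condition (i) (via $\mu(\vec\kappa_1)<\mu(\Cal F)$) is also needed to control $\ell_1$ against $\HN(\Cal D\oplus\Cal F)$ to the left of $-\rank\Cal K$. And the shear by $\vec\gamma$ that you carry over from \S4 is of no use here: $(\star)$ makes no reference to $\Cal F$, so there is no distinguished slope to normalize by.
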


\noindent \textbf{Setup and notation.} Write 
\begin{itemize}
\item $\HNvec(\mathcal{E}) = (\vec{\lambda}_j)_{1 \leq j \leq s}$,
\item $\HNvec(\mathcal{K}) = (\vec{\mu}_i)_{1 \leq i \leq s'}$.
\end{itemize}
By assumption $\HN(\mathcal{E}) \leq \HN(\mathcal{D} \oplus \mathcal{F})$, so 
\[
\vec{\lambda}_1 \prec \vec{\gamma} \hspace{1cm} \text{and} \hspace{1cm} \vec{\lambda}_s \succeq \vec{\beta}.
\]
In addition we have $\vec{\mu}_1 \preceq \vec{\lambda}_1$ by assumption (i) of Theorem \ref{degree inequality for kernel}. 
On the other hand, assumption (ii) of Theorem \ref{degree inequality for kernel} says $\vec{\mu}_{s'} \prec \vec{\beta}$, which implies $\vec{\mu}_{s'} \prec \vec{\lambda}_s$ as $\vec{\beta} \preceq \vec{\lambda}_s$.

		We will first sketch the proof of Theorem \ref{degree inequality for kernel} in geometric terms, because we feel that this conveys a better intuition for the formal argument, whose geometric meaning might be obscured by the algebraic manipulations. Then we will carefully present a thorough algebraic proof.\\

		\subsection{Proof sketch of Theorem \ref{degree inequality for kernel}}\label{proof_sketch}
        Let $O$, $P_1$,$\dots$,$P_{s'+1}$ be the vertices of $\HNvec(\Cal{F}\oplus \Cal{K})$,
	and $O$, $Q_1$, $\dots$, $Q_{s+1}=P_{s'+1}$ be the vertices of $\HNvec(\Cal{E})$.
(See Figure \ref{fig_polygon_comparison}.)

\begin{center}        

\begin{figure}[h]
		\begin{tikzpicture}[scale=0.85]

		\hspace{-0.5cm}
		
		\draw[step=1cm,thick] (0,0) -- node[left] {$\vec{\gamma}$} (2,6);
		\draw[step=1cm,thick] (2,6) -- node[above] {$\vec{\mu}_1$} (5,8);
		\draw[step=1cm,thick] (5,8) -- node[above] {$\vec{\mu}_2$} (8,9);
		\draw[step=1cm,thick] (8,9) -- node[above] {$\ldots$} (15,10);
        
        \draw[dashed] (2,6)--(15,10);

		\draw[step=1cm,thick] (0,0) -- node[right] {$\vec{\lambda}_1$} (3,5);
		\draw[step=1cm,thick] (3,5) -- node[below] {$\vec{\lambda}_2$} (6,7);
		\draw[step=1cm,thick] (6,7) -- node[below] {$\ldots$} (15,10);

		\node at (0-0.2,0-0.4) {$O $};
		\node at (5-0.2,8+0.3) {$P_2$};
		\node at (8-0.2,9+0.3) {$P_3$};
		\node at (15+1.2,10.2) {$P=P_{s'+1}=Q_{s+1}$};
		\node at (2-0.2,6+0.3) {$P_1$};

		\node at (3+0.2,5-0.3) {$Q_1$};
		\node at (6+0.2,7-0.3) {$Q_2$};

		\draw[step=1cm,thick] (0,0) -- (0,10);
		
		\draw[step=1cm,thick] (0,0) -- (15,0);

		\draw [fill] (0,0) circle [radius=0.05];
		
		\draw [fill] (2,6) circle [radius=0.05];
		
		\draw [fill] (5,8) circle [radius=0.05];
		
		\draw [fill] (8,9) circle [radius=0.05];

		\draw [fill] (15,10) circle [radius=0.05];
		
		\draw [fill] (3,5) circle [radius=0.05];
		
		\draw [fill] (6,7) circle [radius=0.05];

        \draw[dashed] (0,0)--(15,10);
         \useasboundingbox (-2,0);
        
		\end{tikzpicture}	
		\caption{Comparison of the HN vectors for $\HN(\mathcal{K} \oplus \mathcal{F})$ (the top solid lines) and $\HN(\mathcal{E})$ (the bottom solid lines).}\label{fig_polygon_comparison}
		\end{figure}
\end{center}

        First, we claim that the right hand side of the inequality \eqref{section 5 inequality} is twice the area of the convex polygon $OP_1\dots P_{s'+1}$.
        Indeed, by Proposition \ref{geometric diamond lemma for one vector bundle} we see that $\deg(\Cal{K}^\vee\otimes \Cal{K})^{\geq 0}$ is twice the area of the polygon $P_1 P_2 \dots P_{s'+1}$, while 
        \[
        \deg(\Cal{E}^\vee \otimes \Cal{F})^{\geq 0} =  
        	\sum_{{i=1}}^s \vec\lambda_j \times \vec\gamma
            =\overrightarrow{OP_{s'+1}}\times \overrightarrow{OP_1}
        \]
        is twice the area of the triangle $\Delta OP_1 P_{s'+1}.$
        Summing then gives us the area of polygon $OP_1\dots P_{s'+1}$. (See Figure \ref{area_inequality_1 first}.)
        
\begin{center}        
\begin{figure}[h]
		\begin{tikzpicture}[scale=0.85]

		\hspace{-0.5cm}
		\draw [blue, fill=blue!20]  (0,0) -- (2,6) -- (15,10) --(0,0);
		\draw [red, fill=red!20]  (2,6) -- (5,8) --(8,9) -- (15,10) -- (2,6);
		
		\draw[step=1cm,thick] (0,0) -- node[left] {$\vec{\gamma}$} (2,6);
		\draw[step=1cm,thick] (2,6) -- node[above] {$\vec{\mu}_1$} (5,8);
		\draw[step=1cm,thick] (5,8) -- node[above] {$\vec{\mu}_2$} (8,9);
		\draw[step=1cm,thick] (8,9) -- node[above] {$\vec{\mu}_3$} (15,10);
		\draw[step=1cm,thick] (2,6) -- node[pos=0.4,sloped, above=5pt] {$ \frac{1}{2}\deg(\mathcal{K}^\vee \otimes \mathcal{K})^{\geq 0}$} (15,10);

		\node at (0-0.2,0-0.2) {$O$};
		\node at (5-0.2,8+0.3) {$P_2$};
		\node at (8-0.2,9+0.3) {$P_3$};
		\node at (15-0.2,10+0.3) {$P_4$};
				\node at (2-0.2,6+0.3) {$P_1$};
		
		\draw[step=1cm,thick] (0,0) --  node[sloped, below] {$\vec{\lambda}_1+\vec{\lambda}_2+\vec{\lambda}_3$} (15,10) node[pos=0.5, sloped, above=10pt] {$ \frac{1}{2}\deg(\mathcal{E}^\vee \otimes \mathcal{F})^{\geq 0}$};

		\draw[step=1cm,thick] (0,0) -- (0,10);
		
		\draw[step=1cm,thick] (0,0) -- (15,0);

		\draw [fill] (0,0) circle [radius=0.05];
		
		\draw [fill] (2,6) circle [radius=0.05];
		
		\draw [fill] (5,8) circle [radius=0.05];
		
		\draw [fill] (8,9) circle [radius=0.05];

		\draw [fill] (15,10) circle [radius=0.05];
		
		\useasboundingbox (-2,0);
		
		\end{tikzpicture}	
		\caption{Geometric interpretation of the left hand side of \eqref{section 5 inequality}.}
        \label{area_inequality_1 first}
		\end{figure}
\end{center}

        Now consider the left side of the inequality \eqref{section 5 inequality}.
        By Lemma \ref{diamond lemma for nonnegative degree}, we have
        \begin{align*}
        	\deg(\Cal{K}^\vee\otimes \Cal{E})^{\geq 0} 
            	&= \sum_{\vec{\mu}_i\preceq \vec{\lambda}_j} \vec{\mu}_i\times \vec{\lambda}_j \\
                &= \sum_{i=1}^{s'} \vec{\mu}_i \times \left(\sum_{\vec\lambda_j\succeq \vec \mu_i} \vec\lambda_j \right) \\
                &= \sum_{i=1}^{s'} \overrightarrow{P_i P_{i+1}} \times \overrightarrow{OQ_{j_i}},
        \end{align*}
        where $Q_{j_i}$ is some vertex of $\HN(\Cal{E})$.
        Now we claim that $\overrightarrow{P_iP_{i+1}}\times \overrightarrow{OQ_{j_i}}$
        	is at most twice the area of the triangle $OP_iP_{i+1}$, with equality if 
            and only if $Q_{j_i}$ lies on the line segment $P_iP_{i+1}$.
        Indeed, twice the area of $OP_iP_{i+1}$ is just 
        	$\overrightarrow{P_iP_{i+1}}\times \overrightarrow{OP_i}$,
            so
        \[
        	\overrightarrow{P_iP_{i+1}} \times \overrightarrow{OP_i}
            -\overrightarrow{P_iP_{i+1}} \times \overrightarrow{OQ_{j_i}}
            =\overrightarrow{P_iP_{i+1}} \times \overrightarrow{P_iQ_{j_i}}
        \]
        	is by convexity considerations just twice the area of $\Delta P_iP_{i+1}Q_{j_i}$, which is nonnegative.
        By convexity, it is $0$ if and only if $Q_{j_i}$ lies on the segment $P_iP_{i+1}$.  Then we have
\begin{align*}
	\deg(\Cal{K}^\vee\otimes \Cal E)^{\geq 0} 
    	&\leq \sum_{i=1}^{s'} 2\cdot\text{Area}(\Delta OP_iP_{i+1})\\
        &=2\cdot\text{Area}(OP_1\dots P_{s'+1}) \\
        &= \deg(\mathcal{K}^\vee \otimes \mathcal{K})^{\geq 0} + \deg(\mathcal{E}^\vee \otimes \mathcal{F})^{\geq 0}.
\end{align*}
To see equality cannot hold, observe that, as by assumption $\HNvec(\Cal{E})\leq \HNvec(\Cal{F}\oplus \Cal{D})$, all $Q_{j_i}$ lie inside the triangle $\Delta OP_1P_{s'+1}$.
But since the largest slope of $\Cal{E}$ is strictly less than the slope of
	$\Cal{F}$, we see that $Q_{j_1}$ cannot be the point $P_1$, so cannot lie on
    the segment $P_1P_2$ as $\Cal{K}$ is not semistable.
This gives the theorem. \qed

\subsection{Formal proof of Theorem \ref{degree inequality for kernel}}
We now carefully go through a formal argument for Theorem \ref{degree inequality for kernel}. This is essentially just a thorough formalization of \S \ref{proof_sketch}, so the reader who is already convinced by \S \ref{proof_sketch} can skip ahead to the next subsection.

Maintain the notation of \S \ref{5 section 1}. By rearranging the vectors $\vec{\mu}_i$ and $\vec{\lambda}_j$ in order of decreasing slopes, we get a sequence of the form
		\begin{align*} \vec{\lambda}_1 \succ  \cdots \succ \vec{\lambda}_{j_1} \succeq \vec{\mu}_1 \succ  \cdots \succ \vec{\mu}_{i_1} \succ \vec{\lambda}_{j_1+1} \succ \cdots \succ \vec{\lambda}_{j_2} \succeq \vec{\mu}_{i_1+1} \succ \cdots \succ \vec{\mu}_{i_2} \succ \cdots\\
		\succ \vec{\lambda}_{j_{t-1}+1} \succ \cdots \succ \vec{\lambda}_{j_t = s} \succeq \vec{\mu}_{i_{t-1}+1} \succ \cdots \succ \vec{\mu}_{i_t = s'}. 
		\end{align*}
		For convenience, we set $i_0 = j_0 = 0$. Note that inequalities between $\vec{\lambda}_{j_k}$ and $\vec{\mu}_{i_{k-1}+1}$ are not strict. Let $\vec{\beta}=(r,d)$ and $\vec{\gamma}=(r',d')$. 
        
		\subsubsection{A simple special case}

		Let us first consider the case when $t=1$, which occurs precisely when $\vec{\mu}_i \preceq \vec{\lambda}_j$ for all $1 \leq i \leq s'$ and $1 \leq j \leq s$. By Lemma \ref{diamond lemma for nonnegative degree}, we compute
		\begin{align*}
		\deg(\mathcal{K}^\vee \otimes \mathcal{E})^{\geq 0} &= \sum_{\vec{\mu}_i \preceq \vec{\lambda}_j} \vec{\mu}_i \times \vec{\lambda}_j = \sum_{i, j} \vec{\mu}_i \times \vec{\lambda}_j = \deg(\mathcal{K}^\vee \otimes \mathcal{E})\\
		&= \rank(\mathcal{K}) \deg(\mathcal{E}) - \rank(\mathcal{E})\deg(\mathcal{K}) \\
		& = r(d+d') - (r+r') d = rd' - r'd, \\
		\deg(\mathcal{E}^\vee \otimes \mathcal{F})^{\geq 0} &= \sum_{\vec{\lambda}_j \preceq \vec{\gamma}} \vec{\lambda}_j \times \vec{\gamma} = \sum_j \vec{\lambda}_j \times \vec{\gamma} = \deg(\mathcal{E}^\vee \otimes \mathcal{F})\\
		&= \rank(\mathcal{E}) \deg(\mathcal{F}) - \rank(\mathcal{F}) \deg(\mathcal{E})\\
		&= (r+r') d' - r'(d+d') = rd' - r'd. 
		\end{align*}
		In particular, we have $\deg(\mathcal{K}^\vee \otimes \mathcal{E})^{\geq 0}  = \deg(\mathcal{E}^\vee \otimes \mathcal{F})^{\geq 0}$. On the other hand, Proposition \ref{geometric diamond lemma for one vector bundle} yields $\deg(\mathcal{K}^\vee \otimes \mathcal{K})^{\geq 0} > 0$, as $\mathcal{K}$ is not semistable. Hence we deduce the desired inequality, in this case.
		
		\subsubsection{Reduction to simpler slope relations}
		We now assume that $t \geq 2$. We now reduce to the case where the slope vectors satisfy a simpler 	``intertwining'' relation. 
		
		Define, for each index $k=1, 2, \dotsc, t$,
		\begin{align*}
		 \tilde{\vec{\lambda}}_k & := \vec{\lambda}_{j_{k-1}+1} + \cdots + \vec{\lambda}_{j_k}, \\
		 \tilde{\vec{\mu}}_k &:= \vec{\mu}_{i_{k-1}+1} + \cdots + \vec{\mu}_{i_k}.
		 \end{align*}
		  
		  Then we clearly have
		\begin{equation}\label{slope inequalities after reduction} \tilde{\vec{\lambda}}_1 \succeq \tilde{\vec{\mu}}_1 \succ \tilde{\vec{\lambda}}_2 \succeq \tilde{\vec{\mu}}_2 \succ \cdots \succ \tilde{\vec{\lambda}}_t \succeq \tilde{\vec{\mu}}_t.\end{equation}
		Define two vector bundles $\tilde{\mathcal{K}}$ and $\tilde{\mathcal{E}}$ on $\adicff_C$ by $\HNvec(\tilde{\mathcal{K}}) = (\tilde{\vec{\mu}}_k)_{1 \leq k \leq t}$ and  $\HNvec(\tilde{\mathcal{E}}) = (\tilde{\vec{\lambda}}_k)_{1 \leq k \leq t}$. Then we have the following properties of $\tilde{\mathcal{K}}$ and $\tilde{\mathcal{E}}$:
		\begin{itemize}
			\item $\deg(\tilde{\mathcal{K}}) = \deg(\mathcal{K}) = d, ~\rank(\tilde{\mathcal{K}}) = \rank(\mathcal{K})= r$;
			\item $\deg(\tilde{\mathcal{E}}) = \deg(\mathcal{E}) = d+d', ~\rank(\tilde{\mathcal{E}}) = \rank(\mathcal{E})= r+r'$;
			\item $\tilde{\mathcal{K}}$ is not semistable since $t \geq 2$; 
			\item the maximal slope of $\tilde{\mathcal{K}}$ is less than or equal to the maximal slope of $\tilde{\mathcal{E}}$, as $\tilde{\vec{\mu}}_1 \preceq \tilde{\vec{\lambda}}_1$; 
            \item the maximal slope of $\tilde{\Cal{E}}$ is strictly less than the slope of	
     	$\Cal{F}$, as $\tilde{\vec{\lambda}}_1 \preceq \vec{\lambda}_1 \prec \vec{\gamma}$;
			\item $\HN(\tilde{\mathcal{E}}) \leq \HN(\mathcal{E})$, and hence $\HN(\tilde{\mathcal{E}}) \leq \HN(\mathcal{D} \oplus \mathcal{F})$. 
		\end{itemize}
		We claim that it suffices to prove Theorem \ref{degree inequality for kernel} for $\tilde{\mathcal{K}}$ and $\tilde{\mathcal{E}}$. 
		
		Indeed, $\tilde{\mathcal{K}}$ and $\tilde{\mathcal{E}}$ satisfy the same assumptions made on $\mathcal{K}$ and $\mathcal{E}$ in Theorem \ref{degree inequality for kernel}. It is also straightforward to check the identities 
		\[\deg(\mathcal{K}^\vee \otimes \mathcal{E})^{\geq 0} = \deg(\tilde{\mathcal{K}}^\vee \otimes \tilde{\mathcal{E}})^{\geq 0} \quad \text{ and } \quad \deg(\mathcal{E}^\vee \otimes \mathcal{F})^{\geq 0} = \deg(\tilde{\mathcal{E}}^\vee \otimes \mathcal{F})^{\geq 0}\] 
		using Lemma \ref{diamond lemma for nonnegative degree} and the inequalities in \eqref{slope inequalities after reduction}. In addition, since $\HN(\tilde{\mathcal{K}}) \leq \HN(\mathcal{K})$, Corollary \ref{diamond inequality} yields
		\[ \deg(\tilde{\mathcal{K}}^\vee \otimes \tilde{\mathcal{K}})^{\geq 0} \leq \deg(\mathcal{K}^\vee \otimes \mathcal{K})^{\geq 0}.\]
		Hence for the purpose of establishing Theorem \ref{degree inequality for kernel} it suffices to prove the inequality 
		\begin{equation}\label{reduced degree inequality for kernel} \deg(\tilde{\mathcal{K}}^\vee \otimes \tilde{\mathcal{E}})^{\geq 0} < \deg(\tilde{\mathcal{K}}^\vee \otimes \tilde{\mathcal{K}})^{\geq 0} + \deg(\tilde{\mathcal{E}}^\vee \otimes \mathcal{F})^{\geq 0}.\end{equation}
		
		\subsubsection{Some algebraic reductions} 
		We now complete the proof of Theorem \ref{degree inequality for kernel}. 
		
		Using Lemma \ref{diamond lemma for nonnegative degree} and the inequalities in \eqref{slope inequalities after reduction}, we find
		\begin{align*}
		\deg(\tilde{\mathcal{K}}^\vee \otimes \tilde{\mathcal{E}})^{\geq 0} &= \sum_{i \geq j} \tilde{\vec{\mu}}_i \times \tilde{\vec{\lambda}}_j = \sum_{i=1}^t \left (\tilde{\vec{\mu}}_i \times \sum_{j=1}^i \tilde{\vec{\lambda}}_j \right)\\
		\deg(\tilde{\mathcal{K}}^\vee \otimes \tilde{\mathcal{K}})^{\geq 0} &= \sum_{i \geq j} \tilde{\vec{\mu}}_i \times \tilde{\vec{\mu}}_j = \sum_{i=1}^t \left(\tilde{\vec{\mu}}_i \times \sum_{j=1}^i \tilde{\vec{\mu}}_j \right), \\
		\deg(\tilde{\mathcal{E}}^\vee \otimes \mathcal{F})^{\geq 0} &=  \sum_{j=1}^t \tilde{\vec{\lambda}}_j \times \vec{\gamma} = \left( \sum_{j=1}^t \tilde{\vec{\lambda}}_j \right) \times \vec{\gamma}.
		\end{align*}
		
		Note that $\vec{\gamma} + (\tilde{\vec{\mu}}_1 + \cdots + \tilde{\vec{\mu}}_t) = \tilde{\vec{\lambda}}_1 + \cdots + \tilde{\vec{\lambda}}_t$, so we may write
		\[\deg(\tilde{\mathcal{E}}^\vee \otimes \mathcal{F})^{\geq 0} =  \left( \vec{\gamma} + \sum_{i=1}^t \tilde{\vec{\mu}}_i \right) \times \vec{\gamma} = \sum_{i=1}^t \tilde{\vec{\mu}}_i  \times \vec{\gamma}.\]
		Substituting this above yields
		\begin{align}\begin{split}\label{right hand side for kernel inequality}
		\deg(\tilde{\mathcal{K}}^\vee \otimes \tilde{\mathcal{K}})^{\geq 0} + \deg(\tilde{\mathcal{E}}^\vee \otimes \mathcal{F})^{\geq 0} &= \sum_{i=1}^t \left[\tilde{\vec{\mu}}_i \times \left( \vec{\gamma} + \sum_{j=1}^i \tilde{\vec{\mu}}_j \right) \right]\\
		&= \sum_{i=1}^t \left[\tilde{\vec{\mu}}_i \times \left( \sum_{j=1}^t \tilde{\vec{\lambda}}_j - \sum_{j=i+1}^t \tilde{\vec{\mu}}_j \right) \right].
		\end{split}\end{align}

		We thus have
		\begin{align*} 
		&\deg(\tilde{\mathcal{K}}^\vee \otimes \tilde{\mathcal{K}})^{\geq 0} + \deg(\tilde{\mathcal{E}}^\vee \otimes \mathcal{F})^{\geq 0} - \deg(\tilde{\mathcal{K}}^\vee \otimes \tilde{\mathcal{E}})^{\geq 0} \\
		&= \sum_{i=1}^t \left[\tilde{\vec{\mu}}_i \times \left( \sum_{j=1}^t \tilde{\vec{\lambda}}_j - \sum_{j=i+1}^t \tilde{\vec{\mu}}_j \right) - \tilde{\vec{\mu}}_i \times \sum_{j=1}^i \tilde{\vec{\lambda}}_j \right]\\
		&=  \sum_{i=1}^t \left[  \tilde{\vec{\mu}}_i \times \left( \sum_{j=i+1}^t \tilde{\vec{\lambda}}_j - \sum_{j=i+1}^t \tilde{\vec{\mu}}_j \right) \right]\\
		&=  \sum_{i=1}^{t-1} \left[  \tilde{\vec{\mu}}_i \times \left( \sum_{j=i+1}^t \tilde{\vec{\lambda}}_j - \sum_{j=i+1}^t \tilde{\vec{\mu}}_j \right) \right].
		\end{align*}
		Hence, to prove \eqref{reduced degree inequality for kernel}, it suffices to prove 
		\begin{equation}\label{vector representation for kernel inequality} 
		0<  \tilde{\vec{\mu}}_i \times \left( \sum_{j=i+1}^t \tilde{\vec{\lambda}}_j - \sum_{j=i+1}^t \tilde{\vec{\mu}}_j \right) \quad \text{ for } i=1, 2, \dotsc, t-1. 
		\end{equation}

		\subsubsection{Completion of the proof} 
		
For the conclusion, it will be useful to refer to the geometry of the HN polygons. Let $O, P_1, P_2, \dotsc, P_{t+1}= P$ be the breakpoints of $\HN(\tilde{\mathcal{K}} \oplus \mathcal{F})$, listed in order of increasing $x$-coordinates. Similarly, let $O= Q_1, Q_2, \dotsc, Q_{t+1} = P$ be the breakpoints of $\HN(\tilde{\mathcal{E}})$, also listed in order of increasing $x$-coordinates. See Figure \ref{fig_polygon_comparison_again}.
		
\begin{center}        
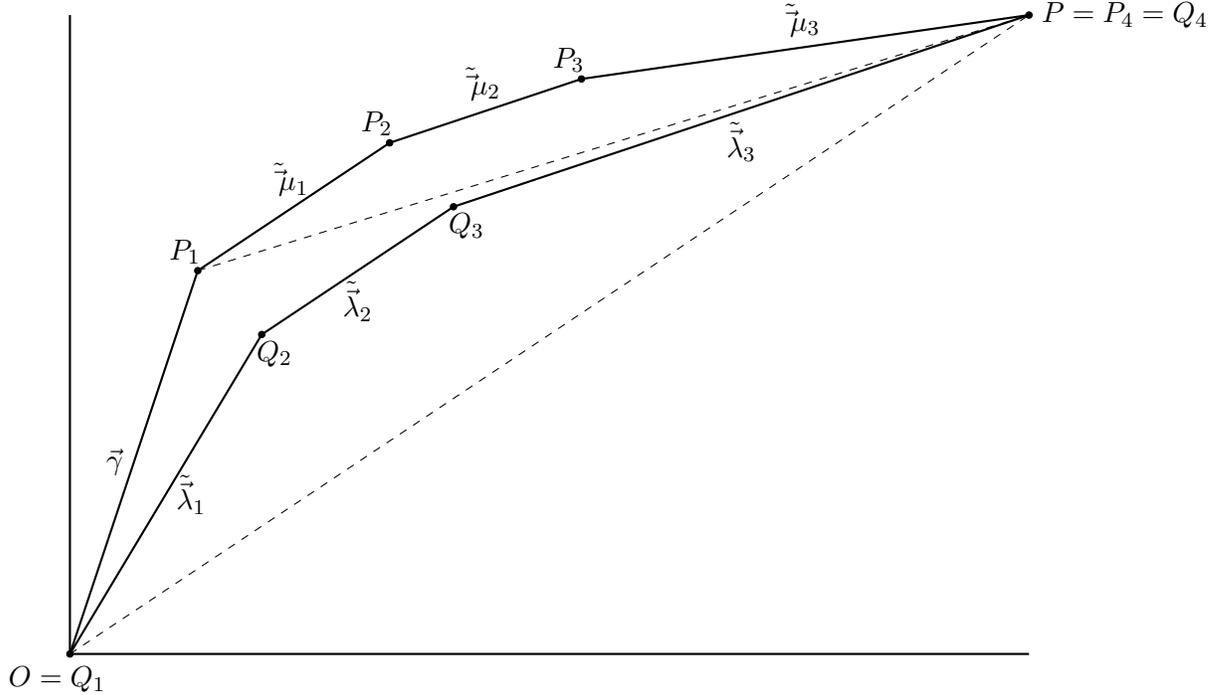
\begin{figure}[h]
		\begin{tikzpicture}[scale=0.85]

		\hspace{-0.5cm}
		
		\draw[step=1cm,thick] (0,0) -- node[left] {$\vec{\gamma}$} (2,6);
		\draw[step=1cm,thick] (2,6) -- node[above] {$\tilde\vec{\mu}_1$} (5,8);
		\draw[step=1cm,thick] (5,8) -- node[above] {$\tilde\vec{\mu}_2$} (8,9);
		\draw[step=1cm,thick] (8,9) -- node[above] {$\tilde\vec{\mu}_3$} (15,10);
        
        \draw[dashed] (2,6)--(15,10);

		\draw[step=1cm,thick] (0,0) -- node[right] {$\tilde{\vec{\lambda}}_1$} (3,5);
		\draw[step=1cm,thick] (3,5) -- node[below] {$\tilde{\vec{\lambda}}_2$} (6,7);
		\draw[step=1cm,thick] (6,7) -- node[below] {$\tilde{\vec{\lambda}}_3$} (15,10);

		\node at (0-0.2,0-0.4) {$O =Q_1 $};
		\node at (5-0.2,8+0.3) {$P_2$};
		\node at (8-0.2,9+0.3) {$P_3$};
		\node at (15+1.5,10) {$P=P_4=Q_4$};
		\node at (2-0.2,6+0.3) {$P_1$};

		\node at (3+0.2,5-0.3) {$Q_2$};
		\node at (6+0.2,7-0.3) {$Q_3$};

		\draw[step=1cm,thick] (0,0) -- (0,10);
		
		\draw[step=1cm,thick] (0,0) -- (15,0);

		\draw [fill] (0,0) circle [radius=0.05];
		
		\draw [fill] (2,6) circle [radius=0.05];
		
		\draw [fill] (5,8) circle [radius=0.05];
		
		\draw [fill] (8,9) circle [radius=0.05];

		\draw [fill] (15,10) circle [radius=0.05];
		
		\draw [fill] (3,5) circle [radius=0.05];
		
		\draw [fill] (6,7) circle [radius=0.05];

        \draw[dashed] (0,0)--(15,10);
          \useasboundingbox (-2,0);
		\end{tikzpicture}	
		\caption{Comparison of the HN vectors for $\HN(\tilde{\mathcal{K}} \oplus \mathcal{F})$ (the top solid lines) and $\HN(\tilde{\mathcal{E}})$ (the bottom solid lines).}\label{fig_polygon_comparison_again}
		\end{figure}
\end{center}
Then we have $\tilde{\vec{\mu}}_i = \overrightarrow{P_i P_{i+1}}$ and $\tilde{\vec{\lambda}}_j = \overrightarrow{Q_j Q_{j+1}}$, which yields
\begin{align*}
\sum_{j=i+1}^t \tilde{\vec{\lambda}}_j - \sum_{j=i+1}^t \tilde{\vec{\mu}}_j & = \sum_{j=i+1}^t \overrightarrow{Q_j Q_{j+1}} - \sum_{j=i+1}^t \overrightarrow{P_j P_{j+1}} \\
& = \overrightarrow{Q_{i+1} Q_{t+1}} - \overrightarrow{P_{i+1} P_{t+1}} \\
&= \overrightarrow{Q_{i+1} P_{i+1}}.
\end{align*}
		Thus we may rewrite \eqref{vector representation for kernel inequality} as
		\begin{equation}\label{geometric representation for kernel inequality} 
		0 < \overrightarrow{P_i P_{i+1}} \times \overrightarrow{Q_{i+1} P_{i+1}}.
		\end{equation}

		We will prove the inequality \eqref{geometric representation for kernel inequality}  by dividing into three cases depending on the sign of the $x$-coordinate of $\overrightarrow{Q_{i+1} P_{i+1}}$. Note that the $x$-coordinate of $\overrightarrow{P_i P_{i+1}}$ is always positive. 
		
\begin{enumerate}
\item Suppose first that the $x$-coordinate of $\overrightarrow{Q_{i+1} P_{i+1}}$ is positive. In this case, \eqref{geometric representation for kernel inequality} is equivalent by Lemma \ref{relation_area} to
		\begin{align*}
&		  \overrightarrow{P_i P_{i+1}} \prec  \overrightarrow{Q_{i+1} P_{i+1}} \\
		  \iff &\overrightarrow{P_{i+1}P_i} \succ  \overrightarrow{P_{i+1}Q_{i+1}}
		\end{align*}
		which clearly holds by the fact that $\HN(\tilde{\mathcal{E}}) \leq \HN(\tilde{\mathcal{K}} \oplus \mathcal{F})$ and $\tilde{\vec{\lambda}}_1 \prec \vec{\gamma}$. 
		
\item Next consider the case when the $x$-coordinate of $\overrightarrow{Q_{i+1} P_{i+1}}$ is negative. In this case, \eqref{geometric representation for kernel inequality} is equivalent to
		\[  \overrightarrow{P_i P_{i+1}} \succ  \overrightarrow{Q_{i+1} P_{i+1}}.\]
		In fact, one has a stronger inequality 
		\[  \overrightarrow{P_{i+1} P_{i+2}} \succeq \overrightarrow{Q_{i+1} P_{i+1}}\]
		(this is stronger because $ \overrightarrow{P_i P_{i+1}}  \succ \overrightarrow{P_{i+1} P_{i+2}} $ by definition), which can be easily seen by the fact that $\HN(\tilde{\mathcal{E}}) \leq \HN(\tilde{\mathcal{K}} \oplus \mathcal{F})$.
		
\item	It remains to consider the case when the $x$-coordinate of $\overrightarrow{Q_{i+1} P_{i+1}}$ is zero. In this case, \eqref{geometric representation for kernel inequality} is equivalent to saying that the $y$-coordinate of $\overrightarrow{Q_{i+1} P_{i+1}}$ is positive. In fact, this is clearly nonnegative by the fact that $\HN(\tilde{\mathcal{E}}) \leq \HN(\tilde{\mathcal{K}} \oplus \mathcal{F})$, so it remains to prove that this coordinate is never zero. Suppose for contradiction that the $y$-coordinate of $\overrightarrow{Q_{i+1} P_{i+1}}$ is zero, which means that $P_{i+1} = Q_{i+1}$. However, since $\HN(\tilde{\mathcal{E}}) \leq \HN(\tilde{\mathcal{D}} \oplus \mathcal{F})$, the only point on $\tilde{\mathcal{K}}$ which can also lie on $\tilde{\mathcal{E}}$ is $P_1$. Hence we must have $P_1 = Q_1=O$, which is clearly a contradiction. 
		\end{enumerate}
\qed

	\begin{remark} We explain a geometric perspective of \eqref{right hand side for kernel inequality}, as an example of the translation from the geometric argument sketched in \S \ref{proof_sketch}. As in the proof, let $O=P_0, P_1, P_2, \dotsc, P_{t+1}$ be the breakpoints of $\HN(\tilde{\mathcal{K}} \oplus \mathcal{F})$, listed in the order of increasing $x$-coordinates. Note that $\vec{\gamma} = \overrightarrow{O P_1}, ~\tilde{\vec{\mu}}_i = \overrightarrow{P_i P_{i+1}}$ and $\tilde{\vec{\lambda}}_1 + \cdots + \tilde{\vec{\lambda}}_t = \overrightarrow{O P_{t+1}}$. 
		

		By Proposition \ref{geometric diamond lemma for one vector bundle}, $\deg(\tilde{\mathcal{K}}^\vee \otimes \tilde{\mathcal{K}})^{\geq 0} = 2 \cdot \text{Area}(P_1 P_2 \cdots P_{t+1})$. In addition, we have 
		\[\deg(\tilde{\mathcal{E}}^\vee \otimes \mathcal{F})^{\geq 0}  = (\tilde{\vec{\lambda}}_1 + \cdots +\tilde{\vec{\lambda}}_t) \times \vec{\gamma} = \overrightarrow{OP_{t+1}} \times \overrightarrow{OP_1} = 2 \cdot \text{Area}(\bigtriangleup OP_1 P_{t+1}).\]
		We may thus write (please refer to Figure \ref{area_inequality_1})
		\begin{align*}
		\deg(\tilde{\mathcal{K}}^\vee \otimes \tilde{\mathcal{K}})^{\geq 0} + \deg(\tilde{\mathcal{E}}^\vee \otimes \mathcal{F})^{\geq 0} &= 2 \cdot \text{Area}(P_1 P_2 \cdots P_{t+1}) + 2 \cdot \text{Area}(\bigtriangleup OP_1 P_{t+1}) \\
		& = 2 \cdot \text{Area}(O P_1 P_2 \cdots P_{t+1}) \\
		&= 2 \sum_{i=1}^t \text{Area}(\bigtriangleup OP_i P_{i+1})=2 \sum_{i=1}^t \Delta_i
		\end{align*}
        
        where we set $\Delta_i=\text{Area}(\bigtriangleup OP_i P_{i+1})$.
		
\begin{center}        
\begin{figure}[h]
		\begin{tikzpicture}[scale=0.7]

		\hspace{-0.5cm}
		\draw [blue, fill=blue!20]  (0,0) -- (2,4) -- (15,10) --(0,0);
		\draw [red, fill=red!20]  (2,4) -- (5,7) --(9,9) -- (15,10) -- (2,4);
		
		\draw[step=1cm,thick] (0,0) -- node[above] {$\vec{\gamma}$} (2,4);
		\draw[step=1cm,thick] (2,4) -- node[above] {$\tilde\vec{\mu}_1$} (5,7);
		\draw[step=1cm,thick] (5,7) -- node[above] {$\tilde\vec{\mu}_2$} (9,9);
		\draw[step=1cm,thick] (9,9) -- node[above] {$\tilde\vec{\mu}_3$} (15,10);
		\draw[step=1cm,thick] (2,4) -- node[sloped, above=5pt] {$ \frac{1}{2}\deg(\tilde{\mathcal{K}}^\vee \otimes \tilde{\mathcal{K}})^{\geq 0}$} (15,10);

		\node at (0-0.2,0-0.2) {$O$};
		\node at (5-0.2,7+0.3) {$P_2$};
		\node at (9-0.2,9+0.3) {$P_3$};
		\node at (15-0.2,10+0.3) {$P_4$};
				\node at (2-0.2,4+0.3) {$P_1$};
		
		\draw[step=1cm,thick] (0,0) --  node[sloped, below] {$\tilde\vec{\lambda}_1+\tilde\vec{\lambda}_2+\tilde\vec{\lambda}_3$} (15,10) node[pos=0.3, sloped, above=10pt] {$ \frac{1}{2}\deg(\tilde{\mathcal{E}}^\vee \otimes \mathcal{F})^{\geq 0}$};

		\draw[step=1cm,thick] (0,0) -- (0,10);
		
		\draw[step=1cm,thick] (0,0) -- (15,0);

		\draw [fill] (0,0) circle [radius=0.05];
		
		\draw [fill] (2,4) circle [radius=0.05];
		
		\draw [fill] (5,7) circle [radius=0.05];
		
		\draw [fill] (9,9) circle [radius=0.05];

		\draw [fill] (15,10) circle [radius=0.05];
		
			\useasboundingbox (-2,0);
		
		\end{tikzpicture}	
		\caption{Geometric interpretation of the left hand side of \eqref{right hand side for kernel inequality}.}\label{area_inequality_1}
		\end{figure}
\end{center}

		On the other hand, for each $i=1, 2, \dotsc, t$ we have (please refer to Figure \ref{area_inequality_2}):
		\begin{align*} 
		2\Delta_i &= \overrightarrow{P_i P_{i+1}} \times \overrightarrow{OP_{i+1}}= \overrightarrow{P_i P_{i+1}} \times \big(\overrightarrow{OP_{t+1}} - \overrightarrow{P_{i+1}P_{t+1}}\big)\\
		&= \tilde{\vec{\mu}}_i \times \left( \sum_{j=1}^t \tilde{\vec{\lambda}}_j - \sum_{j=i+1}^t \tilde{\vec{\mu}}_j \right).
		\end{align*} Hence we deduce \eqref{right hand side for kernel inequality}.

\begin{center}
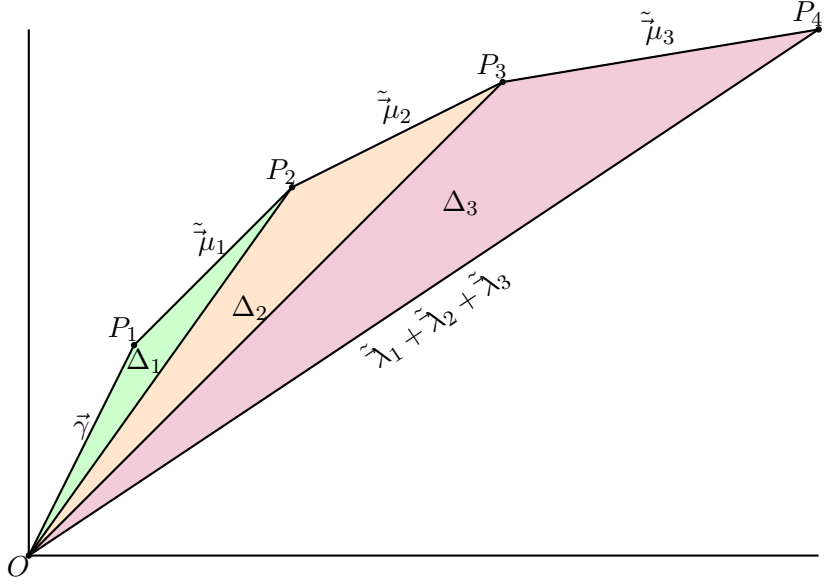
\begin{figure}[h]
		\begin{tikzpicture}[scale=0.7]
        \hspace{-0.5cm}
		\draw [green, fill=green!20]  (0,0) -- (2,4) -- (5,7) --(0,0);
		\draw [orange, fill=orange!20]  (0,0) -- (5,7) --(9,9) -- (0,0);
		\draw [purple, fill=purple!20]  (0,0) -- (9,9) --(15,10) -- (0,0);
		
		\draw[step=1cm,thick] (0,0) -- node[above] {$\vec{\gamma}$} (2,4);
		\draw[step=1cm,thick] (2,4) -- node[above] {$\tilde\vec{\mu}_1$} (5,7);
		\draw[step=1cm,thick] (5,7) -- node[above] {$\tilde\vec{\mu}_2$} (9,9);
		\draw[step=1cm,thick] (9,9) -- node[above] {$\tilde\vec{\mu}_3$} (15,10);
		
		\node at (2+0.2,4-0.3) {$\Delta_1$};
		\node at (4+0.2,5-0.3) {$\Delta_2$};
		\node at (8+0.2,7-0.3) {$\Delta_3$};	
		
		\draw[step=1cm,thick] (0,0) --  node[sloped, below]  {$\tilde\vec{\lambda}_1+\tilde\vec{\lambda}_2+\tilde\vec{\lambda}_3$} (15,10);
		\draw[step=1cm,thick] (0,0) -- (0,10);
		
		\draw[step=1cm,thick] (0,0) -- (15,0);
		\draw[step=1cm,thick] (0,0) -- (5,7);
		\draw[step=1cm,thick] (0,0) -- (9,9);
		
		\draw [fill] (0,0) circle [radius=0.05];
		
		\draw [fill] (2,4) circle [radius=0.05];
		
		\draw [fill] (5,7) circle [radius=0.05];
		
		\draw [fill] (9,9) circle [radius=0.05];

		\draw [fill] (15,10) circle [radius=0.05];
		\node at (0-0.2,0-0.2) {$O$};
		\node at (5-0.2,7+0.3) {$P_2$};
		\node at (9-0.2,9+0.3) {$P_3$};
		\node at (15-0.2,10+0.3) {$P_4$};
		\node at (2-0.2,4+0.3) {$P_1$};
		
				  \useasboundingbox (-2,0);
		\end{tikzpicture}
		\caption{Geometric interpretation of the right hand side of \eqref{right hand side for kernel inequality}.}\label{area_inequality_2}
		
		\end{figure}
		
\end{center}		
		
	\end{remark}

	\subsection{The extension theorem}\label{completion}
We can now complete the proof of the extension theorem.

\begin{proof}[Proof of Theorem \ref{main_thm_3}]  Let the notation and assumptions be as in the theorem.  
By the reduction in \ref{strategy}, it suffices to treat the case where the maximal slope of $\Cal{E}$ is strictly less than the slope of $\Cal F_2$. By Theorem \ref{step-1}, there exists a surjection $\ecal \to \fcal_2$; equivalently, $\Surj(\ecal,\fcal_2)$ is nonempty. It now clearly suffices to show that $\Surj(\ecal,\fcal_2)^{\fcal_1}$ is nonempty.

Let $\mathcal{K}$ be a vector bundle on $X$ which is isomorphic to the kernel of a surjective map from $\mathcal{E}$ to $\mathcal{F}_2$. If $\mathcal{K}$ is not semistable, it clearly satisfies the assumptions in Theorem \ref{degree inequality for kernel}, so we obtain an inequality
	\[ \deg(\mathcal{K}^\vee \otimes \mathcal{E})^{\geq 0} < \deg(\mathcal{K}^\vee \otimes \mathcal{K})^{\geq 0} + \deg(\mathcal{E}^\vee \otimes \mathcal{F}_2)^{\geq 0}.\]
Since $\Surj(\mathcal{E}, \mathcal{F}_2)$ is not empty, we can insert this inequality into the dimension formulas for $\Surj(\mathcal{E}, \mathcal{F}_2)^\mathcal{K}$ and $\Surj(\mathcal{E}, \mathcal{F}_2)$ established in Proposition \ref{surjinjnice} and Lemma \ref{maindimlemma2}, respectively, yielding the strict inequality
	\[ \dim \Surj(\mathcal{E}, \mathcal{F}_2)^\mathcal{K} < \dim \Surj(\mathcal{E}, \mathcal{F}_2)\]for any $\kcal$ satisfying the assumptions of Theorem \ref{degree inequality for kernel}.  Let $S$ denote the set of isomorphism classes of such $\kcal$s.  Now, arguing as the proof of Theorem \ref{mainsurjthm1}, we deduce that the dimension of $X = |\Surj(\mathcal{E}, \mathcal{F}_2)| \smallsetminus |\Surj(\mathcal{E}, \mathcal{F}_2)^{\fcal_1}|$ satisfies the inequality $$\dim X \leq \sup_{\kcal \in S} \deg(\mathcal{K}^\vee \otimes \mathcal{E})^{\geq 0} - \deg(\mathcal{K}^\vee \otimes \mathcal{K})^{\geq 0} < \deg(\mathcal{E}^\vee \otimes \mathcal{F}_2)^{\geq 0},$$ which again yields a contradiction if $|\Surj(\mathcal{E}, \mathcal{F}_2)^{\fcal_1}|$ is empty.
\end{proof}
\begin{remark}There is a more quantitative form of Theorem \ref{main_thm_3} which we would like to explain. To describe this result, fix bundles $\fcal_i$ as in the statement of the theorem, and let $\mathcal{E}\mathrm{xt}(\fcal_2,\fcal_1)$ be the sheafification of the presheaf sending $S \in \mathrm{Perf}_{/\Spa\,F}$ to $H^1(\xcal_S,\fcal_{2,S}^\vee \otimes \fcal_{1,S})$. By arguments similar to those in \S \ref{bundlemaps}, one checks that $\mathcal{E}\mathrm{xt}(\fcal_2,\fcal_1)$ is a locally spatial and partially proper diamond over $\Spd\,F$, which (by construction) parametrizes all isomorphism classes of extensions of $\fcal_2$ by $\fcal_1$. For any given vector bundle $\ecal$, let $\mathcal{E}\mathrm{xt}(\fcal_2,\fcal_1)^{\ecal}$ denote the locally closed subdiamond of $\mathcal{E}\mathrm{xt}(\fcal_2,\fcal_1)$ parametrizing extensions which are isomorphic to $\ecal$ at all geometric points.  According to Theorem \ref{main_thm_3}, $\mathcal{E}\mathrm{xt}(\fcal_2,\fcal_1)^\ecal$ is nonempty if and only if $\HN(\ecal) \leq \HN(\fcal_1 \oplus \fcal_2)$. We then have the following result.
\begin{thm}\label{quantitativemainthm}
For any $\ecal$ such that $\HN(\ecal) \leq \HN(\fcal_1 \oplus \fcal_2)$, the diamond $\mathcal{E}\mathrm{xt}(\fcal_2,\fcal_1)^{\ecal}$ is equidimensional of dimension $$\deg (\fcal_2 \otimes \fcal_1^\vee) - \deg (\ecal \otimes \ecal^\vee)^{\geq 0}.$$
\end{thm}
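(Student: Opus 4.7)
\textbf{Proof plan for Theorem \ref{quantitativemainthm}.} The strategy is to produce an $\mathcal{A}\mathrm{ut}(\mathcal{E})$-torsor relating the diamonds of surjections studied in \S\ref{bundlemaps} to $\mathcal{E}\mathrm{xt}(\mathcal{F}_2,\mathcal{F}_1)^{\mathcal{E}}$, which reduces the desired equidimensionality and dimension formula to the formulas already proved in Lemma \ref{maindimlemma2} and Proposition \ref{homnice}.

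\textbf{Step 1 (set-up of $\mathcal{E}\mathrm{xt}$).} Choose a short exact sequence $0 \to \mathcal{F}_2^\vee \otimes \mathcal{F}_1 \to \mathcal{V} \to \mathcal{W} \to 0$ on $\mathcal{X}$ in which both $\mathcal{V}$ and $\mathcal{W}$ have only nonnegative HN slopes (possible by tensoring $\mathcal{F}_2^\vee\otimes\mathcal{F}_1$ with a sufficiently positive line bundle and extracting a two-term resolution). Because $H^1$ vanishes on such bundles, $\mathcal{E}\mathrm{xt}(\mathcal{F}_2,\mathcal{F}_1)$ is the sheaf cokernel of the induced morphism $\mathcal{H}^0(\mathcal{V})\to\mathcal{H}^0(\mathcal{W})$; Proposition \ref{h0nice} and the arguments of \S\ref{bundlemaps} then show that $\mathcal{E}\mathrm{xt}(\mathcal{F}_2,\mathcal{F}_1)$ is a locally spatial, partially proper diamond over $\mathrm{Spd}\,F$. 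Cutting out the locus where the middle term of the universal extension has constant HN polygon equal to $\HN(\mathcal{E})$ defines the locally closed subdiamond $\mathcal{E}\mathrm{xt}(\mathcal{F}_2,\mathcal{F}_1)^{\mathcal{E}}$, exactly as in the proof of Proposition \ref{surjfixedkernelnice}.

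\textbf{Step 2 (the comparison map).} I would then define the natural morphism
\[
\pi: \Surj(\mathcal{E},\mathcal{F}_2)^{\mathcal{F}_1,\heart} \longrightarrow \mathcal{E}\mathrm{xt}(\mathcal{F}_2,\mathcal{F}_1)^{\mathcal{E}}
\]
sending a pair $(q,\iota)$ to the extension class of $0 \to \mathcal{F}_1 \xrightarrow{\iota^{-1}} \ker q \hookrightarrow \mathcal{E} \xrightarrow{q} \mathcal{F}_2 \to 0$, and equip it with the left action of $\mathcal{A}\mathrm{ut}(\mathcal{E})$ by $\phi\cdot(q,\iota) = (q\circ\phi^{-1},\,\iota\circ\phi^{-1}|_{\phi(\ker q)})$. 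The crux of the proof is to show that $\pi$ is an $\mathcal{A}\mathrm{ut}(\mathcal{E})$-torsor. Surjectivity is immediate: any isomorphism of the middle term of a chosen representative with $\mathcal{E}$ produces a preimage. Transitivity on fibers is exactly the equivalence relation defining $\mathrm{Ext}^1(\mathcal{F}_2,\mathcal{F}_1)$. Freeness is the heart of the matter: a stabilizer element at $(q,\iota)$ must satisfy $q\circ\phi = q$ and $\phi|_{\ker q} = \mathrm{id}$, which forces $\phi-\mathrm{id}$ to factor uniquely through a map $\mathcal{F}_2\to\mathcal{F}_1$; but $\mathcal{F}_2^\vee \otimes \mathcal{F}_1$ is semistable of negative slope, so $\Hom(\mathcal{F}_{2,S},\mathcal{F}_{1,S}) = 0$ functorially in $S$, forcing $\phi = \mathrm{id}$.

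\textbf{Step 3 (dimensions).} Granted the torsor structure, Lemma \ref{imagedimlemma} (applied to $\pi$, whose fibers are $\mathcal{A}\mathrm{ut}(\mathcal{E})$-torsors of constant dimension $\deg(\mathcal{E}^\vee\otimes\mathcal{E})^{\geq 0}$) yields equidimensionality of $\mathcal{E}\mathrm{xt}(\mathcal{F}_2,\mathcal{F}_1)^{\mathcal{E}}$ together with
\[
\dim \mathcal{E}\mathrm{xt}(\mathcal{F}_2,\mathcal{F}_1)^{\mathcal{E}} = \dim \Surj(\mathcal{E},\mathcal{F}_2)^{\mathcal{F}_1,\heart} - \dim \mathcal{A}\mathrm{ut}(\mathcal{E}).
\]
By Lemma \ref{maindimlemma2} and the $\mathcal{A}\mathrm{ut}(\mathcal{F}_1)$-torsor relation between $\Surj(\mathcal{E},\mathcal{F}_2)^{\mathcal{F}_1,\heart}$ and $\Surj(\mathcal{E},\mathcal{F}_2)^{\mathcal{F}_1}$, semistability of $\mathcal{F}_1$ gives $\dim \Surj(\mathcal{E},\mathcal{F}_2)^{\mathcal{F}_1,\heart} = \deg(\mathcal{E}\otimes\mathcal{F}_1^\vee)^{\geq 0}$. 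The hypothesis $\HN(\mathcal{E}) \leq \HN(\mathcal{F}_1\oplus\mathcal{F}_2)$ with matching endpoints forces all HN slopes of $\mathcal{E}$ to lie in $[\mu(\mathcal{F}_1),\mu(\mathcal{F}_2)]$, so $\mathcal{E}\otimes\mathcal{F}_1^\vee$ is everywhere of nonnegative slope, giving $\deg(\mathcal{E}\otimes\mathcal{F}_1^\vee)^{\geq 0} = \deg(\mathcal{E}\otimes\mathcal{F}_1^\vee) = \deg(\mathcal{F}_2\otimes\mathcal{F}_1^\vee)$ by additivity of rank and degree. Combining gives the claimed formula.

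The main obstacle is the diamond-theoretic rigor of Steps 1 and 2 --- constructing $\mathcal{E}\mathrm{xt}$ as a locally spatial diamond together with a universal extension, and verifying that the comparison map $\pi$ is functorially an $\mathcal{A}\mathrm{ut}(\mathcal{E})$-torsor; once these are established, the dimension count is essentially a bookkeeping exercise using the machinery already developed in \S\ref{diamond_dim}.
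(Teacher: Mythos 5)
Your proposal is correct and follows essentially the same route the authors sketch in the remark preceding Theorem \ref{quantitativemainthm}: one identifies the rigidified extension space with $\Surj(\mathcal{E},\mathcal{F}_2)^{\mathcal{F}_1,\heart}$, shows the forgetful map to $\mathcal{E}\mathrm{xt}(\mathcal{F}_2,\mathcal{F}_1)^{\mathcal{E}}$ is an $\mathcal{A}\mathrm{ut}(\mathcal{E})$-torsor (with freeness coming from $\mathcal{H}\mathrm{om}(\mathcal{F}_2,\mathcal{F}_1)=0$ since $\mathcal{F}_2^\vee\otimes\mathcal{F}_1$ has strictly negative slope, which is where the running hypothesis $\mu(\mathcal{F}_1)<\mu(\mathcal{F}_2)$ enters), and then combines Lemma \ref{maindimlemma2}, Lemma \ref{additivefiberdim}/\ref{imagedimlemma}, and the numerical observation $\deg(\mathcal{E}\otimes\mathcal{F}_1^\vee)^{\geq 0}=\deg(\mathcal{E}\otimes\mathcal{F}_1^\vee)=\deg(\mathcal{F}_2\otimes\mathcal{F}_1^\vee)$, valid because $\HN(\mathcal{E})\leq\HN(\mathcal{F}_1\oplus\mathcal{F}_2)$ pins all slopes of $\mathcal{E}$ into $[\mu(\mathcal{F}_1),\mu(\mathcal{F}_2)]$. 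The only cosmetic point is that Lemma \ref{imagedimlemma} by itself gives the total dimension rather than equidimensionality; the latter follows by applying Lemma \ref{additivefiberdim} pointwise together with equidimensionality of $\Surj(\mathcal{E},\mathcal{F}_2)^{\mathcal{F}_1,\heart}$ (an $\mathcal{A}\mathrm{ut}(\mathcal{F}_1)$-torsor over an open subdiamond of the equidimensional $\Surj(\mathcal{E},\mathcal{F}_2)$), as is implicitly done in the paper's Lemmas \ref{maindimlemma1} and \ref{maindimlemma2}.
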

Using Proposition \ref{geometric diamond lemma for one vector bundle}, one easily checks that the difference of degrees appearing here is twice the area of the region enclosed between $\HN(\ecal)$ and $\HN(\fcal_1 \oplus \fcal_2)$.

Roughly, the idea of the proof is that after rigidifying the extension, we obtain an $\mathcal{A}\mathrm{ut}(\ecal)$-torsor $$\mathcal{E}\mathrm{xt}(\fcal_2,\fcal_1)^{\ecal,\heart} \to \mathcal{E}\mathrm{xt}(\fcal_2,\fcal_1)^{\ecal}$$which can also be identified (more or less) with $\Surj(\ecal,\fcal_2)^{\fcal_1,\heart}$. One then argues using dimension theory as before.

\end{remark}

\section{Multi-step filtrations}\label{ind_step}

We now explain how to deduce Theorem \ref{main_thm_2} from Theorem \ref{main_thm_3}; note that the latter is simply the special case $k=2$ of the former.  

As in the assumptions of Theorem \ref{main_thm_2}, suppose vector bundles $\Cal{F}_1, \Cal{F}_2, \ldots, \Cal{F}_{k}$ are given, along with $\Cal{E}$ such that 
\[
\HN(\Cal{E}) \leq \HN(\Cal{F}_1 \oplus ... \oplus \Cal{F}_k).
\]
We want to find a filtration on $\Cal{E}$ whose subquotients are $\Cal{F}_i$. We induct on $k$, with the case $k=1$ being trivial; we shall need to use the case $k=2$, which is Theorem \ref{main_thm_3}, in the inductive step. \\

\begin{proof}[Proof of Theorem \ref{main_thm_2}] Consider graphing the HN polygons of $\Cal{E}$ and $\Cal{F}_1 \oplus ... \oplus \Cal{F}_k$. By assumption, we have 
\[
\HN(\Cal{F}_1 \oplus ... \oplus \Cal{F}_k) \geq \HN(\Cal{E}).
\]
Since $\Cal{F}_k$ is semistable and has the largest slope, it lies above $\HN(\Cal{E})$. (See Figure \ref{step1_fig}.)
		
		\begin{center}
		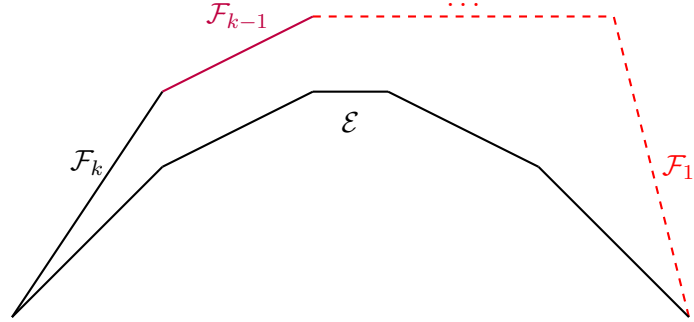
\begin{figure}[h]
	\begin{tikzpicture}	
	
	\coordinate (e0) at (0,0) {};
	\coordinate (e1) at (2,2) {};
	\coordinate (e2) at (4,3) {};
	\coordinate (e3) at (5,3) {};
	\coordinate (e4) at (7,2) {};
	\coordinate (e5) at (9,0) {};
	\coordinate (f1) at (2,3) {};
	\coordinate (f2) at (4,4) {};	     
	\coordinate (f3) at (8,4) {};

	\node[black] at (4.5,3-0.4) {$\Cal{E}$};

	\draw[step=1cm,thick,purple] (f1)--node[above=5pt]{$\Cal{F}_{k-1}$}(f2);
	\draw[step=1cm,thick,red,dashed] (f2)--node[above]{$\ldots$}(f3)--node[above,right]{$\Cal{F}_1$}(e5);
	\draw[step=1cm,thick] (e0) --(e1);
	\draw[step=1cm,thick] (e1) -- (e2);
	\draw[step=1cm,thick] (e2) -- (e3);
	\draw[step=1cm,thick] (e3) -- (e4);
	\draw[step=1cm,thick] (e4) -- (e5);	
	\draw[step=1cm,thick] (e0)--node[above=7 pt]{$\Cal{F}_k$}(f1);

	\end{tikzpicture}	
	\caption{Depiction of the HN polygons for $\Cal{E}$ and $\Cal{F}_1 \oplus \ldots \oplus \Cal{F}_k$.}\label{step1_fig}
		\end{figure}
		\end{center}

Now take the upper convex hull of $\HN(\Cal{F}_k)$ and $\HN(\Cal{E})$.  This gives a polygon which can be written as $\HN(\Cal{E}')$ for a bundle $\Cal{E}' \simeq \Cal{F}_k \oplus \Cal{E}_{k-1}$ for some $\Cal{E}_{k-1}$ (shown in blue below). We have not yet shown that $\Cal{E}_{k-1}$ is a sub-bundle of $\Cal{E}$, but we will do so shortly, and then $\Cal{E}_{k-1}$ will indeed be the first step of the filtration claimed in Theorem \ref{main_thm_2}.

 Note that $\HN(\Cal{E}_{k-1})$ will consist of a subset $\HN(\Cal{H}_{k-1})$ of $\HN(\Cal{E})$ together with a line segment of maximal slope, which is $\HN(\Cal{G}_{k-1})$ for some $\Cal{G}_{k-1}$, connecting $\Cal{F}_k$ to some vertex of $\Cal{E}$. (See Figure \ref{step2_fig}.) Finally, $\HN(\Cal{E})$ is the union of $\HN(\Cal{H}_{k-1})$ and $\HN(\Cal{L}_{k-1})$ for some $\Cal{L}_{k-1}$.

	\begin{center}
	\begin{figure}[h]
		\begin{tikzpicture}	
	
	\coordinate (e0) at (0,0) {};
	\coordinate (e1) at (2,2) {};
	\coordinate (e2) at (4,3) {};
	\coordinate (e3) at (5,3) {};
	\coordinate (e4) at (7,2) {};
	\coordinate (e5) at (9,0) {};
	\coordinate (f1) at (2,3) {};
	\coordinate (f2) at (4,4) {};	     
	\coordinate (f3) at (8,4) {};

	\node[blue] at (4,3-0.4) {$\Cal{E}_{k-1}$};	
	\node[black! 50! green] at (2+0.2,2-0.5) {$\Cal{L}_{k-1}$};	
	\node  at (6.7,1.7) {$\Cal{H}_{k-1}$};
	
	\draw[step=1cm,thick,purple] (f1)--node[above=5pt]{$\Cal{F}_{k-1}$}(f2);
	\draw[step=1cm,thick,red,dashed] (f2)--node[above]{$\ldots$}(f3)--node[above,right]{$\Cal{F}_1$}(e5);
	\draw[step=1cm,thick, black! 50! green] (e0) --(e1);
	\draw[step=1cm,thick, black! 50! green] (e1) -- (e2);
	\draw[step=1cm,thick] (e2) -- (e3);
	\draw[step=1cm,thick] (e3) -- (e4);
	\draw[step=1cm,thick] (e4) -- (e5);	
	\draw[step=1cm,thick] (e0)--node[above=7 pt]{$\Cal{F}_k$}(f1);
	\draw[step=1cm,thick,blue]  (f1)--node[below=7pt,left,black]{$\Cal{G}_{k-1}$}(e2)--(e3)--(e4)--(e5);	
	
	\end{tikzpicture}
	\caption{Construction of the first step of the filtration.}\label{step2_fig}
	\end{figure}
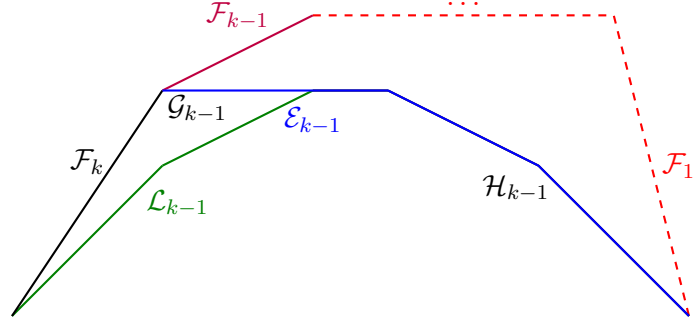
	\end{center}
	
Since all HN filtrations split (Corollary \ref{HNsplit}), we know that 
\begin{equation}\label{multi_eq1}
\Cal{E}_{k-1} \simeq \Cal{G}_{k-1} \oplus \Cal{H}_{k-1}
\end{equation}
and 
\begin{equation}\label{multi_eq2}
\Cal{E}\simeq \Cal{L}_{k-1} \oplus \Cal{H}_{k-1}.
\end{equation}
Now, by the $k=2$ case, which is a reformulation of Theorem \ref{main_thm_2}, we know that there exists an exact sequence 
\[
0 \to \Cal{G}_{k-1} \to \Cal{L}_{k-1} \to \Cal{F}_k \to 0
\]
Pushing out this sequence with respect to $\Cal{G}_{k-1} \rightarrow \Cal{G}_{k-1} \oplus \Cal{H}_{k-1}$ (i.e. taking the direct sum of the first two terms with $\Cal{H}_{k-1} \xrightarrow{\Id} \Cal{H}_{k-1}$), we obtain an exact sequence (using \eqref{multi_eq1} and \eqref{multi_eq2})
\[
0 \to \Cal{E}_{k-1} \to \Cal{E} \to \Cal{F}_k \to 0.
\]
Thus we have constructed a subbundle $\Cal{E}_{k-1}$ with $\Cal{E}/\Cal{E}_{k-1}=\Cal{F}_k$.\\

	\begin{center}
	\begin{figure}[h]
		\begin{tikzpicture}	
	
	\coordinate (e0) at (0,0) {};
	\coordinate (e1) at (2,2) {};
	\coordinate (e2) at (4,3) {};
	\coordinate (e3) at (5,3) {};
	\coordinate (e4) at (7,2) {};
	\coordinate (e5) at (9,0) {};
	\coordinate (f1) at (2,3) {};
	\coordinate (f2) at (4,4) {};	     
	\coordinate (f3) at (8,4) {};

	\node[blue] at (4,3-0.4) {$\Cal{E}_{k-1}$};

	\draw[step=1cm,thick] (e3) -- (e4);
	\draw[step=1cm,thick] (e4) -- (e5);	
	
	\draw[step=1cm,thick,blue]  (f1)--(e2)--(e3)--(e4)--(e5);	
	\draw[step=1cm,thick,purple] (f1)--node[above=5pt]{$\Cal{F}_{k-1}$}(f2); 
	\draw[step=1cm,thick,purple,dashed] (f1)--node[above=5pt]{$\Cal{F}_{k-1}$}(f2)--(e4)--(e5);
	\draw[step=1cm,thick,red,dashed] (f2)--node[above]{$\ldots$}(f3)--node[above,right]{$\Cal{F}_1$}(e5);
\end{tikzpicture}
	\caption{The proof is completed by induction.} \label{induction is awesome}
	\end{figure}
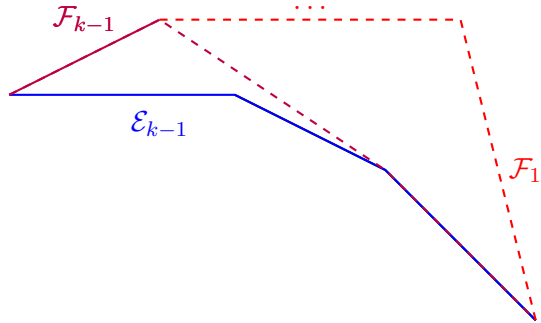
	\end{center}

By convexity of the HN polygons, we have $\HN(\Cal{E}_{k-1}) \leq \HN(\Cal{F}_1 \oplus \ldots \oplus \Cal{F}_{k-1})$ with the same endpoints. We can then conclude by induction (Figure \ref{induction is awesome}).
	
\end{proof}

\bibliographystyle{amsalpha}

\bibliography{Bibliography}
	
\end{document}